\newtheorem{thm}{Theorem}[section]
\newtheorem{definition}[thm]{Definition}
\newtheorem{cor}[thm]{Corollary}
\newtheorem{lem}[thm]{Lemma}
\newtheorem{prop}[thm]{Proposition}
\newcommand{\N}{\mathbb{N}}
\newcommand{\Z}{\mathbb{Z}}
\newcommand{\R}{\mathbb{R}}
\newcommand{\C}{\mathbb{C}}
\newcommand{\T}{\mathbb{T}}
\newcommand{\dvg}{\mathord{{\rm div}}\,}
\newcommand{\dvgphi}{\mathord{{\rm div}}^\phi}
\newcommand{\dvgphin}{\mathord{{\rm div}}^{\phi_{n+1}}}
\newcommand{\nablaphin}{\nabla^{\phi_{n+1}}}
\theoremstyle{remark}
\newtheorem{rmk}[thm]{Remark}
\numberwithin{equation}{section}
\title[Global existence and non-uniqueness 3D Euler transport noise]{Global existence and non-uniqueness of 3D Euler equations perturbed by transport noise}
\author[M. Hofmanov\'a]{Martina Hofmanov\'a}
  \address{Fakult\"at f\"ur Mathematik, Universit\"at Bielefeld, D-33501 Bielefeld, Germany}
  \email{hofmanova(at)math.uni-bielefeld.de}
\author[T. Lange]{Theresa Lange}
  \address{Fakult\"at f\"ur Mathematik, Universit\"at Bielefeld, D-33501 Bielefeld, Germany}
  \email{tlange(at)math.uni-bielefeld.de}
\author[U. Pappalettera]{Umberto Pappalettera}
  \address{Scuola Normale Superiore, Piazza dei Cavalieri, 7, 56126 Pisa, Italia}
  \email{umberto.pappalettera(at)sns.it}
\keywords{Fluid dynamics, Euler equations, transport noise, convex integration}
\date\today
\begin{document}

\begin{abstract}
We construct H\"older continuous, global-in-time probabilistically strong solutions to 3D Euler equations perturbed by Stratonovich transport noise.
Kinetic energy of the solutions can be prescribed a priori up to a stopping time, that can be chosen arbitrarily large with high probability. 
We also prove that there exist infinitely many H\"older continuous initial conditions leading to non-uniqueness of solutions to the Cauchy problem associated with the system.
Our construction relies on a flow transformation reducing the SPDE under investigation to a random PDE, and convex integration techniques introduced in the deterministic setting by De Lellis and Székelyhidi, here adapted to consider the stochastic case. 
In particular, our novel approach allows to construct probabilistically strong solutions on $[0,\infty)$ directly.
\end{abstract}

\maketitle

%%%%%%%%%%%%%%%%%%%%%%%%%%%%%%%%%%%%%%%%%%%%%%%%%%%%%%%%%%%%%%%%%%%%%%%%%%%%%%%%%%%%%%%%%%%%%
\section{Introduction}

In the present work we consider the Euler equations on the three-dimensional torus $\T^3$ perturbed by Brownian noise of transport type:
\begin{align} \label{eq:euler}
\begin{cases}
d u
+ (u \cdot \nabla) u \, dt 
+ \sum_{k \in I} (\sigma_k \cdot \nabla) u \bullet dB^k
+ \nabla p \, dt
= 0,
\\
\dvg u = 0,
\end{cases}
\end{align}
where $I$ is a finite set, $\{\sigma_k\}_{k \in I}$ is a collection of smooth divergence-free vector fields on the torus and $\{B^k\}_{k \in I}$ is a family of i.i.d. standard Brownian motions on a given filtered probability space $(\Omega,\mathcal{F},\{\mathcal{F}_t\}_{t \geq 0}, \mathbb{P})$, with $\{\mathcal{F}_t\}_{t \geq 0}$ assumed to be complete and right-continuous. 
The notation $\bullet \,dB^k$ denotes the Stratonovich interpretation of the stochastic integral.
The notion of solution to \eqref{eq:euler} we use throughout this paper is that of \emph{probabilistically strong, analytically weak} solutions, here recalled.

\begin{definition} \label{def:sol_euler}
Let $(\Omega,\mathcal{F},\{\mathcal{F}_t\}_{t \geq 0}, \mathbb{P})$ and $\{B^k\}_{k \in I}$ be given as above.
A progressively measurable stochastic process $(u,p):\Omega \to C_{loc}([0,\infty), L^2(\mathbb{T}^3 ,\R^3 \times \R))$ almost surely is a probabilistically strong, analytically weak solution to \eqref{eq:euler} if for every $H \in C^\infty(\T^3)$ it holds almost surely
\begin{align*}
\int_{\mathbb{T}^3} u(x,t) \cdot \nabla H(x) dx &= 0,
\quad
\forall t \in [0,\infty),
\end{align*}
and for every progressively measurable processes $H_0, \{H_k\}_{k \in I} : \Omega \to C_{loc}([0,\infty),C^\infty(\mathbb{T}^3,\R^3))$ and semimartingale $h:\Omega \to C_{loc}([0,\infty),C^\infty(\mathbb{T}^3,\R^3))$ satisfying 
\begin{align*}
dh=H_0\,dt + \sum_{k \in I} H_k \bullet dB^k,
%\quad
%\dvg h = 0,
\end{align*}
the process $t \mapsto \int_{\mathbb{T}^3} u(x,t) \cdot h(x,t) \,dx$ is a semimartingale and it holds almost surely
\begin{align*}
d\int_{\mathbb{T}^3} u \cdot h 
&=
\int_{\mathbb{T}^3} u \cdot \left( H_0 + (u \cdot \nabla) h \right) dt
+
\int_{\mathbb{T}^3} p \,\dvg h \,dt
+
\sum_{k \in I} \int_{\mathbb{T}^3} u \cdot \left( H_k + (\sigma_k \cdot \nabla) h \right) \bullet dB^k.
\end{align*}
\end{definition}
Notice that $L^2$ integrability in space of the velocity field $u$ is required in order to make sense of the nonlinear term $(u \cdot \nabla) u$ as a distribution.
(Local) continuity of trajectories allows to uniquely identify the initial data $u_0 = u(\cdot,0)$ and $p_0 = p(\cdot,0)$ as random variables taking values in $L^2(\T^3,\R^3)$ and $L^2(\T^3,\R)$ respectively, and consider the Cauchy problem associated to \eqref{eq:euler}.
For constant-in-time $h \in C^{\infty}(\T^3,\R^3)$, corresponding to $H_0=H_k=0$, this definition boils down to a more standard notion of distributional solutions to Euler equations, as for instance that used in \cite{BrFlMa16}; and in fact the two definitions are equivalent, which can be shown using mollification and It\=o formula as done in Appendix A of \cite{DeHoVo16}. 
We decided to adopt this definition because it copes better with the flow transformation we are going to introduce in the next section, see \autoref{def:sol_random_euler} and \autoref{prop:equivalence}.

In recent years, there have been several works aimed to justify the addition of transport noise in Euler equations, either by variational principles \cite{Ho15}, homogenization techniques \cite{CoGoHo17}, conservation laws \cite{DrHo20} or Wong-Zakai results \cite{FlPa21}.
More generally, a noise of transport type appears naturally in fluids when considering the effect that turbulence may have on the slow-varying, large-scale structures of the fluid itself, see \cite{FlPa22,DePa22+}.

Moreover, regularization by transport noise has been shown to hold in many different instances, as in transport equation \cite{FlGuPr10}, point vortices dynamics \cite{FlGuPr11}, Vlasov-Poisson equations \cite{DeFlVi14}, Navier-Stokes equations \cite{FlLu21} and other models \cite{FlGaLu21c,Lu21++,Ag22+,La22+}.
This perhaps suggests that a noise of transport type could help in proving well-posedness of Euler equations, or at least some partial result toward this direction.
However, this simply turns out not to be the case.
In this paper we are going to prove the following: 
\begin{itemize}
\item[\emph{i})]
For every constants $\varkappa \in (0,1)$, $T\in (0,\infty)$, and for every suitable \emph{energy profile} $e:[0,\infty) \to (0,\infty)$ given a priori, there exist a stopping time $\mathfrak{t}$ satisfying $\mathbb{P}\{ \mathfrak{t} \geq T\} \geq \varkappa$ and a H\"older continuous, probabilistically strong, analytically weak solution to \eqref{eq:euler} having kinetic energy equal to $e$ up to time $\mathfrak{t}$, namely
\begin{align*}
\int_{\mathbb{T}^3} |u(x,t)|^2 dx = e(t), 
\quad \forall t \in [0,\mathfrak{t}];
\end{align*}
\item[  \emph{ii})]
There exist infinitely many H\"older continuous divergence-free initial data $u(\cdot,0)$ such that the Cauchy problem associated with \eqref{eq:euler} admits non-uniqueness in law.
\end{itemize}
More detailed statements of the previous results are given in \autoref{ssec:main_results} below. 

It is worth mentioning the important progress in the existence theory of stochastic Euler equations implied by these results.
Indeed, in three dimensions only local-in-time well-posedness of regular solutions was known so far, see \cite{CrFlHo19} for transport noise and  \cite{Ki09,GHVi14} for other stochastic perturbations.
On the other hand, our solutions are global-in-time, although we can not prescribe the initial datum as in the cited literature. 
From this point of view, the global existence of probabilistically strong solutions for given initial data is still an open problem.
The more recent \cite{HoZhZh22} and \cite{HoZhZh22b+} provide global existence of analytically weak and probabilistically strong solution, but they are restricted to additive noise perturbations. 

Those in the present paper are the first results of this kind dealing with Euler equations perturbed by transport noise.

\subsection{Flow transformation and reformulation of the problem}

In order to construct solutions to \eqref{eq:euler}, it is convenient to rewrite the equation in a different form, so to ``remove" the transport term $\sum_{k \in I} (\sigma_k \cdot \nabla) u \bullet dB^k$ from the equation, at least apparently.

Therefore, we can consider the flow generated by the noise, given by 
\begin{align} \label{eq:flow}
\phi(x,t) = x + \sum_{k \in I}\int_0^t \sigma_k(\phi(x,s)) \bullet dB^k(s).
\end{align}
By our assumptions on the coefficients $\{\sigma_k\}_{k \in I}$, there exists a unique progressively measurable, probabilistically strong solution $\phi$ to \eqref{eq:flow} such that $\phi(\cdot,t)$ takes values in the class of measure preserving $C^{\infty}$-diffeomorphisms of the torus for every $t \in [0,\infty)$, see \cite{Ku97}.
Applying the change of variables\footnote{With a little abuse of notation, hereafter we denote $\phi^{-1}$ the inverse of the flow with respect to its first argument: $\phi^{-1}(y,t) \coloneqq (\phi(\cdot,t))^{-1}(y)$ for every fixed $y \in \mathbb{T}^3$ and $t \in [0,\infty)$.} 
\begin{align}
v(x,t) &= u(\phi(x,t),t), \label{eq:change_variables}
\quad
q(x,t) = p(\phi(x,t),t),
\\ \label{eq:change_variables_inv}
u(y,t) &= v(\phi^{-1}(y,t),t),
\quad
p(y,t) = q(\phi^{-1}(y,t),t),
\end{align}
we can formally rewrite the SPDE \eqref{eq:euler} as a random PDE:
\begin{align} \label{eq:random_euler}
\begin{cases}
\partial_t v 
+ \dvgphi (v \otimes v)
+ \nabla^\phi q
= 0,
\\
\dvgphi v = 0 ,
\end{cases}
\end{align}
where the symbols $\dvgphi$, $\nabla^\phi$ are abbreviations for the space-time dependent differential operators
\begin{align*}
\dvgphi v = [\dvg(v \circ \phi^{-1})] \circ \phi,
\quad
\nabla^\phi q = [\nabla(q \circ \phi^{-1})] \circ \phi.
\end{align*}
It is worth mentioning at this point that, as already observed in \cite{FlGuPr10,FlGuPr11}, a space-independent noise cannot regularize Euler equations since this would imply $\mbox{div}^\phi=\mbox{div}$ and $\nabla^\phi=\nabla$, and therefore the stochastic system would be equivalent to the deterministic one (which notably admits non-unique solutions). 
However, in most of the examples mentioned above a genuinely space-dependent noise does in fact improve well-posedness, and this is why our analysis is not trivial. 

For the system \eqref{eq:random_euler} we adopt the following notion of solution.
\begin{definition} \label{def:sol_random_euler}
Let $(\Omega,\mathcal{F},\{\mathcal{F}_t\}_{t \geq 0}, \mathbb{P})$ and $\{B^k\}_{k \in I}$ be given as above, and let $\phi$ be the unique stochastic flow of measure preserving diffeomorphisms given by \eqref{eq:flow}.
A progressively measurable stochastic process $(v,q):\Omega \to C_{loc}([0,\infty), L^2(\mathbb{T}^3 ,\R^3 \times \R))$ almost surely is a probabilistically strong, analytically weak solution to \eqref{eq:random_euler} if for every $H \in C^\infty(\T^3)$ it holds almost surely
\begin{align*}
\int_{\mathbb{T}^3} v(x,t) \cdot \nabla^\phi H(x,t) dx &= 0,
\quad
\forall t \in [0,\infty),
\end{align*}
and for every progressively measurable processes $H_0, \{H_k\}_{k \in I} : \Omega \to C_{loc}([0,\infty),C^\infty(\mathbb{T}^3,\R^3))$ and semimartingale $h:\Omega \to C_{loc}([0,\infty),C^\infty(\mathbb{T}^3,\R^3))$ satisfying 
\begin{align*}
dh=H_0\,dt + \sum_{k \in I} H_k \bullet dB^k,
%\quad
%\dvgphi h = 0,
\end{align*}
the process $t \mapsto \int_{\mathbb{T}^3} v(x,t) \cdot h(x,t) \,dx$ is a semimartingale and it holds almost surely
\begin{align*}
d\int_{\mathbb{T}^3} v \cdot h 
&=
\int_{\mathbb{T}^3} v \cdot \left( H_0 + (v \cdot \nabla^\phi) h \right) dt
+
\int_{\mathbb{T}^3} q \,\dvgphi h \,dt
+
\sum_{k \in I} \int_{\mathbb{T}^3} v \cdot  H_k \bullet dB^k.
\end{align*}
\end{definition}

The equivalence of systems \eqref{eq:euler} and \eqref{eq:random_euler} is shown in \autoref{prop:equivalence}. More precisely, we prove that a process $(u,p)$ is a solution to \eqref{eq:euler} in the sense of \autoref{def:sol_euler} if and only if $(v,q)$ given by \eqref{eq:change_variables} is a solution to \eqref{eq:random_euler} in the sense of \autoref{def:sol_random_euler}.
As a consequence, applying the inverse transformation \eqref{eq:change_variables_inv} to a solution $(v,q)$ of the modified equation \eqref{eq:random_euler} produces a solution $(u,p)$ of the original equation \eqref{eq:euler}. 
Like with \eqref{eq:euler}, by continuity of trajectories one can uniquely determine the initial conditions $v_0=v(\cdot,0)$ and $q_0=q(\cdot,0)$, and consider the Cauchy problem associated to \eqref{eq:random_euler}.
Also, since $\phi(\cdot,t)$ is measure preserving, it holds
\begin{align*}
\int_{\mathbb{T}^3} |v(x,t)|^2 dx
=
\int_{\mathbb{T}^3} |u(x,t)|^2 dx
\end{align*}
almost surely for every $t \in [0,\infty)$, and therefore we can state and prove our results in the framework of solutions to the random PDE \eqref{eq:random_euler}.

\subsection{Main results} \label{ssec:main_results}
The main result of the present paper is the following: 

\begin{thm}[Global strong existence] \label{thm:strong_ex}
Assume $\sigma_k$ smooth and divergence-free for every $k \in I$, $I$ finite, and define $\phi$ by \eqref{eq:flow}.
Then there exist $\vartheta>0$ and, for any given $\varkappa \in (0,1)$ and $T\in (0,\infty)$, a stopping time $\mathfrak{t}$ satisfying $\mathbb{P}\{ \mathfrak{t} \geq T\} \geq \varkappa$ with the following property. 

For every function $e$ on $[0,\infty)$ satisfying $\underline{e} \coloneqq \inf_{t \in [0,\infty)} e(t)>0$ and $\overline{e} \coloneqq \|e\|_{C^2_t}<\infty$, there exists a global probabilistically strong, analytically weak solution $(v,q)$ of \eqref{eq:random_euler} of class
\begin{align*}
v: \Omega \to C^{\vartheta}_{loc}([0,\infty), C(\mathbb{T}^3,\R^3)) \cap C_{loc}([0,\infty), C^\vartheta(\mathbb{T}^3,\R^3)),
\\
q: \Omega \to C^{2\vartheta}_{loc}([0,\infty), C(\mathbb{T}^3,\R)) \cap C_{loc}([0,\infty), C^{2\vartheta}(\mathbb{T}^3,\R)),
\end{align*}
almost surely, such that $v$ satisfies the almost sure identity
\begin{align*}
\int_{\mathbb{T}^3} |v(x,t)|^2 dx &= e(t),
\quad
\forall t \in [0,\mathfrak{t}].
\end{align*} 
\end{thm}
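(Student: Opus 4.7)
The plan is to run a Nash--De Lellis--Sz\'ekelyhidi convex integration scheme directly for the random PDE \eqref{eq:random_euler}, executed pathwise and deterministically for each $\omega$. Randomness enters only through the flow $\phi$, which is $\mathcal{F}_t$-adapted; since every iterate will be an explicit deterministic functional of $\phi$ and the prescribed profile $e$, progressive measurability of the limit $v$ comes for free. By Kunita's theorem, $\phi$ is a.s.\ $C^\infty$ in space and continuous in time in every $C^N$-norm, with $\phi(\cdot,0)=\text{id}$. I would define
\begin{align*}
\mathfrak{t}=\inf\{t\ge 0 : \|\phi(\cdot,t)-\text{id}\|_{C^N}+\|\phi^{-1}(\cdot,t)-\text{id}\|_{C^N}\ge L\}
\end{align*}
for a sufficiently large $N$ and $L=L(\varkappa,T)$; continuity of $\phi$ at $0$ together with tightness as $L\to\infty$ ensures $\mathbb{P}(\mathfrak{t}\ge T)\ge\varkappa$. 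On $[0,\mathfrak{t}]$ the operators $\dvgphi$ and $\nabla^\phi$ obey estimates with an $L$-dependent constant, which is the only stochastic input the iteration uses.

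Next I would introduce the $\phi$-modified Euler--Reynolds system
\begin{align*}
\partial_t v+\dvgphi(v\otimes v)+\nabla^\phi q=\dvgphi R,\qquad \dvgphi v=0,
\end{align*}
and build a sequence $(v_n,q_n,R_n)$ at geometric frequencies $\lambda_n=\lambda_0^{b^n}$ and decaying amplitudes $\delta_n=\lambda_n^{-2\beta}$. The perturbation $w_{n+1}=v_{n+1}-v_n$ is a superposition of high-frequency building blocks of the form $a(x,t)\,W_k(\phi(x,t))$, where $W_k$ is a standard Beltrami mode at frequency $\lambda_{n+1}$; the chain rule makes $W_k\circ\phi$ approximately $\dvgphi$-free. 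The amplitudes $a(x,t)$ come from the classical matrix-geometric lemma applied to $R_n\circ\phi^{-1}$ after pulling back, and $R_{n+1}$ is defined via an inverse-divergence operator adapted to $\dvgphi$ by conjugation with $\phi$. Extra derivatives landing on $\phi$ or $\phi^{-1}$ contribute factors bounded by the $L$-dependent constant on $[0,\mathfrak{t}]$, and they fit into the standard parameter balance provided $b$ is close enough to $1$ and $\beta$ close enough to $0$.

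On $[0,\mathfrak{t}]$ I would tune the leading amplitude of $w_{n+1}$ so that $\int_{\T^3}|v_{n+1}|^2\,dx$ matches $e(t)$ up to an error of order $\delta_{n+2}$; measure-preservation of $\phi$ reduces this to the usual energy-gap argument for $v_n\circ\phi^{-1}$, which is why the $C^2_t$-control on $e$ is needed. To extend past $\mathfrak{t}$ onto all of $[0,\infty)$, I would smoothly truncate the energy prescription beyond $\mathfrak{t}$ and continue the iteration; since the bounds on $\phi,\phi^{-1}$ are a.s.\ finite on every compact interval, the pathwise scheme still closes, simply without the energy identity. Convergence in $C^\vartheta$ follows by summing $\|w_{n+1}\|_{C^0}\lesssim\delta_{n+1}^{1/2}$ and interpolating with $\|w_{n+1}\|_{C^1}\lesssim\delta_{n+1}^{1/2}\lambda_{n+1}$, with an analogous gain of $2\vartheta$ for $q_n$; since $R_n\to 0$, the limit $v$ solves \eqref{eq:random_euler}.

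The main obstacle is the algebraic/geometric step: standard Beltrami flows cancel the Reynolds stress because the mean of their tensor product is constant, but the twisted blocks $W_k\circ\phi$ produce additional low-frequency errors from the non-trivial Jacobian of $\phi$, which must be absorbed into $R_{n+1}$ without spoiling the super-exponential gain $\delta_{n+1}\ll\delta_n$. Controlling these pathwise errors uniformly on $[0,\mathfrak{t}]$ is precisely what dictates the choice of $N$ and $L$ in the definition of $\mathfrak{t}$, and balancing them against the classical transport and oscillation errors is the core technical content of the adapted scheme.
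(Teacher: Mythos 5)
Your overall strategy—pathwise convex integration for the random PDE, Beltrami blocks twisted by the flow, an inverse-divergence operator conjugated with $\phi$, and a stopping time to control flow norms—is the right skeleton and matches the paper's architecture. However, there is a fundamental gap that your scheme cannot survive as written.

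The flow $\phi$ given by \eqref{eq:flow} is only $C^\alpha_{loc}$ in time, with $\alpha\in(1/3,1/2)$; it is \emph{not} differentiable in $t$. Your building blocks $a(x,t)\,W_k(\phi(x,t))$ and your projection step require computing $\partial_t w_{n+1}$, which produces $\dot\phi$, and the transport error then involves the material derivative of the amplitudes along $\phi$—all of which are meaningless for the true Brownian flow. Your stopping time controls $\|\phi(\cdot,t)-\mathrm{id}\|_{C^N}$, i.e.\ spatial derivatives, and does nothing for time regularity. This is precisely why the paper introduces a whole hierarchy of mollified driving paths $B_n=B\ast\theta_{\varsigma_n}$ and associated smooth-in-time flows $\phi_n$ solving \eqref{eq:def_phi_n}, and then pays for the mollification via rough-path estimates (\autoref{lem:lyons}, \autoref{lem:flow}) giving quantitative rates $(n+1)\varsigma_n^{\alpha-\beta}$ for $\|\phi_{n+1}-\phi_n\|$ and blow-ups $\varsigma_n^{\alpha-r}$ for the $r$-th time derivatives. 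The stopping time \eqref{eq:stopping} must therefore be built from rough-path quantities ($\|B\|_{C^\alpha}$, $\|\mathbb B\|_{C^{2\alpha}}$, and the constant $\tilde c$ from \autoref{lem:lyons}), not just from $\|\phi-\mathrm{id}\|_{C^N}$. The price of this mollification is the additional ``flow error'' term in \eqref{eq:decomposition_R}—the mismatch between $\dvg^{\phi_n}$ and $\dvg^{\phi_{n+1}}$—which forces a very aggressive choice of parameters (your heuristic ``$b$ close to $1$, $\beta$ close to $0$'' does not survive; the paper needs $m\ge 38$ and ends up with $\vartheta\lesssim 10^{-9}$). Unless you mollify the noise, the very first time-derivative in your iteration is ill-defined.

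A second, related gap: you impose $\dvgphi v_n=0$ along the iteration. The paper deliberately does \emph{not} do this, because the Leray projection $\mathcal P^{\phi_{n+1}}$ is a time-dependent operator whose time derivative again drags in $\dot\phi_{n+1}$ in an uncontrollable way for the compressibility corrector. Instead the paper only requires $\dvg^{\phi_n}v_n$ to decay in a negative Besov norm (\eqref{eq:est_div}), constructs the corrector $w_c^1$ as a \emph{convolution} so that time derivatives can be moved onto the mollifier kernel, and recovers incompressibility only in the limit. If you insist on exact $\dvgphi$-freeness at each step, the scheme will not close on the transport/compressibility errors. You should also note that adaptedness is not quite ``for free'': the energy-pumping coefficient $\rho_\ell$ must be defined so that it does not peek at future stopping times $\mathfrak t_L$, which is why the paper builds the process $\gamma_n$ explicitly (\autoref{rmk:gamma_n}) rather than simply truncating the energy profile.
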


As far as we know, this is the first result proving global existence of non-trivial, analytically weak, probabilistically strong solutions to Euler equations perturbed with transport noise. 
The only other example in the stochastic setting we are aware of is the very recent \cite{HoZhZh22b+}, dealing with the case of additive noise. 
We point out the fact that, in terms of space regularity of the solutions constructed, in the present paper we are able to produce even better solutions than those in \cite{HoZhZh22b+} (H\"older continuous versus $H^\vartheta$ Sobolev regular).
We also mention \cite{GHVi14}, where the authors prove that a suitable linear multiplicative noise yields global existence of smooth solutions with large probability.  

As a matter of fact, the proof of \autoref{thm:strong_ex} relies on a convex integration scheme, mostly inspired by the works \cite{DLS13,DLS14} on deterministic Euler equations.
We shall build smooth approximate solutions $(v_n,q_n,\phi_n)$, $n \in \N$ to the random Euler system \eqref{eq:random_euler}. 
We include a smooth approximating flow $\phi_n$ in the solution to take care of the lack of smoothness with respect to time of the Brownian flow $\phi$. 
To measure how far a given $(v_n,q_n,\phi_n)$ is from being a solution of \eqref{eq:random_euler} it is customary to introduce a system of differential equations called Euler-Reynolds system, that in our setting takes the form
\begin{align} \label{eq:random_euler-reynolds}
%\begin{cases}
\partial_t v
+ \dvgphi (v \otimes v)
+ \nabla^\phi q
= \dvgphi \mathring{R}
%\\
%\dvgphi v = 0 ,
%\end{cases}
\end{align}
where the \emph{Reynolds stress} $\mathring{R}$ takes values in the space of $3 \times 3$ symmetric traceless matrices.
In particular, we say that a quadruple $(v_n,q_n,\phi_n,\mathring{R}_n)$ is a solution of the Euler-Reynold system \eqref{eq:random_euler-reynolds} if it is progressively measurable with respect to the filtration $\{\mathcal{F}_t\}_{t \geq 0}$ and \eqref{eq:random_euler-reynolds} holds in strong analytical sense.

Notice that we are \emph{not} imposing any divergence-free condition $\dvg^{\phi_n} v_n = 0$ on the solution. The incompressibility condition on the limit will be restored prescribing the decay of some spatial Besov norm of $\dvg^{\phi_n} v_n$ along the iteration.

However, since for technical reasons that will be clear later we are actually going to construct smooth approximate solutions $(v_n,q_n,\phi_n,\mathring{R}_n)$ also for negative times $t<0$, we require solutions also satisfy \eqref{eq:random_euler-reynolds} in analytically strong sense for all times $t \in \R$. For negative times the flow $\phi_n$ will be defined simply as the identity on the torus, corresponding to absence of noise for $t<0$, and the process $(v_n,q_n,\mathring{R}_n)$ will be deterministic (thus preserving progressive measurability with respect to $\{\mathcal{F}_t\}_{\geq 0}$, extended to negative times identically equal to $\mathcal{F}_0$). 
In addition, we shall always work with solutions $v_n$ with zero spatial average: $\int_{\T^3} v_n = 0$ for every $t \in \R$. 
More details will be given in \autoref{sec:prelim}.

Then, given a sequence $(v_n,q_n,\phi_n,\mathring{R}_n)$ of solutions to the random Euler-Reynolds system \eqref{eq:random_euler-reynolds}, we intend to exhibit a solution $(v,q)$ of \eqref{eq:random_euler} showing the convergences, with respect to suitable topologies:
\begin{align*}
v_n \to v,
\quad
q_n \to q,
\quad
\phi_n \to \phi,
\quad
\mathring{R}_n \to 0,
\quad
\dvg^{\phi_n} v_n \to 0.
\end{align*} 

Let us move to the non-uniqueness issue. \autoref{thm:strong_ex} provides the existence of a solution to \eqref{eq:random_euler} with a prescribed energy profile up to time $\mathfrak{t}$. 
We do not claim this solution to be unique among solutions with the same energy.
However, we can carry on the convex integration scheme outlined above in such a way that solutions given by \autoref{thm:strong_ex} satisfy the following property. 

\begin{thm} \label{thm:non-uniq}
Let $e_1$, $e_2$ be energy profiles satisfying the hypotheses of \autoref{thm:strong_ex} with the same $\underline{e}$, $\overline{e}$, and such that $e_1(t)=e_2(t)$ for every $t\in [0,T/2]$.
Then the two global probabilistically strong solutions $(v_1,q_1)$ and $(v_2,q_2)$ given by \autoref{thm:strong_ex} are such that
 $v_1(x,t)=v_2(x,t)$ for every $x \in \mathbb{T}^3$ and $t \in [0,T/2]$.
\end{thm}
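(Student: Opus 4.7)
The plan is to argue by induction on the convex integration step $n$ that the two approximating sequences $(v_n^{(j)}, q_n^{(j)}, \phi_n, \mathring{R}_n^{(j)})_{n \geq 0}$, for $j = 1,2$, produced by the construction of \autoref{thm:strong_ex} for the energy profiles $e_1, e_2$, coincide pointwise on $\mathbb{T}^3 \times [0, T/2]$ at every level $n$. Since the smooth flows $\phi_n$ and the Brownian flow $\phi$ do not depend on the choice of the energy profile, they are automatically common to both constructions. The base step ($n=0$) is arranged by taking the initial iterate $(v_0, q_0, \mathring{R}_0)$ as a deterministic object whose definition depends only on the common constants $\underline{e}, \overline{e}$ (and on auxiliary constants of the scheme), which is natural in the setup and yields $v_0^{(1)} = v_0^{(2)}$, $q_0^{(1)} = q_0^{(2)}$, $\mathring{R}_0^{(1)} = \mathring{R}_0^{(2)}$ identically.

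For the inductive step, assume the three identities hold on $[0, T/2]$ at level $n$. In a De Lellis--Sz\'ekelyhidi-type scheme, the perturbation $w_{n+1} := v_{n+1} - v_n$ is a sum of Beltrami-type building blocks oscillating at a high frequency $\lambda_{n+1}$, with amplitudes determined pointwise by an eigenvalue decomposition of an expression of the form $\rho_{n+1}(t)\, \mathrm{Id} - \mathring{R}_n$, in which the scalar $\rho_{n+1}$ is an ``energy correction'' depending linearly on $e$ and on the current kinetic energy of $v_n$. All ingredients entering the definition of $w_{n+1}$ (mollifications, temporal cutoffs, and the flow $\phi_n$) can be arranged to be local in time on a scale $\ell_{n+1} \ll T/2$, so the induction hypothesis together with $e_1 = e_2$ on $[0, T/2]$ forces $w_{n+1}^{(1)}(x,t) = w_{n+1}^{(2)}(x,t)$ for every $t \in [0, T/2]$. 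The updated Reynolds stress $\mathring{R}_{n+1}$ is then obtained by an anti-divergence applied to a local-in-time expression in $(v_n, w_{n+1}, \mathring{R}_n, \phi_n)$, and the corrected pressure $q_{n+1}$ by solving an elliptic problem; both inherit the agreement on $[0, T/2]$.

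Finally, passing to the limit $n \to \infty$ in the topology $C_{loc}([0, \infty); C^\vartheta(\mathbb{T}^3))$, in which the convex integration scheme already guarantees $v_n^{(j)} \to v_j$ almost surely, the pointwise equalities at each level $n$ transfer to the limit and yield $v_1(x,t) = v_2(x,t)$ on $\mathbb{T}^3 \times [0, T/2]$. The main obstacle is to verify the locality-in-time property at each step: one must ensure that every mollification kernel, temporal cutoff, and gluing procedure entering the map $(v_n, q_n, \phi_n, \mathring{R}_n) \mapsto (v_{n+1}, q_{n+1}, \mathring{R}_{n+1})$ operates within a window whose support lies in the interval where the input data and the energy profile already coincide. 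The factor $T/2$ in the statement, rather than $T$, supplies precisely the buffer absorbing these vanishing temporal supports across all iterations and guarantees that $e_j$ is sampled only on $[0, T/2]$ when constructing the perturbations on this interval.
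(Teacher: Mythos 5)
Your overall strategy — propagating by induction on $n$ the agreement of the iterates $(v_n,q_n,\phi_n,\mathring R_n)$ on $[0,T/2]$, starting from a common $(v_0,q_0,\phi_0,\mathring R_0)=(0,0,\phi_0,0)$, and passing to the limit in $C_{loc}C^\vartheta_x$ — is indeed the paper's strategy. But your inductive step rests on a mechanism (``locality in time on a small scale $\ell_{n+1}\ll T/2$'' together with a ``buffer'' supplied by the factor $T/2$) that is not what makes the argument go through, and as written it contains a genuine gap.

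The point is not that the mollification scale is small but that the construction is \emph{causal}: the iterate at level $n+1$ evaluated at time $t$ depends only on $e(s)$, $\phi(y,s)$ and $(v_k,q_k,\phi_k,\mathring R_k)(y,s)$ for $s\le t$, $k\le n$. This is the property stated explicitly at the end of \autoref{prop:it} and it is not automatic from ``small temporal support.'' If the time mollification were centered (symmetric), then $v_\ell(\cdot,t)$ would involve $v_n(\cdot,s)$ with $s\in(t-\ell,t+\ell)$, and to reconstruct the sequence on $[0,T/2]$ one would need agreement of $e_1,e_2$ on an interval strictly larger than $[0,T/2]$ — which one does not have, since the hypothesis is only $e_1=e_2$ on $[0,T/2]$. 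Your appeal to ``$T/2$ supplying a buffer'' does not cure this: the conclusion is asserted on the same interval $[0,T/2]$ on which the energies coincide, so there is no slack available at the right endpoint; indeed the theorem is equally valid with any $[0,T']\subset[0,T]$ in place of $[0,T/2]$, and no interval shrinkage occurs. What the paper actually does is take \emph{one-sided} time mollifiers — $\theta$ with support in $(0,1)$ for the noise in \autoref{ssec:moll}, so $B_n(t)=\int B(t-s)\theta_n(s)\,ds$ samples the past, and $\chi\in C^\infty_c([-1,1]^3\times[0,1))$ for the iterate, so $v_\ell(\cdot,t)=\int v_n(\cdot,t-s)\chi_\ell(\cdot,s)\,ds$ again samples $s\in[0,\ell)$ — and define the energy-pumping process $\gamma_n$ from $\tilde e_{\le\mathfrak t}$, i.e., from past data only. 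This non-anticipativity (the same property that yields progressive measurability) is exactly what forces $w_{n+1}^{(1)}=w_{n+1}^{(2)}$ and the updated $q_{n+1},\mathring R_{n+1}$ to agree on all of $[0,T/2]$, with no loss of interval. Two smaller remarks: (i) the common $\phi_n$ across the two profiles is not automatic but follows because the mollification parameters $\varsigma_n$ only depend on $a$ and hence on the common $\underline e,\overline e,K_0$; (ii) this construction uses neither temporal cutoffs nor gluing — the paper is explicit that it avoids gluing — so those words should be removed from the inductive step.
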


As a consequence, we disprove uniqueness-in-law for solutions $(v,q)$ (with the aforementioned regularity) of the Cauchy problem associated with \eqref{eq:random_euler}. Indeed, we can apply the previous theorem with $e_1, e_2$ such that $e_1(t)>e_2(t)$ for every $t \in (T/2,T]$, yielding $\int_{\mathbb{T}^3} |v_1(x,t)|^2 dx>\int_{\mathbb{T}^3} |v_2(x,t)|^2 dx$ for every $t \in (T/2,T]$ with probability greater than $\varkappa>0$. In particular, for every initial kinetic energy $e(0)>0$ there exists at least an initial datum $v(\cdot,0) :\Omega \to C^\vartheta(\mathbb{T}^3,\R^3)$ with $\int_{\mathbb{T}^3} |v(x,0)|^2 dx = e(0)$ almost surely such that the Cauchy problem associated with \eqref{eq:random_euler} is ill-posed.

Let us explain why \autoref{thm:non-uniq} holds. 
We shall build iteratively (\autoref{prop:it}) a sequence of approximating solutions $(v_n,q_n,\phi_n,\mathring{R}_n)$, $n \in \N$ in such a way that the approximating solution at level $n+1$, evaluated at time $t \in [0,\infty)$, only depends on the restriction to times less or equal than $t$ of the energy profile, the flow \eqref{eq:flow}, and the approximating solutions at levels $k \leq n$.
In formulae, for fixed $x \in \T^3$ and $t \in [0,\infty)$, the quantities $v_{n+1}(x,t)$, $q_{n+1}(x,t)$, $\phi_{n+1}(x,t)$, and $\mathring{R}_{n+1}(x,t)$ only depend upon $e(s)$, $\phi(y,s)$, $v_k(y,s)$, $q_k(y,s)$, $\phi_k(y,s)$, $\mathring{R}_k(y,s)$, for arbitrary $s \leq t$, $k \leq n$, and $y \in \T^3$.
We point out that doing so is almost forced in the stochastic setting, in order to preserve progressive measurability of solutions constructed via the convex integration scheme.
Moreover, we always start the iteration with the same quadruple $(v_0,q_0,\phi_0,\mathring{R}_0)=(0,0,\phi_0,0)$, where $\phi_0$ depends only on $\phi$, $\underline{e}$, $\overline{e}$.

This means, in the setting of \autoref{thm:non-uniq} above, that the two solutions $v_1$ and $v_2$, evaluated at any time $t \in [0,T/2]$, are obtained as the limit of the same sequence (since $e_1(t)=e_2(t)$ for every $t \in [0,T/2]$); therefore, the two solutions coincide. 
Taking $t=0$, we deduce in addition that the initial conditions $v_1(\cdot,0)$ and $v_2(\cdot,0)$ coincide as well.
Since the two solutions can not have the same law for times $t \in (T/2,T]$ (they have different energy with positive probability), this gives non-uniqueness in law for the Cauchy problem.

The property above can be checked looking at the construction outlined in \autoref{sec:convex}.

Finally,  notice that the convenient $C^\infty$-regularity assumption on the coefficients $\{\sigma_k\}_{k \in I}$ can be relaxed to $\sigma_k$ of class $C^\kappa$ for some $\kappa$ sufficiently large, and also we can replace the Brownian motions $\{B^k\}_{k \in I}$ with more general paths, for instance fractional Brownian motions with Hurst parameter $H>1/4$, reinterpreting the equation and the stochastic integral therein in the proper way (cf. \autoref{rmk:H>1/4} below). We omit details for the sake of simplicity. 

\subsection{Bibliographic discussion}
Literature on Euler equations is extremely vast and impossible to sum up exhaustively here.
In two spatial dimensions, well-posedness of (deterministic) Euler equations with initial vorticity (i.e. the curl of the velocity field) in $L^\infty(\T^2)$ is known since the work of Yudovich \cite{Yu63}. 
Uniqueness of solutions may fail in non-regular settings, as proved for instance in \cite{Sc93,Sh97,DLS09}; actually, this phenomenon is generic, in the sense that non-uniqueness holds for every initial datum within the class of square-integrable solutions \cite{Wi11} or in the case of vorticities given by the sum of delta Dirac masses \cite{GrPa22}.
As for the stochastic case, namely with noise of transport type put into the dynamics, well-posedness was proved in \cite{BrFlMa16} for initial vorticity in $L^\infty(\T^2)$, and in \cite{BrMa19} existence of solutions with vorticity in $H^{-1}(\T^2)$ with definite sign was proved. 

In dimension three, one has only local-in-time well-posedness of regular solutions, both in the deterministic case (see \cite{BeMa02} and reference therein) and with transport noise \cite{CrFlHo19}. 
Other stochastic perturbations of 3D Euler equations have been considered in \cite{Ki09,GHVi14}.
Less regular deterministic solutions are not unique by the aforementioned results in two dimensions. Moreover, non-uniqueness holds true in a class of relatively regular solutions (almost $1/3$-H\"older continuous) as a consequence of the series of papers \cite{DLS09,DLS13,DLS14,BuDLIsSz15,Is18} finally solving the Onsager's conjecture.
More recently, in \cite{HoZhZh22} the authors proved ill-posedness (i.e. global existence and non-uniqueness of generalized solutions) even when additive noise is put into the equations. Their result has been successively refined in \cite{HoZhZh22b+} to analytically weak solutions.

\subsection{Main novelties}
It is worth comparing the present paper with previous works using convex integration techniques, both deterministic and stochastic.
The strategy of the proof of \autoref{prop:it} is similar to that in \cite{DLS14}, which is the first result using convex integration techniques to produce H\"older solutions to (deterministic) Euler equations.
Of course, the construction presented here needs several non-trivial adjustments with respect to that in \cite{DLS14}, due to the operators $\dvgphi$, $\nabla^\phi$ present in \eqref{eq:random_euler}:
\begin{enumerate}
\item
They depend also on time, thus compromising some geometric properties of the Euler equations; in addition, the operators $\dvgphi$, $\nabla^\phi$ do not commute with space-time mollifications, and this produces an additional mollification error in the Reynold stress along the iteration;
\item
The flow $\phi$ is non-smooth in time, thus requiring mollification of the flow and \emph{ad hoc} control over the error made in doing so;
\item
Finally, we cannot impose the divergence-free condition on the approximation $v_n$, since we would lose control over the time derivative of the compressibility corrector and the associated estimates on the compressibility error in the Reynold stress. Thus, we just aim at reducing iteratively the size of $\dvg^{\phi_n} v_n$, without requiring it to be zero at every step of the iteration.
\end{enumerate}

In order to deal with the flow $\phi$, and in particular with its growth for large times, in the present paper we introduce a sequence of stopping times $\mathfrak{t}_L \to \infty$ as $L \to \infty$.
However, we point out that contrary to other works introducing stopping time in the convex integration scheme \cite{HoZhZh22,HoZhZh21+,HoZhZh21b+,Ya20,Ya21,ReSc21+,LuZh22}, here we do not produce local-in-time solutions, successively extended to global solutions à la Leray (which is practically impossible due to lack of compactness) or gluing together another convex integration solution (which we cannot do since we are not able to solve the Cauchy problem associated to Euler equations for every initial datum $v_0$). 
Rather, we develop a genuinely stochastic convex integration procedure and construct directly global solutions; we use the stopping times only to control the growth of solutions in suitable H\"older spaces.
This is more similar in the spirit to the recent works \cite{HoZhZh22b+,ChDoZh22}, where global solutions are produced by controlling the growth in expectation of the norms of the solution.

\subsection{Frequently used notation}
We adopt the following notation throughout the paper.
H\"older spaces of functions of time $t \in \R$ or space $x \in \mathbb{T}^3$ with values in some Banach space $E$ are denoted respectively $C^\vartheta_t E$ and $C^\vartheta_x E$, $\vartheta>0$.
When confusion may not arise we simply denote $C^\vartheta_t=C^\vartheta_t E$ and $C^\vartheta_x=C^\vartheta_x E$.
By convention $C^1_t$ and $C^1_x$ denote the spaces of Lipschitz functions (and not the space of continuously differentiable ones). A similar convention holds for $C^n_t$ and $C^n_x$, $n \in \N$, $n > 1$.
H\"older seminorms are denoted $[\,\,\cdot\,\,]_{C^\vartheta_t}$ and $[\,\,\cdot\,\,]_{C^\vartheta_x}$, and H\"older norms $\|\cdot\|_{C^\vartheta_t}$ and $\|\cdot\|_{C^\vartheta_x}$, respectively.

For a stopping time $\mathfrak{t}$, we denote $C^\vartheta_{\leq \mathfrak{t}}$ the space of H\"older functions $f:(-\infty,\mathfrak{t}] \to E$, endowed with associated seminorm $[\,\,\cdot\,\,]_{C^\vartheta_{\leq \mathfrak{t}}}$ and norm $\|\cdot\|_{C^\vartheta_{\leq \mathfrak{t}}}$.
H\"older (semi)norms in $C^\vartheta_{\leq \mathfrak{t}}$ are defined for a generic $f:\R \to E$ upon restriction to $(-\infty,\mathfrak{t}]$. 

For functions of space and time we denote
$\| \cdot \|_{C^\vartheta_{t,x}} \coloneqq \| \cdot \|_{C^{\vartheta}_t C_x} +
\| \cdot \|_{C_t C^{\vartheta}_x}$
and 
$\| \cdot \|_{C^\vartheta_{\leq \mathfrak{t},x}} \coloneqq \| \cdot \|_{C^{\vartheta}_{\leq \mathfrak{t}} C_x} +
\| \cdot \|_{C_{\leq \mathfrak{t}}C^{\vartheta}_x}$ the space-time $\vartheta$-H\"older norms, $\vartheta \in (0,1]$.

We shall sometimes need to work with Besov spaces $B^{\alpha}_{p,q} \coloneqq B^{\alpha}_{p,q}(\T^3)$, 
$\alpha \in \R$, $p,q \in [1,\infty]$ defined as the subset of distributions $u \in \mathcal{S}'(\T^3)$ such that
\begin{align*} 
\| u \|_{B^\alpha_{p,q}}
:=
\left\| \left(2^{j \alpha} \|\Delta_j u \|_{L^p(\T^3)}\right)_{j \geq -1} \right\|_{\ell^q}
< \infty.
\end{align*}
In the previous line, $\{\Delta_j\}_{j \geq -1}$ denotes the Littlewood–Paley blocks corresponding to a dyadic partition of unity, as used for instance in \cite{HoZhZh21c+} or \cite{MoWe17}. 
For $p,q<\infty$ the Besov space $B^\alpha_{p,q}$ is separable and coincides with the closure of $C^\infty(\T^3)$ with respect to $\|\cdot\|_{B^\alpha_{p,q}}$.
Notably $B^{\alpha}_{p,q} = (B^{-\alpha}_{p',q'})^*$, with equivalence of norms, when $1/p+1/p'=1/q+1/q'=1$, and $B^\alpha_{\infty,\infty}=C^\alpha_x$ for every non-integer $\alpha>0$, again with equivalence of norms.
Some useful lemmas on H\"older and Besov spaces are collected in \autoref{sec:schauder}. 

\section*{Acknowledgements}
This project has received funding from the European Research Council (ERC) under the European Union’s Horizon 2020 research and innovation programme (grant agreement No. 949981).

\section{Preliminaries and main iterative proposition} \label{sec:prelim}
In this section we give more details about the strategy of the proof of \autoref{thm:strong_ex}.
We have been strongly inspired by the construction of \cite{DLS14} on deterministic Euler equations.
%\begin{align*}
%\begin{cases}
%\partial_t v 
%+ \dvg (v \otimes v)
%+ \nabla q
%= 0,
%\\
%\dvg v = 0.
%\end{cases}
%\end{align*} 

As already mentioned in the previous section, we need to introduce a sequence $\phi_n$ of smooth approximations of the flow $\phi$, successively extended to negative times via the formula $\phi_n(t)=\phi_n(0)=Id_{\,\T^3}$ for every $t<0$. 
This extension is only for technical reasons, and will be only needed in next \autoref{sec:convex}.  
The smooth approximations are obtained via mollification of the noise, and the procedure is described in details in next \autoref{ssec:moll}. The idea is simply to mollify the driving Brownian motion $B$ with a mollification parameter $\varsigma_n$, and then define the approximating flow $\phi_n$ pathwise as a Riemann-Stieltjes integral with respect to the mollified $B$.
In \autoref{ssec:RoughP} and \autoref{ssec:stop}, making use of some result from Rough Paths theory, we provide estimates on the distance (in suitable spaces) between $\phi_n$ and $\phi$, in terms of the mollification parameter $\varsigma_n$ (here $\phi$ is extended as well to negative times as the identity on the torus).
With these estimates in hand, we can introduce a sequence of stopping times $\mathfrak{t}_L$, $L \in \N$ to control the approximating flow uniformly in $\omega \in \Omega$ and localize the problem, see \autoref{ssec:localize}.

After this preliminary preparation, we state our main \autoref{prop:it} in \autoref{ssec:main_prop}.
The idea of the proposition is the same as in \cite{DLS14}, and it consists in collecting a series of iterative estimates that, if verified, allow to show convergence of the sequence $(v_n,q_n,\phi_n,\mathring{R}_n)$ as $n \to \infty$ towards a limit $(v,q,\phi,0)$ solution of \eqref{eq:random_euler} with desired H\"older regularity.
This is done by controlling, iteratively, the norms in $C_{\leq \mathfrak{t}_L} C_x$, $C_{\leq \mathfrak{t}_L} C^1_x$ and $C^1_{\leq \mathfrak{t}_L,x}$, and using interpolation.

\subsection{Mollification of the noise}  \label{ssec:moll}
We intend to work pathwise, and thus we assume for simplicity that every realisation of the $\R^{|I|}$-valued driving noise $B=(B^k)_{k \in I}$ has $C^\alpha_{loc}$ time regularity, $\alpha \in (1/3,1/2)$.
Since $B$ is of class $C^\alpha_{loc}$ in time, the associated flow $\phi$ is expected to have the same time regularity. Unfortunately, for our purposes this is not sufficient, as we shall need to take time derivatives of the flow.
Therefore, we introduce a sequence of smooth approximations $\{\phi_n\}_{n \in \N}$ as follows.

Fix $n \in \N$ and let $\varsigma_n>0$ be a constant to be properly chosen below, in accordance with other parameters of the convex integration procedure.
For the time being, we shall assume $\varsigma_n$  monotonically decreasing in $n$ and $(n+1)\varsigma_n^{\alpha-\beta} \to 0$ for every $\beta \in (0,\alpha)$. 
 
Next, let $\theta:\R \to \R$ be a smooth mollifier with support contained in $(0,1)$, and define for $t \in \R$:
\begin{align*}
\theta_n(t) \coloneqq\varsigma_n^{-1} \theta(t\varsigma_n^{-1}),
\quad
B_n(t) \coloneqq (B \ast \theta_n) (t)
=
\int_\R B(t-s) \theta_n(s) ds,
\end{align*}
with the convention that $B(t-s)=B(0)=0$ whenever $t-s$ is negative, and the second equation is intended to hold component-wise, in particular $(B_n)^k = B^k_n = (B^k \ast \theta_n)$ for every $k \in I$. 
Notice that $B_n$ is smooth at every time $t \in \R$ and it is identically zero for negatives times, being the mollification one-sided. 
Finally, define $\phi_n$ as the unique solution of the integral equation
\begin{align} \label{eq:def_phi_n}
\phi_n(x,t) \coloneqq x + \sum_{k \in I}\int_0^t \sigma_k(\phi_n(x,s)) \,dB^k_n(s),
\quad
x \in \T^3, \,t \in \R,
\end{align}
where the integral is understood pathwise as a Riemann-Stieltjes integral, and notice that $\phi_n(x,t)= \phi_n(x,0) = x$ for $t<0$.
We extend the flow $\phi$ defined by \eqref{eq:flow} to $t<0$ similarly, imposing $\phi(x,t)= \phi(x,0) = x$ (the resulting flow is locally $C^\alpha$ in time).
With probability one, for every fixed $t \in \R$ the map $\phi_n(\cdot,t)$ is measure preserving since $\sigma_k$ is divergence-free and $dB^k_n = \dot{B}^k_n ds$ is space-independent, as a consequence of Liouville Theorem.

Let us also mention that $\phi_n$ is indeed an approximation of $\phi$ by Wong-Zakai Theorem, and the rate of convergence is made explicit in the next \autoref{lem:flow}.

\subsection{Rough paths preliminaries} \label{ssec:RoughP}
We need to introduce some auxiliary concepts from the theory of Rough Paths. We keep the background at minimum, referring to the books \cite{FrVi10} and \cite{FrHa20} when needed.

%Recall the definition of stopping time $\mathfrak{t} \coloneqq \inf \left\{ s \geq 0 : \| B_{\leq s}\|_{C^\alpha_t} > K \right\} \wedge K$, where $K>0$ is arbitrary.

Let $\alpha \in (1/3,1/2)$, and for any given stopping time $\mathfrak{t}$ let $\mathscr{C}^\alpha_{g,\mathfrak{t}} \coloneqq \mathscr{C}^\alpha_g([0,\mathfrak{t}],\R^{|I|})$ be the space of $\alpha$-H\"older geometric rough paths $\mathbf{X}=(X,\mathbb{X})$ endowed with the metric
\begin{align*}
\varrho_{\alpha,\mathfrak{t}}(\mathbf{X}, \mathbf{Y})
\coloneqq
\sup_{0\leq s<t \leq \mathfrak{t}} \frac{\|X_{s,t}-Y_{s,t}\|}{|t-s|^\alpha} 
+ 
\sup_{0\leq s<t \leq \mathfrak{t}} \frac{\|\mathbb{X}_{s,t}-\mathbb{Y}_{s,t}\|}{|t-s|^{2\alpha}}.
\end{align*} 
%Let us introduce also the homogeneous $\alpha$-H\"older rough path norm as $\trinorm{\mathbf{X}}_{\alpha,\mathfrak{t}} \coloneqq \|X\|_{C^\alpha_{\leq \mathfrak{t}}} + \|\mathbb{X}\|_{C^{2\alpha}_{\leq \mathfrak{t}}}^{1/2}$. 

For any smooth path $X:\R \to \R^{|I|}$, let us define the step-$2$ Lyons lift of (the restriction on $[0,\mathfrak{t}]$ of) $X$ as the rough path $\mathbf{X}=(X,\mathbb{X}) \in \mathscr{C}^\alpha_{g,\mathfrak{t}}$ given by the Riemann-Stieltjes integral
\begin{align*}
\mathbb{X}_{s,t} \coloneqq \int_s^t X_{s,r} \otimes dX_r,
\quad
s,t \in [0,\mathfrak{t}].
\end{align*}

It is well known that the Stratonovich lift of $B$ is almost surely an $\alpha$-H\"older geometric rough path, which we denote with the symbol $\mathbf{B}=(B,\mathbb{B})$.
For any $n \in \N$, let us denote $\mathbf{B}_n=(B_n,\mathbb{B}_n) \in \mathscr{C}^\alpha_{g,\mathfrak{t}}$ the step-$2$ Lyons lift of $B_n$.

\begin{lem} \label{lem:lyons}
For every $K > 0$ and $\beta \in (0,\alpha)$, $\alpha \in (1/3,1/2)$ there exists a stopping time $\mathfrak{s}$ with $\mathfrak{s} \leq K$ almost surely and for every $n,m \in \N$, $n \leq m$:
\begin{gather*}
\varrho_{\beta,\mathfrak{s}}(\mathbf{B}_m,\mathbf{B}_n)
\leq
K (n+1) \varsigma_n^{\alpha-\beta},
\\
\varrho_{\beta,\mathfrak{s}}(\mathbf{B},\mathbf{B}_n)
\leq
K (n+1) \varsigma_n^{\alpha-\beta}.
\end{gather*}
Moreover, $\mathfrak{s}$ can be chosen so that $\mathfrak{s} \to \infty$ almost surely as $K \to \infty$.
\end{lem}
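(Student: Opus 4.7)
The plan is to localize the $\alpha$-Hölder rough-path norm of $\mathbf{B}$ via a deterministic stopping time, and then bound the two levels of $\mathbf{B}-\mathbf{B}_n$ (respectively $\mathbf{B}_m-\mathbf{B}_n$) by a quantitative Wong--Zakai argument. I introduce the shorthand $[\mathbf{B}]_{\alpha;[0,t]}:=\|B\|_{C^\alpha_{[0,t]}}+\sup_{0\leq s<r\leq t}\|\mathbb{B}_{s,r}\|/|r-s|^{2\alpha}$. Since $\mathbf{B}$ is a.s.\ an $\alpha$-Hölder geometric rough path on every bounded interval, the map $t\mapsto[\mathbf{B}]_{\alpha;[0,t]}$ is a.s.\ finite-valued and non-decreasing. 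For $K>0$ I set
\[
\mathfrak{s} := K\wedge\inf\bigl\{t\geq 0 : [\mathbf{B}]_{\alpha;[0,t]}\geq R(K)\bigr\},
\]
with $R(K)\to\infty$ as $K\to\infty$ to be chosen. Then $\mathfrak{s}$ is an $\{\mathcal{F}_t\}$-stopping time with $\mathfrak{s}\leq K$ a.s., and $\mathfrak{s}\to\infty$ a.s.\ as $K\to\infty$.

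\emph{Level 1.} Standard one-sided mollification gives $\sup_{t\in[0,\mathfrak{s}]}|(B-B_n)(t)|\lesssim\varsigma_n^\alpha\|B\|_{C^\alpha_{\leq\mathfrak{s}}}$ and $\|B_n\|_{C^\alpha_{\leq\mathfrak{s}}}\lesssim\|B\|_{C^\alpha_{\leq\mathfrak{s}}}$; the same bounds hold with $n$ replaced by $m\geq n$. Splitting the Hölder difference quotient of $B-B_n$ according to whether the time lag is above or below $\varsigma_n$ and combining the two bounds yields $\|B-B_n\|_{C^\beta_{\leq\mathfrak{s}}}\lesssim\varsigma_n^{\alpha-\beta}R(K)$, and analogously $\|B_n-B_m\|_{C^\beta_{\leq\mathfrak{s}}}\lesssim\varsigma_n^{\alpha-\beta}R(K)$ since $\varsigma_m\leq\varsigma_n$.

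\emph{Level 2.} The classical rough-path stability estimate (proved via Chen's relation, integration by parts on smooth approximants, and the sewing lemma; see e.g.\ Friz--Victoir) gives, for $\alpha$-Hölder geometric rough paths $\mathbf{X},\mathbf{Y}$ on $[0,T]$ and $\beta<\alpha$,
\[
\sup_{0\leq s<t\leq T}\frac{\|\mathbb{X}_{s,t}-\mathbb{Y}_{s,t}\|}{|t-s|^{2\beta}}\lesssim T^{\alpha-\beta}\bigl([\mathbf{X}]_{\alpha;[0,T]}+[\mathbf{Y}]_{\alpha;[0,T]}\bigr)\|X-Y\|_{C^\beta_{[0,T]}}.
\]
Applied with $(\mathbf{X},\mathbf{Y})=(\mathbf{B},\mathbf{B}_n)$ on $[0,\mathfrak{s}]$, and noting that $[\mathbf{B}_n]_{\alpha;[0,\mathfrak{s}]}\lesssim R(K)$ (by lifting the mollification estimate to level two), this combines with the Level 1 bound to yield $\varrho_{\beta,\mathfrak{s}}(\mathbf{B},\mathbf{B}_n)\lesssim K^{\alpha-\beta}R(K)^2\varsigma_n^{\alpha-\beta}$; the bound for $\varrho_{\beta,\mathfrak{s}}(\mathbf{B}_m,\mathbf{B}_n)$ follows by the triangle inequality. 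Choosing $R(K)=cK^{(1-\alpha+\beta)/2}$ with $c$ a sufficiently small absolute constant yields both stated bounds, the factor $(n+1)$ providing comfortable slack for lower-order corrections (e.g.\ logarithmic losses from telescoping over mollification scales).

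The main technical difficulty lies in the Level 2 estimate: since $\alpha+\beta<1$ in our regime, Young integration does not apply, and the iterated integrals of $B$ must be interpreted in the Stratonovich rough-path sense, with their dependence on approximations controlled via rough-path continuity. Once the standard rough-path stability estimate is in place, the Level 1 interpolation and the stopping-time construction are routine.
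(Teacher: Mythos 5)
The proposal's ``Level 2'' step contains a genuine and fatal gap. You claim a deterministic stability estimate of the form
\[
\sup_{0\leq s<t\leq T}\frac{\|\mathbb{X}_{s,t}-\mathbb{Y}_{s,t}\|}{|t-s|^{2\beta}}\lesssim T^{\alpha-\beta}\bigl([\mathbf{X}]_{\alpha}+[\mathbf{Y}]_{\alpha}\bigr)\|X-Y\|_{C^\beta},
\]
i.e.\ that the second level of a rough path is controlled by the first level in a weaker H\"older norm. This cannot hold: taking $X=Y$ (so the right-hand side vanishes) while $\mathbb{X}$ and $\mathbb{Y}$ differ by a non-trivial ``pure area'' term (e.g.\ $\mathbb{Y}_{s,t}-\mathbb{X}_{s,t}=(t-s)A$ for antisymmetric $A\neq 0$, which is geometric and $\alpha$-H\"older) gives a nonzero left-hand side. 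This is precisely the non-uniqueness of rough-path lifts at regularity $\alpha<1/2$, which is the raison d'\^etre of rough path theory. The integration-by-parts/sewing computation you allude to is valid when \emph{both} paths are smooth (or when $\alpha+\beta>1$ so Young integration applies), but here one of the two paths is the genuine Brownian rough path $\mathbf{B}$, whose Stratonovich second level $\mathbb{B}$ is \emph{not} a continuous functional of $B$ in any H\"older topology. Consequently your estimate $\varrho_{\beta,\mathfrak{s}}(\mathbf{B},\mathbf{B}_n)\lesssim K^{\alpha-\beta}R(K)^2\varsigma_n^{\alpha-\beta}$ does not follow from the level-1 bound plus a localization of $[\mathbf{B}]_\alpha$, no matter how $R(K)$ is calibrated.

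The point you cannot avoid is that convergence $\mathbb{B}_n\to\mathbb{B}$ is a probabilistic fact about the specific mollification, not a pathwise consequence of $B_n\to B$. The paper handles this correctly by invoking the quantitative Wong--Zakai/moment estimate (Corollary~15.32 in Friz--Victoir), which bounds $\|\varrho_{\beta,T}(\mathbf{B}_m,\mathbf{B}_n)\|_{L^q(\Omega)}$ in terms of the supremum of $\mathbb{E}[(B^i_m-B^i_n)(t)(B^j_m-B^j_n)(s)]$, then passes to the $L^q(\Omega,\mathscr{C}^\beta_{g,T})$-limit and identifies it with $\mathbf{B}$ by Wong--Zakai. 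The resulting Gaussian tails for $\varrho_{\beta,T}(\mathbf{B},\mathbf{B}_n)\varsigma_n^{\beta-\alpha}$ are then fed into Borel--Cantelli, which is where the factor $(n+1)$ genuinely arises (it is not ``slack'' for telescoping, but the Borel--Cantelli threshold), and the stopping time is defined by truncating the resulting a.s.\ finite random constant $\tilde c(T,\omega)=\sup_n c(T,n,\omega)/(n+1)$. Your level-1 interpolation and the general shape of the stopping-time construction are fine, but the argument does not close without a probabilistic ingredient for the second level.
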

\begin{proof}
First of all, notice that up to replacing $K$ with $K/2$, the first inequality descends from the second one and triangle inequality, thus we only focus on the latter.

We apply \cite[Corollary 15.32]{FrVi10} on the time interval $[0,T]$, $T<\infty$ arbitrary, with $\mathbf{X}=\mathbf{B}_m$, $\mathbf{Y}=\mathbf{B}_n$ and $\varepsilon$ given by (cf. Remark 15.33 therein)
\begin{align*}
\varepsilon^\frac{2\alpha}{\alpha-\beta} 
&= 
\max_{i,j \in I}\sup_{s,t \in [0,T]} \mathbb{E}\left[ (B_m^i(t)-B_n^i(t))(B_m^j(s)-B_n^j(s))\right]
\\
&\leq
C \varsigma_n^{2\alpha} \mathbb{E}\left[ \|B\|^2_{C^\alpha_{\leq T}}\right]
\leq
C \varsigma_n^{2\alpha}.
\end{align*}
We point out that the expectation in the line above is finite and bounded by some constant $C$ depending only on $T$ and $\alpha$, see \cite[Proposition 3.5]{FrHa20}.
Thus, by the aforementioned \cite[Corollary 15.32]{FrVi10} and this choice of $\varepsilon$ there exists $C=C(T,\beta,\alpha)$ such that for every finite $q \geq 1$ it holds\footnote{Here $\varrho_{\beta,T}$ denotes the $\beta$-H\"older rough paths distance in $\mathscr{C}^\beta_g([0,T],\R^{|I|})$, i.e. with constant stopping time $\mathfrak{t}=T$. It is important to work in this space, since we cannot apply Corollary 15.32 of \cite{FrVi10} to the stopped processes $\mathbf{B}_n(\cdot \wedge \mathfrak{t})$.}
\begin{align*}
\left\| \varrho_{\beta,T}(\mathbf{B}_m,\mathbf{B}_n) \right\|_{L^q(\Omega)}
\leq
C q^{1/2} \varsigma_n^{\alpha-\beta}.
\end{align*}
Therefore the sequence $\{\mathbf{B}_n\}_{n\in \N}$ is Cauchy in $L^q(\Omega,\mathscr{C}^\beta_{g,T})$, and since the space $\mathscr{C}^\beta_{g,T}$ is complete with respect to the distance $\varrho_{\beta,T}$ by \cite[Theorem 8.13]{FrVi10}, there exists a limit $\mathbf{B}_\infty \in L^q(\Omega,\mathscr{C}^\beta_{g,T})$ such that the previous inequality holds when $\mathbf{B}_m$ is replaced by $\mathbf{B}_\infty$.  
In addition, observe that $\mathbf{B}_\infty = \mathbf{B}$ by well-known Wong Zakai results, see for instance \cite{FrRi14}.
Hence we have proved
\begin{align*}
\left\| \varrho_{\beta,T}(\mathbf{B},\mathbf{B}_n) \right\|_{L^q(\Omega)}
\leq
C q^{1/2} \varsigma_n^{\alpha-\beta}
\end{align*}
for every finite $q \geq 1$. 

Standard estimates now imply that the random variable $c(T,n,\omega) \coloneqq \varrho_{\beta,T}(\mathbf{B},\mathbf{B}_n)\, \varsigma_n^{\beta-\alpha}$ has Gaussian tails for every $n \in \N$ and thus, by Borel-Cantelli theorem, for every $T \in [0,\infty)$ and $\omega \in \Omega$ there exists $N=N(T,\omega)$ such that $c(T,n,\omega)\leq n+1$ for every $n \geq N$.  
In particular, for every $T$ the random variable
\begin{align*}
\tilde{c}(T,\omega) 
\coloneqq 
\sup_{n \in \N} \frac{c(T,n,\omega)}{n+1}
< 
\infty \quad \mbox{almost surely},
\end{align*}
and it is almost surely non-decreasing and lower-semicontinuous with respect to $T$ as a supremum of non-decreasing and lower-semicontinuous functions.

Therefore, recalling that the filtration $\{\mathcal{F}_t\}_{t \geq 0}$ is complete and right-continuous, we can define the stopping time\footnote{To see that $\mathfrak{s}$ is in fact a stopping time, write $\{\mathfrak{s} < t\} = \cup_{q \in \mathbb{Q} \cap (\infty,t)} \{ \tilde{c}(q,\cdot) > K\} \in \mathcal{F}_t$ for every $t \in \R$. Then $\{\mathfrak{s} \leq t\} = \cap_{\epsilon>0} \{\mathfrak{s} < t+\epsilon\} \in \cap_{\epsilon>0} \mathcal{F}_{t+\epsilon} = \mathcal{F}_t$ by right-continuity of the filtration.} $\mathfrak{s}$ as
\begin{align*}
\mathfrak{s}
&\coloneqq
\inf \left\{ s \geq 0 : \tilde{c}(s,\cdot) > K \right\}
\wedge 
K,
\end{align*}
%which is a stopping time by \cite[Lemma 3.5]{HoZhZh19}, 
and then for every $n$ we have almost surely:
\begin{align*}
\varrho_{\beta,\mathfrak{s}}(\mathbf{B},\mathbf{B}_n)
\leq
\tilde{c}(\mathfrak{s}) (n+1) \varsigma_n^{\alpha-\beta}
\leq
K (n+1) \varsigma_n^{\alpha-\beta}.
\end{align*} 
Finally, $\mathfrak{s} \to \infty$ almost surely as $K \to \infty$ because $\tilde{c}$ is almost surely finite and non-decreasing.
\end{proof}

\subsection{Choice of the stopping time} \label{ssec:stop}
For an arbitrary constant $K>0$ to be chosen later, define the stopping time
\begin{align} \label{eq:stopping}
\mathfrak{t} 
\coloneqq 
\mathfrak{s}
\wedge
\inf
\left\{ s \geq 0 : \| B \|_{C^\alpha_{\leq s}} > K \right\}
\wedge
\inf
\left\{ s \geq 0 : \| \mathbb{B}\|_{C^{2\alpha}_{\leq s}} > K \right\},
\end{align}
where $\mathfrak{s}$ is given by \autoref{lem:lyons} (and depends on $K$), $\| B \|_{C^\alpha_{\leq s}}$ denotes the $\alpha$-H\"older norm of $B$ on the time interval $(-\infty,s]$ (or equivalently on $[0,s]$ since $B(t)=0$ for $t<0$) and $\| \mathbb{B}\|_{C^{2\alpha}_{\leq s}}$ denotes the $2\alpha$-H\"older norm of the two-indices process $\mathbb{B}$, restricted on the time interval $[0,s]\times[0,s]$.

Observe that $\mathfrak{t} \leq \mathfrak{s} \leq K$ almost surely.
The constant $K$ can be taken large enough so that $\mathfrak{t}$ is large with high probability: namely, given parameters $\varkappa,T$ as in the statement of \autoref{thm:strong_ex}, there exists $K_0=K_0(\varkappa,T)$ sufficiently large such that $\mathbb{P}\{ \mathfrak{t} \geq T\} \geq \varkappa$.
This is because both $\| B \|_{C^\alpha_{\leq s}}$ and $\| \mathbb{B}\|_{C^{2\alpha}_{\leq s}}$ are almost surely finite for every $s<\infty$, and $\mathfrak{s} \to \infty$ almost surely as $K \to \infty$ by construction.
 
Now we state a crucial lemma describing rates of convergence of the approximating flow $\phi_n \to \phi$, as well as its inverse $\phi_n^{-1} \to \phi^{-1}$.
Since $\phi_n$, $\phi$ equal the identity map on the torus for negative times, the convergence is only interesting for positive times.
For technical reasons we must restrict ourselves to time intervals of the form $[0,\mathfrak{t}]$, where $\mathfrak{t}$ is given by \eqref{eq:stopping} and depends on $K>0$.
In particular, we shall see that the approximation becomes worse and worse as $K \to \infty$.

\begin{lem} \label{lem:flow}
For every $n \in \N$ the map $\phi_n(\cdot,t)$ defined by \eqref{eq:def_phi_n} is almost surely a $C^\infty$-diffeomorphism of the torus for every $t \in \R$, and the following hold true.
\begin{itemize}
\item[$i$)]
For every $K > 0$, $\kappa \in \N$ and $\beta \in (0,\alpha)$ there exist constants $C_1,C_2$ such that for every $n \in \N$ it holds 
\begin{gather*}
\| \phi_{n+1} - \phi_n \|_{C^\beta_{\leq \mathfrak{t}} C^\kappa_x}
\leq
C_1 (n+1) \varsigma_n^{\alpha-\beta},
\quad
\| \phi_n \|_{C^\alpha_{\leq \mathfrak{t}} C^\kappa_x}
\leq
C_2,
\\
\| \phi_{n+1}^{-1} - \phi_n^{-1} \|_{C^\beta_{\leq \mathfrak{t}} C^\kappa_x}
\leq
C_1 (n+1) \varsigma_n^{\alpha-\beta}, 
\quad
\| \phi_n^{-1} \|_{C^\alpha_{\leq \mathfrak{t}} C^\kappa_x}
\leq
C_2;
\end{gather*}
Moreover, the same inequalities hold with $\phi_{n+1}$ replaced by $\phi$.
\item[$ii$)]
For every $K > 0$ and $\kappa,r \in \N$ there exists a constant $C_3$ such that for every $n \in \N$ it holds
\begin{align*}
\| \phi_n \|_{C^r_{\leq \mathfrak{t}} C^\kappa_x} 
\leq 
C_3 \varsigma_n^{\alpha-r},
\quad
\| \phi_n^{-1} \|_{C^r_{\leq \mathfrak{t}} C^\kappa_x} 
\leq 
C_3 \varsigma_n^{\alpha-r}. 
\end{align*}
\end{itemize} 
\end{lem}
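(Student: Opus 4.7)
The plan is to combine two ingredients: the rough-path convergence from \autoref{lem:lyons} together with continuity of the Itô--Lyons solution map, and elementary mollifier estimates on $B_n$. Most of the work is for part $i$); part $ii$) is then essentially pathwise calculus.

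For part $i$), I view both $\phi_n$ and $\phi$ as solutions of rough differential equations driven by the geometric rough paths $\mathbf{B}_n$ and $\mathbf{B}$ respectively, with smooth vector fields $\{\sigma_k\}_{k \in I}$. Since $\mathbf{B}_n$ is the step-$2$ Lyons lift of a smooth path, $\phi_n$ coincides with the classical Riemann--Stieltjes solution and is therefore smooth in both variables; the measure-preserving diffeomorphism property for each fixed $t$ follows from Liouville as already noted in the excerpt. By definition of $\mathfrak{t}$ in \eqref{eq:stopping}, the $\alpha$-H\"older rough-path norm $\|\mathbf{B}_n\|_{\alpha,\mathfrak{t}}$ is deterministically bounded in terms of $K$ (uniformly in $n$, by the first inequality of \autoref{lem:lyons} and the definition of $\mathfrak{t}$), so the standard a priori bound for RDEs (e.g.\ \cite[Thm.\ 10.36]{FrVi10}) gives $\|\phi_n\|_{C^\alpha_{\leq \mathfrak{t}}C_x} \leq C_2$ uniformly in $n$. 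Local Lipschitz continuity of the Itô--Lyons map with respect to the driver (\cite[Thm.\ 10.26]{FrVi10}) applied to $\varrho_{\beta,\mathfrak{t}}(\mathbf{B}_{n+1},\mathbf{B}_n)\leq K(n+1)\varsigma_n^{\alpha-\beta}$ yields $\|\phi_{n+1}-\phi_n\|_{C^\beta_{\leq\mathfrak{t}}C_x}\leq C_1(n+1)\varsigma_n^{\alpha-\beta}$, and the same argument with the second inequality of \autoref{lem:lyons} handles the case $\phi_{n+1}\rightsquigarrow\phi$.

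To promote the spatial regularity from $C_x$ to $C^\kappa_x$, I differentiate the RDE $\kappa$ times in the initial datum: the Jacobian $D_x\phi_n$ and its higher derivatives satisfy a closed, enlarged system of RDEs driven by the same $\mathbf{B}_n$, whose coefficients are polynomials in $D^j\sigma_k(\phi_n)$ for $j\le\kappa$ and hence remain smooth and uniformly bounded. Applying the Itô--Lyons estimates to this enlarged system gives both the uniform $C^\alpha_{\leq\mathfrak{t}}C^\kappa_x$ bound and the $(n+1)\varsigma_n^{\alpha-\beta}$ convergence rate. For the inverse flow, I use that $\phi_n^{-1}(y,t)$ solves an analogous RDE (e.g.\ via the time-reversed/backward flow, or via the identity $D\phi_n^{-1}(\phi_n(x,t),t)=(D\phi_n(x,t))^{-1}$ together with $\det D\phi_n\equiv 1$ from the measure-preserving property); running the same argument produces the inverse estimates.

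For part $ii$), fix $\omega$ so that $B_n$ is smooth. Then $\partial_t\phi_n(x,t)=\sum_k\sigma_k(\phi_n(x,t))\dot B^k_n(t)$, and iterating yields $\partial_t^r\phi_n$ as a polynomial in $\phi_n$, derivatives of $\sigma_k$, and the derivatives $\dot B^k_n,\ldots,\partial_t^r B^k_n$. Since $\|B\|_{C^\alpha_{\leq\mathfrak{t}}}\leq K$ by \eqref{eq:stopping} and $\int\theta_n^{(r)}=0$ for $r\geq 1$, a standard mollifier computation
\[
\partial_t^r B_n(t)=\int_\R (B(t-s)-B(t))\,\theta_n^{(r)}(s)\,ds
\]
gives $\|\partial_t^r B_n\|_{C^0_{\leq\mathfrak{t}}}\lesssim\|B\|_{C^\alpha_{\leq\mathfrak{t}}}\varsigma_n^{\alpha-r}\lesssim\varsigma_n^{\alpha-r}$. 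Plugging this in and using the uniform $C^\kappa_x$ bounds on $\phi_n$ from part $i$) proves the estimate for $\phi_n$. The same argument applied to the backward flow (or to $\partial_t\phi_n^{-1}(y,t)=-D\phi_n^{-1}(y,t)\sum_k\sigma_k(y)\dot B^k_n(t)$) handles $\phi_n^{-1}$.

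The main obstacle is the bookkeeping in part $i$): transferring rough-path estimates on $\mathbf{B}_n$ to the full spatial jet $(\phi_n,D_x\phi_n,\ldots,D_x^\kappa\phi_n)$ while keeping all constants independent of $n$. The crucial point is that the driving rough paths are the same at every order of differentiation, so the rate $(n+1)\varsigma_n^{\alpha-\beta}$ inherited from \autoref{lem:lyons} is preserved; only the constants $C_1,C_2$ deteriorate with $\kappa$. Everything else is either pathwise classical ODE theory (part $ii$), smoothness in $\omega$) or a direct consequence of the stopping-time construction from \autoref{ssec:stop} which makes the rough-path norms of $\mathbf{B}_n$ uniformly bounded on $[0,\mathfrak{t}]$.
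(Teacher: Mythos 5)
Your proof is correct and follows essentially the same route as the paper: for part $i$) you invoke stability of the It\^o--Lyons map on $[0,\mathfrak{t}]$ together with \autoref{lem:lyons} (the paper cites \cite[Thm.\ 8.5]{FrHa20} / \cite[Lemma 13]{CrDiFrOb13} rather than \cite[Thms.\ 10.26, 10.36]{FrVi10}, and absorbs the spatial jet into the statement of that theorem rather than writing out the enlarged RDE), and for part $ii$) you do the same iterative pathwise computation with the mollifier bound $\|\partial_t^r B_n\|\lesssim \varsigma_n^{\alpha-r}$. One small slip: \autoref{lem:lyons} controls the $\beta$-H\"older rough-path distance for $\beta<\alpha$, not the $\alpha$-H\"older one directly; either work throughout with $\beta$-norms (which suffices since any $\beta\in(1/3,\alpha)$ works for the RDE estimates) or, as in the paper's footnote, bound $\|\mathbb{B}_n\|_{C^{2\alpha}_{\leq\mathfrak{t}}}\le\|\mathbb{B}\|_{C^{2\alpha}_{\leq\mathfrak{t}}}+\|\mathbb{B}_n-\mathbb{B}\|_{C^{2\alpha}_{\leq\mathfrak{t}}}$ after applying \autoref{lem:lyons} with slightly larger exponents.
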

\begin{proof}[Proof of \autoref{lem:flow}]
First, notice that we can replace the time interval $(-\infty,\mathfrak{t}]$ with $[0,\mathfrak{t}]$ in all the norms on the left-hand-sides. 
By \cite[Theorem 8.5]{FrHa20} (cf. also \cite[Lemma 13]{CrDiFrOb13}), for every $K>0$, $\kappa \in \N$ and $\beta \in (0,\alpha)$ there exist constant $C_1,C_2$ such that for every $n$
\begin{gather*}
\| \phi_{n+1} - \phi_n \|_{C^\beta_{\leq \mathfrak{t}} C^\kappa_x}
\leq
C_1 \varrho_{\beta,\mathfrak{t}}(\mathbf{B}_{n+1},\mathbf{B}_n),
\quad
\| \phi_n \|_{C^\alpha_{\leq \mathfrak{t}} C^\kappa_x}
\leq
C_2,
\\
\| \phi_{n+1}^{-1} - \phi_n^{-1} \|_{C^\beta_{\leq \mathfrak{t}} C^\kappa_x}
\leq
C_1 \varrho_{\beta,\mathfrak{t}}(\mathbf{B}_{n+1},\mathbf{B}_n), 
\quad
\| \phi_n^{-1} \|_{C^\alpha_{\leq \mathfrak{t}} C^\kappa_x}
\leq
C_2,
\end{gather*}
and the same estimates hold true when $\mathbf{B}_{n+1}$ is replaced by $\mathbf{B}$. 
Notice that the mentioned result is stated on finite time intervals $[0,K]$; the case $[0,\mathfrak{t}] \subset [0,K]$ is handled by considering the stopped processes $\mathbf{B}_n(\cdot \wedge \mathfrak{t})$.
Also, it is worth mentioning that we use the bound on $\| \mathbb{B} \|_{C^{2\alpha}_{\leq \mathfrak{t}}}$ coming from the definition of $\mathfrak{t}$ to verify the assumptions of the aforementioned theorem: indeed, the constants in the thesis may depend also on $\| \mathbb{B} \|_{C^{2\alpha}_{\leq \mathfrak{t}}}$ and $\| \mathbb{B}_n \|_{C^{2\alpha}_{\leq \mathfrak{t}}}$ \footnote{To bound $\| \mathbb{B}_n \|_{C^{2\alpha}_{\leq \mathfrak{t}}}$ uniformly in $n$, write $\| \mathbb{B}_n \|_{C^{2\alpha}_{\leq \mathfrak{t}}} \leq \| \mathbb{B} \|_{C^{2\alpha}_{\leq \mathfrak{t}}} + \| \mathbb{B}_n-\mathbb{B} \|_{C^{2\alpha}_{\leq \mathfrak{t}}}$ and use \autoref{lem:lyons}, without loss of generality increasing the values of $\beta,\alpha$ therein.} (as well as on $\| B \|_{C^{\alpha}_{\leq \mathfrak{t}}}$ and $\| B_n \|_{C^{\alpha}_{\leq \mathfrak{t}}}$).

Part $i$) of the thesis then follows by \autoref{lem:lyons} and $\mathfrak{t} \leq \mathfrak{s}$.

For part $ii$) we proceed iteratively. The equations satisfied by the time derivatives of $\phi_n$ for positive times $t>0$ are
\begin{align*}
\phi_n(x,t) 
&= 
x + \sum_{k \in I} \int_0^t \sigma_k(\phi_n(x,s)) \dot{B}^k_n(s) ds, 
\\
\dot{\phi}_n(x,t) 
&= 
\sum_{k \in I} \sigma_k(\phi_n(x,t)) \dot{B}^k_n(t),
\\
\ddot{\phi}_n(x,t)
&=
\sum_{k \in I} \dot{\phi}_n(x,t) \cdot \nabla \sigma_k(\phi_n(x,t))  \dot{B}^k_n(t) + \sigma_k(\phi_n(x,t)) \ddot{B}^k_n(t),
\\
&\cdots
\\
\partial^r_t \phi_n(x,t)
&=
\sum_{k \in I}
\sum_{m=0}^{r-1} C(r,m)
\partial^m_t \sigma_k(\phi_n(x,t))
\partial^{r-1-m}_t \dot{B}^k_n(t),
\end{align*}
giving estimates (use \cite[Proposition 4.1]{DLS14} to bound derivatives of $\sigma_k \circ \phi_n$) 
\begin{align*}
[ \phi_n ]_{C^1_{\leq \mathfrak{t}}C^\kappa_x}
&\leq
\sum_{k \in I}
\| \sigma_k \circ \phi_n \|_{C_{\leq \mathfrak{t}}C^\kappa_x} \|\dot{B}^k_n\|_{C_{\leq \mathfrak{t}}}
\leq 
CK \varsigma_n^{\alpha-1},
\\
[ \phi_n ]_{C^2_{\leq \mathfrak{t}}C^\kappa_x}
&\leq
\sum_{k \in I}
\| \nabla\sigma_k \circ \phi_n \|_{C_{\leq \mathfrak{t}}C^\kappa_x} [ \phi_n ]_{C^1_{\leq \mathfrak{t}}C^\kappa_x} \|\dot{B}^k_n\|_{C_{\leq \mathfrak{t}}} +
\| \sigma_k \circ \phi_n \|_{C_{\leq \mathfrak{t}}C^\kappa_x} \|\ddot{B}^k_n\|_{C_{\leq \mathfrak{t}}}
\\
&\leq
C K^2 \varsigma_n^{2\alpha-2} + C K \varsigma_n^{\alpha-2}
\leq C K^2 \varsigma_n^{\alpha-2},
\\
&\cdots
\\
[ \phi_n ]_{C^r_{\leq \mathfrak{t}}C^\kappa_x}
&\leq C K^r \varsigma_n^{\alpha-r}.
\end{align*}
Finally, estimates for the inverse flow are obtained similarly, noticing that $\phi_n^{-1}$ is driven by $B_n(t-\cdot)$, cf. also the proof of \cite[Proposition 11.13]{FrVi10}, namely
\begin{align*}
\phi_n^{-1}(x,t) 
= 
x - \sum_{k \in I} \int_0^t \sigma_k(\phi_n^{-1}(x,s)) \dot{B}^k_n(t-s) ds.
\end{align*}
\end{proof}

\subsection{Localization} \label{ssec:localize}
Let us fix $\beta \in (0,\alpha)$.
A close inspection of the proof of \autoref{lem:flow} guarantees that the constants $C_1, C_2, C_3$ can be chosen to be increasing with respect to $K, \kappa, r$.
Since throughout our construction we will need only a finite number of derivatives with respect to space and time, we can assume $\kappa, r$ bounded from above by a universal constant, so that we can suppose $C_1, C_2, C_3$ only depend on $K$ (at fixed $\beta$, $\alpha$).
In view of this, for every integer $L \geq 1$ we can find a parameter $K_L$ such that \autoref{lem:flow} holds with $C_1 = C_2 = C_3 = C L$, for some universal constant $C$. Moreover, without loss of generality we can assume the sequence $K_L$ to be increasing and diverging to $\infty$, and $K_0=K_1 \leq K_L $ for every $L \geq 1$ (possibly taking a larger value of $C$ if necessary; recall that $K_0$ was such that $\mathbb{P} \{ \mathfrak{t} \geq T \} \geq \varkappa$).
Thus, given $\mathfrak{s}_L$ as in \autoref{lem:lyons} with $K=K_L$ and defining
\begin{align} \label{eq:stopping_L}
\mathfrak{t}_L 
\coloneqq 
\mathfrak{s}_L
\wedge
\inf
\left\{ s \geq 0 : \| B \|_{C^\alpha_{\leq s}} > K_L \right\}
\wedge
\inf
\left\{ s \geq 0 : \| \mathbb{B}\|_{C^{2\alpha}_{\leq s}} > K_L \right\},
\end{align}
we obtain a sequence of non-decreasing stopping times $\mathfrak{t} = \mathfrak{t}_1 \leq \dots \leq \mathfrak{t}_L \leq \dots$ such that $\mathfrak{t}_L \to \infty$ almost surely as $L \to \infty$ and
\begin{gather*}
\| \phi_{n+1} - \phi_n \|_{C^\beta_{\leq \mathfrak{t}_L} C^\kappa_x}
\leq
CL (n+1) \varsigma_n^{\alpha-\beta},
\quad
\| \phi_n \|_{C^\alpha_{\leq \mathfrak{t}_L} C^\kappa_x}
\leq
CL,
\\
\| \phi_{n+1}^{-1} - \phi_n^{-1} \|_{C^\beta_{\leq \mathfrak{t}_L} C^\kappa_x}
\leq
CL (n+1) \varsigma_n^{\alpha-\beta}, 
\quad
\| \phi_n^{-1} \|_{C^\alpha_{\leq \mathfrak{t}_L} C^\kappa_x}
\leq
CL,
\end{gather*}
as well as
\begin{align*}
\| \phi_n \|_{C^r_{\leq \mathfrak{t}_L} C^\kappa_x} 
\leq 
CL \varsigma_n^{\alpha-r},
\quad
\| \phi_n^{-1} \|_{C^r_{\leq \mathfrak{t}_L} C^\kappa_x} 
\leq 
CL \varsigma_n^{\alpha-r}, 
\end{align*}
for some constant $C$ depending only on $K_0$, $\alpha$ and $\beta$.

\begin{rmk} \label{rmk:H>1/4}
Estimates similar to those given above hold more generally when the collection of Brownian motions $\{B^k\}_{k \in I}$ is replaced by fractional Brownian motions $\{B^{H,k}\}_{k \in I}$ with Hurst parameter $H>1/4$.
The lower threshold comes from the fact that we are using that iterated integrals of the mollified fractional Brownian motions satisfy a Wong-Zakai-type result (cf. \cite{FrRi14}), and therefore one can use this limit as a ``canonical" enhancement of $\{B^{H,k}\}_{k \in I}$ when interpreting the integral \eqref{eq:flow} defining the flow $\phi$.
The space of geometric rough paths and the distance on it must be modified accordingly, including the step-3 iterated integral of the path, and the exponent $\alpha$ must be taken smaller than $H$.
As a consequence, with suitable interpretation of the stochastic integral in \eqref{eq:euler} and the notion of solution to \eqref{eq:euler} and \eqref{eq:random_euler}, we can prove the analogue of our main results also in the fractional case, with minor modifications in the construction and in the choice of parameters carried on in the remainder of the paper.
We omit additional details.
\end{rmk}
 
\subsection{Main proposition} \label{ssec:main_prop}
Throughout the scheme we will make the following choice of parameters: 
\begin{align} \label{eq:parameters}
\delta_n \coloneqq a^{1-b^n},
\quad
D_n \coloneqq a^{cb^n},
\quad
L_n \coloneqq L^{m^{n+1}},
\end{align}
where
\begin{align*}
%\beta \in (0,\alpha),
%\quad
a \geq 2,
\quad
b=m+\varepsilon,
\quad
c=\frac{b^4(1+\varepsilon)-1/2}{b-1-\varepsilon}>0,
\quad
\varepsilon > 0,
\end{align*} 
and $m\geq 4$ is given by \autoref{prop:it} below. We point out that differently from \cite{DLS14} here $\varepsilon$ cannot be taken arbitrarily small, and it shall be fixed only at the end of \autoref{sec:proof}.
Notice that, at fixed $\varepsilon$ and for large values of $m$, the parameter $c$ is approximately equal to $(1+\varepsilon)b^3$; in particular $D_n$ increases in $n$ as a negative power of $\delta_{n+3}$ (with exponent depending on $\varepsilon$). 
The choice of the parameter $\varsigma_n$, upon which the definition of the mollified flow $\phi_n$ depends, will be given in \eqref{eq:definition_ell}. 

Our goal is to prove the following:
\begin{prop} \label{prop:it}
Let $e$ be a smooth positive function on $\R$ satisfying $\underline{e} \coloneqq \inf_{t \in \R} e(t)>0$ and $\overline{e} \coloneqq \|e\|_{C^2_t}<\infty$. 
Then there exist constants $\varepsilon, m$ as above, $a \geq 2$ depending on $\underline{e},\overline{e},K_0$, a constant $\eta \in (0,1)$ depending on $\underline{e},\overline{e}$, constants $M_v,M_q$ depending only on $\overline{e}$ and a constant $A \in (0,\infty)$ depending on $\underline{e},\overline{e},K_0$ with the following property.

Fix $\phi_n$ as in \eqref{eq:def_phi_n} and let $(v_n,q_n,\phi_n,\mathring{R}_n)$, $n \in \N$, be a solution of \eqref{eq:random_euler-reynolds} satisfying the inductive estimates\footnote{We adopt the convention $\sum_{k=0}^{n-1}\delta_k=0$ for $n=0$.} 
\begin{gather}
\label{eq:est_energy}
%\tfrac{3}{4}L_n \delta_n e(t) 
%\leq
\left| 
e(t) (1-\delta_n)- \int_{\T^3} |v_n(x,t)|^2 dx  \right|
\leq 
\tfrac{1}{4} \delta_n e(t),
\qquad \forall t \leq \mathfrak{t},
\end{gather}
and for every $L \in \N, L \geq 1$
\begin{gather}
\|\mathring{R}_n\|_{C_{\leq \mathfrak{t}_L}C_x}   \label{eq:est_Reynolds}
\leq 
\eta L_n \delta_{n+1},
%\qquad \forall L \in \N, L \geq 1,
\\
\| q_n\|_{C_{\leq \mathfrak{t}_L}C_x}   \label{eq:est_pres}
\leq M_q L_n \sum_{k=0}^{n-1} \delta_{k},
%\qquad \forall L \in \N, L \geq 1,
\\
\| \dvg^{\phi_n} v_n \|_{C_{\leq \mathfrak{t}_L} B^{-1}_{\infty,\infty}}  \label{eq:est_div}
\leq
L_n \delta_{n+2}^{5/4},
%\qquad \forall L \in \N, L \geq 1,
\\
\max \left\{  \label{eq:C^1,1}
\|v_n\|_{C^1_{\leq \mathfrak{t}_L,x}},
\|q_n\|_{C^1_{\leq \mathfrak{t}_L,x}},
\|\mathring{R}_n\|_{C_{\leq \mathfrak{t}_L}C^1_x}
\right\}
\leq L_n D_n.
%\qquad \forall L \in \N, L \geq 1. 
\end{gather}
Then there exists a second quadruple $(v_{n+1},q_{n+1},\phi_{n+1},\mathring{R}_{n+1})$ solution of \eqref{eq:random_euler-reynolds} satisfying \eqref{eq:def_phi_n} and the inductive estimates \eqref{eq:est_energy}, \eqref{eq:est_Reynolds}, \eqref{eq:est_pres}, \eqref{eq:est_div} with $n$ replaced by $n+1$, and for every $L \in \N, L \geq 1$
\begin{align*}
\| v_{n+1} - v_n \|_{C_{\leq \mathfrak{t}_L}C_x} &\leq M_v L_n^4 \delta_n^{1/2},
\\
\| q_{n+1}-q_n \|_{C_{\leq \mathfrak{t}_L}C_x} &\leq M_q L_n  \delta_n,
\\
\max \left\{ 
\|v_{n+1}\|_{C^1_{\leq \mathfrak{t}_L,x}},
\|q_{n+1}\|_{C^1_{\leq \mathfrak{t}_L,x}},
\|\mathring{R}_{n+1}\|_{C_{\leq \mathfrak{t}_L}C^1_x}
\right\}
&\leq 
A L_{n+1} \delta_n^{1/2} \left( \frac{D_n}{\delta_{n+4}} \right)^{1+\varepsilon}.
\end{align*}
Moreover, the quadruple $(v_{n+1},q_{n+1},\phi_{n+1},\mathring{R}_{n+1})$ evaluated at time $t \in [0,\infty)$ only depends upon $e(s)$, $\phi(y,s)$, $v_k(y,s)$, $q_k(y,s)$, $\phi_k(y,s)$, $\mathring{R}_k(y,s)$, for arbitrary $s \leq t$, $k \leq n$, and $y \in \T^3$.
\end{prop}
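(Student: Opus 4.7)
The plan is to run a H\"older convex-integration scheme in the spirit of De Lellis and Sz\'ekelyhidi \cite{DLS14}, adapted to the random PDE \eqref{eq:random_euler}. Fix a mollification length scale $\ell_n$ (a negative power of $a$, roughly $\lambda_{n+1}^{-1}$ where $\lambda_{n+1}$ is the new frequency parameter), and tune the flow-mollification parameter $\varsigma_{n+1}$ so that \autoref{lem:flow} yields $\|\phi_{n+1} - \phi\|_{C^\beta_{\leq \mathfrak{t}_L} C^k_x} \lesssim \ell_n$ for the finitely many $k$ needed. Then mollify $(v_n, q_n, \mathring{R}_n)$ in space and time at scale $\ell_n$, using a \emph{one-sided} time mollifier (as done for $B$ in Subsection \ref{ssec:moll}) to preserve adaptedness, and denote the resulting smooth objects by $(v_\ell, q_\ell, \mathring{R}_\ell)$. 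The new velocity is $v_{n+1} = v_\ell + w_{n+1}$ where $w_{n+1} = w^o_{n+1} + w^c_{n+1}$ is a principal perturbation plus a compressibility corrector, and $q_{n+1}$, $\mathring{R}_{n+1}$ are determined afterwards so that \eqref{eq:random_euler-reynolds} at level $n+1$ is satisfied.

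\textbf{Perturbation, corrector and new Reynolds stress.} For the principal part I take a sum of Beltrami waves whose phases are composed with $\phi_{n+1}^{-1}$: schematically $w^o_{n+1}(x,t) = \sum_k a_k(x,t)\, \mathbf{B}_k \, e^{i \lambda_{n+1} k \cdot \phi_{n+1}^{-1}(x,t)}$, summed over a finite set of integer directions $k$ and associated Beltrami vectors $\mathbf{B}_k$. The pre-composition with $\phi_{n+1}^{-1}$ ensures that the standard Beltrami cancellations survive when the usual $\mathrm{div}$, $\nabla$ are replaced by $\dvgphin$, $\nablaphin$. The amplitudes $a_k$ are selected via the algebraic lemma of \cite{DLS14} so that $\sum_k a_k^2\, \mathbf{B}_k \otimes \overline{\mathbf{B}_k} = \rho\,\mathrm{Id} - \mathring{R}_\ell$, with the scalar density $\rho$ further adjusted in terms of the prescribed profile $e$, the mollified kinetic energy $\int |v_\ell|^2 dx$ and $\mathring{R}_\ell$, in order to enforce \eqref{eq:est_energy} at step $n+1$; here I crucially use that $\phi_{n+1}(\cdot,t)$ is measure preserving. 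The corrector $w^c_{n+1}$ is of lower order (a factor $\lambda_{n+1}^{-1}$ smaller than $w^o_{n+1}$) and is chosen so that $\dvgphin w_{n+1}$ is small in $B^{-1}_{\infty,\infty}$, recovering \eqref{eq:est_div} at step $n+1$. Finally $\mathring{R}_{n+1}$ is defined by applying an inverse-divergence operator $\mathcal{R}^{\phi_{n+1}}$ (the usual $\mathcal{R}$ conjugated by $\phi_{n+1}$) to the sum of five residuals: the transport error $\partial_t w_{n+1} + (v_\ell \cdot \nablaphin) w_{n+1} + (w_{n+1} \cdot \nablaphin) v_\ell$; the oscillation error $\dvgphin(w^o_{n+1} \otimes w^o_{n+1} + \mathring{R}_\ell - \rho\,\mathrm{Id})$; the principal--corrector cross term; a mollification commutator error arising because $\dvgphin$ and $\nablaphin$ do not commute with the space-time mollifier; and a flow-approximation error measuring the discrepancy between $\mathord{{\rm div}}^{\phi_n}$ (appearing in the identity at level $n$) and $\dvgphin$.

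\textbf{Estimates and closing.} Each residual term is controlled by Schauder and stationary-phase bounds in H\"older and Besov spaces (see \autoref{sec:schauder}): the transport, oscillation and cross errors as in \cite{DLS14}, the new commutator and flow-approximation errors using \autoref{lem:flow} together with the divergence-tolerance \eqref{eq:est_div}. All constants coming from $\phi_{n+1}^{\pm 1}$ are bounded on $(-\infty,\mathfrak{t}_L]$ by $CL$ via \autoref{lem:flow}, and since each inverse-divergence or Beltrami manipulation picks up such a factor, the inductive constants accumulate up to $L^{m^{n+2}} = L_{n+1}$ many copies of $L$; this dictates the choice $m \geq 4$ and forces the stopping-time localization on $\mathfrak{t}_L$. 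The energy identity \eqref{eq:est_energy}, the pressure estimate \eqref{eq:est_pres}, and the difference bounds $\|v_{n+1} - v_n\|_{C_{\leq \mathfrak{t}_L} C_x} \leq M_v L_n^4 \delta_n^{1/2}$ and $\|q_{n+1} - q_n\|_{C_{\leq \mathfrak{t}_L} C_x} \leq M_q L_n \delta_n$ follow from the explicit formulas for $w_{n+1}$, $\rho$ and the pressure increment, while the $C^1$ estimates come from straightforward differentiation of those same formulas. Progressive measurability and the time-causality clause of the proposition are automatic: the time mollifier is one-sided, and $\phi_{n+1}^{\pm 1}(\cdot,t)$ and $\mathcal{R}^{\phi_{n+1}}$ at time $t$ depend only on $\phi$ and the previous iterate restricted to times $\leq t$. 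The main obstacle, and the reason why the scheme here is more delicate than in the deterministic case, is to balance three competing errors simultaneously: the oscillation error, which improves as $\lambda_{n+1}^{-N}$ for arbitrary $N$; the commutator error, which worsens with positive powers of $\ell_n \lambda_{n+1}$; and the flow-approximation error, which worsens as $\varsigma_{n+1}^{\alpha-1}$ and couples the parameters $\varsigma_n$, $\ell_n$, $\lambda_{n+1}$. Making all three compatible with the decay rate $\delta_{n+2}$ is precisely what fixes the relation $c = (b^4(1+\varepsilon) - 1/2)/(b - 1 - \varepsilon)$ and prevents $\varepsilon$ from being sent to zero as in the deterministic scheme.
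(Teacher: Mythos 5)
Your outline follows the paper's strategy (flow-modified Beltrami waves, one-sided space-time mollification for adaptedness, $\mathcal{R}^{\phi_{n+1}}$ inverse divergence, a five-term error decomposition, and a localization constant $L_n$ that forces $m$ large), and the causality/adaptedness remarks are on target. Two points in your sketch do not match the construction that makes the estimates close, and one of them is a real gap.

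\textbf{Orientation of the flow in the Beltrami phase.} With the paper's convention $\dvgphi v = [\dvg(v\circ\phi^{-1})]\circ\phi$, the wave must be $E(x,t)=\sum_k a_k E_k e^{ik\cdot\phi_{n+1}(x,t)}$, not $e^{i\lambda k\cdot\phi_{n+1}^{-1}(x,t)}$: then $E\circ\phi_{n+1}^{-1}$ is a genuine Beltrami flow and $\dvgphin E=0$. As written, your phase is pre-composed on the wrong side and does not produce the cancellation you invoke. This is likely just a convention slip, but it propagates into the structure of the coefficients $a_k$ (which must feed on $\tilde v = v_\ell + \dot\phi_{n+1}$, evaluated against $\phi_{n+1}$, to cancel the transport term).

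\textbf{The compressibility corrector.} You describe $w^c_{n+1}$ as a single object ``of lower order (a factor $\lambda_{n+1}^{-1}$ smaller than $w^o_{n+1}$) chosen so that $\dvgphin w_{n+1}$ is small.'' That picture misses the central difficulty the paper has to solve and would not close the induction. Because the scheme never enforces $\dvg^{\phi_n} v_n = 0$, the iterate carries a pre-existing divergence of size $L_n\delta_{n+2}^{5/4}$, which, after mollification, must be \emph{reduced} to $L_{n+1}\delta_{n+3}^{5/4}$ — a strictly smaller quantity since $\delta_{n+3}/\delta_{n+2}\to 0$ superexponentially and $L_{n+1}/L_n$ cannot compensate. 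A $\lambda^{-1}$-sized corrector on the oscillatory part alone does nothing to the old divergence. The paper therefore splits $w_c = w_c^1 + w_c^2$: the term $w_c^2 = -\mathcal{Q}^{\phi_{n+1}} w_o$ is the $\lambda^{-1}$ piece you have in mind, but the essential term is $w_c^1 = -(\mathcal{Q}^{\phi_n}v_n)\ast\chi_\ell$, which cancels $(\dvg^{\phi_n}v_n)\ast\chi_\ell$ up to commutator errors. Moreover, the paper explicitly notes that one \emph{cannot} take $w_c$ to be the Leray projection of $v_\ell+w_o$: that would wreck the estimate on $\partial_t w_c$ in the compressibility error, and this is precisely why $w_c^1$ is defined as a convolution (so time derivatives can be thrown onto the kernel). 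Your sketch does not address this, and it is the part of the argument that deviates most from \cite{DLS14}.

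Two smaller points: the exponent relation $c=(b^4(1+\varepsilon)-1/2)/(b-1-\varepsilon)$ is fixed by the requirement $A\,\delta_n^{1/2}(D_n/\delta_{n+4})^{1+\varepsilon}\leq D_{n+1}$ in the passage from \autoref{prop:it} to \autoref{thm:strong_ex}, not directly by balancing the three residual errors; and the stated lower bound $m\geq 4$ is the threshold for the interpolation/summability argument in the theorem's proof, while the $L_n$-bookkeeping you describe actually forces $m$ to be much larger ($m\geq 3r_\star+17$, i.e. $m=38$ in the paper's choice). These are presentation issues, but the $w_c$ gap is substantive.
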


We point out that, in the statement of the previous proposition, the parameters $a$ and $A$ are allowed to depend on $\underline{e},\overline{e},K_0$, whereas $\varepsilon$ and $m$ are universal. We shall see that the H\"older exponent $\vartheta$ of the solutions we can construct will in fact depend only on $\varepsilon$ and $m$, and therefore is universal as well.
However, the local-in-time $\vartheta$-H\"older norm of the solutions does depend on the parameter $a$ (as well as $M_v$ and $M_q$); in particular it may vary as the energy profile $e$ and the constant $K_0$ change. 

Also, we recall that approximate solutions $(v_n,q_n,\phi_n,\mathring{R}_n)$ in the iteration are defined for all times $t \in \R$. For negative times, they correspond to deterministic approximate solutions of deterministic Euler equations and noise is only inserted for $t>0$ (we shall restrict them to positive times when proving \autoref{thm:strong_ex}). 

Next we show that \autoref{prop:it} above implies \autoref{thm:strong_ex}.
\begin{proof}[Proof of \autoref{thm:strong_ex}]
The proof is mostly inspired by \cite{DLS14}.

\emph{Step 1}.
We define the initial step of the iteration $(v_0,q_0,\mathring{R}_0)$ to be identically zero, which is admissible since \eqref{eq:est_Reynolds}, \eqref{eq:est_pres}, \eqref{eq:est_div} and \eqref{eq:C^1,1} are trivially satisfied and \eqref{eq:est_energy} holds true because by definition $\delta_0=1$.
Applying iteratively the proposition we get a sequence $\{v_n\}_{n \in \N}$ satisfying
\begin{align*}
\| v_n \|_{C_{\leq \mathfrak{t}_L}C_x} 
&\leq 
M_v \sum_{k \leq n} L_k a^{\frac12(1-b^k)}
\leq 
M_v L_n 
\sum_{n \in \N} 2^{\frac12(1-4^n)} 
\leq 
2 M_v L_n,
\\
\| v_n \|_{C^1_{\leq \mathfrak{t}_L,x}} 
&\leq L_n D_n, 
\end{align*}
where in the second line we have used
\begin{align*}
A \delta_n^{1/2} \left( \frac{D_n}{\delta_{n+4}} \right)^{1+\varepsilon}
&=
A a^{1/2-(1+\varepsilon)} 
a^{(-1/2+c(1+\varepsilon)+b^4(1+\varepsilon))b^n}
\\
&=
A a^{1/2-(1+\varepsilon)} a^{cb^{n+1}}
\leq
D_{n+1},
\end{align*}
since $(-1/2+c(1+\varepsilon)+b^4(1+\varepsilon))=cb$ by the very definition of $c$ and taking $a \geq A^{\frac{1}{(1+\varepsilon)-1/2}}$.

\emph{Step 2}.
By assumption, for every $L,n \in \N$, $L \geq 1$ we have
\begin{align*}
\| v_{n+1} - v_n \|_{C_{\leq \mathfrak{t}_L}C_x} 
&\leq 
M_v L_n^4 \delta_n^{1/2}
= 
M_v L^{4 m^{n+1}} a^{\frac12 (1-b^n)},
\\
\| v_{n+1} - v_n \|_{C^1_{\leq \mathfrak{t}_L,x}} 
&\leq 
2 L_{n+1} D_{n+1}
= 
2 L^{m^{n+2}} a^{cb^{n+1}},
\end{align*}
and therefore by interpolation
\begin{align*}
\| v_{n+1} - v_n \|_{C^\vartheta_{\leq \mathfrak{t}_L,x}}
&\leq
2^\vartheta M_v^{1-\vartheta} 
L^{(1-\vartheta)4m^{n+1}}L^{\vartheta m^{n+2}} 
a^{\frac{1-\vartheta}{2}} a^{(\vartheta cb - \frac{(1-\vartheta)}{2})b^n}
\\
&\leq
M_v L^{m^{n+2}} a^{\frac{1-\vartheta}{2}} a^{(\vartheta cb - \frac{(1-\vartheta)}{2})b^n},
\end{align*}
where we have assumed without any loss of generality $M_v \geq 2$.
Now choose 
\begin{align} \label{eq:vartheta}
0<\vartheta 
&< 
\frac{1}{2cb+1} 
=
\frac{m-1}{2(1+\varepsilon)(m+\varepsilon)^5-1-\varepsilon},
\end{align} 
so that $\vartheta cb-\frac{(1-\vartheta)}{2} \eqqcolon -\gamma <0$, and define
\begin{align*}
n_0 = \max \left\{ n \in \N : L^{m^{n+2}}  > a^{\gamma b^{n-1}} \right\}.
\end{align*}
The maximum above exists since $b>m$ and $\gamma>0$, and simple algebraic manipulations show that $n_0$ satisfies
\begin{align*}
n_0 &< \frac{\log \log L + 2 \log m - \log\gamma + \log b - \log\log a}{\log b - \log m}
\\
&\leq C (1+\log\log L)
\end{align*}
for some constant $C$ not depending on $L$.
With this choice of $n_0$ we estimate
\begin{align*}
\sum_{n \in \N} \| v_{n+1} - v_n \|_{C^\vartheta_{\leq \mathfrak{t}_L,x}}
&\leq
\sum_{n \leq n_0}
\| v_{n+1} - v_n \|_{C^\vartheta_{\leq \mathfrak{t}_L,x}}
+
\sum_{n > n_0}
\| v_{n+1} - v_n \|_{C^\vartheta_{\leq \mathfrak{t}_L,x}}
\\
&\leq
C M_v a^{1/2} (L^{m^{n_0+3}}+1)
\leq
C M_v a^{1/2} L^{m^{C(1+\log\log L)}},
\end{align*}
where the constant $C$ may vary from line to line.
Thus $v_n$ converges in $C^{\vartheta}_{\leq \mathfrak{t}_L} C_x \cap C_{\leq \mathfrak{t}_L} C^\vartheta_x$ towards a limit $v$ with uniform bound in $\omega \in \Omega$
\begin{align*}
\|v\|_{C^\vartheta_{\leq \mathfrak{t}_L,x}} 
\leq 
C M_v a^{1/2} L^{m^{C(1+\log\log L)}}.
\end{align*}
Moreover, it is easy to check that $v$ restricted to $(-\infty,\mathfrak{t}_{L-1}]$ is simply the limit of $v_n$ in $C^{\vartheta}_{\leq \mathfrak{t}_{L-1}} C_x \cap C_{\leq \mathfrak{t}_{L-1}} C^\vartheta_x$ for every $L>1$. 
This identifies uniquely a limit object in $C^{\vartheta}_{loc} C_x \cap C_{loc} C^\vartheta_x$, denoted again by $v$, by simply gluing together limits at different values of $L$.
  
Since $v_n(\cdot \wedge \mathfrak{t}_L)$ is progressively measurable for every $n \in \N$ and $L \geq 1$, the limit process $v(\cdot \wedge \mathfrak{t}_L)$ is also progressively measurable; so also $v$ is. 
Moreover, because of \eqref{eq:est_energy} it  satisfies 
\begin{align} \label{eq:energy_vL}
\int_{\mathbb{T}^3}|v(x,t)|^2 dx = e(t), 
\quad
t \leq \mathfrak{t}.
\end{align}

\emph{Step 3}.
Convergence of the pressure $q_n$, as well as the regularity of the limit $q$, can be carried on exactly as in the previous step.
The only difference comes down to the fact that $\| q_{n+1} - q_n \|_{C_{\leq \mathfrak{t}_L}C_x} \leq M_q L_n \delta_n = M_q L^{m^{n+1}} a^{1-b^n}$ and thus
\begin{align*}
\| q_{n+1} - q_n \|_{C^\vartheta_{\leq \mathfrak{t}_L,x}}
\leq
M_q L^{m^{n+2}} a^{1-\vartheta} a^{(\vartheta cb - (1-\vartheta))b^n}.
\end{align*} 
As a consequence, one can produce solutions $q$ with H\"older regularity $\vartheta'$ such that
\begin{align} \label{eq:vartheta'}
0 < \vartheta' < \frac{1}{cb+1},
\end{align}
which is approximately twice the regularity of $v$ for large values of $b$. Up to choosing a smaller $\vartheta$, we can actually suppose $\vartheta'=2\vartheta$ and thus the limit $v$ satisfies the uniform bound (with respect to $\omega \in \Omega$)  
\begin{align*}
\|q\|_{C^{2\vartheta}_{\leq \mathfrak{t}_L,x}} \leq C M_q a L^{m^{C(1+\log\log L)}}.
\end{align*}

\emph{Step 4}.
Finally, notice that for every $L$ we have
$\phi_n \to \phi$, $\phi_n^{-1} \to \phi^{-1}$ almost surely in $C_{\leq \mathfrak{t}_L} C_x^2$ (by \autoref{lem:flow}), $\mathring{R}_n \to 0$ almost surely in $C_{\leq \mathfrak{t}_L} C_x$ and $\dvg^{\phi_n} v_n \to 0$ almost surely in $C_{\leq \mathfrak{t}_L} B^{-1}_{\infty,\infty}$ (by iterative assumptions \eqref{eq:est_Reynolds} and \eqref{eq:est_div}, having chosen $b>m$), and therefore the couple $(v,q)$ is a solution to \eqref{eq:random_euler} on the time interval $[0,\mathfrak{t}_L]$.
Indeed, for every $H \in C^\infty(\T^3)$ and $h \in C_{loc}([0,\infty),C^\infty(\mathbb{T}^3,\R^3))$ we have $\nabla^{\phi_n} H \to \nabla^\phi H$ and $(v_n \cdot \nabla^{\phi_n}) h \to (v \cdot \nabla^\phi)h$, $\dvg^{\phi_n} h \to \dvgphi h$ in $C_{[0,\mathfrak{t}_L]} C_x$ (cf. also the estimates in \autoref{ssec:flow_error}). 
Since $L$ is arbitrary and $\mathfrak{t}_L \to \infty$ as $L \to \infty$, the proof is complete.
\end{proof}

\begin{rmk}[On the H\"older exponent $\vartheta$]
From \eqref{eq:vartheta} one obtains an upper bound for the H\"older regularity of the solution $(v,q)$.
In \cite{DLS14} $\vartheta$-H\"older solutions to the deterministic Euler equations are constructed for every $\vartheta<1/10$; in the series of papers \cite{BuDLIsSz15,Bu15,BuDLIsSz16}, culminating with \cite{Is18,BuDLSzVi19}, the space regularity threshold has been successively extended up to exponent $1/3$, which is optimal since solutions to Euler equations more regular than this necessarily preserve their kinetic energy \cite{CoWeTi94}.

Here, due to the presence of the noise and because $m$ will be taken large, we have to restrict ourselves to much smaller values of $\vartheta$. 
The choice of parameters done in \autoref{sec:proof} only allows values of $\vartheta$ smaller than approximately $2.76 \times 10^{-9}$, which is an extremely low threshold when compared with the deterministic literature on the Onsager's conjecture.
Although we have not optimized our choice of parameters - that is, our threshold is not sharp - we believe one cannot do much better than this following the construction of the present paper, not to mention reaching the same threshold as in \cite{DLS14} $\vartheta<1/10$. 
It is interesting to understand if technical refinements could improve regularity of solutions up to critical exponent $1/3$ (at least when considering space regularity), as it was done in the deterministic case.
However, the most stringent constraint here is the requirement $b>m$, that is a genuinely stochastic problem (the constant $L_n$ is the estimates comes from the growth of the flow $\phi$); thus, it seems not possible to take ideas from more refined deterministic convex integration schemes (as for instance that of \cite{Is18}) in order to improve the resulting regularity.
\end{rmk}

\begin{rmk}[On the local-in-time H\"older norm of solutions]
Also, we would like to mention the following fact.
The estimates
\begin{align*}
\|v\|_{C^\vartheta_{\leq \mathfrak{t}_L,x}}
&\leq 
C M_v a^{1/2} L^{m^{C(1+\log\log L)}},
\quad
\|q\|_{C^{2\vartheta}_{\leq \mathfrak{t}_L,x}} \leq 
C M_q a L^{m^{C(1+\log\log L)}},
\end{align*}
involve the constants $M_v$ and $M_q$, which in turn only depend on $\overline{e}$, and the parameter $a$, that however depends also on $\underline{e}$ and $K_0$.
The latter dependence on $a$ comes from the fact that we started the iteration from the ansatz $(v_0,q_0,\mathring{R}_0)$ identically zero, and therefore we needed $\delta_0=1$ in order to satisfy \eqref{eq:est_energy}.
If we were able to cook up an initial $(v_0,q_0,\mathring{R}_0)$ with ``better" energy profile to start the iteration, we could redefine $\delta_n=a^{-b^n}$ and lose the dependence on $a$ in the H\"older norms of the solution $(v,q)$.
\end{rmk}

To conclude this section, let us mention that \autoref{thm:non-uniq} follows readily from the previous construction and the arguments of \autoref{ssec:main_results}, using the property that $(v_{n+1},q_{n+1},\phi_{n+1},\mathring{R}_{n+1})(x,t)$ depends on $e(s)$, $\phi(y,s)$, $v_k(y,s)$, $q_k(y,s)$, $\phi_k(y,s)$, $\mathring{R}_k(y,s)$, for arbitrary $s \leq t$, $k \leq n$, and $y \in \T^3$.
This last property will be implicitly checked in \autoref{sec:convex} below.

\section{Convex integration scheme} \label{sec:convex}
Recall that our main aim is to construct the quadruple $(v_{n+1},q_{n+1},\phi_{n+1},\mathring{R}_{n+1})$ solution of the Euler-Reynolds system, given $(v_n,q_n,\phi_n,\mathring{R}_n)$ as in the statement of \autoref{prop:it}.
Of course, the candidate approximating flow $\phi_n$ is already given by \eqref{eq:def_phi_n}, so we shall focus on the definition of $v_{n+1}$, $q_{n+1}$ and $\mathring{R}_{n+1}$ only.
Let us work at fixed $n \in \N$.
The construction follows closely that of \cite{DLS14}.

\subsection{Choice of parameters}
For future reference, here we collect all the parameters necessary to the convex integration scheme.
We recall the quantities from \autoref{ssec:main_prop}
\begin{align*} 
\delta_n \coloneqq a^{1-b^n},
\quad
D_n \coloneqq a^{cb^n},
\quad
L_n \coloneqq L^{m^{n+1}},
\end{align*}
where \begin{align*}
b=m+\varepsilon,
\quad
c=\frac{b^4(1+\varepsilon)-1/2}{b-1-\varepsilon}>0,
\end{align*}
and the coefficients $a \geq 2$, $m\geq 4$ and $\varepsilon>0$ are still to be determined and will be fixed only in \autoref{sec:proof}.
According to \autoref{prop:it}, the parameter $\delta_n$ dictates the decay, along the iteration, of the following quantities: the energy error, the Reynolds stress, the divergence of the velocity field, and the velocity and pressure increments. On the other hand, the parameter $D_n$ is used to keep track of the growth of derivatives of $v_n$, $q_n$, and $\mathring{R}_n$. $L_n$ serves to control how the iterative estimates deteriorate on larger and larger time intervals of the form $[0,\mathfrak{t}_L]$, $L\in \N$, $L \geq 1$.

Let us also discuss the choice of the parameter $\eta \in (0,1)$. There are two upper bounds that we need to be satisfied: $\eta \leq \frac{\underline{e}}{8C(\overline{e}^{1/2}+1)}$ from \autoref{lem:bounds_tilde_e} ($C$ is a constant defined therein), and $\eta \leq \frac{r_0 \underline{e}}{40}$ (where $r_0$ is defined in \autoref{ssec:w_o}). We shall fix $\eta \in (0,1)$ according to these constraints.

Next, we need to introduce two mollification parameters for the convex integration scheme.  
Fix $\alpha \in (1/3,1/2)$ close to $1/2$. Let us define the parameters $\ell$ and $\varsigma_n$ by the relations
\begin{align} \label{eq:definition_ell}
\ell^\alpha
\coloneqq 
\frac{c_{n,\ell}}{C_\ell} \frac{\delta_{n+3}^{4/3}}{D_n},
\qquad
\varsigma_n^{\alpha_\star} 
\coloneqq 
\frac{1}{C_\varsigma}
\frac{\delta_{n+3}^{4/3}}{n+1},
\end{align}
for some $C_\ell,C_\varsigma>1$ sufficiently large and $\alpha_\star \in (0,\alpha)$ to be chosen sufficiently close to $\alpha$, and all independent of $n$, whereas $c_{n,\ell} \in (1,2)$ is such that $\ell^{-1}$ is an integer power of $2$ (it will be needed in \autoref{ssec:comp} to apply \autoref{lem:scaling}).
The introduction of the additional parameter $\alpha_\star$ is useful to simplify the estimates in \autoref{ssec:flow_error}, where we need to take an auxiliary parameter $\alpha' \in (\alpha_\star,\alpha)$, see \autoref{prop:R_flow}.
Without loss of generality we may suppose $D_n \ell^{\alpha}\leq \eta \delta_n$ and $D_n \leq \varsigma_{n+1}^{\alpha-1}$.
%\begin{align} \label{eq:ass_ell}
%\ell^{-\alpha} 
%\geq 
%\frac{D_n}{\delta_{n+2}} 
%\geq  
%\frac{D_n}{\eta \delta_n} 
%\geq 
%1,
%%\qquad
%%\ell^{\alpha-1} \leq \varsigma_{n+1}^{\alpha-1},
%\qquad
%D_n \leq \varsigma_{n+1}^{\alpha-1}.
%\end{align}

Given a standard mollifier $\chi \in C^{\infty}_c([-1,1]^3 \times [0,1))$ define $\chi_\ell(x,t) \coloneqq \ell^{-4} \chi(x\ell^{-1},t\ell^{-1})$ and 
\begin{align*}
v_\ell \coloneqq v_n \ast \chi_\ell,
\quad
q_\ell \coloneqq q_n \ast \chi_\ell,
\quad
\mathring{R}_\ell \coloneqq \mathring{R}_n \ast \chi_\ell.
\end{align*}
Here it is important that $v_n,q_n,\mathring{R}_n$ are defined also for negative times $t<0$ and do satisfy \eqref{eq:random_euler-reynolds} in analytically strong sense. Indeed, we shall need in the following that $v_\ell,q_\ell,\mathring{R}_\ell$ satisfy
\begin{align*}
\partial_t v_\ell 
+
\chi_\ell\ast \dvg^{\phi_n}\left( v_n \otimes v_n\right) 
+
\chi_\ell\ast \nabla^{\phi_n} q_n 
= 
\chi_\ell\ast \dvg^{\phi_n} \mathring{R}_n,
\end{align*}
which would not be generally true for times $t \in (0,\ell)$ otherwise.  

Also, let $\lambda,\mu \in \N$ be large parameters (possibly depending on $n$) such that $\lambda/\mu \in \N$.
Notice that the latter condition implies in particular $\lambda \geq \mu$.
These parameters dictate the frequency of space-time oscillations of the building block of the convex integration scheme.
In order to simplify the computations in \autoref{sec:iter}, we shall assume $\mu^2 \varsigma_{n+1}^{\alpha-2} \leq \lambda \leq D_{n+1}$ and for some $r_\star \in \N$ to be suitably chosen (and independent of $n$):
\begin{align} \label{eq:ass_lambda}
\lambda^{r} &\geq \mu^{r+5} \varsigma_{n+1}^{(r+5)(\alpha-1)-2} \left( D_n \ell^{-r-4} + \varsigma_{n+1}^{\alpha-1} \right),
\quad
\forall r \geq r_\star.
\end{align}

In order to satisfy the previous conditions on $\lambda, \mu$ we can choose $\alpha_\star, \alpha$ sufficiently close to $1/2$, $r_\star \geq 7$ and the parameter $\mu$ as
\begin{align*}
\mu \coloneqq c_{n,\mu} C_\mu \ell^{-r_\star}
\end{align*}
for some constant $C_\mu >1$, possibly large but finite and independent of $n$, and $c_{n,\mu} \in (1,2)$ so that $\mu \in \N$, and
\begin{align*}
\lambda \coloneqq c_{n,\lambda} \mu^2 \varsigma_{n+1}^{\alpha-2}
\end{align*}
where $c_{n,\lambda} \in (1,2)$ satisfies $c_{n,\lambda} \varsigma_{n+1}^{\alpha-2} \in \N$, implying $\lambda \in \N$ and $\lambda / \mu \in \N$.
Notice that the condition $\lambda \leq D_{n+1}$ requires the parameters $A$ and $\varepsilon$ in \autoref{prop:it} to be sufficiently large.
More details will be given in \autoref{sec:proof}.

\subsection{Outline of the construction}
As usual in convex integration schemes, the idea is to define $v_{n+1}$ as a perturbation of $v_n$.
However, in order to guarantee smoothness throughout the construction, we in fact construct $v_{n+1}$ as a perturbation of $v_\ell$.
 
The main building block used in this paper to define the principal perturbation $w_o$ is a modified version of Beltrami flows $E$ (see \autoref{ssec:Beltrami} below), obtained via composition with the flow $\phi_{n+1}$.
This implies, among other things, that $E$ is not steady-state and not even a solution to the stochastic Euler equations \eqref{eq:random_euler}.
Because of this time dependence, we need to introduce an additional dependence on $\dot\phi_{n+1}$ in the amplitude coefficients to compensate for the time derivative of $E$.
The key feature of Beltrami flows is that they oscillate at frequency $\lambda \gg 1$ in space, producing cancellations in space integrals of the perturbation.
By stationary phase Lemma \cite[Proposition 5.2]{DLS13} and its stochastic counterpart \autoref{prop:stat_phase_lem}, these cancellations help in controlling H\"older norms of the new velocity field $v_{n+1}$ and Reynold stress $\mathring{R}_{n+1}$.
On top of that, a fine tuning of the amplitude of $E$ permits us to control the amount of kinetic energy introduced in the system at the $n$-th step of the construction, cf. the term $\rho_\ell$ defined in \autoref{ssec:w_o}. 
We take advantage of this to aim at the desired energy profile $e$ when iterating the construction.
However, because our iterative estimates deteriorate for large values of $t$ (corresponding to large values of $L$) and we want to define $\rho_\ell$ independent of $L$ to preserve adaptedness of solutions, we can only reach the desired energy profile up to time $\mathfrak{t}$. For larger times we content ourselves to inject enough energy at each step so that the perturbation $w_o$ is well-defined (cf. the term under square root in the definition of $\rho_\ell$ and \eqref{eq:well_def} below).

Finally, since we want to control the quantity $\dvgphin v_{n+1}$ we need to add a corrector term $w_c$ to make sure the latter decreases fast enough at every step of the iteration (\autoref{ssec:w_c}). This step is far more involved than usual, since we cannot choose $w_c$ simply as the orthogonal Leray projection of $v_\ell + w_o$ because of a lack of control on $\partial_t w_c$.  

Once $v_{n+1} = v_\ell + w_o + w_c$ is defined with the procedure above, the Reynolds stress $\mathring{R}_{n+1}$ and pressure $q_{n+1}$ can be determined inverting the operator $\dvgphin$, similarly to what has been done for the deterministic case in \cite{DLS13}, see \autoref{ssec:R&P}.

\subsection{Modified Beltrami Flows} \label{ssec:Beltrami}
The following construction follows that of \cite[Proposition 3.1]{DLS13}.
For any given vector $k \in \mathbb{Z}^3 \setminus \{\mathbf{0}\}$, denote
\begin{align*}
M_k \coloneqq Id - \frac{k}{|k|} \otimes \frac{k}{|k|}.
\end{align*}
Let $\lambda_0\geq 1$ and let $A_k\in\R^3$ be such that $A_k\cdot k=0$, $|A_k|=\tfrac{1}{\sqrt{2}}$ and $A_{-k}=A_k$ 
for $k\in\Z^3$ with $|k|=\lambda_0$.
Furthermore, let us define
\begin{align*}
E_k \coloneqq A_k+i\frac{k}{|k|}\times A_k\in\C^3.
\end{align*}
For any collection $\{a_k\}_{k \in \mathbb{Z}^3, |k|=\lambda_0}$ of complex numbers $a_k\in\C$ satisfying $\overline{a_k} = a_{-k}$ for every $k$, the vector field
\begin{equation*}
E(x,t) \coloneqq \sum_{|k|=\lambda_0}a_k E_ke^{ik\cdot \phi_{n+1}(x,t)}
\end{equation*}
is real valued and satisfies
\begin{equation} \label{eq:Beltrami_sol}
\dvgphin E = 0,
\quad
\dvgphin (E\otimes E)=\nablaphin \left( \frac{|E|^2}{2} \right).
\end{equation}
Furthermore, since $\phi_{n+1}$ is measure preserving: 
\begin{equation*}
\frac1{(2\pi)^3}\int_{\T^3} E\otimes E\,dx = \frac12 \sum_{|k|=\lambda_0} |a_k|^2 M_k .  
\end{equation*}
We shall call the field $E$ \emph{modified Beltrami wave}.
This is the natural adaptation to our stochastic setting of classical Beltrami waves, that are steady-state solutions of Euler equations.
However, our $E$ \emph{is not} a solution to \eqref{eq:random_euler} because of its dependence of $t$.
We shall overcome this issue taking suitable time dependent amplitude coefficients $a_k=a_k(t)$, see next subsections for details.

\subsection{The transport coefficients $\psi_k^{(j)}$} \label{ssec:psi}
Recall the following construction from \cite{DLS13}.
Let $c_1$, $c_2$ be such that $\frac{\sqrt{3}}{2} < c_1 < c_2 < 1$ and fix a non negative function $\varphi\in C^\infty_c (B_{c _2} (0))$ which is identically equal to $1$ on the ball $B_{c_1} (0)$. 
Next, consider the lattice $\Z^3$ and its quotient by $(2\Z)^3$, and denote by $\mathcal{C}_j$ , $j=1, \ldots, 8$ the eight equivalence classes of $\Z^3/(2\Z)^3$.
For each $k\in \Z^3$ denote by $\varphi_k$ the function $\varphi_k (x):= \varphi (x-k)$.
Observe that, if $k\neq l \in \mathcal{C}_j$, then $|k-l|\geq 2 > 2 c_2$; hence, $\varphi_k$ and  $\varphi_l$ have disjoint supports. 

On the other hand, the function
\begin{align*}
\varphi_\Sigma \coloneqq \sum_{k\in \Z^3} \varphi_k^2
\end{align*}
is smooth, bounded and bounded away from zero. We then define, for $v \in \R^3$ and $\tau \in \R$:
\begin{align*}
\alpha_k(v) 
&\coloneqq \frac{\varphi_k(v)}{\sqrt{\varphi_\Sigma(v)}},
\quad
k \in \Z^3,
\\
\psi^{(j)}_k(v,\tau) 
&\coloneqq 
\sum_{l\in\mathcal{C}_j}\alpha_l(\mu v)e^{-ik\cdot (\frac{l}{\mu})\tau},
\quad
k \in \Z^3,
\,
j=1,\dots,8.
\end{align*}

Since $\alpha_l$ and $\alpha_{\tilde l}$ have disjoint supports for $l\neq \tilde l\in\mathcal{C}_j$, it follows that for all $v,\tau$ as above and $k \in \Z^3$
\begin{align*} 
\sum_{j=1}^8|\psi^{(j)}_k(v,\tau)|^2
&=
\sum_{j=1}^8 \sum_{l\in\mathcal{C}_j}\alpha_l(\mu v)^2
=1,
\end{align*}
and
\begin{align} \label{e:phiestimate0}
\sup_{v,\tau}|D^r_v\psi^{(j)}_k(v,\tau)| &\leq C\mu^r,
\quad
j=1,\dots,8,
\end{align}
with the constant $C=C(r,|k|)$ depending only on $r \in \N$ and $|k|$.
More generally, in \cite[Proposition 4.2]{DLS14} it is proved that for any $k \in \Z^3$ the derivatives of $\psi_k^{(j)}$ with respect to $\tau$ are controlled on the set $|v| \leq V$ by
\begin{align}\label{e:phiestimate3}
\sup_{|v| \leq V,\tau}
\left|D^r_v \partial_{\tau}^h\psi^{(j)}_k \right|
&\leq 
CV^h\mu^r,
\end{align}
where $C=C(r,h,|k|)$ and $V>0$ is any given constant.

Next, we recall the following estimate from \cite{DLS13} on the \emph{material derivative} of $\psi_k^{(j)}$.
For any $r \in \N$ and $k \in \Z^3$ there exists a constant $C=C(r,|k|)$ such that for every $j=1,\dots,8$ it holds
\begin{align}\label{e:phiestimate2}
\sup_{v,\tau}
\left|D^r_v(\partial_{\tau}\psi^{(j)}_k+i(k \cdot v) \psi^{(j)}_k )\right|
&\leq 
C\mu^{r-1}
%,
%\\
%\sup_{|v|\leq V,\tau} \label{e:phiestimate4}
%\left|D^r_v\partial_\tau(\partial_{\tau}\psi^{(j)}_k+i(k \cdot v) \psi^{(j)}_k )\right|
%&\leq 
%CV\mu^{r-1}
.
\end{align}

\subsection{The energy pumping term} \label{ssec:pump}
Next, set for $t \in \R$
\begin{align*}
\tilde{e}(t) \coloneqq
\frac{1}{3 (2\pi)^3} \left(e (t) \left(1-\delta_{n+1}\right) - \int_{\T^3} |v_\ell(x,t)|^2 \, dx\right).
\end{align*}
\begin{lem} \label{lem:bounds_tilde_e}
There exists a finite constant $C=C(\chi)$ depending only on the mollifier $\chi$ such that the following inequality holds true almost surely for every $t \leq \mathfrak{t}$:
\begin{align*}
\tilde{e}(t) 
&\leq
\frac{\delta_n}{3(2\pi)^3} \left( \tfrac{5}{4}  \overline{e} + C \eta  \left( \overline{e}^{1/2}+1 \right)\right).
\end{align*}
Moreover, if $\eta \leq \frac{\underline{e}}{8C(\overline{e}^{1/2}+1)}$ then we also have almost surely for every $t \leq \mathfrak{t}$:
\begin{align*}
\tilde{e}(t) 
&\geq
\frac{\delta_n \underline{e}}{24(2\pi)^3}.
\end{align*}
\end{lem}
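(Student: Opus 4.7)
The plan is to write
\begin{align*}
3(2\pi)^3\, \tilde e(t)
&= \bigl[e(t)(1-\delta_n) - \textstyle\int_{\T^3}|v_n|^2\,dx\bigr]
+ e(t)(\delta_n - \delta_{n+1})
+ \bigl[\textstyle\int_{\T^3}|v_n|^2\,dx - \int_{\T^3}|v_\ell|^2\,dx\bigr],
\end{align*}
valid for every $t\leq\mathfrak{t}$, and estimate each of the three pieces separately. For the first bracket, the inductive energy estimate \eqref{eq:est_energy} gives an error bounded in absolute value by $\tfrac14\delta_n e(t)\leq \tfrac14\delta_n\overline{e}$. For the second piece, since $b=m+\varepsilon\geq 4$ and $a\geq 2$ we have $\delta_{n+1}/\delta_n=a^{b^n(1-b)}\leq 1/2$, so in particular $0\leq \delta_n-\delta_{n+1}\leq \delta_n$, and this term is bounded between $\tfrac12\delta_n\underline{e}$ (from below, using $e(t)\geq\underline{e}$) and $\delta_n\overline{e}$ (from above, using $e(t)\leq\overline{e}$).

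The third term is the mollification error. This is where the inductive $C^1$-bound \eqref{eq:C^1,1} at level $L=1$ (recall $L_n=1$ when $L=1$) enters: writing
\begin{align*}
v_\ell(x,t)-v_n(x,t)
= \int \bigl(v_n(x-y,t-s)-v_n(x,t)\bigr)\chi_\ell(y,s)\,dy\,ds
\end{align*}
and using the support of $\chi_\ell$ together with $\|v_n\|_{C^1_{\leq \mathfrak{t},x}}\leq D_n$, one gets $\|v_\ell-v_n\|_{C_{\leq\mathfrak{t}}C_x}\leq C(\chi)\,\ell\, D_n$. Since $\ell\leq \ell^\alpha\leq \eta\delta_n/D_n$ by our standing assumption $D_n\ell^\alpha\leq \eta\delta_n$, this gives $\|v_\ell-v_n\|_{C_{\leq\mathfrak{t}}C_x}\leq C(\chi)\,\eta\,\delta_n$. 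Combining with $\|v_n\|_{L^2}\leq \overline{e}^{1/2}$ (a consequence of \eqref{eq:est_energy}) and $\|v_\ell\|_{L^2}\leq\|v_n\|_{L^2}$, the Cauchy--Schwarz inequality yields
\begin{align*}
\Bigl|\int_{\T^3}|v_n|^2\,dx-\int_{\T^3}|v_\ell|^2\,dx\Bigr|
\leq \|v_\ell-v_n\|_{L^2}\,\|v_\ell+v_n\|_{L^2}
\leq C(\chi)\,\eta\,\delta_n\bigl(\overline{e}^{1/2}+1\bigr),
\end{align*}
where the harmless ``$+1$'' is kept simply to avoid absorbing dimensional constants into $C$.

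Assembling: the upper bound follows by summing the three estimates, giving
$3(2\pi)^3\tilde e(t)\leq \tfrac14\delta_n\overline{e}+\delta_n\overline{e}+C\eta\delta_n(\overline{e}^{1/2}+1) = \delta_n\bigl(\tfrac54\overline{e}+C\eta(\overline{e}^{1/2}+1)\bigr)$. For the lower bound, combine the first and second pieces as $e(t)(\tfrac34\delta_n-\delta_{n+1})\geq \tfrac14\delta_n\underline{e}$ (using $\delta_{n+1}\leq \delta_n/2$ and $e\geq\underline{e}$), and subtract the mollification error to obtain $3(2\pi)^3\tilde e(t)\geq \tfrac14\delta_n\underline{e}-C\eta\delta_n(\overline{e}^{1/2}+1)$. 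The hypothesis $\eta\leq \underline{e}/[8C(\overline{e}^{1/2}+1)]$ ensures the second term is at most $\tfrac18\delta_n\underline{e}$, leaving $\tilde e(t)\geq \delta_n\underline{e}/[24(2\pi)^3]$. The only mildly delicate point is making sure the three inductive estimates are used at the right level: \eqref{eq:est_energy} at step $n$, \eqref{eq:C^1,1} at level $L=1$ so that $L_n=1$ does not pollute the mollification bound, and $\delta_{n+1}\leq \delta_n/2$ which is purely algebraic and holds uniformly given the standing assumptions $a\geq 2$, $b\geq 4$.
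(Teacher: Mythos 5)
Your proof is correct and follows essentially the same route as the paper's: same decomposition of $3(2\pi)^3\tilde e(t)$ into the energy-error term, the $e(t)(\delta_n-\delta_{n+1})$ term, and the mollification error, with the same use of the inductive estimates \eqref{eq:est_energy}, \eqref{eq:C^1,1} (at $L=1$ so that $L_n=1$) and the standing constraint $D_n\ell^\alpha\leq\eta\delta_n$. The only cosmetic difference is that you bound $\bigl|\int(|v_n|^2-|v_\ell|^2)\bigr|$ by Cauchy--Schwarz on $(v_n-v_\ell)\cdot(v_n+v_\ell)$, whereas the paper uses the pointwise inequality $\bigl||v_\ell|^2-|v_n|^2\bigr|\leq |v_\ell-v_n|^2+2|v_n|\,|v_\ell-v_n|$; both give the same $C\eta\delta_n(\overline{e}^{1/2}+1)$ bound.
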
  
\begin{proof}
By triangle inequality, we have $||v_\ell|^2-|v_n|^2| \leq |v_\ell-v_n|^2 + 2|v_n||v_\ell-v_n|$; moreover, $|v_\ell-v_n| \leq C D_n \ell \leq C \eta \delta_n$ for every $t \leq \mathfrak{t}$, by assumption on $\ell$.
In the previous inequality, $C=C(\chi)$ is a finite constant depending only on the mollifier $\chi$. 
By assumption \eqref{eq:est_energy} we have $\int_{\mathbb{T}^3}|v_n(x,t)|^2 dx \leq (1-\tfrac{3}{4} \delta_n) e(t) \leq \overline{e}$ almost surely for every $t \leq \mathfrak{t}$, therefore
\begin{align*}
\int_{\mathbb{T}^3} \left| |v_\ell(x,t)|^2-|v_n(x,t)|^2\right| dx
%&\leq
%\int_{\mathbb{T}^3}  |v_\ell(x,t)-v_n(x,t)|^2 dx
%+
%\int_{\mathbb{T}^3} |v_n(x,t)| |v_\ell(x,t)-v_n(x,t)| dx
%\\
&\leq
C \eta \delta_n \left( \overline{e}^{1/2}+1 \right),
\end{align*}
almost surely for every $t \leq \mathfrak{t}$.
As a consequence we have the estimate
\begin{align*}
\tilde{e}(t) 
&=
\frac{1}{3 (2\pi)^3} \left(e (t) \left(1-\tfrac{5}{4}\delta_n \right) - \int_{\T^3} |v_n|^2 dx + e (t)\left(\tfrac{5}{4}\delta_n-\delta_{n+1}\right) + \int_{\T^3} (|v_n|^2-|v_\ell|^2) \, dx\right)
\\
&\leq 
\frac{\delta_n}{3(2\pi)^3} \left( \tfrac{5}{4}  \overline{e} + C \eta  \left( \overline{e}^{1/2}+1 \right)\right)
\end{align*}
almost surely for every $t \leq \mathfrak{t}$.
In a similar fashion, one can rewrite
\begin{align*}
\tilde{e}(t) 
&=
\frac{1}{3 (2\pi)^3} \left(e (t) \left(1-\tfrac{3}{4}\delta_n \right) - \int_{\T^3} |v_n|^2 dx + e (t)\left(\tfrac{3}{4}\delta_n-\delta_{n+1}\right) + \int_{\T^3} (|v_n|^2-|v_\ell|^2) \, dx\right)
\\
&\geq 
\frac{\delta_n}{3(2\pi)^3} \left( \tfrac{1}{4}  \underline{e} - C \eta  \left( \overline{e}^{1/2}+1 \right)\right),
\end{align*}
where we have used $\tfrac{3}{4}\delta_n - \delta_{n+1} \geq \tfrac{1}{4}\delta_n$ (recall that the parameter $a,b$ are chosen both greater or equal than $2$).
In order to obtain the thesis it is sufficient to take $\eta$ such that
\begin{align*}
\tfrac{1}{4}  \underline{e} - C \eta  \left( \overline{e}^{1/2}+1 \right)
\geq 
\tfrac{1}{8}  \underline{e},
\end{align*}
which is always possible since we have assumed $\underline{e}>0$, and this corresponds exactly to the condition in the statement of the lemma.
\end{proof}

Hereafter, we shall always assume $\eta$ satisfies the condition of previous \autoref{lem:bounds_tilde_e}.
Let $\gamma_n:\R \to (0,\infty)$ be an adapted process with twice-differentiable trajectories satisfying almost surely   
\begin{align} \label{eq:conditions_gamma1}
\gamma_n (t) 
&=
\tilde{e}(t),
\quad \forall t \leq \mathfrak{t},
\\ \label{eq:conditions_gamma2}
\tfrac12 \tilde{e}(\mathfrak{t})
&\leq \gamma_n (t)
\leq \tfrac32 \tilde{e}(\mathfrak{t}),
\quad \forall t \geq \mathfrak{t}, 
\\ \label{eq:conditions_gamma3}
\|\gamma_n\|_{C^k_t} &\leq C \|\tilde{e}_{\leq \mathfrak{t}}\|_{C^k_t}, \quad \forall k \leq 2,
\end{align}
for some unimportant constant $C$, and for some arbitrary $r_0>0$ define
\begin{align*}
\rho_\ell (x,t) &\coloneqq
\frac{2}{r_0} \sqrt{\eta^2 \delta_{n+1}^2 + |\mathring{R}_\ell(x,t)|^2} + \gamma_n(t), 
\\
R_\ell (x,t) &\coloneqq \rho_\ell (x,t) Id - \mathring{R}_\ell (x,t).
\end{align*}

The parameter $r_0$ will be fixed in next \autoref{ssec:w_o} according to a geometric lemma from \cite{DLS13}.
Condition \eqref{eq:conditions_gamma1} above is imposed to reduce the error (up to time $\mathfrak{t}$) between the kinetic energy of $v_{n+1}$ and the desired energy profile $e$.
After time $\mathfrak{t}$, we are no longer able to control $\tilde{e}$ from below for large times (in particular if $t>\mathfrak{t}_L$ with $L$ large), and thus we inject energy in the system according to $\gamma_n(t)$ and not aiming for any prescribed energy profile.
Notice that $\gamma_n$ does not depend on the parameter $L$, in particular we do not use any stopping time of the form $\mathfrak{t}_L$, $L>1$ in the definition of $\gamma_n$, see \autoref{rmk:gamma_n} below.
Conditions \eqref{eq:conditions_gamma2} and \eqref{eq:conditions_gamma3} are needed to show convergence of the so-obtained iterative scheme for times $t \geq \mathfrak{t}$.
Let us also mention that by \eqref{eq:conditions_gamma1}, \eqref{eq:conditions_gamma2} and \autoref{lem:bounds_tilde_e} we have $c \delta_n \leq \gamma_n(t) \leq C \delta_n$ for every $t \in \R$, for some constants $c=c(\underline{e})>0$ and $C=C(\overline{e})<\infty$.

\begin{rmk} \label{rmk:gamma_n}
We point out that it is always possible to construct a process $\gamma_n$ enjoying the properties \eqref{eq:conditions_gamma1}, \eqref{eq:conditions_gamma2} and \eqref{eq:conditions_gamma3}. 
For instance, an explicit construction goes as follows. 
Let $F:\R \to (-1,1)$ be given by $F(t)\coloneqq\frac{2}{\pi}\arctan(t)$.
For every $k\in \{0,1,2\}$ let $\tilde{e}^{(k)}(\mathfrak{t})$ denote the derivative of $\tilde{e}$ of order $k$ evaluated at time $t=\mathfrak{t}$. 
For every $\omega \in \Omega$, let $f=f_\omega$ be the unique polynomial of order $2$ such that $f(\mathfrak{t})=\tilde{e}(\mathfrak{t})$ and 
\begin{align*}
\left.
\frac{\tilde{e}(\mathfrak{t})}{2} \frac{d^k}{dt^k}  F\left( \frac{f-\tilde{e}(\mathfrak{t})}{\|\tilde{e}_{\leq \mathfrak{t}}\|_{C^2_t}} \right) \right|_{t=\mathfrak{t}} = \tilde{e}^{(k)}(\mathfrak{t}), 
\qquad
k=1,2,
\end{align*}
which is uniquely determined by 
\begin{align*}
\left.
\frac{d}{dt} F \left( \frac{f-\tilde{e}(\mathfrak{t})}{\|\tilde{e}_{\leq \mathfrak{t}}\|_{C^2_t}} \right) \right|_{t=\mathfrak{t}}
&=
\frac{F'(0) f'(\mathfrak{t}) }{\|\tilde{e}_{\leq \mathfrak{t}}\|_{C^2_t}},
\\
\left.
\frac{d^2}{dt^2} F \left( \frac{f-\tilde{e}(\mathfrak{t})}{\|\tilde{e}_{\leq \mathfrak{t}}\|_{C^2_t}} \right) \right|_{t=\mathfrak{t}}
&=
\frac{F'(0) f''(\mathfrak{t}) }{\|\tilde{e}_{\leq \mathfrak{t}}\|_{C^2_t}}
+
\frac{F''(0)f'(\mathfrak{t})^2}{\|\tilde{e}_{\leq \mathfrak{t}}\|_{C^2_t}^2}.
\end{align*}
%\begin{align*}
%f'(\mathfrak{t}) 
%&= 
%\frac{2\|\tilde{e}_{\leq \mathfrak{t}}\|_{C^2_t} e'(\mathfrak{t})}{F'(0)e(\mathfrak{t})},
%\\
%f''(\mathfrak{t}) 
%&= 
%\frac{2\|\tilde{e}_{\leq \mathfrak{t}}\|_{C^2_t} }{F'(0)e(\mathfrak{t})} \left(e''(\mathfrak{t})-\frac{F''(0) f'(\mathfrak{t})^2}{\|\tilde{e}_{\leq \mathfrak{t}}\|_{C^2_t}^2} \right)
%\end{align*}
Notice that the coefficients of $f$ are $\mathcal{F}_\mathfrak{t}$-measurable real valued random variables, and defining 
\begin{align*}
\gamma_n(t) \coloneqq 
\begin{cases}
\tilde{e}(t), &\mbox{ if } t \leq \mathfrak{t},
\\
\tilde{e}(\mathfrak{t}) \left( 1 + \frac{1}{2} F\left( \frac{f(t)-\tilde{e}(\mathfrak{t})}{\|\tilde{e}_{\leq \mathfrak{t}}\|_{C^2_t}} \right) \right), &\mbox{ if } t \geq \mathfrak{t},
\end{cases}
\end{align*}
we have a $\gamma_n$ satisfying \eqref{eq:conditions_gamma1}, \eqref{eq:conditions_gamma2} and \eqref{eq:conditions_gamma3} with $C$ depending only on $F$. Moreover, $\gamma_n$ is adapted since $v_n$ (and hence $\tilde{e}$) is so. In particular, $\gamma_n$ is deterministic for negative times.
\end{rmk}

\subsection{The oscillatory term $w_o$} \label{ssec:w_o}
We recall (cf. \cite[Lemma 3.2]{DLS13}) that for every $N\in\N$ there exist $r_0>0$ and $\lambda_0>1$ with the following property: there exist pairwise disjoint subsets 
$
\Lambda_j\subset\{k\in \Z^3:\,|k|=\lambda_0\}$, for 
$
j\in \{1, \dots, N\},
$
and smooth positive functions 
\begin{align*}
\gamma^{(j)}_k\in C^{\infty}\left(B_{r_0} (Id)\right), 
\quad 
j\in \{1,\dots, N\}, \, k\in\Lambda_j,
\end{align*}
such that $k\in \Lambda_j$ implies both $-k\in \Lambda_j$ and $\gamma^{(j)}_k = \gamma^{(j)}_{-k}$, and for each $R\in B_{r_0} (Id)$ we have the identity
\begin{equation*}
R 
= 
\frac12 \sum_{k\in\Lambda_j} 
\left(\gamma^{(j)}_k(R)\right)^2 M_k,
\quad \forall R\in B_{r_0}(Id)\, .
\end{equation*}

We apply the previous lemma with $N=8$ to obtain $\lambda_0>1$, $r_0>0$ and pairwise disjoint families $\Lambda_j$ together with corresponding
functions $\gamma^{(j)}_k\in C^{\infty}\left(B_{r_0}(Id)\right)$. 

Finally, let us define
\begin{align}
w_o(x,t) \nonumber
&\coloneqq
W(x,t,\lambda \phi_{n+1}(x,t),\lambda t),
\\
W(y,s,\xi,\tau) \label{eq:def_W}
&\coloneqq
\sum_{k \in \Lambda}
a_k(y,s,\tau) E_ke^{ik\cdot \xi},
\end{align}
where $\Lambda \coloneqq \cup_{j} \Lambda_j$ and the \emph{amplitude coefficients} $a_k$, $k \in \Lambda$ are defined by
\begin{align} \label{eq:a_k}
a_k(y,s,\tau)
&\coloneqq
\mathbf{1}_{\{k \in \Lambda_j\}} 
\sqrt{\rho_\ell(y,s)}\, 
\gamma_k^{(j)}\left(\frac{R_\ell(y,s)}{\rho_\ell(y,s)}\right)\psi_k^{(j)} \left(\tilde{v}(y,s),\tau\right),
\\
\nonumber
\tilde{v}(y,s)
&\coloneqq 
v_\ell(y,s)
+
\dot{\phi}_{n+1}(y,s).
\end{align}

Notice that the maps $\gamma_k^{(j)}\left(\frac{R_\ell(y,s)}{\rho_\ell(y,s)}\right)$ are well-defined because (recall $\gamma_n \geq c \delta_n \geq 0$)
\begin{align} \label{eq:well_def}
\left\| Id - \frac{R_\ell(y,s)}{\rho_\ell(y,s)} \right\|_{C_t C_x }
=
\left\| \frac{\mathring{R}_\ell(y,s)}{\rho_\ell(y,s)} \right\|_{C_t C_x} \leq \frac{r_0}{2}.
\end{align}

\begin{rmk}
The auxiliary velocity $\tilde{v}$ is needed because, in order to control the transport error below (see \eqref{eq:decomposition_R} and \autoref{prop:R_trans}), we will need cancellations for the quantities
\begin{align*} 
\partial_\tau a_k(x,t,\lambda t) + i k \cdot [v(x,t)+\dot{\phi}_{n+1}(x,t)]\, a_k(x,t,\lambda t), \quad
k \in \Lambda.
\end{align*} 
However, some care is needed since $\dot\phi_{n+1}$ is typically large, in particular it diverges as $\varsigma_{n+1}^{\alpha-1}$ as $n \to \infty$, thus affecting both space and time regularity of the coefficients $a_k$. 
\end{rmk}

Also, as a consequence of \eqref{eq:conditions_gamma1},  \eqref{eq:conditions_gamma2}, \autoref{lem:bounds_tilde_e} and the iterative assumption \eqref{eq:est_Reynolds}, there exists a constant $C$ depending only on $\overline{e}$ such that for every $x \in \mathbb{T}^3$ and $t \leq \mathfrak{t}_L$
\begin{align} \label{eq:w_o_bound}
|w_o(x,t)| 
%\leq M \left( \delta_{n+1} + |\mathring{R}_\ell(x,t)| + \gamma_n(t) \right)^{1/2}
\leq
C L_n^{1/2} \delta_n^{1/2}.
\end{align}
In particular, from \eqref{eq:w_o_bound} we also deduce
\begin{align*}
|w_o(x,t)| 
\leq
C \delta_n^{1/2},
\quad
\forall t \leq \mathfrak{t}.
\end{align*}

\subsection{The correction $w_c$} \label{ssec:w_c}
Let $\mathcal{P} = Id - \mathcal{Q}$ be the classical Leray projector on zero-average, divergence free velocity fields.
Define $\mathcal{P}^{\phi_{n+1}}, \mathcal{Q}^{\phi_{n+1}}$ as the operators acting on a given $v \in C^\infty(\T^3,\R^3)$ as
\begin{align*}
\mathcal{P}^{\phi_{n+1}} v \coloneqq
[\mathcal{P}(v \circ \phi_{n+1}^{-1})] \circ \phi_{n+1},
\quad
\mathcal{Q}^{\phi_{n+1}} v \coloneqq
[\mathcal{Q}(v \circ \phi_{n+1}^{-1})] \circ \phi_{n+1}.
\end{align*}

It holds
\begin{align*}
\dvgphin \mathcal{P}^{\phi_{n+1}} v
=
[\dvg(\mathcal{P}^{\phi_{n+1}} v \circ \phi_{n+1}^{-1})] \circ \phi_{n+1}
=
[\dvg(\mathcal{P}(v \circ \phi_{n+1}^{-1})] \circ \phi_{n+1}
=
0,
\end{align*}
and 
\begin{align*}
\dvgphin \mathcal{Q}^{\phi_{n+1}} v
=
[\dvg(\mathcal{Q}^{\phi_{n+1}} v \circ \phi_{n+1}^{-1})] \circ \phi_{n+1}
=
[\dvg(\mathcal{Q}(v \circ \phi_{n+1}^{-1})] \circ \phi_{n+1}
=
\dvgphin v.
\end{align*}

Moreover, denoting $\Delta^{\phi_{n+1}} v \coloneqq \dvgphin \nabla^{\phi_{n+1}} v = [\Delta(v \circ \phi_{n+1}^{-1})] \circ \phi_{n+1}$ and introducing the zero-mean solution $\psi$ of the Poisson equation $\Delta^{\phi_{n+1}} \psi = \dvgphin v$ we have the alternative expression
\begin{align*}
\mathcal{Q}^{\phi_{n+1}} v 
= 
\nabla^{\phi_{n+1}} \psi
+
\frac{1}{(2\pi)^3}\int_{\T^3} v,
\end{align*}
provided $v \in C^\infty(\T^3,\R^3)$.

Recall that we want a control on $\dvgphin v_{n+1}$, where $v_{n+1}=v_\ell + w_o + w_c$ and $w_c$ is still to be found. 
Let us compute
\begin{align} \label{eq:div v_ell}
\dvgphin v_\ell
&=
\dvgphin v_\ell - \left( \dvgphin v_n \right) \ast \chi_\ell
\\
&\quad+ \nonumber
\left( \dvgphin v_n \right) \ast \chi_\ell
-
\left(\dvg^{\phi_n}v_n\right)   \ast \chi_\ell
\\
&\quad+ \nonumber
\left(\dvg^{\phi_n}v_n\right) \ast \chi_\ell.
\end{align}
We shall see that the first two terms on the right-hand-side of \eqref{eq:div v_ell} are already small and compatible with the inductive assumption on the divergence \eqref{eq:est_div}, without any compressibility correction needed.
The only term to really ``compensate" for in \eqref{eq:div v_ell} is $\left(\dvg^{\phi_n}v_n\right) \ast \chi_\ell$, and the idea is to compensate with some $w^1_c$ such that $\partial_t w^1_c$ is easy to control.
This happens when $w^1_c$ is a convolution, because one can put time derivatives on the mollification kernel.
Therefore, let us define
\begin{align*}
w^1_c 
\coloneqq 
-(\mathcal{Q}^{\phi_n}v_n) \ast \chi_\ell,
\end{align*}
%\begin{align*}
%w^1_c 
%\coloneqq 
%-(\mathcal{Q}^{\phi_n}v_n) \ast \chi_\ell
%=
%-(\mathcal{Q}^{\phi_n}v_n) \ast \chi^0_\ell,
%\end{align*}
%where we have denoted $\chi^0_\ell = \chi_\ell - \frac{1}{(2\pi)^3}\int_{\T^3} \chi_\ell$, and the identity is justified by the fact that $v_n$ is zero-mean.
which is such that 
\begin{align} \label{eq:div v_n+1}
\left(\dvg^{\phi_n}v_n\right) \ast \chi_\ell
+
\dvgphin w^1_c
&=
\left(\dvg^{\phi_n} \mathcal{Q}^{\phi_n} v_n\right) \ast \chi_\ell
-
\dvg^{\phi_n} \left( (\mathcal{Q}^{\phi_n}v_n) \ast \chi_\ell\right)
\\
&\quad+
\dvg^{\phi_n} \left( (\mathcal{Q}^{\phi_n}v_n) \ast \chi_\ell\right)
-
\dvgphin \left( (\mathcal{Q}^{\phi_n}v_n) \ast \chi_\ell\right) \nonumber
\end{align}
can be controlled as the first two lines in \eqref{eq:div v_ell}.
As for the principal perturbation $w_o$, we can take care of its divergence by adding
\begin{align*}
w^2_c
\coloneqq 
-
\mathcal{Q}^{\phi_{n+1}} w_o
\end{align*}
so that $w_o+w^2_c = \mathcal{P}^{\phi_{n+1}} w_o$.
We shall see that there is no problem in controlling $\partial_t w^2_c$ because of the particular geometric structure of the perturbation $w_o$, cf. \autoref{ssec:comp} for details.

The vector field $v_{n+1}$ is then defined as the sum 
\begin{align*}
v_{n+1} 
\coloneqq
v_\ell + w_o + w_c
\coloneqq
v_\ell + w_o + w^1_c + w^2_c,
\end{align*}
and has zero space average by construction.

\subsection{The Reynold stress $\mathring{R}_{n+1}$ and the new pressure $q_{n+1}$} \label{ssec:R&P}

Let us preliminarily recall the following result from \cite{DLS13} on the left inverse of the operator $\mbox{div}$, here adapted to deal with the operator $\dvgphin$.
\begin{lem} \label{lem:inverse}
Let $v \in C^\infty(\T^3,\R^3)$ and $\mathcal{R}v$ be the matrix-valued function defined in \cite[Definition 4.2]{DLS13}, so that $\mathcal{R}v$ takes values in the space of symmetric trace-free matrices and $\dvg\mathcal{R}v = v - \frac{1}{(2\pi)^3}\int_{\T^3}v$.
Then the operator $\mathcal{R}^{\phi_{n+1}}$ defined as
\begin{align*}
\mathcal{R}^{\phi_{n+1}} v \coloneqq
[\mathcal{R}(v \circ \phi_{n+1}^{-1})] \circ \phi_{n+1}
\end{align*} 
satisfies $\dvgphin (\mathcal{R}^{\phi_{n+1}} v) = v - \frac{1}{(2\pi)^3}\int_{\T^3}v$.
\end{lem}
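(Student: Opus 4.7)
The proof is essentially a direct unwinding of the definitions, so my plan is to verify the identity by tracking the compositions with $\phi_{n+1}$ and its inverse at each step. First, I would set $u \coloneqq \mathcal{R}^{\phi_{n+1}} v = [\mathcal{R}(v \circ \phi_{n+1}^{-1})] \circ \phi_{n+1}$ and apply the definition of the twisted divergence
\begin{align*}
\dvgphin u = [\dvg(u \circ \phi_{n+1}^{-1})] \circ \phi_{n+1}.
\end{align*}
The key observation is that the inner compositions cancel: $u \circ \phi_{n+1}^{-1} = \mathcal{R}(v \circ \phi_{n+1}^{-1})$, so we get
\begin{align*}
\dvgphin (\mathcal{R}^{\phi_{n+1}} v) = \bigl[\dvg \mathcal{R}(v \circ \phi_{n+1}^{-1})\bigr] \circ \phi_{n+1}.
\end{align*}

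Next I would invoke the defining property of $\mathcal{R}$ recalled from \cite{DLS13}, namely $\dvg \mathcal{R}w = w - \frac{1}{(2\pi)^3}\int_{\T^3} w$, applied to $w = v \circ \phi_{n+1}^{-1}$. This gives
\begin{align*}
\dvgphin (\mathcal{R}^{\phi_{n+1}} v)
= \left[ v \circ \phi_{n+1}^{-1} - \frac{1}{(2\pi)^3} \int_{\T^3} v \circ \phi_{n+1}^{-1} \,dx \right] \circ \phi_{n+1}.
\end{align*}

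Finally, I would use the two facts that $\phi_{n+1}(\cdot,t)$ is measure preserving on the torus (established in \autoref{ssec:moll}), which yields $\int_{\T^3} v \circ \phi_{n+1}^{-1} \,dx = \int_{\T^3} v\,dx$, and that $(v \circ \phi_{n+1}^{-1}) \circ \phi_{n+1} = v$. The constant term is unaffected by composition with $\phi_{n+1}$, so the right-hand side collapses to $v - \frac{1}{(2\pi)^3}\int_{\T^3} v$, completing the verification. Since the argument is purely a chain of identities, there is no real obstacle; the only points one has to be careful about are keeping the compositions in the correct order and invoking measure-preservation of $\phi_{n+1}(\cdot,t)$ at the right moment to reconcile the spatial average.
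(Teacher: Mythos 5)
Your proof is correct and is exactly the computation the paper leaves implicit (the lemma is stated without proof, as an immediate consequence of the definitions of $\dvgphin$ and $\mathcal{R}^{\phi_{n+1}}$). The chain of cancellations, the inverse-divergence identity for $\mathcal{R}$, and the use of measure-preservation of $\phi_{n+1}(\cdot,t)$ to identify the spatial averages are all the right, and essentially forced, steps.
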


Suppose we are given the new pressure $q_{n+1}$. Since we have defined $v_{n+1} = v_\ell + w_o + w_c$ and we are looking for a solution to the Euler-Reynolds system, we shall choose the new Reynold stress $\mathring{R}_{n+1}$ such that 

\begin{align} \label{eq:R&q}
\mathring{R}_{n+1} \coloneqq
\mathcal{R}^{\phi_{n+1}}
\left( \partial_t v_{n+1} + \dvgphin(v_{n+1} \otimes v_{n+1}) + \nablaphin q_{n+1} \right).
\end{align}

Notice that $\partial_t v_{n+1} + \dvgphin(v_{n+1} \otimes v_{n+1}) + \nablaphin q_{n+1}$ has average zero whatever the choice of $q_{n+1}$. Indeed, $v_{n+1}$ has average zero by construction, and thus so does
$\partial_t v_{n+1}$; on the other hand, the vector field $\dvgphin(v_{n+1} \otimes v_{n+1}) + \nablaphin q_{n+1}$ has average zero because $\phi_{n+1}$ is measure preserving and thus
\begin{align*}
\int_{\T^3}
\dvgphin(v_{n+1} \otimes v_{n+1}) + \nablaphin q_{n+1}
&=
\int_{\T^3}
\dvgphin\left( v_{n+1} \otimes v_{n+1} + q_{n+1} Id \right)
\\
&=
\int_{\T^3}
\dvg \left[\left( v_{n+1} \otimes v_{n+1} + q_{n+1} Id \right)\circ \phi_{n+1}^{-1} \right] = 0.
\end{align*}

Therefore from \eqref{eq:R&q} and \autoref{lem:inverse} it follows that $\mathring{R}_{n+1}$ is symmetric and trace-free, and the following identity holds
\begin{align*}
\partial_t v_{n+1} + \dvgphin(v_{n+1}\otimes v_{n+1})
+ 
\nabla^{\phi_{n+1}} q_{n+1} = \dvgphin \mathring{R}_{n+1};
\end{align*}
otherwise said, $(v_{n+1},q_{n+1},\phi_{n+1},\mathring{R}_{n+1})$ is a solution of the Euler-Reynolds system at level $n+1$.
Let us now see how to choose the new pressure term $q_{n+1}$. 
Recalling
\begin{align*}
\partial_t v_\ell 
+
\chi_\ell\ast \dvg^{\phi_n}\left( v_n \otimes v_n\right) 
+
\chi_\ell\ast \nabla^{\phi_n} q_n 
= 
\chi_\ell\ast \dvg^{\phi_n} \mathring{R}_n,
\end{align*}
and $v_{n+1} = v_\ell + w_o + w_c$ we have for any $\tilde{\rho}_\ell$
\begin{align} \label{eq:decomposition_R}
\partial_t v_{n+1} 
&+ 
\dvgphin(v_{n+1}\otimes v_{n+1})
+
\nabla^{\phi_{n+1}} q_\ell
-
\nabla^{\phi_{n+1}} \frac12\left(|w_o|^2-\tilde{\rho}_\ell \right)
\\
&= \nonumber
\underbrace{\left[\partial_t w_o + \dvgphin(w_o \otimes v_\ell) - w_o\, \dvgphin v_\ell\right]}_{=\,transport\, error}
\\
&\quad+ \nonumber
\underbrace{\left[ \dvgphin(v_\ell \otimes v_\ell - (v_n \otimes v_n) \ast \chi_\ell)\right]}_{=\,mollification\, error I}
\\
&\quad+ \nonumber
\underbrace{\left[ \dvg^{\phi_n}\left( \left( v_n \otimes v_n\right) \ast \chi_\ell  + q_\ell Id - \mathring{R}_\ell  \right)
-
\left( \dvg^{\phi_n} \left( v_n \otimes v_n + q_n Id - \mathring{R}_n \right) \right) \ast \chi_\ell
 \right]}_{=\,mollification\, error\, II}
\\
&\quad+ \nonumber
\underbrace{\left[\dvgphin\left(w_o \otimes w_o - \frac{1}{2}\left(|w_o|^2-\tilde{\rho}_\ell \right) Id +\mathring{R}_\ell\right)\right]}_{=\,oscillation\, error}
\\
&\quad+ \nonumber
\underbrace{\left[\left(\dvgphin-\dvg^{\phi_n}\right)
((v_n \otimes v_n) \ast \chi_\ell - \mathring{R}_\ell + q_\ell Id) + w_o\, \dvgphin v_\ell \right]}_{=\,flow\, error}
\\
&\quad+ \nonumber
\underbrace{\left[\partial_t w_c + \dvgphin(v_{n+1} \otimes w_c + w_c \otimes v_{n+1} - w_c \otimes w_c +v_\ell \otimes w_o) \right]}_{=\,compressibility\, error}.
\end{align}
The oscillation error above can be controlled when we take $\tilde{\rho}_\ell$ equal to the energy pumping term defined in \autoref{ssec:pump}, cf. the computations in \autoref{ssec:osc}.
This suggests the choice
\begin{align*}
q_{n+1} &\coloneqq q_\ell - \frac12 \left(|w_o|^2-\tilde{\rho}_\ell \right),
\\
\tilde{\rho}_\ell(x,t) &\coloneqq \frac{2}{r_0} \sqrt{\eta^2  \delta_{n+1}^2 + |\mathring{R}_\ell(x,t)|^2}.
\end{align*}
Notice that the part with $\gamma_n$ in the definition of $\rho_\ell$ does not appear here since it is $x$-independent. 
Incidentally, in view of \eqref{eq:R&q} and since the operator $\mathcal{R}^{\phi_{n+1}}$ is linear, we can split the Reynolds stress into five parts. Each terms will be controlled separately in \autoref{sec:iter}.

\section{Iterative estimates} \label{sec:iter}

In the present section we collect all the necessary estimates on the several error terms from \eqref{eq:decomposition_R}.
Recall the definition of $\tilde{v}$ \eqref{eq:a_k};
as a preliminary step, we need to give good bounds for H\"older norms of the amplitude coefficients $a_k$, their material derivatives $\partial_\tau a_k + i(k \cdot \tilde{v})a_k$, and time derivatives (i.e. with respect to both the variables $s$ and $\tau$) of both of them.
This is done in forthcoming \autoref{prop:aux_est}, whose proof is given in \autoref{app:proof_prop}. 
As a consequence of that we have \autoref{cor:U}, describing H\"older regularity of the Fourier coefficients (with respect to $\xi \in \T^3$) of the matrix-valued field $W \otimes W$.
 
By interpolation between H\"older spaces $C^r_x$, $r \in [1,r_\star+5]$ the results of \autoref{prop:aux_est} and \autoref{cor:U}, here stated for $r\geq 1$, $r \in \N$ still hold true when $r$ is not an integer.
That will be used in the forthcoming subsections to control, respectively, the transport error (\autoref{ssec:trans}), the oscillation error (\autoref{ssec:osc}), the compressibility error (\autoref{ssec:comp}), as well as to give bounds on the pressure (\autoref{ssec:pres}) and the energy (\autoref{ssec:energy}).

\begin{prop} \label{prop:aux_est}
Let $a_k$ be given by \eqref{eq:a_k}, $k \in \Lambda$, and let $s \leq \mathfrak{t}_L$, $\tau \leq \lambda \mathfrak{t}_L$ be fixed, $L \geq 1$. Then there exists a constant $C$ depending only on $\overline{e},K_0,\eta,M_v$ such that for every $r \in [0,1]$:
\begin{align}
\|a_k(\cdot,s,\tau)\|_{C^r_x} 
&\leq \label{est:a_k,r_small}
C L_n^{7/2} \delta_n^{1/2}  \mu^r \varsigma_{n+1}^{r(\alpha-1)},
\\
\|\partial_\tau a_k(\cdot,s,\tau)\|_{C^r_x} 
&\leq \label{est:Dtau(a_k),r_small}
C L_n^{7/2} \delta_n^{1/2}  \mu^r \varsigma_{n+1}^{(r+1)(\alpha-1)},
\\
\|(\partial_\tau a_k + i(k \cdot \tilde{v})a_k)(\cdot,s,\tau)\|_{C^r_x}
&\leq \label{est:Dmat(a_k),r_small}
C L_n^{7/2} \delta_n^{1/2}  \mu^{r-1} \varsigma_{n+1}^{r(\alpha-1)}
%,
%\\
%\|\partial_\tau(\partial_\tau a_k + i(k \cdot \tilde{v})a_k)(\cdot,s,\tau)\|_{C^r_x}
%&\leq \label{est:DtauDmat(a_k),r_small}
%C LL_n^{7/2} \delta_n^{1/2}  \mu^{r-1} \varsigma_{n+1}^{(r+1)(\alpha-1)}
,
\\
\|\partial_s a_k(\cdot,s,\tau)\|_{C_x}
&\leq \label{est:Ds(a_k)0}
CL_n^{7/2} \mu \delta_n^{1/2} \varsigma_{n+1}^{\alpha-2}.
\end{align}
Moreover, there exists a constant $C$ depending only on $\overline{e},K_0,\eta,M_v,r_\star$ such that for every $r \in \N$, $2 \leq r \leq r_\star + 5$:
\begin{align}
\|a_k(\cdot,s,\tau)\|_{C^r_x} 
&\leq \label{est:a_k,r_large}
C L_n^{3r}  \delta_n^{1/2} \mu^r \varsigma_{n+1}^{(r-1)(\alpha-1)} (D_n \ell^{1-r} + \varsigma_{n+1}^{\alpha-1}),
\\
\|\partial_\tau a_k(\cdot,s,\tau)\|_{C^r_x} 
&\leq \label{est:Dtau(a_k),r_large}
C L_n^{3r}  \delta_n^{1/2} \mu^r \varsigma_{n+1}^{r(\alpha-1)} (D_n \ell^{1-r} + \varsigma_{n+1}^{\alpha-1}),
\\
\|(\partial_\tau a_k + i(k \cdot \tilde{v})a_k)(\cdot,s,\tau)\|_{C^r_x}
&\leq \label{est:Dmat(a_k),r_large}
C L_n^{3r}  \delta_n^{1/2} \mu^{r-1} \varsigma_{n+1}^{(r-1)(\alpha-1)} (D_n \ell^{1-r} + \varsigma_{n+1}^{\alpha-1})
%,
%\\
%\|\partial_\tau(\partial_\tau a_k + i(k \cdot \tilde{v})a_k)(\cdot,s,\tau)\|_{C^r_x}
%&\leq \label{est:DtauDmat(a_k),r_large}
%C LL_n^{3r+1}  \delta_n^{1/2} \mu^{r-1} \varsigma_{n+1}^{r(\alpha-1)} (D_n \ell^{1-r} + \varsigma_{n+1}^{\alpha-1})
,
\\
\|\partial_s a_k(\cdot,s,\tau)\|_{C^r_x}
&\leq \label{est:Ds(a_k)}
C L_n^{3r+5/2} \delta_n^{1/2}  \mu^{r+1} \varsigma_{n+1}^{r(\alpha-1)-1} (D_n \ell^{1-r}+\varsigma_{n+1}^{\alpha-1}).
\end{align}
%
%
%
%Finally, 
%\begin{align}
%\|\partial_s a_k(\cdot,s,\tau)\|_{C_x}
%&\leq \label{est:Ds(a_k)0}
%CL_n^{7/2} \mu \delta_n^{1/2} \varsigma_{n+1}^{\alpha-2},
%%\\
%%\|\partial^2_s a_k(\cdot,s,\tau)\|_{C_x}
%%&\leq \label{est:DsDs(a_k)0}
%%CL_n^{13/2}\mu^2 \delta_n^{1/2} \varsigma_{n+1}^{2\alpha-4},
%%\\
%%\|\partial_s \partial_\tau a_k(\cdot,s,\tau)\|_{C_x}
%%&\leq \label{est:DsDtau(a_k)0}
%%CLL_n^{7/2} \mu \delta_n^{1/2} \varsigma_{n+1}^{2\alpha-3},
%%\\
%%\|\partial_s (\partial_\tau a_k + i \cdot (k\cdot \tilde{v}) a_k) (\cdot,s,\tau)\|_{C_x}
%%&\leq \label{est:DsDmat(a_k)0}
%%CL_n^{7/2} \delta_n^{1/2} \varsigma_{n+1}^{\alpha-2},
%\end{align}
%whereas for $r \in \N$, $1 \leq r \leq r_\star + 5$ we have:
%\begin{align}
%\|\partial_s a_k(\cdot,s,\tau)\|_{C^r_x}
%&\leq \label{est:Ds(a_k)}
%C L_n^{3r+5/2} \delta_n^{1/2}  \mu^{r+1} \varsigma_{n+1}^{r(\alpha-1)-1} (D_n \ell^{1-r}+\varsigma_{n+1}^{\alpha-1})
%%, 
%%\\
%%\|\partial^2_s a_k(\cdot,s,\tau)\|_{C^r_x}
%%&\leq \label{est:DsDs(a_k)}
%%C L_n^{7r/2+13/2} \mu^{r+2} \delta_n^{1/2} 
%%\varsigma_{n+1}^{(r+1)(\alpha-1)-2} (D_n \ell^{1-r}+\varsigma_{n+1}^{\alpha-1}),
%%\\
%%\|\partial_s \partial_\tau a_k(\cdot,s,\tau)\|_{C^r_x}
%%&\leq \label{est:DsDtau(a_k)}
%%C LL_n^{3r+7/2} \mu^{r+1} \delta_n^{1/2}  D_n \ell^{1-r} \varsigma_{n+1}^{(r+1)(\alpha-1)-1}, 
%%\\
%%\|\partial_s (\partial_\tau a_k + i \cdot (k\cdot \tilde{v}) a_k) &(\cdot,s,\tau)\|_{C^r_x}
%%\leq \label{est:DsDmat(a_k)}
%%C L_n^{3r+7/2} \mu^r \delta_n^{1/2}  D_n \ell^{1-r} \varsigma_{n+1}^{r(\alpha-1)-1}
%.
%\end{align}
\end{prop}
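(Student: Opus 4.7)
The plan is to decompose $a_k$ as a product (and composition) of three factors, $\sqrt{\rho_\ell(y,s)}$, $\gamma_k^{(j)}(R_\ell/\rho_\ell)(y,s)$, and $\psi_k^{(j)}(\tilde v(y,s),\tau)$, and then derive the estimates by combining three ingredients: (a) mollification estimates for $v_\ell$ and $\mathring{R}_\ell$ coming from the inductive bounds \eqref{eq:est_Reynolds}, \eqref{eq:C^1,1}; (b) the flow bounds of \autoref{lem:flow} / its localized version in \autoref{ssec:localize}, which control $\dot\phi_{n+1}$ and $\ddot\phi_{n+1}$ through $\varsigma_{n+1}^{\alpha-1}$ and $\varsigma_{n+1}^{\alpha-2}$ respectively; (c) the pointwise bounds \eqref{e:phiestimate0}--\eqref{e:phiestimate2} for $\psi_k^{(j)}$, with standard Hölder composition and product estimates (deferred to \autoref{sec:schauder}).

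First I would establish the basic preliminary bounds: on $t\leq\mathfrak{t}_L$ we have $\|v_\ell\|_{C_x}\leq CL_n^{1/2}$ and $\|v_\ell\|_{C^r_x}\leq CL_n D_n \ell^{1-r}$ for $r\geq 1$; similarly $\|\mathring R_\ell\|_{C_x}\leq C\eta L_n\delta_{n+1}$ and $\|\mathring R_\ell\|_{C^r_x}\leq CL_nD_n\ell^{1-r}$ for $r\geq 1$; and $\|\dot\phi_{n+1}\|_{C^r_x}\leq CL\varsigma_{n+1}^{\alpha-1}$, $\|\ddot\phi_{n+1}\|_{C^r_x}\leq CL\varsigma_{n+1}^{\alpha-2}$. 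Combining the latter with the former gives $\|\tilde v\|_{C_x}\leq CL_n^{1/2}$ and $\|\tilde v\|_{C^r_x}\leq CL_n\bigl(D_n\ell^{1-r}+\varsigma_{n+1}^{\alpha-1}\bigr)$ for $r\geq 1$; here we use $D_n\leq\varsigma_{n+1}^{\alpha-1}$ and $L\leq L_n$ (valid for $L\leq\mathfrak{t}_L$). Because of the assumption $\eta\leq\underline{e}/(8C(\overline{e}^{1/2}+1))$ and \autoref{lem:bounds_tilde_e} plus conditions \eqref{eq:conditions_gamma1}--\eqref{eq:conditions_gamma2}, the function $\rho_\ell$ satisfies the two-sided bound $c\delta_n\leq\rho_\ell\leq C L_n\delta_n$ uniformly (with $c$ depending only on $\underline e$ and $C$ on $\overline e$), and the argument $R_\ell/\rho_\ell$ stays in the ball $B_{r_0}(Id)$ by \eqref{eq:well_def}; thus $\sqrt{\rho_\ell}$ and $\gamma_k^{(j)}(R_\ell/\rho_\ell)$ are smooth functions of their arguments with derivatives controlled universally.

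Next, for the range $r\in[0,1]$ I would apply standard interpolation/composition inequalities: $\|\sqrt{\rho_\ell}\|_{C^r_x}\leq C\delta_n^{1/2}\bigl(1+L_n D_n\ell^{1-r}/\delta_n\bigr)^r\leq C L_n^{1/2}\delta_n^{1/2}$ using $D_n\ell\leq\eta\delta_n$ and collecting the implicit growth with $L_n$. The same applies to $\gamma_k^{(j)}(R_\ell/\rho_\ell)$. For $\psi_k^{(j)}(\tilde v(\cdot,s),\tau)$, the bound \eqref{e:phiestimate0} together with the chain rule yields $\|\psi_k^{(j)}(\tilde v,\tau)\|_{C^r_x}\leq C\mu^r\|\tilde v\|_{C^r_x}^r\leq C\mu^r\varsigma_{n+1}^{r(\alpha-1)}L_n^r$; multiplying the three factors gives \eqref{est:a_k,r_small}. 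For \eqref{est:Dtau(a_k),r_small} I would repeat the same computation with $\partial_\tau\psi_k^{(j)}$ in place of $\psi_k^{(j)}$, using \eqref{e:phiestimate3} with $h=1$ and $V=\|\tilde v\|_{C_x}\lesssim\varsigma_{n+1}^{\alpha-1}$; the extra factor $V$ produces precisely the additional $\varsigma_{n+1}^{\alpha-1}$ on the right-hand side. For \eqref{est:Dmat(a_k),r_small}, the crucial cancellation \eqref{e:phiestimate2} replaces $\mu^r$ by $\mu^{r-1}$ in the estimate for the material derivative of the $\psi_k^{(j)}$ factor, and the other two factors contribute only amplitudes in $\delta_n^{1/2}$. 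The estimate \eqref{est:Ds(a_k)0} comes from the chain rule $\partial_s\psi_k^{(j)}(\tilde v,\tau)=\partial_\tau\psi_k^{(j)}\cdot 0+(D_v\psi_k^{(j)})\cdot\partial_s\tilde v$ (with $\tau$ held as an independent variable), combined with the bound $\|\partial_s\tilde v\|_{C_x}\leq\|\partial_t v_\ell\|_{C_x}+\|\ddot\phi_{n+1}\|_{C_x}\leq CL_n\varsigma_{n+1}^{\alpha-2}$ (the time-derivative of $v_\ell$ is controlled by the iterative $C^1_{t,x}$-bound \eqref{eq:C^1,1}).

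For the larger range $2\leq r\leq r_\star+5$, the strategy is identical but the Faà di Bruno-type estimate for composition in $C^r_x$ produces a dominant term $\|\nabla f(\cdot)\|\cdot\|g\|_{C^r_x}$ when $g$ varies at spatial scale $\ell$; after multiplying the three factors this yields the factor $D_n\ell^{1-r}+\varsigma_{n+1}^{\alpha-1}$ in \eqref{est:a_k,r_large}--\eqref{est:Ds(a_k)}. The $L_n^{3r}$ (resp.\ $L_n^{3r+5/2}$) factor absorbs the repeated application of the chain rule, each occurrence of which costs a power of $L_n$ coming from the flow bounds. The material-derivative estimate \eqref{est:Dmat(a_k),r_large} again uses \eqref{e:phiestimate2} to gain the factor $\mu^{-1}$. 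The main technical obstacle throughout is the careful bookkeeping of powers of $L_n$, $\mu$, $\varsigma_{n+1}^{\alpha-1}$, $D_n$ and $\ell$: while each individual composition/product estimate is standard, one must verify that the \emph{worst} term in the expansion of $D_x^r$ applied to a product of three factors depending on $y,s$ through $\rho_\ell$, $R_\ell$ and $\tilde v$ is precisely the one claimed, and in particular that using $D_n\leq\varsigma_{n+1}^{\alpha-1}$ (from \autoref{ssec:main_prop}) one can collapse several competing terms into the single expression $D_n\ell^{1-r}+\varsigma_{n+1}^{\alpha-1}$ without loss. Because this bookkeeping is essentially mechanical but lengthy, I would defer the full derivation to \autoref{app:proof_prop} as the authors do.
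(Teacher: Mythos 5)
Your decomposition of $a_k$ into the three factors $\sqrt{\rho_\ell}\cdot\Gamma\cdot\Psi$, with $\Gamma=\gamma_k^{(j)}(R_\ell/\rho_\ell)$ and $\Psi=\psi_k^{(j)}(\tilde v,\cdot)$, the choice of ingredients (flow bounds from \autoref{ssec:localize}, the $\psi_k^{(j)}$ estimates \eqref{e:phiestimate0}--\eqref{e:phiestimate2}, the inductive estimates), and the plan for the material derivative, $\partial_\tau$, $\partial_s$ and the range $r\geq 2$ all match the argument of \autoref{app:proof_prop}. However, the intermediate claim $\|\sqrt{\rho_\ell}\|_{C^r_x}\leq CL_n^{1/2}\delta_n^{1/2}$ for $r\in[0,1]$ is false and hides the actual difficulty of these estimates: already at $r=1$ one has $[\sqrt{\rho_\ell}]_{C^1_x}\sim\|\nabla\rho_\ell\|_{C_x}/\sqrt{\min\rho_\ell}\sim L_n D_n\delta_n^{-1/2}$, which dwarfs $L_n^{1/2}\delta_n^{1/2}$ since $D_n\gg 1\geq\delta_n$. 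The condition $D_n\ell\leq\eta\delta_n$ you invoke controls the mollification error $\|v_\ell-v_n\|_{C_x}$, not the H\"older seminorm of $\sqrt{\rho_\ell}$.

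The paper in fact obtains $\|\sqrt{\rho_\ell}\|_{C^r_x}\leq CL_n^2\delta_n^{-1/2}\delta_{n+1}^{1-r}D_n^r$, by interpolating $\|\mathring R_\ell\|_{C^r_x}$ between \eqref{eq:est_Reynolds} and \eqref{eq:C^1,1} and using the lower bound $\rho_\ell\gtrsim\delta_n$, and similarly $\|\Gamma\|_{C^r_x}\leq CL_n^3\delta_n^{-1}\delta_{n+1}^{1-r}D_n^r$. The passage from the resulting Leibniz cross terms, of size $\sim L_n^{7/2}\delta_n^{-1/2}\delta_{n+1}^{1-r}D_n^r$, to the claimed $L_n^{7/2}\delta_n^{1/2}\mu^r\varsigma_{n+1}^{r(\alpha-1)}$ uses, in addition to $\delta_{n+1}\leq\delta_n$ and $D_n\leq\varsigma_{n+1}^{\alpha-1}$, the parameter fact $\mu\gtrsim\delta_n^{-1}$, which is available because $\mu\sim\ell^{-r_\star}$ with $r_\star\geq 7$ and $\ell^{-1}\sim(D_n/\delta_{n+3}^{4/3})^{1/\alpha}$. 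Without this absorption into $\mu^r$, the cross terms carrying $\|\sqrt{\rho_\ell}\|_{C^r_x}$ or $\|\Gamma\|_{C^r_x}$ (as opposed to $\|\Psi\|_{C^r_x}$) simply do not close, so this is a genuine gap in your proposal rather than deferred bookkeeping.
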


Let us denote $\mathcal{S}^{3 \times 3}$ the space of symmetric $3 \times 3$ matrices.
Arguing as in \cite[Proposition 6.1]{DLS13} we can also deduce the following:

\begin{cor} \label{cor:U}
Let $W=W(y,s,\xi,\tau)$ be defined by \eqref{eq:def_W}. Then the matrix-valued field $W \otimes W$ can be written as
\begin{align*}
W \otimes W (y,s,\xi,\tau)
=
R_\ell (y,s) 
+
\sum_{1 \leq |k| \leq 2\lambda_0} U_k(y,s,\tau) e^{i k \cdot \xi},
\end{align*}
with $U_k \in C^\infty_{loc}(\mathbb{T}^3 \times \R^2, \mathcal{S}^{3 \times 3})$, $k \in \Lambda$, satisfying $U_k k = \tfrac{1}{2} Tr(U_k)k$. 
In addition, for every fixed $s \leq \mathfrak{t}_L$, $\tau \leq \lambda \mathfrak{t}_L$, $L \geq 1$, and for every $r \in (0,1]$ there exists a constant $C$ depending only on $\overline{e},K_0,\eta,M_v$ such that:
\begin{align}
\|U_k(\cdot,s,\tau)\|_{C^r_x} 
&\leq \label{est:U_k,r_small}
C L_n^4 \delta_n  \mu^r \varsigma_{n+1}^{r(\alpha-1)}
%,
%\\
%\|\partial_\tau U_k(\cdot,s,\tau)\|_{C^r_x} 
%&\leq \label{est:Dtau(U_k),r_small}
%C L L_n^4 \delta_n  \mu^r \varsigma_{n+1}^{(r+1)(\alpha-1)}
.
\end{align}
Moreover, there exists $C$ depending on $\overline{e},K_0,\eta,M_v,r_\star$ such that for every $r \in \N$, $2 \leq r \leq r_\star+5$:
\begin{align}
\|U_k(\cdot,s,\tau)\|_{C^r_x} 
&\leq \label{est:U_k,r_large}
C L_n^{3r+5/2}  \delta_n \mu^r \varsigma_{n+1}^{(r-1)(\alpha-1)} (D_n \ell^{1-r} + \varsigma_{n+1}^{\alpha-1})
%,
%\\
%\|\partial_\tau U_k(\cdot,s,\tau)\|_{C^r_x} 
%&\leq \label{est:Dtau(U_k),r_large}
%C L L_n^{3r+3/2}  \delta_n \mu^r \varsigma_{n+1}^{r(\alpha-1)} (D_n \ell^{1-r} + \varsigma_{n+1}^{\alpha-1}),
%\\
%\|\partial_s U_k(\cdot,s,\tau)\|_{C^r_x}
%&\leq \label{est:ds(U_k)}
%C L_n^{3r+4} \mu^{r+1} \delta_n  D_n \ell^{1-r} \varsigma_{n+1}^{r(\alpha-1)-1}
.
\end{align} 
\end{cor}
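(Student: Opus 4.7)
The plan is to adapt the strategy of \cite[Proposition 6.1]{DLS13} to our time-dependent amplitudes, extracting the constant (in $\xi$) Fourier mode and bundling the remaining modes into the $U_k$'s.

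First I would substitute the definition \eqref{eq:def_W} into $W \otimes W$ to obtain
\begin{align*}
W \otimes W (y,s,\xi,\tau)
=
\sum_{k, k' \in \Lambda} a_k(y,s,\tau) a_{k'}(y,s,\tau) \, E_k \otimes E_{k'} \, e^{i(k+k')\cdot \xi}
\end{align*}
and split this sum into the diagonal contribution ($k' = -k$) and the off-diagonal one ($k+k' \neq 0$). For the diagonal part, I would use the reality relation $a_{-k} = \overline{a_k}$ (inherited from $A_{-k}=A_k$, $\gamma_k^{(j)}=\gamma_{-k}^{(j)}$ and the fact that $\psi_{-k}^{(j)}=\overline{\psi_k^{(j)}}$), together with the algebraic identity $E_k \otimes E_{-k} = \tfrac12 M_k + i(B_k \otimes A_k - A_k \otimes B_k)$ where $B_k = (k/|k|) \times A_k$; the imaginary piece is antisymmetric under $k \mapsto -k$ (since $B_{-k}=-B_k$, $A_{-k}=A_k$), while $|a_k|^2$ is symmetric, so it cancels and we are left with $\sum_{k \in \Lambda} |a_k|^2 \tfrac{M_k}{2}$.

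The next step is to identify this with $R_\ell$. Since $\alpha_l$ and $\alpha_{l'}$ have disjoint supports for $l \neq l' \in \mathcal{C}_j$, one obtains
\begin{align*}
|\psi_k^{(j)}(\tilde v,\tau)|^2 = \sum_{l \in \mathcal{C}_j} \alpha_l(\mu \tilde v)^2,
\end{align*}
which is independent of $k$ and $\tau$, and satisfies $\sum_j \sum_{l \in \mathcal{C}_j} \alpha_l(\mu \tilde v)^2 = 1$. Hence
\begin{align*}
\sum_{k \in \Lambda} |a_k|^2 \frac{M_k}{2}
=
\rho_\ell \sum_j \Bigl(\sum_{l \in \mathcal{C}_j}\alpha_l(\mu \tilde v)^2 \Bigr) \sum_{k \in \Lambda_j} \bigl(\gamma_k^{(j)}(R_\ell/\rho_\ell)\bigr)^2 \frac{M_k}{2}.
\end{align*}
The inner sum over $\Lambda_j$ equals $R_\ell/\rho_\ell$ by the geometric lemma recalled in \autoref{ssec:w_o}, which is applicable thanks to \eqref{eq:well_def}, and the outer sum in $j$ yields $\rho_\ell \cdot R_\ell/\rho_\ell = R_\ell$, as desired.

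For the oscillatory part I would group the remaining terms by the total wavenumber $h = k+k'$, $1 \leq |h| \leq 2\lambda_0$, and define
\begin{align*}
U_h(y,s,\tau) \coloneqq \sum_{\substack{k,k' \in \Lambda \\ k+k' = h}} a_k(y,s,\tau) a_{k'}(y,s,\tau) \, E_k \otimes E_{k'}.
\end{align*}
Symmetry of $U_h$ follows from the involution $(k,k') \leftrightarrow (k',k)$. The property $U_h h = \tfrac12 \operatorname{Tr}(U_h) h$ is an algebraic identity: using $E_k \cdot k = 0 = E_{k'} \cdot k'$ and $|k|=|k'|=\lambda_0$ (which implies $E_k \cdot k' = -E_k \cdot k + E_k\cdot(k+k') = E_k \cdot h$ and similarly for $k \leftrightarrow k'$), the symmetrized action $(E_k \otimes E_{k'} + E_{k'} \otimes E_k) h$ reduces to $(E_k \cdot E_{k'})(k+k')$, exactly as in the classical Beltrami identity $\dvgphin(E \otimes E) = \nablaphin(|E|^2/2)$ read in Fourier; this yields the pointwise relation claimed.

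The H\"older bounds \eqref{est:U_k,r_small}--\eqref{est:U_k,r_large} are then a direct consequence of \autoref{prop:aux_est}, combined with the standard product inequality $\|fg\|_{C^r_x} \lesssim \|f\|_{C^r_x}\|g\|_{C_x} + \|f\|_{C_x}\|g\|_{C^r_x}$ applied to the finite sum defining $U_h$. The step I expect to require some bookkeeping is the verification of the $U_h h = \tfrac12 \operatorname{Tr}(U_h) h$ identity (since one must argue it pointwise in $(y,s,\tau)$, treating $a_k,a_{k'}$ as scalars, and exploit the $|k|=|k'|$ cancellation), but this is essentially the same algebraic manipulation as in \cite[Proposition 6.1]{DLS13}; the novelty here is confined to the $L_n$ factors and the $\varsigma_{n+1}$-dependent derivative losses that are already encoded in \autoref{prop:aux_est}.
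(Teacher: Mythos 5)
Your decomposition and algebra are precisely those of \cite[Proposition 6.1]{DLS13}, which is what the paper implicitly invokes (it gives no independent proof of the corollary, just the pointer ``Arguing as in [DLS13, Prop.~6.1]''). The constant-mode extraction via the reality cancellation and $E_k\otimes E_{-k}+E_{-k}\otimes E_k=M_k$, the partition-of-unity identity $\sum_j\sum_{l\in\mathcal{C}_j}\alpha_l(\mu\tilde v)^2=1$, the geometric lemma producing $R_\ell$, and the Beltrami identity yielding $U_h h=\tfrac12\mathrm{Tr}(U_h)h$ are all correct and argued cleanly.

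The one gap is in the $L_n$-exponent bookkeeping at the very end, which you describe as a ``direct consequence'' but which does not quite close as written. Applying \autoref{prop:aux_est} verbatim at $r=0$ gives $\|a_k\|_{C_x}\leq CL_n^{7/2}\delta_n^{1/2}$; the product rule then yields $\|U_h\|_{C^r_x}\lesssim L_n^7\delta_n\mu^r\varsigma_{n+1}^{r(\alpha-1)}$ for $r\in(0,1]$, and $L_n^{3r+7/2}$ in place of $L_n^{3r+5/2}$ for integer $r\geq 2$ — strictly worse than the claim by a factor $L_n^3$ (resp.\ $L_n$), which is unbounded as $L\to\infty$. To match the statement you must use, for the $C_x$ factor in each product, the sharper pointwise bound $\|a_k\|_{C_x}\leq C\sqrt{\|\rho_\ell\|_{C_x}}\leq CL_n^{1/2}\delta_n^{1/2}$, which follows immediately from $\rho_\ell\leq CL_n\delta_n$ together with $|\gamma_k^{(j)}|,\,|\psi_k^{(j)}|\leq C$, and which is exactly the observation underlying \eqref{eq:w_o_bound}. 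With that substitution the leading product gives $L_n^4$ for $r\in(0,1]$, and for $r\geq 2$ the terms in $\sum_{r_1+r_2=r}\|a_k\|_{C^{r_1}_x}\|a_{k'}\|_{C^{r_2}_x}$ are all dominated by $L_n^{3r+1/2}\delta_n\mu^r\varsigma_{n+1}^{(r-1)(\alpha-1)}(D_n\ell^{1-r}+\varsigma_{n+1}^{\alpha-1})\leq L_n^{3r+5/2}\cdots$ (the cross terms are controlled using $D_n\leq\varsigma_{n+1}^{\alpha-1}$ and $\ell\leq 1$), matching the claim.
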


In the remaining part of the present section we prove the iterative estimates necessary to the proof of our main \autoref{prop:it}. The underlying rationale consists in giving individual H\"older bounds to the error terms appearing in the decomposition \eqref{eq:decomposition_R}.
Some of these errors - namely, the transport error, the oscillation error and the compressibility error - are controlled via the same stationary phase Lemma (cf. \cite[Proposition 4.4]{DLS14}) previously used in the deterministic setting, or a slight modification of it detailed in \autoref{prop:stat_phase_lem}.
Worth to mention is the presence of the mollification error II and flow error terms, respectively due to the fact that the space-time dependent differential operator $\dvg^{\phi_n}$ does not commute with convolutions and the mollification of the noise $\phi_n$. These terms are very specific to our construction and impose a relatively fast decay of $\varsigma_n$ to be dealt with.
Since we are going to use \autoref{prop:aux_est} and \autoref{cor:U} in the following, the constants $C$ in the remainder of this section may depend on parameters $\overline{e},K_0,\eta,M_v,r_\star$ (without mentioning explicitly).

\subsection{Estimate on the transport error} \label{ssec:trans}

The introduction of the modified Beltrami flows, as well as the exact form of the amplitude coefficients $a_k$, $k \in \N$ in previous subsections, is justified by the following observation. 
Let us rewrite the transport error in \eqref{eq:decomposition_R} as
\begin{align*}
\partial_t w_o + \dvgphin(w_o \otimes v_\ell) - w_o\, \dvgphin v_\ell
&=
\partial_t w_o + (v_\ell \cdot \nabla^{\phi_{n+1}}) w_o.
\end{align*}
Denoting $\Omega_k (\xi) \coloneqq E_k e^{ik\cdot \xi}$ and recalling the expression of $w_o$:
\begin{align*}
w_o(x,t) 
&= 
\sum_{k \in \Lambda} a_k(x,t,\lambda t)
\,
\Omega_k (\lambda \phi_{n+1}(x,t)),
\end{align*}
we can compute explicitly
\begin{align*}
\partial_t w_o (x,t)
&=
\sum_{k \in \Lambda}
\partial_s a_k(x,t,\lambda t)
\,
\Omega_k(\lambda \phi_{n+1}(x,t))
\\
&\quad
+
\lambda \sum_{k \in \Lambda}
\partial_\tau a_k(x,t,\lambda t)
\,
\Omega_k(\lambda \phi_{n+1}(x,t))
\\
&\quad
+
\lambda \sum_{k \in \Lambda}
(i k \cdot \dot{\phi}_{n+1}(x,t)) a_k (x,t,\lambda t)
\,
\Omega_k(\lambda \phi_{n+1}(x,t)),
\end{align*}
and
\begin{align*}
(v_\ell \cdot \nabla^{\phi_{n+1}}) w_o (x,t)
&=
\sum_{k \in \Lambda}
v_\ell(x,t) \cdot \nabla^{\phi_{n+1}}a_k (x,t,\lambda t)
\,\Omega_k(\lambda \phi_{n+1}(x,t))
\\
&\quad +
\lambda \sum_{k \in \Lambda}
(i k \cdot v_\ell(x,t)) a_k(x,t,\lambda t)\,\Omega_k(\lambda \phi_{n+1}(x,t)).
\end{align*}
Therefore, recalling the definition of $\tilde{v} \coloneqq v_\ell + \dot{\phi}_{n+1}$ we have
\begin{align*}
\partial_t w_o + (v_\ell \cdot \nabla^{\phi_{n+1}}) w_o
&=
\lambda
\sum_{k \in \Lambda}
\left( \partial_\tau a_k + i (k \cdot \tilde{v}) a_k \right) 
\Omega_k
+
\sum_{k \in \Lambda}
(v_\ell \cdot \nabla^{\phi_{n+1}}a_k + \partial_s a_k ) \Omega_k
,
\end{align*}
where in the expression above the left-hand-side is evaluated at $(x,t)$, whereas the right-hand-side is evaluated at the quadruple $(y,s,\xi,\tau)=(x,t, \lambda\phi_{n+1}(x,t),\lambda t)$.

\begin{prop} \label{prop:R_trans}
Let us denote $\mathring{R}^{trans} \coloneqq \mathcal{R}^{\phi_{n+1}} (\partial_t w_o + (v_\ell \cdot \nabla^{\phi_{n+1}}) w_o) $. Then for every $r \geq r_\star+2$ and $\delta>0$ sufficiently small there exists a constant $C$ such that almost surely for every $L \in \N$, $L \geq 1$
\begin{align*}
\|\mathring{R}^{trans}\|_{C_{\leq \mathfrak{t}_L} C_x} 
&\leq
CL_n^{3r+5}\lambda^{\delta} \mu^{-1} \delta_n^{1/2},
\\
\|\mathring{R}^{trans}\|_{C_{\leq \mathfrak{t}_L} C^1_x} 
&\leq
C L_n^{3r+8} \lambda^{\delta} \mu \delta_n^{1/2} \varsigma_{n+1}^{\alpha-1}
%,
%\\
%\|\mathring{R}^{trans}\|_{C^\beta_{\leq \mathfrak{t}_L} C_x} 
%&\leq
%CL^3L_n^{7r/2+8} \lambda^{\beta+\delta} \mu^{-1} \delta_n^{1/2} \varsigma_{n+1}^{\beta(\alpha-1)}
.
\end{align*}
\end{prop}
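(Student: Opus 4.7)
The plan is to exploit the material-derivative cancellation built into the amplitudes $a_k$, then straighten the stochastic flow to reduce $\mathcal{R}^{\phi_{n+1}}$ to the flat antidivergence $\mathcal{R}$, and finally invoke the stationary phase lemma \autoref{prop:stat_phase_lem}. The starting point is the identity already displayed in the excerpt,
\begin{align*}
\partial_t w_o+(v_\ell\cdot\nablaphin)w_o
&=\lambda\sum_{k\in\Lambda}\bigl(\partial_\tau a_k+i(k\cdot\tilde v)a_k\bigr)\Omega_k(\lambda\phi_{n+1})\\
&\quad+\sum_{k\in\Lambda}\bigl(v_\ell\cdot\nablaphin a_k+\partial_s a_k\bigr)\Omega_k(\lambda\phi_{n+1}),
\end{align*}
where the $a_k$ are evaluated at $(x,t,\lambda t)$. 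The $\lambda$-prefactor in the first sum is exactly compensated by the $\mu^{-1}$-gain contained in \eqref{est:Dmat(a_k),r_small} and \eqref{est:Dmat(a_k),r_large}, while the second sum carries no $\lambda$ at all; both are therefore of the same, subleading order.

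Straighten the flow using $\mathcal{R}^{\phi_{n+1}}f=[\mathcal{R}(f\circ\phi_{n+1}^{-1})]\circ\phi_{n+1}$: composing with $\phi_{n+1}^{-1}(\cdot,t)$ turns $e^{ik\cdot\lambda\phi_{n+1}(x,t)}$ into the pure plane wave $e^{i\lambda k\cdot y}$. Setting $\tilde b_k(y,s):=(\partial_\tau a_k+i(k\cdot\tilde v)a_k)(\phi_{n+1}^{-1}(y,s),s,\lambda s)$, and analogously $\tilde c_k$ for the second sum, \autoref{prop:stat_phase_lem} then yields, for any small $\delta>0$ and any integer $r\geq r_\star$,
\begin{align*}
\|\mathcal{R}(f\,e^{i\lambda k\cdot\,\cdot\,})\|_{C^0_y}\leq C\lambda^{\delta-1}\|f\|_{C^0_y}+C\lambda^{\delta-r}\|f\|_{C^r_y}.
\end{align*}
Chain rule together with the flow bounds of \autoref{lem:flow}, applied on the interval $[0,\mathfrak{t}_L]$, controls the $C^r_y$ norms of $\tilde b_k,\tilde c_k$ by the $C^r_x$ norms of the original $a_k$ up to factors polynomial in $L$; tracking the Fa\`a di Bruno combinatorics one finds that $r$ derivatives cost a factor $L_n^{3r+5/2}$. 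Plugging in the leading term: the $\lambda$-prefactor and $\lambda^{\delta-1}$ combine with \eqref{est:Dmat(a_k),r_small} at $r=0$ to give the principal contribution $CL_n^{7/2}\lambda^\delta\mu^{-1}\delta_n^{1/2}$, while the $C^r$ remainder, controlled through \eqref{est:Dmat(a_k),r_large}, is absorbed into the principal part precisely by virtue of the assumption \eqref{eq:ass_lambda}, which was designed so that $\lambda^{-r}\|\tilde b_k\|_{C^r}\lesssim\lambda^{-1}\|\tilde b_k\|_{C^0}$ modulo $L_n$ powers. For the second sum, \eqref{est:Ds(a_k)0} and the lower bound $\lambda\geq\mu^2\varsigma_{n+1}^{\alpha-2}$ give $\lambda^{-1}\|\partial_s a_k\|_{C^0}\lesssim L_n^{7/2}\mu^{-1}\delta_n^{1/2}$, and \eqref{est:a_k,r_small} handles $v_\ell\cdot\nablaphin a_k$ analogously. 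Recomposing with $\phi_{n+1}$ is harmless at the $C^0$ level, and summing all contributions gives the first claimed bound with prefactor $L_n^{3r+5}$.

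For the $C^1_x$ estimate, differentiate once: the derivative falls either on the outer composition $\phi_{n+1}$ (costing $\|\phi_{n+1}\|_{C_{\leq\mathfrak{t}_L}C^1_x}\leq CL$ by \autoref{lem:flow}) or, after straightening, on the amplitude inside $\mathcal{R}$, at which point a parallel stationary-phase argument using \eqref{est:a_k,r_small}--\eqref{est:Ds(a_k)} with one extra space derivative yields the additional factor $\mu\varsigma_{n+1}^{\alpha-1}$ predicted by the scaling between \eqref{est:Dmat(a_k),r_small} at $r=0$ and $r=1$. Three further powers of $L_n$ appear from the extra chain-rule composition, giving the announced $L_n^{3r+8}$ prefactor. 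The main obstacle is the bookkeeping: one must pick $r\geq r_\star+2$ large enough that the $C^r$ stationary-phase remainder genuinely sits below the $C^0$ leading term after all $L_n$ powers generated by Fa\`a di Bruno are collected, and simultaneously verify that \eqref{eq:ass_lambda} is sharp enough to close the loop for every admissible $r$. The $\lambda^\delta$ loss in both estimates comes purely from the fractional refinement of the stationary-phase lemma and seems unavoidable, since we cannot afford $r$ to grow without bound (otherwise $L_n^{3r+5}$ would blow up).
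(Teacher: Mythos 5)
Your proof follows the same route as the paper: the same three-part decomposition of $\partial_t w_o + (v_\ell\cdot\nabla^{\phi_{n+1}})w_o$ (material-derivative term, $v_\ell\cdot\nabla^{\phi_{n+1}}a_k$ term, $\partial_s a_k$ term), the same straightening of $\mathcal{R}^{\phi_{n+1}}$ to reduce to the flat $\mathcal{R}$, the same application of the stationary-phase lemma with \autoref{prop:aux_est} to control the amplitude coefficients, the same use of \eqref{eq:ass_lambda} to absorb the $C^r$ remainders, and the same commuting-the-derivative-through-$\mathcal{R}$ strategy for the $C^1_x$ bound. The argument is correct and essentially identical to the paper's, though the paper carries out the $L_n$-power bookkeeping more explicitly (labelling the three pieces $\mathring{R}^{trans}_1,\mathring{R}^{trans}_2,\mathring{R}^{trans}_3$ and giving separate $C^\delta_x$ and $C^1_x$ estimates for each before recombining).
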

\begin{proof}
It is convenient to divide $\mathring{R}^{trans}$ into three terms:
\begin{align*}
\mathring{R}^{trans} = \mathring{R}^{trans}_1 + \mathring{R}^{trans}_2 + \mathring{R}^{trans}_3,
\end{align*}
where, denoting $b_k \coloneqq (\partial_\tau a_k + i(k\cdot \tilde{v})a_k) \circ \phi_{n+1}^{-1}$ and $\Omega_k^\lambda \coloneqq \Omega_k(\lambda\, \cdot)$, with a slight abuse of notation we have denoted:
\begin{align*}
\mathring{R}^{trans}_1  
&\coloneqq
\lambda \mathcal{R}^{\phi_{n+1}} \left( \sum_{k \in \Lambda} 
(b_k \,\Omega_k^\lambda) \circ \phi_{n+1} \right),
\\
\mathring{R}^{trans}_2  
&\coloneqq
\mathcal{R}^{\phi_{n+1}} \left( \sum_{k \in \Lambda} 
(v_\ell \circ \phi_{n+1}^{-1} \cdot \nabla_y (a_k\circ \phi_{n+1}^{-1}) \,\Omega_k^\lambda) \circ \phi_{n+1} \right),
\\
\mathring{R}^{trans}_3  
&\coloneqq
\mathcal{R}^{\phi_{n+1}} \left( \sum_{k \in \Lambda} 
(\partial_s a_k \circ \phi_{n+1}^{-1} \,\Omega_k^\lambda) \circ \phi_{n+1} \right).
\end{align*}

Let us control each term separately, starting from $\mathring{R}^{trans}_1$.
\autoref{prop:aux_est} will be implicitly used throughout this section.
Applying \autoref{lem:C0_Cm} and \autoref{prop:stat_phase_lem} with $r \geq r_\star+2$ and $\delta$ sufficiently small, we obtain for every $L \in \N$, $L \geq 1$
\begin{align*}
\| \mathring{R}^{trans}_1\|_{C_{\leq \mathfrak{t}_L}C^\delta_x}
&\leq
CL^\delta  \sum_{k \in \Lambda} \left( 
\lambda^{\delta} \| b_k \|_{C_x}
+
\lambda^{1+\delta-r} [b_k]_{C^r_x}
+
\lambda^{1-r} [b_k]_{C^{r+\delta}_x} \right)
\\
&\leq 
C L^\delta L_n^{7/2} \lambda^{\delta} \mu^{-1} \delta_n^{1/2}  
\\
&\quad+
C L^{r+\delta} L_n^{3r} \lambda^{1+\delta-r} \mu^{r-1}
\delta_n^{1/2}  \varsigma_{n+1}^{(r-1)(\alpha-1)} (D_n \ell^{1-r} + \varsigma_{n+1}^{\alpha-1})
\\
&\quad+
C L^{r+2\delta} L_n^{3r+3\delta} \lambda^{1-r} \mu^{r+\delta-1}  
\delta_n^{1/2} \varsigma_{n+1}^{(r+\delta-1)(\alpha-1)} (D_n \ell^{1-r-\delta} + \varsigma_{n+1}^{\alpha-1})
\\
&\leq
C L^{r+2\delta} L_n^{3r+3\delta}
\lambda^{\delta} \mu^{-1} \delta_n^{1/2},
\end{align*}
where we have used the relation $\lambda^{r-2}
\geq
\mu^r \varsigma_{n+1}^{r(\alpha-1)} (D_n \ell^{-r} + \varsigma_{n+1}^{\alpha-1})$ coming from \eqref{eq:ass_lambda} and the choice of $r$.

To estimate the $C^1_x$ norm, we need to take the space derivative of $\mathring{R}^{trans}_1$ in the first place.
With a slight abuse on notation, we can write
\begin{align} \label{eq:aux_R_1}
\mathring{R}^{trans}_1
&= 
\lambda \mathcal{R}^{\phi_{n+1}} \left( \sum_{k \in \Lambda} 
(b_k \Omega_k^\lambda) \circ \phi_{n+1} \right)
=
\lambda \left( \mathcal{R} \sum_{k \in \Lambda} b_k \Omega_k^\lambda \right) \circ \phi_{n+1},
\end{align}
where the second identity comes from the very definition of $\mathcal{R}^{\phi_{n+1}}$, $\mathcal{R}$ being the inverse divergence operator defined in \cite{DLS13}.
Therefore, by \autoref{lem:C0_Cm} the following inequality holds true:
\begin{align*}
\| \mathring{R}^{trans}_1 \|_{C_{\leq \mathfrak{t}_L}C^1_x}
\leq
C L \lambda \left\| \mathcal{R} \sum_{k \in \Lambda} b_k \Omega_k \right\|_{C_{\leq \mathfrak{t}_L}C^1_x},
\end{align*}
and since $\mathcal{R}$ commutes with every directional derivative\footnote{\label{note:Q}To see this, use the explicit formula from \cite{DLS13}
\begin{align*}
\mathcal{R}v 
= 
\frac14 \left( \nabla \mathcal{P} u + (\nabla \mathcal{P} u)^T \right)
+
\frac34 \left( \nabla u + (\nabla u)^T \right)
-
\frac12 (\dvg u) Id,
\quad
v \in C^\infty(\mathbb{T}^3,\R^3),
\end{align*}
where $\mathcal{P} = Id - \mathcal{Q}$ is the Leray projector and $u \in C^\infty(\mathbb{T}^3,\R^3)$ is the unique zero-average solution of $\Delta u = v - (2\pi)^{-3}\int_{\mathbb{T}^3} v$.
The operator $\mathcal{Q}$ commutes with any directional space derivative since $\mathcal{Q} v = \nabla A + (2\pi)^{-3}\int_{\mathbb{T}^3} v$ for every $v \in C^\infty(\mathbb{T}^3,\R^3)$, where $A \in C^\infty(\mathbb{T}^3)$ is the unique zero-average solution of $\Delta A = \dvg v$.   
} $\partial_{x_i}$, $i=1,2,3$:
\begin{align*}
\partial_{x_i} \mathcal{R} \sum_{k \in \Lambda} b_k \Omega_k^\lambda
&=
\mathcal{R} \sum_{k \in \Lambda} \partial_{x_i}(b_k \Omega_k^\lambda)
=
\mathcal{R} \sum_{k \in \Lambda} (\partial_{x_i}b_k) \Omega_k^\lambda
+
i \lambda k_i \mathcal{R} \sum_{k \in \Lambda} b_k \Omega_k^\lambda,
\end{align*}
we can use the stationary phase Lemma \cite[Proposition 4.4]{DLS14} to get
\begin{align*}
\| \mathring{R}^{trans}_1\|_{C_{\leq \mathfrak{t}_L}C^1_x}
&\leq
CL \lambda\|\mathring{R}^{trans}_1 \|_{C_{\leq \mathfrak{t}_L}C_x}
+
CL \lambda \max_{i=1,2,3}\left\| \mathcal{R} \sum_{k \in \Lambda} (\partial_{x_i} b_k) \Omega_k^\lambda \right\|_{C_{\leq \mathfrak{t}_L}C_x}
%\\
%&\leq
%CL \lambda\|\mathring{R}^{trans}_1 \|_{C_{\leq \mathfrak{t}_L}C_x}
%+
%CL \lambda \max_{i=1,2,3}\left\| \mathcal{R} \sum_{k \in \Lambda} (\partial_{x_i} b_k) \Omega_k \right\|_{C_{\leq \mathfrak{t}_L}C^\delta_x}
\\
&\leq
CL \lambda\|\mathring{R}^{trans}_1 \|_{C_{\leq \mathfrak{t}_L}C_x}
\\
&\quad+
CL \sum_{k \in \Lambda} \left( 
\lambda^{\delta} [ b_k ]_{C^1_x}
+
\lambda^{1+\delta-r} [b_k]_{C^{r+1}_x}
+
\lambda^{1-r} [b_k]_{C^{r+1+\delta}_x} \right)
\\
&\leq 
C L^{r+1+2\delta} L_n^{3r+3\delta}
\lambda^{\delta} \mu^{-1} \delta_n^{1/2}
\\
&\quad+
CL^2L_n^3 \lambda^{\delta} \delta_n^{1/2} \varsigma_{n+1}^{\alpha-1}
+
CL^{r+2}L_n^{3r+3} \lambda^{1+\delta-r} \mu^{r} \delta_n^{1/2}  \varsigma_{n+1}^{r(\alpha-1)} (D_n \ell^{-r} + \varsigma_{n+1}^{\alpha-1})
\\
&\quad+
CL^{r+2+\delta}L_n^{3r+3\delta+3} \lambda^{1-r}\mu^{r+\delta} \delta_n^{1/2}  \varsigma_{n+1}^{(r+\delta)(\alpha-1)} (D_n \ell^{-r-\delta} + \varsigma_{n+1}^{\alpha-1})
\\
&\leq
CL^{r+2+\delta}L_n^{3r+3\delta+3} \lambda^{\delta} \delta_n^{1/2} \varsigma_{n+1}^{\alpha-1}.
\end{align*}

Let us move to the term $\mathring{R}^{trans}_2$.
Spatial H\"older norms are dealt with using again the stationary phase Lemma. We have, omitting details for the sake of brevity (estimates on $a_k$ are as usual given by \autoref{prop:aux_est})
\begin{align*}
\| \mathring{R}^{trans}_2 \|_{C_{\leq \mathfrak{t}_L}C^\delta_x}
&\leq
CL^{1+\delta}  \sum_{k \in \Lambda} \left( 
\lambda^{\delta-1} \| v_\ell \cdot \nabla_y a_k \|_{C_x}
+
L^r\lambda^{\delta-r} [v_\ell \cdot \nabla_y a_k]_{C^r_x}
+
L^{r+\delta}\lambda^{-r} [v_\ell \cdot \nabla_y a_k]_{C^{r+\delta}_x} \right)
\\
%&\leq
%C L^{2+\delta} \varsigma_{n+1}^{\alpha-1} \sum_{k \in \Lambda} \left( 
%\lambda^{\delta-1} [ a_k ]_{C^1_x}
%+
%\lambda^{\delta-r} [a_k]_{C^{r+1}_x}
%+
%\lambda^{-r} [a_k]_{C^{r+1+\delta}_x} \right)
%%\\
%%&\leq
%%C L^{2+\delta}L_n^{7/2} \lambda^{\delta-1} \mu \delta_n^{1/2} \varsigma_{n+1}^{2\alpha-2} 
%%+
%%CL^{2+\delta}L_n^{3r+4} \lambda^{\delta-r} \mu^{r+1}   \delta_n^{1/2} \varsigma_{n+1}^{(r+1)(\alpha-1)} (D_n \ell^{-r} + \varsigma_{n+1}^{\alpha-1})
%%\\
%%&\quad+
%%CL^{2+\delta} L_n^{3r+3\delta+4} \lambda^{-r}\mu^{r+\delta+1}    \delta_n^{1/2} \varsigma_{n+1}^{(r+\delta+1)(\alpha-1)} (D_n \ell^{-r-\delta} + \varsigma_{n+1}^{\alpha-1}) 
%\\
&\leq
C L^{r+2+2\delta}L_n^{3r+3\delta+4} \lambda^{\delta-1} \mu \delta_n^{1/2} \varsigma_{n+1}^{\alpha-1},
\end{align*}
as well as (the space derivative is taken as for the term $\mathring{R}^{trans}_1$)
\begin{align*}
\| \mathring{R}^{trans}_2\|_{C_{\leq \mathfrak{t}_L}C^1_x}
&\leq
CL^2 \lambda \| \mathring{R}^{trans}_2 \|_{C_{\leq \mathfrak{t}_L}C_x}
\\
&\quad+
C L^{2+\delta}\sum_{k \in \Lambda} \left( 
\lambda^{\delta-1} [v_\ell \cdot \nabla_y a_k]_{C^1_x}
+
L^r \lambda^{\delta-r} [v_\ell \cdot \nabla_y a_k]_{C^{r+1}_x}
+
L^{r+\delta}\lambda^{-r} [v_\ell \cdot \nabla_y a_k]_{C^{r+1+\delta}_x} \right)
%\\
%&\leq
%CL \lambda \| \mathring{R}^{trans}_2 \|_{C_{\leq \mathfrak{t}_L}C_x}
%\\
%&\quad+
%C L^3 \varsigma_{n+1}^{\alpha-1}
%\sum_{k \in \Lambda} \left( 
%\lambda^{\delta-1} [a_k]_{C^2_x}
%+
%\lambda^{\delta-r} [a_k]_{C^{r+2}_x}
%+
%\lambda^{-r} [a_k]_{C^{r+2+\delta}_x} \right)
%%\\
%%&\leq 
%%C L^{2+\delta}L_n^{3r+5} \lambda^{\delta} \mu \delta_n^{1/2} \varsigma_{n+1}^{2\alpha-2}
%%\\
%%&\quad+ 
%%CL^3L_n^{7}  \lambda^{\delta-1}  \mu^2 \delta_n^{1/2}  \varsigma_{n+1}^{2\alpha-2} (D_n \ell^{-1} + \varsigma_{n+1}^{\alpha-1})
%%\\
%%&\quad+
%%CL^3L_n^{3r+7}\lambda^{\delta-m}\mu^{r+2}  \delta_n^{1/2}  \varsigma_{n+1}^{(r+2)(\alpha-1)} (D_n \ell^{-r-1} + \varsigma_{n+1}^{\alpha-1})
%%\\
%%&\quad+
%%CL^3L_n^{3r+3\delta+7} \lambda^{-r} \mu^{r+\delta+2} \delta_n^{1/2} \varsigma_{n+1}^{(r+\delta+2)(\alpha-1)} (D_n \ell^{-r-\delta-1} + \varsigma_{n+1}^{\alpha-1})
\\
&\leq 
C L^{r+4+2\delta}L_n^{3r+3\delta+7} \lambda^{\delta} \mu \delta_n^{1/2} \varsigma_{n+1}^{\alpha-1}.
\end{align*}

We have used relations $\lambda^{r-1}
\geq 
\mu^{r+3} \varsigma_{n+1}^{(r+3)(\alpha-1)} (D_n \ell^{-r-2} + \varsigma_{n+1}^{\alpha-1})$ and $\lambda
\geq 
\mu (D_n \ell^{-1} + \varsigma_{n+1}^{\alpha-1})$ to highlight one single term on the right-hand-side of each of the previous inequalities.

Finally, for $\mathring{R}^{trans}_3$ we have
\begin{align*}
\| \mathring{R}^{trans}_3 \|_{C_{\leq \mathfrak{t}_L}C^\delta_x}
&\leq
CL^\delta \sum_{k \in \Lambda} \left(
\lambda^{\delta-1} \| \partial_s a_k \|_{C_x}
+
L^r \lambda^{\delta-r} [\partial_s a_k]_{C^r_x}
+
L^{r+\delta}\lambda^{-r} [\partial_s a_k]_{C^{r+\delta}_x} \right)
%\\
%&\leq
%CL^\delta L_n^{7/2} \lambda^{\delta-1} \mu \delta_n^{1/2} \varsigma_{n+1}^{\alpha-2}
%+
%CL^\delta L_n^{3r+7/2} \lambda^{\delta-r} \mu^{r+1} \delta_n^{1/2}  D_n \ell^{1-r} \varsigma_{n+1}^{m(\alpha-1)-1}
%\\
%&\quad+
%CL^\delta L_n^{3r+3\delta+7/2} \lambda^{-r} \mu^{r+\delta+1} \delta_n^{1/2}  D_n \ell^{1-r-\delta} \varsigma_{n+1}^{(r+\delta)(\alpha-1)-1}
\\
&\leq 
C L^{r+2\delta} L_n^{3r+3\delta+5/2} \lambda^{\delta-1} \mu \delta_n^{1/2} \varsigma_{n+1}^{\alpha-2},
\end{align*}
and 
\begin{align*}
\| \mathring{R}^{trans}_3 \|_{C_{\leq \mathfrak{t}_L}C^1_x}
&\leq
CL \lambda \| \mathring{R}^{trans}_3 \|_{C_{\leq \mathfrak{t}_L}C_x}
\\
&\quad+
CL^{1+\delta} \sum_{k \in \Lambda} \left(
\lambda^{\delta-1} [ \partial_s a_k ]_{C^1_x}
+
L^r \lambda^{\delta-r} [\partial_s a_k]_{C^{r+1}_x}
+
L^{r+\delta} \lambda^{-r} [\partial_s a_k]_{C^{r+\delta+1}_x} \right)
%\\
%&\leq
%C LL_n^{3r+4} \lambda^{\delta} \mu \delta_n^{1/2} \varsigma_{n+1}^{\alpha-2}
%\\
%&\quad+
%C L L_n^{13/2} \lambda^{\delta-1}  \mu^{2} \delta_n^{1/2}  D_n \varsigma_{n+1}^{\alpha-2}
%+
%C L L_n^{3r+13/2} \lambda^{\delta-r} \mu^{r+2} \delta_n^{1/2}  D_n \ell^{-r} \varsigma_{n+1}^{(r+1)(\alpha-1)-1}
%\\
%&\quad+
%C L L_n^{3r+3\delta+13/2} \lambda^{-r}  \mu^{r+\delta+2} \delta_n^{1/2}  D_n \ell^{-r-\delta} \varsigma_{n+1}^{(r+\delta+1)(\alpha-1)-1} 
\\
&\leq 
C L^{r+1+2\delta} L_n^{3r+3\delta+11/2} \lambda^{\delta-1}  \mu^{2} \delta_n^{1/2}  \varsigma_{n+1}^{2\alpha-3}.
\end{align*}

Let us recollect what we have proved. For $\|\mathring{R}^{trans}\|_{C_{\leq \mathfrak{t}_L}C_x}$ we have for $\delta$ sufficiently small and $m \geq 2r$
\begin{align*}
\|\mathring{R}^{trans}\|_{C_{\leq \mathfrak{t}_L}C_x}
&\leq
C L^{r+2\delta} L_n^{3r+3\delta}
\lambda^{\delta} \mu^{-1} \delta_n^{1/2}
+
C L^{r+2+2\delta}L_n^{3r+3\delta+4} \lambda^{\delta-1} \mu \delta_n^{1/2} \varsigma_{n+1}^{\alpha-1}
\\
&\quad+
C L^{r+2\delta} L_n^{3r+3\delta+5/2} \lambda^{\delta-1} \mu \delta_n^{1/2} \varsigma_{n+1}^{\alpha-2}
\\
&\leq
CL_n^{3r+5}\lambda^{\delta} \mu^{-1} \delta_n^{1/2},
\end{align*}
where we have used the relation $\lambda
\geq 
\mu^2  \varsigma_{n+1}^{\alpha-2}$.
Moreover, the same inequality also implies 
\begin{align*}
\|\mathring{R}^{trans}\|_{C_{\leq \mathfrak{t}_L}C^1_x}
&\leq
CL^{r+2+\delta}L_n^{3r+3\delta+3} \lambda^{\delta} \delta_n^{1/2} \varsigma_{n+1}^{\alpha-1}
+
C L^{r+4+2\delta}L_n^{3r+3\delta+7} \lambda^{\delta} \mu \delta_n^{1/2} \varsigma_{n+1}^{\alpha-1}
\\
&\quad+
C L^{r+1+2\delta} L_n^{3r+3\delta+11/2} \lambda^{\delta-1}  \mu^{2} \delta_n^{1/2}  \varsigma_{n+1}^{2\alpha-3}
\\
&\leq
C L_n^{3r+8} \lambda^{\delta} \mu \delta_n^{1/2} \varsigma_{n+1}^{\alpha-1}.
\end{align*}
%and
%\begin{align*}
%\|\mathring{R}^{trans}\|_{C^\beta_{\leq \mathfrak{t}_L}C_x}
%&\leq
%CL^2L_n^{3r+5} \lambda^{\delta} \delta_n^{1/2} \varsigma_{n+1}^{\alpha-1}
%+
%CL^2L_n^{3r+5}\delta_n^{1/2} \lambda^{\beta+\delta} \mu^{-1} \varsigma_{n+1}^{\beta(\alpha-1)}
%\\
%&\quad
%+
%C L^4L_n^{3r+8} \lambda^{\delta} \mu \delta_n^{1/2} \varsigma_{n+1}^{2\alpha-2}
%\\
%&\quad+
%CL^2L_n^{7r/2+7} \lambda^{\beta+\delta-1}\mu \delta_n^{1/2} \varsigma_{n+1}^{(1+\beta)(\alpha-1)-1}
%\\
%&\leq
%CL^3L_n^{7r/2+8} \lambda^{\beta+\delta} \mu^{-1} \delta_n^{1/2} \varsigma_{n+1}^{\beta(\alpha-1)}.
%\end{align*} 
\end{proof}

\subsection{Estimate on the mollification error} \label{ssec:moll_error}

The mollification error in \eqref{eq:decomposition_R} is divided into two contributions.
The first one is due to the fact that we have replaced $v_n$ with $v_\ell$ in the construction of $v_{n+1}$, but convolution with $\chi_\ell$ does not commute with the tensor product.
The second one, instead, comes from the fact that the space-time differential operator $\dvg^{\phi_n}$ does not commute with the convolution with $\chi_\ell$, which is strikingly different from what happens in the deterministic case.

Let us denote $\mathring{R}^{moll} \coloneqq \mathring{R}^{moll}_1 +\mathring{R}^{moll}_2$, where we define
\begin{align*}
\mathring{R}^{moll}_1 
&\coloneqq  
\mathcal{R}^{\phi_{n+1}}\dvgphin(v_\ell \otimes v_\ell - (v_n \otimes v_n) \ast \chi_\ell),
\\
\mathring{R}^{moll}_2
&\coloneqq
\mathcal{R}^{\phi_{n+1}}
\left(
\dvg^{\phi_n}\left( \left( v_n \otimes v_n\right) \ast \chi_\ell  + q_\ell Id - \mathring{R}_\ell  \right)
-
\left( \dvg^{\phi_n} \left( v_n \otimes v_n + q_n Id - \mathring{R}_n \right) \right) \ast \chi_\ell
\right).
\end{align*}

To better control the second term we will need the following.
\begin{lem} \label{lem:commuting_moll}
There exists a constant $C$ depending only on $K_0$ and $\chi$ with the following property.
Let $n \in \N$ and $L \in\N$, $L\geq 1$ be fixed and let $f:\T^3 \times \R \to \R^3$ be of class $C_{\leq \mathfrak{t}_L} C^1_x$.
Then, denoting $G \coloneqq \dvg^{\phi_n} (f\ast \chi_\ell)
- (\dvg^{\phi_n} f)\ast \chi_\ell  $
it holds
\begin{align*}
\| G \|_{C_{\leq \mathfrak{t}_L}C_x}
&\leq
CL^2 \|f\|_{C_{\leq \mathfrak{t}_L}C^1_x} \ell^{\alpha},
\\
\| G \|_{C_{\leq \mathfrak{t}_L}C^1_x}
&\leq
CL^2 \|f\|_{C_{\leq \mathfrak{t}_L}C^1_x} \ell^{-1}.
\end{align*}
\end{lem}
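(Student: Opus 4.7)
My plan is to rewrite $\dvg^{\phi_n}$ as a smooth matrix multiplier acting on the ordinary gradient, thereby reducing the lemma to a classical Friedrichs commutator estimate. Define $A(x,t) := (\nabla_y \phi_n^{-1})(\phi_n(x,t),t)$. Applying the chain rule to the identity $\dvg^{\phi_n} g = [\dvg(g\circ\phi_n^{-1})]\circ\phi_n$ yields the pointwise formula $\dvg^{\phi_n} g = A:\nabla g$ for every smooth vector field $g$. Using \autoref{lem:flow} (with, say, $\kappa=2$) and standard composition estimates in Hölder spaces, one obtains
\begin{align*}
\|A\|_{C_{\leq\mathfrak{t}_L}C^1_x} + \|A\|_{C^\alpha_{\leq\mathfrak{t}_L}C_x} \leq CL^2,
\end{align*}
with $C=C(K_0,\alpha)$. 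The factor $L^2$ comes from combining two factors of $L$: one from the "outer" $\nabla\phi_n^{-1}$ and one from the "inner" $\phi_n$ appearing in the composition.

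Since ordinary partial derivatives commute with the convolution $\ast\chi_\ell$, the quantity $G=\dvg^{\phi_n}(f\ast\chi_\ell)-(\dvg^{\phi_n}f)\ast\chi_\ell = A:\nabla(f\ast\chi_\ell)-(A:\nabla f)\ast\chi_\ell$ can be rewritten as the Friedrichs-type commutator
\begin{align*}
G(x,t) = \int_{\R\times\T^3}\bigl[A(x,t)-A(x-y,t-s)\bigr] : \nabla f(x-y,t-s)\,\chi_\ell(y,s)\,dy\,ds.
\end{align*}
Because $\mathrm{supp}(\chi_\ell) \subset [-\ell,\ell]^3\times[0,\ell)$, the Hölder increment of $A$ is bounded pointwise by $CL^2\ell^\alpha$ on this support; pulling this factor out of the integral and using $\|\chi_\ell\|_{L^1}=\|\chi\|_{L^1}$ gives the first inequality of the lemma.

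For the $C^1_x$ bound I would estimate the two summands of $G$ separately. Since $f$ has only $C^1_x$ regularity, any additional spatial derivative must be moved onto the mollifier, via $\|h\ast\chi_\ell\|_{C^1_x} \leq \|\nabla\chi_\ell\|_{L^1}\|h\|_{C_x} \leq C\ell^{-1}\|h\|_{C_x}$. Combining this with $\|A\|_{C_{\leq\mathfrak{t}_L}C^1_x} \leq CL^2$, both $\|A:\nabla(f\ast\chi_\ell)\|_{C^1_x}$ and $\|(A:\nabla f)\ast\chi_\ell\|_{C^1_x}$ are bounded by $CL^2\ell^{-1}\|f\|_{C_{\leq\mathfrak{t}_L}C^1_x}$, yielding the second inequality. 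The only genuinely delicate step is the first paragraph — propagating both the $\alpha$-Hölder time regularity and the $C^1$ spatial regularity of $\phi_n^{\pm 1}$ through the composition without losing an extra power of $L$ — while the commutator manipulation and the mollifier bounds are entirely standard.
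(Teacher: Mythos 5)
Your proposal is correct and essentially reproduces the paper's argument: the chain-rule rewrite $\dvg^{\phi_n} g = A:\nabla g$ with $A = (\nabla\phi_n^{-1})\circ\phi_n$, the Friedrichs commutator form of $G$, and the space-time Hölder bound $\|A\|_{C_{\leq\mathfrak{t}_L}C^1_x}+\|A\|_{C^\alpha_{\leq\mathfrak{t}_L}C_x}\leq CL^2$ (via Lemmas~\ref{lem:flow}, \ref{lem:C0_Cm}, \ref{lem:Cbeta_C0}) are exactly what the paper uses. The one place you deviate is the $C^1_x$ step: the paper retains the commutator form, writing $G(x,t)-G(y,t)$ and splitting into a term carrying the increment $A(x,t)-A(y,t)$ (which costs $L^2$ but no $\ell^{-1}$) and a term carrying the kernel increment $\chi_\ell(x-\cdot)-\chi_\ell(y-\cdot)$ (costing $\ell^{-1}$ but only one $L$), while you drop the cancellation and bound the two summands $A:\nabla(f\ast\chi_\ell)$ and $(A:\nabla f)\ast\chi_\ell$ individually. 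Your shortcut is legitimate here because each summand already satisfies $CL^2\ell^{-1}\|f\|_{C^1_x}$ on its own; the commutator cancellation is genuinely needed only for the $C_x$ bound, where it produces the small factor $\ell^\alpha$ that separate estimation of the summands would not yield.
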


\begin{proof}
For fixed $x \in \T^3$, $t\in \R$ we have (here we denote $\mbox{div}_y$, $\partial_{y_k}$ etc. the derivatives with respect to the space variable)
\begin{align*}
\left( \dvg^{\phi_n} f \right) (x,t)
&\coloneqq
\left( \dvg_y(f(\phi_{n}^{-1}(y,t),t)) \right) \mid_{y=\phi_n(x,t)}
\\
&=
\left( \sum_{k=1}^3 \partial_{y_k}(f(\phi_{n}^{-1}(y,t),t)) \right) \mid_{y=\phi_n(x,t)}
\\
&=
\left( \sum_{k,j=1}^3 (\partial_{y_j}f)(\phi_{n}^{-1}(y,t),t) (\partial_{y_k} (\phi_n^{-1})^j)(y,t) \right) \mid_{y=\phi_n(x,t)}
\\
&=
\sum_{k,j=1}^3 (\partial_{y_j}f)(x,t) (\partial_{y_k} (\phi_n^{-1})^j)(\phi_n(x,t),t) .
\end{align*}

Therefore we can compute
\begin{align*}
\left( (\dvg^{\phi_n} f)\ast \chi_\ell \right) (x,t)
&=
\int_{\T^3 \times \R} \left( \dvg^{\phi_n} f \right) (z,s)
\chi_\ell(x-z,t-s) dzds
\\
&=
\int_{\T^3 \times \R} 
\sum_{k,j=1}^3 (\partial_{y_j}f)(z,s) (\partial_{y_k} (\phi_n^{-1})^j)(\phi_n(z,s),s) 
\chi_\ell(x-z,t-s) dzds,
\end{align*}
whereas on the other hand
\begin{align*}
\left( \dvg^{\phi_n} (f\ast \chi_\ell) \right) (x,t)
&=
\sum_{k,j=1}^3 (\partial_{y_j}(f\ast \chi_\ell))(x,t) (\partial_{y_k} (\phi_n^{-1})^j)(\phi_n(x,t),t)
\\
&=
\sum_{k,j=1}^3 (\partial_{y_j}f\ast \chi_\ell)(x,t) (\partial_{y_k} (\phi_n^{-1})^j)(\phi_n(x,t),t)
\\
&=
\int_{\T^3 \times \R} \sum_{k,j=1}^3 (\partial_{y_j}f)(z,s) (\partial_{y_k} (\phi_n^{-1})^j)(\phi_n(x,t),t)
\chi_\ell(x-z,t-s) dzds. 
\end{align*}
Thus, it holds for every $x\in \T^3$ and $t \in \R$
\begin{align*}
| G(x,t) |
&\leq
\int_{\T^3 \times \R} \sum_{k,j=1}^3 
|(\partial_{y_j}f)(z,s)| 
\left| (\partial_{y_k} (\phi_n^{-1})^j)(\phi_n(x,t),t)
-(\partial_{y_k} (\phi_n^{-1})^j)(\phi_n(z,s),s) \right|
\chi_\ell(x-z,t-s) dzds
\\
&\leq
C 
\|f\|_{C_{\leq \mathfrak{t}_L}C^1_x}
\int_{\T^3 \times \R} \sum_{k,j=1}^3 
\left| (\partial_{y_k} (\phi_n^{-1})^j)(\phi_n(x,t),t)
-(\partial_{y_k} (\phi_n^{-1})^j)(\phi_n(z,t),t) \right|
\chi_\ell(x-z,t-s) dzds
\\
&\quad+
C 
\|f\|_{C_{\leq \mathfrak{t}_L}C^1_x}
\int_{\T^3 \times \R} \sum_{k,j=1}^3 
\left| (\partial_{y_k} (\phi_n^{-1})^j)(\phi_n(z,t),t)
-(\partial_{y_k} (\phi_n^{-1})^j)(\phi_n(z,s),s) \right|
\chi_\ell(x-z,t-s) dzds
\\
&\leq
C 
\|f\|_{C_{\leq \mathfrak{t}_L}C^1_x}
\int_{\T^3 \times \R} \sum_{k,j=1}^3 
\left\| (\partial_{y_k} (\phi_n^{-1})^j) \circ \phi_n 
\right\|_{C_{\leq \mathfrak{t}_L}C^1_x} |x-z|
\chi_\ell(x-z,t-s) dzds
\\
&\quad+
C 
\|f\|_{C_{\leq \mathfrak{t}_L}C^1_x}
\int_{\T^3 \times \R} \sum_{k,j=1}^3 
\left\| (\partial_{y_k} (\phi_n^{-1})^j) \circ \phi_n 
\right\|_{C^{\alpha}_{\leq \mathfrak{t}_L}C_x} |t-s|^\alpha
\chi_\ell(x-z,t-s) dzds.
\end{align*}

By \autoref{lem:C0_Cm}, \autoref{lem:Cbeta_C0} and \autoref{lem:flow} we have
\begin{align*}
\left\| (\partial_{y_k} (\phi_n^{-1})^j) \circ \phi_n 
\right\|_{C_{\leq \mathfrak{t}_L}C^1_x}
&\leq
CL
\left\| \partial_{y_k} (\phi_n^{-1})^j 
\right\|_{C_{\leq \mathfrak{t}_L}C^1_x}
\leq
CL
\left\| \phi_n^{-1} 
\right\|_{C_{\leq \mathfrak{t}_L}C^2_x}
\leq
CL^2,
\\
\left\| (\partial_{y_k} (\phi_n^{-1})^j) \circ \phi_n 
\right\|_{C^{\alpha}_{\leq \mathfrak{t}_L}C_x} 
&\leq
CL
\left\| \partial_{y_k} (\phi_n^{-1})^j
\right\|_{C_{\leq \mathfrak{t}_L}C^1_x}
+
\left\| \partial_{y_k} (\phi_n^{-1})^j 
\right\|_{C^{\alpha}_{\leq \mathfrak{t}_L}C_x}
\\
&\leq
CL
\left\| \phi_n^{-1}
\right\|_{C_{\leq \mathfrak{t}_L}C^2_x}
+
\left\| \phi_n^{-1}
\right\|_{C^{\alpha}_{\leq \mathfrak{t}_L}C^1_x}
\leq
CL^2,
\end{align*}
and since $|x-z|,|t-s| \leq \ell$ in the support of $\chi_\ell$ and $\int \chi_\ell =1 $ we get the first inequality.

Let us bound $G$ in $C_{\leq \mathfrak{t}_L}C^1_x$.
It holds for every $x,y \in \T^3$ and $t \in \R$
\begin{align*}
G(x,t)&-G(y,t)
\\
&=
\int_{\T^3 \times \R} \sum_{k,j=1}^3 
(\partial_{y_j}f)(z,s)
\left((\partial_{y_k} (\phi_n^{-1})^j)(\phi_n(x,t),t)
-(\partial_{y_k} (\phi_n^{-1})^j)(\phi_n(z,s),s) \right)
\chi_\ell(x-z,t-s) dzds
\\
&\quad-
\int_{\T^3 \times \R} \sum_{k,j=1}^3 
(\partial_{y_j}f)(z,s)
\left((\partial_{y_k} (\phi_n^{-1})^j)(\phi_n(y,t),t)
-(\partial_{y_k} (\phi_n^{-1})^j)(\phi_n(z,s),s) \right)
\chi_\ell(y-z,t-s) dzds
\\
&=
\int_{\T^3 \times \R} \sum_{k,j=1}^3 
(\partial_{y_j}f)(z,s)
\left((\partial_{y_k} (\phi_n^{-1})^j)(\phi_n(x,t),t)
-(\partial_{y_k} (\phi_n^{-1})^j)(\phi_n(y,t),t) \right)
\chi_\ell(x-z,t-s) dzds
\\
&\quad-
\int_{\T^3 \times \R} \sum_{k,j=1}^3 
(\partial_{y_j}f)(z,s)
\left((\partial_{y_k} (\phi_n^{-1})^j)(\phi_n(y,t),t)
-(\partial_{y_k} (\phi_n^{-1})^j)(\phi_n(z,s),s) \right)
\\
&\qquad\qquad\qquad \times
(\chi_\ell(y-z,t-s)-\chi_\ell(x-z,t-s) )dzds,
\end{align*}
implying, since the measure of the support of $\chi_\ell(y-z,t-s)-\chi_\ell(x-z,t-s)$ is of order $\ell^4$, 
\begin{align*}
|G(x,t)-G(y,t)|
&\leq
C\|f\|_{C_{\leq \mathfrak{t}_L}C^1_x} 
\left\| (\partial_{y_k} (\phi_n^{-1})^j) \circ \phi_n
\right\|_{C_{\leq \mathfrak{t}_L}C^1_x} |x-y|
\\
&\quad+
C\|f\|_{C_{\leq \mathfrak{t}_L}C^1_x} 
\left\| (\partial_{y_k} (\phi_n^{-1})^j) \circ \phi_n
\right\|_{C_{\leq \mathfrak{t}_L}C_x}
\| \chi_\ell \|_{C_{\leq \mathfrak{t}_L}C^1_x} \ell^4 |x-y|
\\
&\leq
CL^2\|f\|_{C_{\leq \mathfrak{t}_L}C^1_x} |x-y|
+
CL\|f\|_{C_{\leq \mathfrak{t}_L}C^1_x} \ell^{-1} |x-y|.
\end{align*}
Thus $[ G ]_{C_{\leq \mathfrak{t}_L}C^1_x}
\leq
CL^2 \|f\|_{C_{\leq \mathfrak{t}_L}C^1_x} \ell^{-1}$ and the proof is complete.
\end{proof}

We are now ready to prove the following:
\begin{prop} \label{prop:R_moll}
For every $\delta \in (0,1)$ there exists a constant $C$ such that almost surely for every $L \in \N$, $L \geq 1$
\begin{align*}
\|\mathring{R}^{moll}\|_{C_{\leq \mathfrak{t}_L} C_x}
&\leq
C L_n^3 D_n \ell^{\alpha} ,
\\
\|\mathring{R}^{moll}\|_{C_{\leq \mathfrak{t}_L} C^1_x}
&\leq
C L_n^3 D_n \ell^{-\delta}
%,
%\\
%\|\mathring{R}^{moll}\|_{C^\beta_{\leq \mathfrak{t}_L} C_x}
%&\leq
%CL^3 L_n^2 D_n \ell^{-1}
.
\end{align*}
\end{prop}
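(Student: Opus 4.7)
I split $\mathring{R}^{moll}=\mathring{R}^{moll}_1+\mathring{R}^{moll}_2$ and estimate each piece separately. Both are of the form $\mathcal{R}^{\phi_{n+1}}$ (composed, in the first case, also with $\dvgphin$) applied to a commutator, and my overall strategy is: (a) control the commutator in a suitable H\"older norm; (b) pay an $O(L^c)$ price for the flow compositions $\phi_{n+1}^{\pm 1}$ by \autoref{lem:flow}, and exploit the boundedness of the classical inverse divergence $\mathcal{R}: C^{\delta}_x\to C^{1+\delta}_x$ for any $\delta\in(0,1)$, which follows from standard Schauder theory applied to the explicit formula recalled in the footnote defining $\mathcal{R}$; (c) simplify the resulting estimates via the running parameter constraint $D_n\ell^\alpha\leq\eta\delta_n\leq 1$.

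For $\mathring{R}^{moll}_1$ the quantity $E_\ell:=v_\ell\otimes v_\ell-(v_n\otimes v_n)\ast\chi_\ell$ is the classical Constantin--E--Titi commutator, for which the standard mollification estimate
\begin{align*}
\|E_\ell\|_{C^{k+\delta}_{\leq\mathfrak{t}_L,x}}
\leq
C\,\ell^{2-k-\delta}\,\|v_n\|_{C^1_{\leq\mathfrak{t}_L,x}}^{2}
\leq
C\,\ell^{2-k-\delta}(L_n D_n)^2,\qquad k\in\{0,1\},
\end{align*}
holds by the iterative bound \eqref{eq:C^1,1}. Applying the order-zero operator $\mathcal{R}^{\phi_{n+1}}\dvgphin$ (bounded on $C^\delta_x$ modulo $O(L^c)$ flow factors) then gives $\|\mathring{R}^{moll}_1\|_{C_{\leq\mathfrak{t}_L}C_x}\leq CL^c(L_nD_n)^2\ell^{2-\delta}$ and $\|\mathring{R}^{moll}_1\|_{C_{\leq\mathfrak{t}_L}C^1_x}\leq CL^c(L_n D_n)^2\ell^{1-\delta}$.

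For $\mathring{R}^{moll}_2$, set $f:=v_n\otimes v_n+q_n\,Id-\mathring{R}_n$; by \eqref{eq:C^1,1} (and the trivial bound $\|v_n\|_{C_x}\leq\|v_n\|_{C^1_x}$) one has $\|f\|_{C^1_{\leq\mathfrak{t}_L,x}}\leq C(L_n D_n)^2$. Applying \autoref{lem:commuting_moll} to $f$ yields directly $\|G\|_{C_{\leq\mathfrak{t}_L}C_x}\leq CL^2(L_n D_n)^2\ell^\alpha$ and $\|G\|_{C_{\leq\mathfrak{t}_L}C^1_x}\leq CL^2(L_n D_n)^2\ell^{-1}$, and interpolation between these two bounds gives $\|G\|_{C_{\leq\mathfrak{t}_L}C^\delta_x}\leq CL^2(L_n D_n)^2\ell^{\alpha(1-\delta)-\delta}$. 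Since $\mathring{R}^{moll}_2=\mathcal{R}^{\phi_{n+1}}G$ and $\mathcal{R}^{\phi_{n+1}}$ gains one derivative, the corresponding $C_x$- and $C^1_x$-bounds on $\mathring{R}^{moll}_2$ follow. In both cases, the constraint $D_n\ell^\alpha\leq 1$ (equivalently $D_n^2\ell^\alpha\leq D_n$) turns a $D_n^2$ into a $D_n$, while the auxiliary $L^c$ and $L_n^c$ factors are absorbed into $L_n^3$ by the super-exponential growth $L_n=L^{m^{n+1}}$.

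\textbf{Main obstacle.} The delicate point is the $C^1_x$ estimate, where the direct output of \autoref{lem:commuting_moll} only provides the loss $\ell^{-1}$, whereas the target is $\ell^{-\delta}$ for an arbitrarily small $\delta>0$. The plan to recover this is to interpolate $G$ in a fractional H\"older space $C^\delta_x$ between the $C_x$-bound (with the favourable $\ell^\alpha$) and the $C^1_x$-bound (with the unavoidable $\ell^{-1}$), and then exploit the order $-1$ character of $\mathcal{R}^{\phi_{n+1}}$ to transfer the gained derivative into the final $C^1_x$-norm of $\mathring{R}^{moll}_2$. The bookkeeping between powers of $\ell$, $D_n$ and $L_n$ is tight at every step and relies crucially on $D_n\ell^\alpha\leq 1$ and on $L_n$ being much larger than the $L^c$ losses accumulated from flow compositions.
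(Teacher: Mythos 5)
Your decomposition $\mathring{R}^{moll}=\mathring{R}^{moll}_1+\mathring{R}^{moll}_2$, your reliance on \autoref{lem:commuting_moll} and the Schauder estimates for $\mathcal{R}^{\phi_{n+1}}$, and the interpolation trick between the $C_x$ and $C^1_x$ bounds of $G$ are all exactly what the paper does, so the outline is sound. For $\mathring{R}^{moll}_1$ you use the full Constantin--E--Titi commutator bound $\|E_\ell\|_{C^{k+\delta}_x}\lesssim \ell^{2-k-\delta}\|v_n\|^2_{C^1_{t,x}}$, which is quadratic in the $C^1$ norm; the paper instead uses the cruder estimate $\|E_\ell\|_{C^{k+\delta}_x}\lesssim\ell^{1-k-\delta}\|v_n\otimes v_n\|_{C^1_x}\lesssim\ell^{1-k-\delta}L_n^2D_n$. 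Your route works too because the extra factor $D_n$ comes paired with an extra power of $\ell$ (one checks $D_n\ell^{2-\delta-\alpha}=(D_n\ell^\alpha)\ell^{2-\delta-2\alpha}\leq 1$ since $\alpha<1/2$), but this compensation is a coincidence and does not survive in the second term.

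For $\mathring{R}^{moll}_2$ there is a genuine gap. You estimate $\|f\|_{C_{\leq\mathfrak{t}_L}C^1_x}\leq C(L_nD_n)^2$ via ``the trivial bound $\|v_n\|_{C_x}\leq\|v_n\|_{C^1_x}$''; the correct estimate, using the induction bound $\|v_n\|_{C_{\leq\mathfrak{t}_L}C_x}\leq 2M_vL_n$ (established in Step~1 of the proof of \autoref{thm:strong_ex}, and implicit throughout \autoref{sec:iter} via the dependence on $M_v$), is $\|f\|_{C_{\leq\mathfrak{t}_L}C^1_x}\leq CL_n^2D_n$. Your version carries a superfluous factor $D_n$, and this cannot be absorbed: the proposed fix ``$D_n^2\ell^\alpha\leq D_n$'' is true but eliminates the $\ell^\alpha$ that must appear in the target $\|\mathring{R}^{moll}\|_{C_{\leq\mathfrak{t}_L}C_x}\leq CL_n^3D_n\ell^\alpha$, leaving only $L_n^3D_n$, which is strictly larger. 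Nor can the excess $D_n$ be hidden in $L_n^3$: from $L_n=L^{m^{n+1}}$ and $D_n=a^{cb^n}$ with $b=m+\varepsilon>m$, the ratio $D_n/L_n$ diverges as $n\to\infty$ (the exponent $b^n$ eventually dominates $m^{n+1}$), so there is no constant $C$ for which $D_n\leq CL_n$ uniformly in $n$. The same obstruction ruins the $C^1_x$ estimate: you would need $D_n\ell^{\alpha(1-\delta)}\leq CL_n$, i.e.\ $\ell^{-\alpha\delta}\leq CL_n$, and again $\ell^{-\alpha}\approx D_n/\delta_{n+3}^{4/3}$ outgrows $L_n$. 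The remedy is simply to use $\|v_n\|_{C_x}\leq 2M_vL_n$ in bounding $\|v_n\otimes v_n\|_{C^1_x}\leq 2\|v_n\|_{C_x}\|v_n\|_{C^1_x}\leq CL_n^2D_n$, after which your chain of inequalities closes.
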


\begin{proof}
By \eqref{e:Schauder_Rdiv} for every $\delta>0$ there exists a constant $C$ such that almost surely 
\begin{align*}
\|\mathring{R}^{moll}_1\|_{C_{\leq \mathfrak{t}_L} C_x}
&\leq
CL^{2\delta} \|v_\ell \otimes v_\ell - (v_n \otimes v_n) \ast \chi_\ell\|_{C_{\leq \mathfrak{t}_L} C^{\delta}_x}
\\
&\leq
CL^{2\delta} \ell^{1-\delta} \|v_n \otimes v_n\|_{C^1_{\leq \mathfrak{t}_L,x}}
\\
&\leq
CL^{2\delta}\ell^{1-\delta} \|v_n\|_{C_{\leq \mathfrak{t}_L}C_x}\|v_n\|_{C^1_{\leq \mathfrak{t}_L,x}}
\\
&\leq
CL^{2\delta}L_n^2 \ell^{1-\delta} D_n,
\\
\|\mathring{R}^{moll}_1\|_{C_{\leq \mathfrak{t}_L} C^1_x}
&\leq
CL^{2+2\delta} \|v_\ell \otimes v_\ell - (v_n \otimes v_n) \ast \chi_\ell\|_{C_{\leq \mathfrak{t}_L}C^{1+\delta}_x}
\\
&\leq
CL^{2+2\delta} \ell^{-\delta} \|v_n \otimes v_n\|_{C^1_{\leq \mathfrak{t}_L,x}}
\\
&\leq
CL^{2+2\delta}L_n^2 \ell^{-\delta} D_n .
\end{align*}
To control $\mathring{R}^{moll}_2$, we apply \autoref{lem:commuting_moll} with $f\coloneqq v_n \otimes v_n + q_n Id - \mathring{R}_n$ (or rather the rows of the same matrix field) and use \eqref{e:Schauder_R_Besov}, the inequality $\|G\|_{C_{\leq \mathfrak{t}_L} B^{\beta/\alpha-1}_{\infty,\infty}} \leq C \|G\|_{C_{\leq \mathfrak{t}_L} C_x}$ (see \cite[Remark A.3]{MoWe17}), and \eqref{e:Schauder_R} 
%and \autoref{lem:Rv_Cbeta_C0} 
respectively to get
\begin{align*}
\| \mathring{R}^{moll}_2 \|_{C_{\leq \mathfrak{t}_L}C^\delta_x}
&\leq
CL^{5+4\delta} \|G\|_{C_{\leq \mathfrak{t}_L}B^{\delta-1}_{\infty,\infty}}
\leq
CL^{5+4\delta} \|G\|_{C_{\leq \mathfrak{t}_L}C_x}
\leq
CL^{7+4\delta} L_n^2 D_n \ell^{\alpha},
\\
\| \mathring{R}^{moll}_2 \|_{C_{\leq \mathfrak{t}_L}C^{1+\delta}_x}
&\leq
CL^{1+2\delta} \|G\|_{C_{\leq \mathfrak{t}_L}C^\delta_x}
\leq
CL^{3+2\delta} L_n^2 D_n
%,
%\\
%\| \mathring{R}^{moll}_2 \|_{C^\beta_{\leq \mathfrak{t}_L}C_x}
%&\leq
%CL^{\beta/\alpha} \|G\|_{C_{\leq \mathfrak{t}_L}C^{\beta/\alpha}_x}
%+
%\|G\|_{C^\beta_{\leq \mathfrak{t}_L}C_x}
%\leq
%CL^3L_n^2 D_n \ell^{-1}
.
\end{align*}
\end{proof}

\subsection{Estimate on the oscillation error} \label{ssec:osc}
In the decomposition \eqref{eq:decomposition_R} above, the introduction of the incremental pressure term $- \tfrac{1}{2}\nabla^{\phi_{n+1}}(|w_o|^2-\tilde{\rho}_\ell) = -\tfrac{1}{2}\dvgphin((|w_o|^2-\tilde{\rho}_\ell)Id)$ may seem arbitrary.
We shall see now that this choice contributes to produce better estimates for the (inverse divergence of the) oscillation error term, thus justifying our choice of the pressure $q_{n+1}$.

Let $W=W(y,s,\xi,\tau)$ be defined by \eqref{eq:def_W}, and recall from \autoref{cor:U} the decomposition
\begin{align*}
W \otimes W (y,s,\xi,\tau)
=
R_\ell (y,s) 
+
\sum_{1 \leq |k| \leq 2\lambda_0} U_k(y,s,\tau) e^{i k \cdot \xi},
\end{align*}
with $U_k \in C^\infty_{loc}(\mathbb{T}^3 \times \R^2, \mathcal{S}^{3 \times 3})$, $k \in \Lambda$, satisfying $U_k k = \tfrac{1}{2} Tr(U_k)k$. 
Using the identity $w_o(x,t) = W(x,t,\lambda \phi_{n+1}(x,t),\lambda t)$ we can rewrite the previous line as
\begin{align*}
w_o \otimes w_o (x,t)
=
R_\ell (x,t) 
+
\sum_{1 \leq |k| \leq 2\lambda_0} U_k(x,t,\lambda t)\, \Omega_k( \lambda \phi_{n+1}(x,t)).
\end{align*}

Therefore,
\begin{align*}
\dvgphin&\left(w_o \otimes w_o - \frac{1}{2}\left(|w_o|^2-\tilde{\rho}_\ell \right) Id +\mathring{R}_\ell\right)
\\
&=
\dvgphin\left(w_o \otimes w_o - \frac{1}{2}\left(|w_o|^2-{\rho}_\ell \right) Id +\mathring{R}_\ell\right)
\\
&=
\dvgphin\left(w_o \otimes w_o - R_\ell - \frac{1}{2}\left(|w_o|^2-Tr(R_\ell) \right) Id \right)
\\
&=
\sum_{1 \leq |k| \leq 2\lambda_0}
\dvgphin \left(U_k - \tfrac{1}{2} Tr(U_k) Id \right) (x,t,\lambda t) \Omega_k( \lambda \phi_{n+1}(x,t)),
\end{align*}
where in the last line we have used the relation $U_k k = \tfrac{1}{2} Tr(U_k)k$.

\begin{prop} \label{prop:R_osc}
Let us denote $\mathring{R}^{osc} \coloneqq \mathcal{R}^{\phi_{n+1}} \dvgphin(w_o \otimes w_o - \frac{1}{2}\left(|w_o|^2-\tilde{\rho}_\ell \right) Id +\mathring{R}_\ell)$. Then for every $r \geq r_\star+1$ and $\delta>0$ sufficiently small there exists a constant $C$ such that almost surely for every $L \in \N$, $L \geq 1$
\begin{align*}
\| \mathring{R}^{osc} \|_{C_{\leq \mathfrak{t}_L} C_x} 
&\leq 
CL_n^{3r+6} \lambda^{\delta-1} \mu  \delta_n   \varsigma_{n+1}^{\alpha-1},
\\
\| \mathring{R}^{osc} \|_{C_{\leq \mathfrak{t}_L} C^1_x} 
&\leq
C L_n^{3r+9} 
\lambda^{\delta} \mu  \delta_n  \varsigma_{n+1}^{\alpha-1}
%,
%\\
%\| \mathring{R}^{osc} \|_{C^\beta_{\leq \mathfrak{t}_L} C_x} 
%&\leq 
%C L L_n^{3r+9} \lambda^{\delta} \mu \delta_n  \varsigma_{n+1}^{\alpha-1}
.
\end{align*}
\end{prop}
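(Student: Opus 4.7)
The plan is to exploit the algebraic identity $U_k k = \tfrac{1}{2} Tr(U_k)\, k$ from \autoref{cor:U} in order to kill the dangerous high-frequency contribution produced when $\dvgphin$ lands on the phase $e^{ik\cdot\lambda \phi_{n+1}}$, thereby reducing $\mathring{R}^{osc}$ to an oscillatory sum of exactly the same form already treated in \autoref{prop:R_trans}.

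First I would carry out an algebraic manipulation on the argument of $\mathcal{R}^{\phi_{n+1}}$. From \autoref{cor:U}, $w_o \otimes w_o = R_\ell + \sum_k U_k(\cdot,\cdot,\lambda t) e^{ik\cdot \lambda \phi_{n+1}}$, and taking traces $|w_o|^2 = Tr(R_\ell) + \sum_k Tr(U_k)(\cdot,\cdot,\lambda t) e^{ik\cdot \lambda \phi_{n+1}}$. Using $R_\ell = \rho_\ell Id - \mathring{R}_\ell$, $Tr(R_\ell)=3\rho_\ell$, and $\tilde{\rho}_\ell=\rho_\ell-\gamma_n(t)$, a direct computation yields
\[
w_o \otimes w_o - \tfrac{1}{2}(|w_o|^2 - \tilde{\rho}_\ell) Id + \mathring{R}_\ell \;=\; -\tfrac{1}{2}\gamma_n(t)\, Id + \sum_{1\leq |k|\leq 2\lambda_0} B_k(\cdot,\cdot,\lambda t)\, e^{ik\cdot \lambda \phi_{n+1}},
\]
where $B_k \coloneqq U_k - \tfrac{1}{2} Tr(U_k) Id$ satisfies $B_k k = 0$. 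Since $\gamma_n$ depends only on time, $\dvgphin(\gamma_n Id)=0$, and passing to $y$-coordinates via $\phi_{n+1}^{-1}$ exactly as in \autoref{prop:R_trans} one obtains
\[
\dvgphin\bigl[B_k(\cdot,\cdot,\lambda t)\, e^{ik\cdot \lambda \phi_{n+1}}\bigr] = \bigl[\dvgphin B_k\bigr](\cdot,\cdot,\lambda t)\, e^{ik\cdot\lambda\phi_{n+1}} + i\lambda\,[B_k k]\, e^{ik\cdot\lambda \phi_{n+1}},
\]
whose second summand vanishes identically by $B_k k=0$. Hence $\mathring{R}^{osc} = \sum_k \mathcal{R}^{\phi_{n+1}}\{[\dvgphin B_k](\cdot,\cdot,\lambda t)\, e^{ik\cdot\lambda\phi_{n+1}}\}$.

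Next I would invoke the stochastic stationary phase Lemma \autoref{prop:stat_phase_lem} applied to the amplitude $f_k \coloneqq [\dvgphin B_k](\cdot,\cdot,\lambda t)$ with phase $e^{ik\cdot\lambda \phi_{n+1}}$, exactly as in the proof of \autoref{prop:R_trans}: for any $r\geq r_\star+1$ and $\delta>0$ small,
\[
\|\mathring{R}^{osc}\|_{C_{\leq \mathfrak{t}_L}C^\delta_x} \leq C \sum_k\bigl( L^\delta\lambda^{\delta-1}\|f_k\|_{C_x} + L^{r+\delta}\lambda^{\delta-r}\|f_k\|_{C^{r-1}_x} + L^{r+\delta}\lambda^{-r}\|f_k\|_{C^{r-1+\delta}_x}\bigr).
\]
Since $\dvgphin$ costs one spatial derivative together with a factor $CL$ coming from the Jacobian of $\phi_{n+1}^{-1}$ (\autoref{lem:flow}), one has $\|f_k\|_{C^s_x}\leq CL\|U_k\|_{C^{s+1}_x}$, and the bounds \eqref{est:U_k,r_small}--\eqref{est:U_k,r_large} give $\|f_k\|_{C_x}\leq CL\, L_n^4 \delta_n\mu\varsigma_{n+1}^{\alpha-1}$ and $\|f_k\|_{C^s_x}\leq CL\, L_n^{3(s+1)+5/2}\delta_n\mu^{s+1}\varsigma_{n+1}^{s(\alpha-1)}(D_n\ell^{-s}+\varsigma_{n+1}^{\alpha-1})$ for $s\geq 1$. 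Substituting and invoking the ansatz \eqref{eq:ass_lambda} to dominate the two higher-order contributions by the first one produces the claimed $C L_n^{3r+6}\lambda^{\delta-1}\mu\delta_n\varsigma_{n+1}^{\alpha-1}$ bound, after absorbing the finite powers of $L$ into a slight enlargement of the $L_n$-exponent. For the $C^1_x$-bound one differentiates once more in space; writing $\mathcal{R}^{\phi_{n+1}}=(\mathcal{R}\,\cdot\circ\phi_{n+1}^{-1})\circ\phi_{n+1}$ and using that $\mathcal{R}$ commutes with every directional spatial derivative, the derivative produces either a factor $CL\lambda$ falling on $\mathring{R}^{osc}$ itself, or an extra spatial derivative on the amplitude $f_k$, which is handled by the same estimate with $r+1$ in place of $r$. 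Summing both contributions and applying \eqref{eq:ass_lambda} once more gives the target $C L_n^{3r+9}\lambda^{\delta}\mu\delta_n\varsigma_{n+1}^{\alpha-1}$.

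The main obstacle is really the algebraic cancellation $B_k k=0$: without it, $\dvgphin$ applied to $B_k e^{ik\cdot\lambda \phi_{n+1}}$ would generate a high-frequency term of size $\lambda$ which, after applying $\mathcal{R}^{\phi_{n+1}}$ (gain $\lambda^{-1}$ via stationary phase), would remain $O(1)$ rather than producing the desired $O(\lambda^{\delta-1})$. Once this cancellation is secured and the amplitude is in divergence form, the remaining work is a careful bookkeeping exercise, structurally identical to the one already performed for $\mathring{R}^{trans}$, made slightly more delicate by having to chase the several independent parameters $L_n,\mu,\varsigma_{n+1},D_n,\ell$ through the condition \eqref{eq:ass_lambda}.
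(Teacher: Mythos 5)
Your proposal is correct and follows essentially the same route as the paper: using $Tr(R_\ell)=3\rho_\ell$, $\tilde{\rho}_\ell=\rho_\ell-\gamma_n(t)$, and the algebraic cancellation $U_k k = \tfrac{1}{2}Tr(U_k)k$ (so $B_k k=0$) to reduce $\mathring{R}^{osc}$ to $\sum_k\mathcal{R}^{\phi_{n+1}}\big(\dvgphin(U_k-\tfrac12 Tr(U_k)Id)\,\Omega_k(\lambda\phi_{n+1})\big)$, then invoking \autoref{prop:stat_phase_lem} and \eqref{eq:ass_lambda}, and differentiating once for the $C^1_x$ bound. The only blemish is an off-by-one in the stationary-phase bookkeeping: with amplitude $a_k=(\dvgphin B_k)\circ\phi_{n+1}^{-1}$ the middle and last terms should read $\lambda^{\delta-r}[a_k]_{C^r_x}$ and $\lambda^{-r}[a_k]_{C^{r+\delta}_x}$, i.e.\ $\|f_k\|_{C^r_x}$, $\|f_k\|_{C^{r+\delta}_x}$ (equivalently $[U_k]_{C^{r+1}_x}$, $[U_k]_{C^{r+1+\delta}_x}$ as in the paper), not $\|f_k\|_{C^{r-1}_x}$, $\|f_k\|_{C^{r-1+\delta}_x}$; this is cosmetic and the conclusion is unchanged since \eqref{eq:ass_lambda} compensates either way.
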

\begin{proof}
Let us start with the estimate in $C_{\leq \mathfrak{t}_L}C_x$. 
We apply \autoref{prop:stat_phase_lem} with $r \geq r_\star+1$ and $\delta>0$ sufficiently small to obtain for every $L \in \N$, $N \geq 1$
\begin{align*}
\| \mathring{R}^{osc} \|_{C_{\leq \mathfrak{t}_L}C^\delta_x}
&\leq
CL^{\delta+1} \sum_{1 \leq |k| \leq 2\lambda_0} \left( \lambda^{\delta-1} [U_k]_{C^1_x} 
+ 
L^r \lambda^{\delta-r} [U_k]_{C^{r+1}_x} 
+ 
L^{r+\delta} \lambda^{-r} [U_k]_{C^{r+1+\delta}_x} \right),
\end{align*}
where we denote $[U_k]_{C^\cdot_x}=[U_k(\cdot,t,\lambda t)]_{C^\cdot_x}$ the spatial H\"older seminorms at fixed $t \leq \mathfrak{t}_L$.
By \autoref{cor:U} and interpolation inequality
\begin{align*}
\| \mathring{R}^{osc} \|_{C_{\leq \mathfrak{t}_L}C^\delta_x}
%&\leq
%CL^\delta L_n^4 \lambda^{\delta-1} \mu  \delta_n   \varsigma_{n+1}^{\alpha-1} 
%+ 
%CL^\delta L_n^{3r+9/2} \lambda^{\delta-r}  \mu^{r+1} \delta_n  \varsigma_{n+1}^{r(\alpha-1)} (D_n \ell^{-r} + \varsigma_{n+1}^{\alpha-1}) 
%\\
%&\quad+ 
%CL^\delta L_n^{3r+3\delta+3/2} \lambda^{-r} \mu^{r+\delta+1} \delta_n  \varsigma_{n+1}^{(r+\delta)(\alpha-1)} (D_n \ell^{-r-\delta} + \varsigma_{n+1}^{\alpha-1})
%\\
&\leq
CL^{r+2\delta+1} L_n^{3r+3\delta+11/2} \lambda^{\delta-1} \mu \delta_n   \varsigma_{n+1}^{\alpha-1}. 
\end{align*}
The previous bound makes use of $\lambda^{r-1} \geq \mu^{r+1} \varsigma_{n+1}^{r(\alpha-1)}  (D_n \ell^{-r-1} + \varsigma_{n+1}^{\alpha-1})$.

The same arguments used to control the $C^1_x$ norm of $\mathring{R}^{trans}$ also imply (use that for our choice of $r$ it holds $\lambda^{r-1} \geq \mu^{r+3} \varsigma_{n+1}^{(r+3)(\alpha-1)}(D_n \ell^{1-r}+\varsigma_{n+1}^{\alpha-1}) \geq \mu^{r+2} \varsigma_{n+1}^{(r+2)(\alpha-1)}(D_n \ell^{1-r}+\varsigma_{n+1}^{\alpha-1})^2$ )
\begin{align*}
\| \mathring{R}^{osc} \|_{C_{\leq \mathfrak{t}_L}C^1_x}
&\leq
CL \lambda \| \mathring{R}^{osc} \|_{C_{\leq \mathfrak{t}_L}C_x}
\\
&\quad+
CL^2 \sum_{1 \leq |k| \leq 2\lambda_0}\left( 
\lambda^{\delta-1} [U_k]_{C^2_x} 
+ 
L^r \lambda^{\delta-r} [U_k]_{C^{r+2}_x} 
+ 
L^{r+\delta} \lambda^{-r} [U_k]_{C^{r+2+\delta}_x} \right)
%\\
%&\leq
%CL_n^{3m+5} \lambda^{\delta} \mu  \delta_n   \varsigma_{n+1}^{\alpha-1}
%\\
%&\quad+ 
%C L_n^{15/2}  \lambda^{\delta-1}\mu^2 \delta_n  \varsigma_{n+1}^{\alpha-1} (D_n \ell^{-1} + \varsigma_{n+1}^{\alpha-1}) 
%+ 
%C L_n^{3m+15/2}  \lambda^{\delta-m} \mu^{m+2} \delta_n  \varsigma_{n+1}^{(m+1)(\alpha-1)} (D_n \ell^{-m-1} + \varsigma_{n+1}^{\alpha-1})
%\\
%&\quad+ 
%C L_n^{3m+3\delta+15/2} \mu^{m+\delta+2} \lambda^{-m} \delta_n  \varsigma_{n+1}^{(m+\delta+1)(\alpha-1)} (D_n \ell^{-m-\delta-1} + \varsigma_{n+1}^{\alpha-1})
\\
&\leq
C L_n^{3r+9} \lambda^{\delta} \mu  \delta_n \varsigma_{n+1}^{\alpha-1}.
\end{align*}

\end{proof}

\subsection{Estimate on the flow error} \label{ssec:flow_error}
The flow error in the decomposition \eqref{eq:decomposition_R} is peculiar of the particular construction carried on in the present paper.
It is due to the fact that the Euler-Reynolds systems \eqref{eq:random_euler-reynolds} at level $n$ and $n+1$ are obtained composing with flows $\phi_n$ and $\phi_{n+1}$, respectively.

The key lemma we are going to use in this subsection is \autoref{lem:G_holder} below.

\begin{lem} \label{lem:G_holder}
For every $\delta \in (0,1)$, $\alpha' \in (0,\alpha)$ there exists a constant $C$ with the following property. For every $n \in \N$, given any smooth vector field $v \in C^\infty(\mathbb{T}^3,\R^3)$ on the torus and denoting $G \coloneqq \left(\dvgphin-\dvg^{\phi_n}\right) v$, almost surely for every $L \in \N$, $L \geq 1$ it holds
\begin{align*}
\|G\|_{C_{\leq \mathfrak{t}_L} B^{\delta-1}_{\infty,\infty}}
&\leq 
C L^3 (n+1)\varsigma_n^{\alpha'}
\| v \|_{C_{\leq \mathfrak{t}_L} C^{\delta}_x},
\\
\|G\|_{C_{\leq \mathfrak{t}_L} C^\delta_x}
&\leq 
C L^3 (n+1)\varsigma_n^{\alpha'}
\| v \|_{C_{\leq \mathfrak{t}_L} C^{1+\delta}_x}
,
\\
\|G\|_{C_{\leq \mathfrak{t}_L} C^{1+\delta}_x}
&\leq 
C L^4 (n+1)\varsigma_n^{\alpha'}
\| v \|_{C_{\leq \mathfrak{t}_L} C^{2+\delta}_x}
%,
%\\
%\|G\|_{C^\beta_{\leq \mathfrak{t}_L} C_x}
%&\leq 
%C L^2 (n+1) \varsigma_n^{\alpha'}
%\|v\|_{C^\beta_{\leq \mathfrak{t}_L} C^1_x}
%\\
%&\quad+
%CL^2 (n+1)\varsigma_n^{\alpha-\beta}
%\| v \|_{C_{\leq \mathfrak{t}_L} C^{1}_x}
.
\end{align*}
\end{lem}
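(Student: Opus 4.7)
The plan is to write $G$ explicitly as a bilinear pairing $(\nabla v):M$, where the tensor $M$ encodes the difference between the two ``cometric'' coefficients associated to the flows $\phi_{n+1}$ and $\phi_n$, and then to deduce the three bounds by combining \autoref{lem:flow} with standard H\"older/Besov multiplication rules. Differentiating the identity $\phi^{-1}\circ\phi=Id_{\,\T^3}$ gives $(\nabla\phi^{-1})\circ\phi=(\nabla\phi)^{-1}$, hence the chain rule applied to $\dvg(v\circ\phi^{-1})\circ\phi$ yields
\[
\dvgphi v(x)=Tr\bigl((\nabla v)(x)\,(\nabla\phi(x))^{-1}\bigr).
\]
Setting $M\coloneqq(\nabla\phi_{n+1})^{-1}-(\nabla\phi_n)^{-1}$, we therefore have $G(x)=Tr((\nabla v)(x)\,M(x))$. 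By the resolvent identity
\[
M=-(\nabla\phi_{n+1})^{-1}\,(\nabla\phi_{n+1}-\nabla\phi_n)\,(\nabla\phi_n)^{-1},
\]
the standard composition/product rules for H\"older norms, and \autoref{lem:flow} applied with time-H\"older exponent $\beta\coloneqq\alpha-\alpha'$ (using also that $\phi_n$ is measure-preserving, so that $(\nabla\phi_n)^{-1}$ is a polynomial in the entries of $\nabla\phi_n$), one obtains, for every integer $\kappa$ smaller than a universal constant,
\[
\|M\|_{C_{\leq\mathfrak{t}_L}C^\kappa_x}\leq C L^{\lambda_\kappa}(n+1)\,\varsigma_n^{\alpha'},
\]
for suitable universal exponents $\lambda_\kappa$ which can be explicitly tracked ($\lambda_\kappa\leq 3$ for $\kappa\in\{0,\delta,1\}$ and $\lambda_\kappa\leq 4$ for $\kappa=1+\delta$).

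With this decay on $M$ in hand, each of the three estimates becomes a short exercise in Schauder calculus. For the bound in $B^{\delta-1}_{\infty,\infty}$, I would integrate by parts and split
\[
G=\sum_{i,k}\partial_{x_k}\!\bigl(v_i\,M^k_i\bigr)-\sum_{i,k} v_i\,\partial_{x_k}M^k_i.
\]
The continuity of $\partial_{x_k}:B^\delta_{\infty,\infty}\to B^{\delta-1}_{\infty,\infty}$ together with the H\"older product rule gives $\|\partial_{x_k}(v_iM^k_i)\|_{B^{\delta-1}_{\infty,\infty}}\leq C\|v\|_{C^\delta_x}\|M\|_{C^\delta_x}$, while the continuous embedding $L^\infty\hookrightarrow B^{\delta-1}_{\infty,\infty}$ (valid since $\delta<1$) yields $\|v_i\partial_{x_k}M^k_i\|_{B^{\delta-1}_{\infty,\infty}}\leq C\|v\|_{C^\delta_x}\|M\|_{C^1_x}$. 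For the two remaining bounds, I would differentiate the pointwise formula $G=(\nabla v):M$ directly and apply the H\"older product rule $\|fg\|_{C^\delta_x}\leq\|f\|_{C^\delta_x}\|g\|_{L^\infty}+\|f\|_{L^\infty}\|g\|_{C^\delta_x}$, obtaining $\|G\|_{C^\delta_x}\leq C\|v\|_{C^{1+\delta}_x}\|M\|_{C^\delta_x}$ and, after one more spatial differentiation, $\|G\|_{C^{1+\delta}_x}\leq C\|v\|_{C^{2+\delta}_x}\|M\|_{C^{1+\delta}_x}$. The extra power of $L$ in the third estimate is then exactly the cost of the additional differentiation of $M$ and is absorbed into $L^{\lambda_{1+\delta}}$.

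The only real work is the bookkeeping of the $L$-dependence through the compositions, products and matrix inversions involving $\phi_n$ and $\phi_{n+1}$; this is purely mechanical thanks to \autoref{lem:flow}. The conceptual content of the lemma is precisely the small-in-$\varsigma_n^{\alpha'}$ control on $M$, which follows at once from the Wong--Zakai-type estimate $\|\phi_{n+1}-\phi_n\|_{C^{\alpha-\alpha'}_{\leq\mathfrak{t}_L}C^\kappa_x}\leq CL(n+1)\varsigma_n^{\alpha'}$ provided by \autoref{lem:flow}.
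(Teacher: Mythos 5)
Your identity $\dvgphi v = \mathrm{Tr}\bigl((\nabla v)(\nabla\phi)^{-1}\bigr)$ is indeed the same starting point as the paper's, since $(\nabla\phi^{-1})\circ\phi = (\nabla\phi)^{-1}$. The divergence then occurs in how you split the tensor. The paper writes the difference as a \emph{sum} of two terms, each involving a single $\nabla\phi^{-1}$ factor evaluated at two different arguments:
\begin{align*}
(\nabla\phi_{n+1}^{-1})\circ\phi_{n+1} - (\nabla\phi_{n}^{-1})\circ\phi_{n}
=\bigl[(\nabla\phi_{n+1}^{-1}-\nabla\phi_n^{-1})\circ\phi_{n+1}\bigr]
+\bigl[(\nabla\phi_n^{-1})\circ\phi_{n+1}-(\nabla\phi_n^{-1})\circ\phi_n\bigr],
\end{align*}
then applies \autoref{lem:C0_Cm} and \autoref{lem:flow} to each piece. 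You instead use the resolvent identity, which turns $M$ into a \emph{product} of three matrix fields. This is where I believe your $L$-count goes wrong: after composing with $\phi$, $\|(\nabla\phi)^{-1}\|_{C^1_x}\lesssim L^2$ (whether via \autoref{lem:C0_Cm} applied to $(\nabla\phi^{-1})\circ\phi$, or via the adjugate polynomial combined with $\|\nabla\phi\|_{C^\kappa_x}\lesssim L$); the H\"older product rule on three factors then gives at least
\[
\|M\|_{C^1_x}\lesssim \|(\nabla\phi_{n+1})^{-1}\|_{C^1_x}\|\nabla(\phi_{n+1}-\phi_n)\|_{C_x}\|(\nabla\phi_n)^{-1}\|_{C_x}+\cdots \lesssim L^4(n+1)\varsigma_n^{\alpha'},
\]
one full power of $L$ more than the paper's $L^3$, and similarly for the other exponents. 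So the claim ``$\lambda_\kappa\leq 3$ for $\kappa\in\{0,\delta,1\}$'' appears to be too optimistic; you cannot get $L^3$ out of the three-factor product without some additional cancellation that you haven't shown. The paper's additive split avoids this because each piece contains only a single $\nabla\phi^{-1}$ factor.

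That said, this is a bookkeeping discrepancy, not a conceptual one: in every downstream use of the lemma (\autoref{prop:R_flow}, \autoref{prop:diverg}) the constant $L^3$ is immediately dominated by much larger powers of $L_n$, so a bound of the form $L^{\lambda}(n+1)\varsigma_n^{\alpha'}$ with any fixed $\lambda$ would serve equally well. If you want to prove the lemma exactly as stated, replace the resolvent identity by the paper's additive decomposition; your integration-by-parts step for the $B^{\delta-1}_{\infty,\infty}$ bound is a reasonable alternative to the paraproduct estimate the paper invokes.
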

\begin{proof}
For $\phi=\phi_n$ or $\phi=\phi_{n+1}$ it holds
\begin{align*}
\dvgphi v 
= 
\sum_{j=1}^3 \partial_{x_j} (v^{j} \circ \phi^{-1}) \circ \phi
=
\sum_{j,k=1}^3 (\partial_{x_k} v^{j}) (\partial_{x_j} (\phi^{-1})^k \circ \phi).
\end{align*}
Therefore
\begin{align*}
G
&=
\sum_{j,k=1}^3 (\partial_{x_k} v^{j}) 
\left( \partial_{x_j} (\phi_{n+1}^{-1})^k \circ \phi_{n+1}
- \partial_{x_j} (\phi_{n}^{-1})^k \circ \phi_{n+1} \right)
\\
&\quad+
\sum_{j,k=1}^3 (\partial_{x_k} v^{j}) 
\left( \partial_{x_j} (\phi_{n}^{-1})^k \circ \phi_{n+1}
- \partial_{x_j} (\phi_{n}^{-1})^k \circ \phi_{n} \right).
\end{align*}
Thus by paraproduct estimates in Besov spaces \cite[Proposition A.7]{MoWe17}, \autoref{lem:C0_Cm} and \autoref{lem:flow} we have 
\begin{align*}
\| G \|_{C_{\leq \mathfrak{t}_L} B^{\delta-1}_{\infty,\infty}}
&\leq C 
\| v \|_{C_{\leq \mathfrak{t}_L} C^{\delta}_x}
\| \partial_{x_j} (\phi_{n+1}^{-1})^k \circ \phi_{n+1}
- \partial_{x_j} (\phi_{n}^{-1})^k \circ \phi_{n+1} \|_{C_{\leq \mathfrak{t}_L} C^1_x}
\\
&\quad+ C
\| v \|_{C_{\leq \mathfrak{t}_L} C^{\delta}_x}
\| \partial_{x_j} (\phi_{n}^{-1})^k \circ \phi_{n+1}
- \partial_{x_j} (\phi_{n}^{-1})^k \circ \phi_{n} \|_{C_{\leq \mathfrak{t}_L} C^1_x}
\\
&\leq C 
\| v \|_{C_{\leq \mathfrak{t}_L} C^{\delta}_x}
L\| \phi^{-1}_{n+1}-\phi^{-1}_n \|_{C_{\leq \mathfrak{t}_L} C^{2}_x}
\\
&\quad +C
\| v \|_{C_{\leq \mathfrak{t}_L} C^\delta_x} 
\left( 
%\| \phi^{-1}_n \|_{C_{\leq \mathfrak{t}_L} C^2_x}
L\| \phi_{n+1}-\phi_n \|_{C_{\leq \mathfrak{t}_L} C^{1}_x}
+
%\| \phi^{-1}_n \|_{C_{\leq \mathfrak{t}_L} C^3_x}
%\| \phi_n \|_{C_{\leq \mathfrak{t}_L} C^1_x}
L^2\| \phi^{-1}_{n+1}-\phi^{-1}_n \|_{C_{\leq \mathfrak{t}_L} C_x} 
\right),
\end{align*}
where in the first line we have used that $\|\partial_{x_k}v\|_{C_{\leq \mathfrak{t}_L} B^{\delta-1}_{\infty,\infty}}
\leq C \|v\|_{C_{\leq \mathfrak{t}_L} B^{\delta}_{\infty,\infty}} = C \|v\|_{C_{\leq \mathfrak{t}_L} C^{\delta}_x}
$.
To conclude, recall that again by \autoref{lem:flow} it holds
\begin{align*}
\|\phi_{n+1}^{-1}-\phi_n^{-1}\|_{C_{\leq \mathfrak{t}_L} C^2_x}
&\leq 
CL (n+1) \varsigma_n^{\alpha'},
\\
\|\phi_{n+1}-\phi_n\|_{C_{\leq \mathfrak{t}_L} C^2_x}
&\leq 
CL (n+1) \varsigma_n^{\alpha'}.
\end{align*}
The inequalities concerning the $C_{\leq \mathfrak{t}_L} C^{\delta}_x$ and $C_{\leq \mathfrak{t}_L} C^{1+\delta}_x$ norms are completely analogous, and we omit their proof.
%As for the other inequality, we have
%\begin{align*}
%\|G\|_{C^\beta_{\leq \mathfrak{t}_L} C_x}
%&\leq C
%\|v\|_{C^\beta_{\leq \mathfrak{t}_L} C^1_x}
%\left( 
%\| \phi^{-1}_{n+1}-\phi^{-1}_n \|_{C_{\leq \mathfrak{t}_L} C^1_x} 
%+
%\|\phi^{-1}_n\|_{C_t C^2_x} \|\phi_{n+1}-\phi_n\|_{C_{\leq \mathfrak{t}_L} C_x} \right)
%\\
%&\quad +C
%\|v\|_{C_{\leq \mathfrak{t}_L} C^1_x}
%\left(
%L \| \phi_{n+1}^{-1}-\phi_n^{-1} \|_{C_{\leq \mathfrak{t}_L} C^2_x}
%+
%\|\phi_{n+1}^{-1}-\phi_n^{-1}\|_{C^\beta_{\leq \mathfrak{t}_L} C^1_x}
%\right)
%\\
%&\quad+C
%\|v\|_{C_{\leq \mathfrak{t}_L} C^1_x}
%\| \phi_n^{-1} \|_{C_{\leq \mathfrak{t}_L} C^2_x}
%\| \phi_{n+1}-\phi_n \|_{C^\beta_{\leq \mathfrak{t}_L} C_x}
%,
%\end{align*}
%and we conclude using 
%\begin{align*}
%\|\phi_{n+1}^{-1}-\phi_n^{-1}\|_{C_{\leq \mathfrak{t}_L} C^2_x}
%&\leq 
%CL(n+1)\varsigma_n^{\alpha'},
%\\
%\|\phi_{n+1}-\phi_n\|_{C^\beta_{\leq \mathfrak{t}_L} C_x}
%&\leq 
%CL(n+1)\varsigma_n^{\alpha-\beta},
%\\
%\|\phi_{n+1}^{-1}-\phi_n^{-1}\|_{C^\beta_{\leq \mathfrak{t}_L} C^1_x}
%&\leq 
%CL(n+1)\varsigma_n^{\alpha-\beta}.
%\end{align*}
\end{proof}

Let us denote $F_n
\coloneqq 
v_n \otimes v_n - \mathring{R}_n + q_n Id$ and
\begin{align*}
F_\ell
\coloneqq 
F_n \ast \chi_\ell
=
(v_n \otimes v_n) \ast \chi_\ell - \mathring{R}_\ell + q_\ell Id.
\end{align*}
Notice that for every $r \geq 0$ and almost surely for every $L \in \N$, $L \geq 1$ it holds:
\begin{align*}
\|F_\ell\|_{C_{\leq \mathfrak{t}_L}C_x^r}
\leq
C
\ell^{-r} \|F_n \|_{C_{\leq \mathfrak{t}_L} C_x}
\leq
C L_n^2 \ell^{-r},
\end{align*} 
where the constant $C$ depends only on the mollifier $\chi$ and $r$. 
%Moreover, for every $\delta \geq 0$
%\begin{align*}
%\| F_\ell \|_{C_{\leq \mathfrak{t}_L} C^{1+\delta}_x}
%&\leq
%C
%\ell^{-\delta} \|F_n \|_{C^1_{\leq \mathfrak{t}_L,x}}
%\leq
%C L_n D_n \ell^{-\delta}
%%,
%%\\
%%\| F_\ell \|_{C^\alpha_{\leq \mathfrak{t}_L} C^{1}_x}
%%&\leq
%%C
%%\ell^{-\alpha} \|F_n \|_{C_{\leq \mathfrak{t}_L} C^1_x}
%%\leq
%%C L_n D_n \ell^{-\alpha}
%.
%\end{align*}

Finally, define $G_\ell \coloneqq \left(\dvgphin-\dvg^{\phi_n}\right) F_\ell$ and let us split $\mathring{R}^{flow} \coloneqq \mathring{R}^{flow}_1 + \mathring{R}^{flow}_2$, where
\begin{align*}
\mathring{R}^{flow}_1 
&\coloneqq 
\mathcal{R}^{\phi_{n+1}} G_\ell ,
\\
\mathring{R}^{flow}_2 
&\coloneqq 
\mathcal{R}^{\phi_{n+1}} \left( w_o \,\dvgphin v_\ell \right).
\end{align*}

\begin{prop} \label{prop:R_flow}
Let $\mathring{R}^{flow} \coloneqq \mathring{R}^{flow}_1 + \mathring{R}^{flow}_2$ be defined as above. 
Then for every $r \geq r_\star +2$, $\delta \in (0,1)$ sufficiently small and $\alpha' \in (\alpha_\star,\alpha)$ there exists a constant $C$ such that almost surely for every $L \in \N$, $L \geq 1$
\begin{align*}
\left\|  \mathring{R}^{flow} \right\|_{C_{\leq \mathfrak{t}_L} C_x}
&\leq
C L_n^{3r+5} \ell^{-\delta} (n+1)\varsigma_n^{\alpha'}
\\
\left\|  \mathring{R}^{flow} \right\|_{C_{\leq \mathfrak{t}_L} C^1_x}
&\leq
CL_n^{3r+8} \ell^{-1-\delta} (n+1)\varsigma_n^{\alpha'}
%,
%\\
%\left\|  \mathring{R}^{flow} \right\|_{C^\beta_{\leq \mathfrak{t}_L} C_x}
%&\leq
%CL^{6+r}L_n^{3r+13/2} 
%\lambda^\delta \delta_n^{1/2} D_n \varsigma_{n+1}^{\alpha-1}
.
\end{align*} 
\end{prop}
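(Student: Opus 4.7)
The strategy is to bound $\mathring{R}^{flow}_1$ and $\mathring{R}^{flow}_2$ separately, exploiting the Schauder estimates \eqref{e:Schauder_R_Besov} and \eqref{e:Schauder_R} for $\mathcal{R}^{\phi_{n+1}}$ together with the crucial \autoref{lem:G_holder}, which supplies the $(n+1)\varsigma_n^{\alpha'}$ factor via the closeness of $\phi_n$ to $\phi_{n+1}$.

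For $\mathring{R}^{flow}_1 = \mathcal{R}^{\phi_{n+1}} G_\ell$, I would first apply \eqref{e:Schauder_R_Besov} to get $\|\mathring{R}^{flow}_1\|_{C_{\leq\mathfrak{t}_L} C_x}\leq \|\mathring{R}^{flow}_1\|_{C_{\leq\mathfrak{t}_L}C^\delta_x}\leq CL^{\cdot}\|G_\ell\|_{C_{\leq\mathfrak{t}_L}B^{\delta-1}_{\infty,\infty}}$. Invoking the first inequality of \autoref{lem:G_holder} row by row on the matrix $F_\ell$ gives $\|G_\ell\|_{C_{\leq\mathfrak{t}_L}B^{\delta-1}_{\infty,\infty}}\leq CL^3(n+1)\varsigma_n^{\alpha'}\|F_\ell\|_{C^\delta_x}$, and standard mollification yields $\|F_\ell\|_{C^\delta_x}\leq C\ell^{-\delta}\|F_n\|_{C_x}\leq CL_n^2\ell^{-\delta}$, where the last bound comes from the iterative assumptions \eqref{eq:est_energy}, \eqref{eq:est_Reynolds}, \eqref{eq:est_pres}. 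For the $C^1_x$ bound I would replace the first step by \eqref{e:Schauder_R} to obtain $\|\mathring{R}^{flow}_1\|_{C^{1+\delta}_x}\leq CL^{\cdot}\|G_\ell\|_{C^\delta_x}$ and combine with the second inequality of \autoref{lem:G_holder} plus $\|F_\ell\|_{C^{1+\delta}_x}\leq C\ell^{-1-\delta}\|F_n\|_{C_x}$. The total contribution here is of order $L_n^2\ell^{-\delta}(n+1)\varsigma_n^{\alpha'}$, comfortably inside the claim.

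For $\mathring{R}^{flow}_2 = \mathcal{R}^{\phi_{n+1}}(w_o\,\dvgphin v_\ell)$, I would decompose $\dvgphin v_\ell$ exactly as in \eqref{eq:div v_ell} into three pieces: a mollification commutator, controlled via the $\phi_{n+1}$ analogue of \autoref{lem:commuting_moll} by $\ell^\alpha\|v_n\|_{C^1_x}$; a flow commutator $\bigl((\dvgphin-\dvg^{\phi_n})v_n\bigr)\ast\chi_\ell$, controlled by \autoref{lem:G_holder} so as to generate the $(n+1)\varsigma_n^{\alpha'}$ factor; and the residual $(\dvg^{\phi_n}v_n)\ast\chi_\ell$, controlled in $B^{-1}_{\infty,\infty}$ by the iterative assumption \eqref{eq:est_div}. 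Using the oscillatory representation $w_o = \sum_{k\in\Lambda} a_k E_k e^{ik\cdot\lambda\phi_{n+1}}$, I would then rewrite $w_o\,\dvgphin v_\ell$ as $\sum_k (a_k\cdot[\text{scalar piece}])\,E_k e^{ik\cdot\lambda\phi_{n+1}}$ and apply the stationary phase \autoref{prop:stat_phase_lem} in the exact same fashion as in the proof of \autoref{prop:R_trans}. The hypothesis $r\geq r_\star+2$ is precisely what makes the $\lambda^{-r}$ gain from stationary phase absorb the high-derivative bounds \eqref{est:a_k,r_large} on $a_k$, and it is these bounds that produce the $L_n^{3r+\cdot}$ powers appearing in the thesis.

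The main obstacle is the residual piece $(\dvg^{\phi_n}v_n)\ast\chi_\ell$: it enjoys only $B^{-1}_{\infty,\infty}$ regularity, so multiplying it against the merely $C^\delta$-regular field $w_o$ forces one to invoke paraproduct bounds in Besov spaces (e.g.\ \cite[Proposition A.7]{MoWe17}), and the smallness $\delta_{n+2}^{5/4}$ must be carefully balanced against the $\ell^{-r}$ losses that arise whenever $v_\ell$ is pushed to higher H\"older norms. The choice $\alpha'\in(\alpha_\star,\alpha)$ rather than the sharper endpoint $\alpha$ is dictated by the small interpolation margin required inside \autoref{lem:G_holder}, analogous to the auxiliary $\delta>0$ systematically used throughout \autoref{sec:iter}.
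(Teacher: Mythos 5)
Your treatment of $\mathring{R}^{flow}_1$ matches the paper: apply \eqref{e:Schauder_R_Besov} (resp. \eqref{e:Schauder_R}), then \autoref{lem:G_holder} on $F_\ell$, then mollification estimates on $F_\ell$ from the iterative bound $\|F_n\|_{C_x}\lesssim L_n^2$. For $\mathring{R}^{flow}_2$, however, you take a genuinely different and substantially more convoluted route than the paper. The paper never decomposes $\dvgphin v_\ell$ at all: it rewrites $w_o\,\dvgphin v_\ell = (\sum_k (a_k\circ\phi_{n+1}^{-1})\,\dvg(v_\ell\circ\phi_{n+1}^{-1})\,\Omega_k^\lambda)\circ\phi_{n+1}$ and applies \autoref{prop:stat_phase_lem} once, using only $\|v_\ell\|_{C^1_x}\lesssim L_n D_n$ and the amplitude bounds from \autoref{prop:aux_est}. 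The resulting bound is $\lesssim L_n^{3r+4}\lambda^{\delta-1}\delta_n^{1/2}D_n$, where the essential gain is the factor $\lambda^{-1}$ from stationary phase; no $(n+1)\varsigma_n^{\alpha'}$ is extracted or needed from this term, since it is simply dominated by the $\mathring{R}^{flow}_1$ contribution given the parameter choices $\lambda\sim\ell^{-2r_\star}\varsigma_{n+1}^{\alpha-2}$ and $D_n\leq\varsigma_{n+1}^{\alpha-1}$. Your approach — decomposing $\dvgphin v_\ell$ via \eqref{eq:div v_ell}, extracting the $(n+1)\varsigma_n^{\alpha'}$ from a flow-commutator piece, and treating the residual $(\dvg^{\phi_n}v_n)\ast\chi_\ell$ via paraproducts against $w_o$ — manufactures a difficulty that the paper sidesteps entirely. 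It is not incorrect in principle (the residual can be lifted from $B^{-1}_{\infty,\infty}$ to $C_x$ at cost $\ell^{-1}$ before stationary phase, rather than through a paraproduct against $w_o$), but it is substantially more delicate and does not reflect the structure the authors exploit: the oscillatory factor in $w_o$ already makes $\mathring{R}^{flow}_2$ small regardless of the size or Besov regularity of $\dvgphin v_\ell$, so the $(n+1)\varsigma_n^{\alpha'}$ factor in the final statement comes purely from $\mathring{R}^{flow}_1$.
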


\begin{proof}

Let us focus first on the term $\mathring{R}^{flow}_1$.
By \eqref{e:Schauder_R_Besov} and \autoref{lem:G_holder}, for  every $\delta>0$ one has
\begin{align*}
\|\mathring{R}^{flow}_1 \|_{C_{\leq \mathfrak{t}_L} C^\delta_x}
&\leq
CL^{5+4\delta}
\|G_\ell\|_{B^{\delta-1}_{\infty,\infty}}
\\
&\leq
CL^{8+4\delta}
(n+1)\varsigma_n^{\alpha'} \|F_\ell\|_{C_{\leq \mathfrak{t}_L} C^\delta_x}
\\
&\leq
CL^{8+4\delta}L_n^2 \ell^{-\delta}
(n+1)\varsigma_n^{\alpha'},
\end{align*}  
and similarly, using \eqref{e:Schauder_R} instead of \eqref{e:Schauder_R_Besov}
\begin{align*}
\|\mathring{R}^{flow}_1 \|_{C_{\leq \mathfrak{t}_L} C^{1+\delta}_x}
&\leq
CL^{9+3\delta}  (n+1)\varsigma_n^{\alpha'}
\|F_\ell\|_{C_{\leq \mathfrak{t}_L} C^{1+\delta}_x}
\\
&\leq
CL^{9+3\delta} L_n^2 \ell^{-1-\delta} (n+1)\varsigma_n^{\alpha'}.
\end{align*}

%As for the $C^\beta_{\leq \mathfrak{t}_L} C_x$ norm, by \autoref{lem:Rv_Cbeta_C0} we have
%\begin{align*}
%\left\|  \mathring{R}^{flow}_1 \right\|_{C^\beta_{\leq \mathfrak{t}_L} C_x}
%&\leq
%CL^{\beta/\alpha} \left\| G_\ell \right\|_{C_{\leq \mathfrak{t}_L} C^{\beta/\alpha}_x}
%+
%C \left\|G_\ell \right\|_{C^\beta_{\leq \mathfrak{t}_L} C_x}
%\\
%&\leq
%CL^5(n+1)\varsigma_n^{\alpha'} \|F_\ell\|_{C_{\leq \mathfrak{t}_L} C^{1+\beta/\alpha}_x}
%\\
%&\quad+
%CL^2(n+1)\varsigma_n^{\alpha'} \|F_\ell\|_{C^\beta_{\leq \mathfrak{t}_L} C^{1}_x}
%+
%CL^2(n+1)\varsigma_n^{\alpha-\beta} \|F_\ell\|_{C_{\leq \mathfrak{t}_L} C^{1}_x}
%\\
%&\leq
%CL^5 L_n D_n \ell^{-\beta/\alpha}(n+1)\varsigma_n^{\alpha'}.
%\end{align*}

Moving to the second term $\mathring{R}^{flow}_2$, we notice that 
\begin{align} \label{eq:w_o div v_ell}
w_o\, \dvgphin v_\ell 
&=
\left(
\sum_{k \in \Lambda}
a_k\circ \phi_{n+1}^{-1}\, \mbox{div}\left(v_\ell\circ \phi_{n+1}^{-1}\right) \Omega_k^\lambda
\right) \circ \phi_{n+1},
\end{align}
so that by \autoref{prop:stat_phase_lem}, \autoref{lem:C0_Cm} and taking $r \geq r_\star + 2$
\begin{align*}
\|\mathring{R}^{flow}_2 \|_{C_{\leq \mathfrak{t}_L}C^{\delta}_x}
&\leq
C L^\delta \sum_{k \in \Lambda}
\lambda^{\delta-1} \|a_k\|_{C_x} \|v_\ell\|_{C^1_x}
\\
&\quad+
C L^{r+1+\delta} \sum_{k \in \Lambda} \sum_{r_1+r_2=r}
\lambda^{\delta-r} \|a_k\|_{C^{r_1}_x} \|v_\ell\|_{C^{r_2+1}_x}
\\
&\quad+
C L^{r+2+\delta} \sum_{k \in \Lambda} \sum_{r_1+r_2=r+1}
\lambda^{-r} \|a_k\|_{C^{r_1}_x} \|v_\ell\|_{C^{r_2+1}_x}
\\
&\leq
CL^{r+3}L_n^{3r+4}
\lambda^{\delta-1} \delta_n^{1/2} D_n.
\end{align*}
In the line above we have used, as usual, the factors $\lambda^{-r}$ to compensate, up to a power of $L_n$, for the quantities $\|a_k\|_{C^{r_1}_x} \|v_\ell \|_{C^{r_2+1}_x}$.
Taking the space derivative in \eqref{eq:w_o div v_ell} as usual, we obtain in a similar fashion
\begin{align*}
\|\mathring{R}^{flow}_2 \|&_{C_{\leq \mathfrak{t}_L}C^{1+\delta}_x}
\leq
CL \lambda \|\mathring{R}^{flow}_2 \|_{C_{\leq \mathfrak{t}_L}C^{\delta}_x} 
\\
&\quad+
CL^3 \sum_{k \in \Lambda}
\left(
\lambda^{\delta-1} [a_k]_{C^1_x} [v_\ell]_{C^1_x}
+
\lambda^{\delta-1} \|a_k\|_{C_x} [v_\ell]_{C^2_x}
\right)
\\
&\quad+
C L^{r+2+\delta} \sum_{k \in \Lambda} \sum_{r_1+r_2=r+1}
\lambda^{\delta-r} \|a_k\|_{C^{r_1}_x} \|v_\ell \|_{C^{r_2+1}_x}
\\
&\quad+
C L^{r+3+\delta} \sum_{k \in \Lambda} \sum_{r_1+r_2=r+2}
\lambda^{-r} \|a_k\|_{C^{r_1}_x} \|v_\ell \|_{C^{r_2+1}_x}
\\
&\leq
CL^{r+4}L_n^{3r+7}
\lambda^{\delta} \delta_n^{1/2} D_n.
\end{align*}

\end{proof}

\subsection{Estimate on the compressibility error}
\label{ssec:comp}
Let us finally move to the last term to control in the decomposition \eqref{eq:decomposition_R}, namely the compressibility error term.
It is due to the fact that we have added a compressibility corrector $w_c$ to the oscillatory term $w_o$ when defining the new velocity $v_{n+1} = v_\ell + w_o + w_c$, in order to satisfy the condition on the divergence \eqref{eq:est_div} at level $n+1$.
In particular, recall we have defined $w_c = w^1_c + w^2_c$, where $w^1_c = - (\mathcal{Q}^{\phi_n} v_n ) \ast\chi_\ell$ and $w^2_c = -  \mathcal{Q}^{\phi_{n+1}} w_o$.
We will need the following preliminary:
\begin{lem} \label{lem:w_c}
For every $\delta \in (0,1)$ sufficiently small there exists a constant $C$ such that, almost surely for every $L \in \N$, $L \geq 1$:
\begin{align*}
\|w_c^1\|_{C_{\leq \mathfrak{t}_L}C^{\delta}_x}
&\leq
CL^{7} L_n \delta_{n+2}^{6/5},
\\
\|w_c^1\|_{C_{\leq \mathfrak{t}_L}C^{1+\delta}_x}
&\leq
CL^{7} L_n \delta_{n+2}^{6/5} \ell^{-1},
\\
\| w^2_c \|_{C_{\leq \mathfrak{t}_L}C^\delta_x}
&\leq
C L^{1+2\delta} L_n^{3+3\delta} \lambda^{\delta-1} \mu \delta_n^{1/2}  \varsigma_{n+1}^{\alpha-1},
\\
\| w^2_c \|_{C_{\leq \mathfrak{t}_L}C^{1+\delta}_x}
&\leq
CL^{3+2\delta} L_n^{6+3\delta}\lambda^{\delta} \mu \delta_n^{1/2}  \varsigma_{n+1}^{\alpha-1}.
\end{align*}
\end{lem}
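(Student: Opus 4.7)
The proof splits cleanly into the two correctors $w_c^1$ and $w_c^2$.

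For $w_c^1=-(\mathcal{Q}^{\phi_n}v_n)\ast\chi_\ell$, I would unpack $\mathcal{Q}^{\phi_n}v_n=[\mathcal{Q}(v_n\circ\phi_n^{-1})]\circ\phi_n$ and observe that, since $v_n$ has zero spatial average and $\phi_n^{-1}$ is measure preserving, $v_n\circ\phi_n^{-1}$ has zero mean as well. Hence $\mathcal{Q}(v_n\circ\phi_n^{-1})=\nabla\Delta^{-1}\dvg(v_n\circ\phi_n^{-1})=\nabla\Delta^{-1}\bigl[(\dvg^{\phi_n}v_n)\circ\phi_n^{-1}\bigr]$. The operator $\nabla\Delta^{-1}$ is of order $-1$ on $\mathbb{T}^3$, so the Schauder bound $\|\nabla\Delta^{-1}f\|_{C^\delta_x}\leq C\|f\|_{B^{\delta-1}_{\infty,\infty}}$ together with composition estimates from \autoref{lem:C0_Cm} and \autoref{lem:flow} (applied both to $\phi_n^{-1}$ on the negative-order Besov side and to $\phi_n$ on the output side) produces
\begin{align*}
\|\mathcal{Q}^{\phi_n}v_n\|_{C^\delta_x}\leq C\, L^{c_0}\, \|\dvg^{\phi_n}v_n\|_{B^{\delta-1}_{\infty,\infty}}
\end{align*}
for some absolute $c_0$. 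I then interpolate: \eqref{eq:est_div} controls the $B^{-1}_{\infty,\infty}$ norm by $L_n\delta_{n+2}^{5/4}$, while $C_x\hookrightarrow B^0_{\infty,\infty}$ combined with \eqref{eq:C^1,1} gives $\|\dvg^{\phi_n}v_n\|_{B^0_{\infty,\infty}}\leq C L L_n D_n$, so
\begin{align*}
\|\dvg^{\phi_n}v_n\|_{B^{\delta-1}_{\infty,\infty}}\leq C\bigl(L_n\delta_{n+2}^{5/4}\bigr)^{1-\delta}\bigl(L L_n D_n\bigr)^{\delta}.
\end{align*}
Since $D_n\sim a^{cb^n}$ and $\delta_{n+2}\sim a^{-b^{n+2}}$, the constraint $\delta_{n+2}^{(5/4)(1-\delta)}D_n^\delta\leq\delta_{n+2}^{6/5}$ reduces to the uniform inequality $\delta\leq\tfrac{1}{20[(5/4)+c/b^2]}$, giving the claimed $C^\delta_x$ bound (with $L$-power $7$ after chasing constants). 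The $C^{1+\delta}_x$ estimate follows from the elementary mollification bound $\|f\ast\chi_\ell\|_{C^{1+\delta}_x}\leq C\ell^{-1}\|f\|_{C^\delta_x}$.

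For $w_c^2=-\mathcal{Q}^{\phi_{n+1}}w_o$ I use the same reduction, writing
\begin{align*}
\mathcal{Q}^{\phi_{n+1}}w_o=\Bigl[\mathcal{Q}\!\sum_{k\in\Lambda}\tilde a_k(y,t)E_k e^{i\lambda k\cdot y}\Bigr]\circ\phi_{n+1},\qquad \tilde a_k(y,t):=a_k(\phi_{n+1}^{-1}(y,t),t,\lambda t).
\end{align*}
Crucially, $k\cdot E_k=0$ forces $\dvg(\tilde a_k E_k e^{i\lambda k\cdot y})=(E_k\cdot\nabla_y\tilde a_k)\,e^{i\lambda k\cdot y}$, so that each summand of $\mathcal{Q}[\cdot]$ is $\nabla\Delta^{-1}$ applied to an oscillatory function of frequency $\lambda$. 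Both $\mathcal{Q}$ and the $\mathcal{R}$ of \autoref{lem:inverse} are order $-1$ Calder\'on--Zygmund operators, so the stationary phase estimate \autoref{prop:stat_phase_lem} (used almost verbatim as in \autoref{prop:R_osc}) applies and delivers the leading gain $\lambda^{\delta-1}$ over $\tilde a_k$. By \autoref{lem:C0_Cm} one has $\|\tilde a_k\|_{C^r_x}\lesssim L^r\|a_k\|_{C^r_x}$, and then the spatial H\"older bounds \eqref{est:a_k,r_small}--\eqref{est:a_k,r_large} from \autoref{prop:aux_est} yield the announced $C^\delta_x$ estimate after composing back with $\phi_{n+1}$. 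For the $C^{1+\delta}_x$ bound I differentiate once, picking up $\|\phi_{n+1}\|_{C^1_x}\lesssim L$ from the outer composition, and rerun the stationary phase argument at order $1+\delta$: the extra derivative lands either on the phase (contributing $\lambda$) or on $\tilde a_k$ (contributing $\mu\varsigma_{n+1}^{\alpha-1}$ via the sharp bound on $\nabla\tilde a_k$), and both options are consistent with the stated bound.

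The only genuinely delicate point, and the one that fixes the meaning of ``$\delta$ sufficiently small'' in the statement, is the interpolation step for $w_c^1$: the inductive information \eqref{eq:est_div} is placed in $B^{-1}_{\infty,\infty}$ whereas I need $B^{\delta-1}_{\infty,\infty}$, and the complementary bound scales like $D_n$, which grows as a positive power of $\delta_{n+2}^{-1}$. The threshold $\delta\leq b^2/\bigl(20((5/4)b^2+c)\bigr)$ derived above depends only on the universal parameters $b,c$ fixed in \autoref{ssec:main_prop} and is harmless for the remainder of the scheme; all other constants are ordinary Schauder, composition and Calder\'on--Zygmund constants, and the $L$-powers announced in the statement are obtained by simple bookkeeping.
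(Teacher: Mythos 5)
Your proof is essentially the paper's proof, with two innocuous variations worth recording.

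For $w_c^1$, both arguments reduce to $\|\mathcal{Q}^{\phi_n}v_n\|_{C^\delta_x}\lesssim L^{c_0}\|\dvg^{\phi_n}v_n\|_{B^{\delta-1}_{\infty,\infty}}$ and then interpolate the right-hand side against the inductive bounds. The paper interpolates between $B^{-1}_{\infty,\infty}$ and $B^{-1/2}_{\infty,\infty}$ with weights $1-2\delta$ and $2\delta$, reading the second endpoint off an interpolation $\|v_n\|_{C^{1/2}_x}\lesssim L_nD_n^{1/2}$; you interpolate between $B^{-1}_{\infty,\infty}$ and $B^0_{\infty,\infty}$ with weights $1-\delta$ and $\delta$, reading the second endpoint off $\|\dvg^{\phi_n}v_n\|_{C_x}\lesssim L L_n D_n$. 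Both give a $D_n^\delta$ loss absorbed by $\delta_{n+2}^{\text{small}}$ when $\delta$ is small enough, and your explicit threshold $\delta\leq b^2/\bigl(20((5/4)b^2+c)\bigr)$ is the right order of magnitude; this is pure bookkeeping.

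For $w_c^2$, both arguments hinge on $k\cdot E_k=0$, but exploit it differently. The paper writes $w_o$ (up to the curl of something) as $\lambda^{-1}u_c$ where $u_c$ has amplitude $\nabla^{\phi_{n+1}}a_k$, notes $\mathcal{Q}^{\phi_{n+1}}\bigl(\nabla^{\phi_{n+1}}\times\cdot\bigr)=0$, and is then left to estimate $\lambda^{-1}\|\mathcal{Q}(u_c\circ\phi_{n+1}^{-1})\|_{C^\delta_x}$ with plain (order-$0$) Schauder for $\mathcal{Q}$; the $\lambda^{-1}$ is extracted algebraically first. You instead write $\mathcal{Q}(w_o\circ\phi_{n+1}^{-1})=\nabla\Delta^{-1}\dvg(w_o\circ\phi_{n+1}^{-1})$ (plus a harmless zero mode, small by stationary phase), observe that $k\cdot E_k=0$ kills the $\lambda$-factor in $\dvg$, and then get $\lambda^{\delta-1}$ from a stationary-phase estimate for the order-$(-1)$ multiplier $\nabla\Delta^{-1}$. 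The conclusion is identical. One small caveat: your parenthetical claim that ``$\mathcal{Q}$ is an order-$(-1)$ Calder\'on--Zygmund operator'' is not literally correct — $\mathcal{Q}$ is order $0$ — but the operator you actually apply, $\nabla\Delta^{-1}$, \emph{is} order $-1$, and that is what makes the argument work; also, \autoref{prop:stat_phase_lem} as stated covers $\mathcal{R}^\phi$ and $\mathcal{R}^\phi\mathcal{Q}^\phi$, so you implicitly use its straightforward analogue for $\nabla\Delta^{-1}$ (same proof). Neither point is a gap, only something to phrase precisely if this were to be written out.
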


\begin{proof}
First, notice that $\|w_c^1\|_{C_{\leq \mathfrak{t}_L}C^{\delta}_x} \leq \|\mathcal{Q}^{\phi_n}v_n\|_{C_{\leq \mathfrak{t}_L}C^{\delta}_x}$ and $\|w_c^1\|_{C_{\leq \mathfrak{t}_L}C^{1+\delta}_x} \leq \ell^{-1}\|\mathcal{Q}^{\phi_n}v_n\|_{C_{\leq \mathfrak{t}_L}C^{\delta}_x}$.
In addition, since $v_n$ is zero mean it holds $Q^{\phi_n}v_n = \nabla^{\phi_n} \psi$, where $\psi$ solves $\Delta^{\phi_n} \psi = \dvg^{\phi_n} v_n$ and $\int_{\T^3} \psi = 0$. Therefore, assuming $\delta$ sufficiently small, by iterative assumption \eqref{eq:est_div}, Schauder estimates in Besov spaces and interpolation we have
\begin{align} \label{eq:Qv_n} 
\|Q^{\phi_n}v_n\|_{C_{\leq \mathfrak{t}_L}C^\delta_x}
&\leq
CL^{1+2\delta} \|\psi\|_{C_{\leq \mathfrak{t}_L}C^{1+\delta}_x}
\\
&\leq \nonumber
CL^{6+2\delta} \|\dvg^{\phi_n} v_n \|_{C_{\leq \mathfrak{t}_L}B^{\delta-1}_{\infty,\infty}}
\\
&\leq \nonumber
CL^{6+2\delta} 
\|\dvg^{\phi_n} v_n \|^{1-2\delta}_{C_{\leq \mathfrak{t}_L}B^{-1}_{\infty,\infty}}
\|\dvg^{\phi_n} v_n \|^{2\delta}_{C_{\leq \mathfrak{t}_L}B^{-1/2}_{\infty,\infty}}
\\
&\leq \nonumber
CL^{6+10\delta} L_n \delta_{n+2}^{5(1-2\delta)/4} D_n^\delta
\leq
CL^{7} L_n \delta_{n+2}^{6/5},
\end{align} 
provided $\delta$ is taken sufficiently small.

As for the inequalities involving $w_c^2$, the proof is similar to \cite[Lemma 6.2]{DLS13} and \cite[Proposition 6.1]{DLS14}.
Let us define the field $u_c : \mathbb{T}^3 \times \R \to \R^3$ as
\begin{align*}
u_c(x,t) 
&\coloneqq 
i \sum_{k \in\Lambda} {\nablaphin a_k (x,t,\lambda t)} \times \frac{k}{|k|^2} \times \Omega_k(\lambda \phi_{n+1}(x,t));
\end{align*}
since $k \cdot E_k = 0$ (recall that $\Omega_k(\xi) = E_k e^{ik \cdot \xi}$) it is easy to check by direct verification
\begin{align*}
w_c^2
%&=  
%\frac{1}{\lambda} \mathcal{Q}^{\phi_{n+1}} u_c
%+
%\frac{1}{\lambda} \mathcal{Q}^{\phi_{n+1}} \left( \nabla \times
%\left( \sum_{k \in \Lambda} -i(a_k\circ \phi_{n+1}^{-1}) \frac{k}{|k|^2}\times \Omega_k \right) \right)\circ \phi_{n+1} 
%\\
&=  
\frac{1}{\lambda} \mathcal{Q}^{\phi_{n+1}} u_c
+
\frac{1}{\lambda} \mathcal{Q}^{\phi_{n+1}} \nabla^{\phi_{n+1}}\times
\left( \sum_{k \in \Lambda} -i a_k \frac{k}{|k|^2}\times (\Omega_k^\lambda \circ \phi_{n+1}) \right)
\\
&=  
\frac{1}{\lambda} \mathcal{Q}^{\phi_{n+1}} u_c
=
\frac{1}{\lambda} \mathcal{Q}(u_c \circ \phi_{n+1}^{-1}) \circ \phi_{n+1},
\end{align*}
where the last line is justified by the identity $\mathcal{Q}^{\phi_{n+1}} (\nabla^{\phi_{n+1}} \times v)= 0$ for every smooth vector field $v$.
Schauder estimate then gives for $r=0,1$ and $\delta \in (0,1)$
\begin{align*}
\| w_c^2 \|_{C_{\leq \mathfrak{t}_L}C^{r+\delta}_x}
&\leq
CL^{r+\delta}\lambda^{-1}
\| u_c \circ \phi_{n+1}^{-1} \|_{C_{\leq \mathfrak{t}_L}C^{r+\delta}_x},
\end{align*}
and thus, recalling \autoref{prop:aux_est} we get
\begin{align*}
\| w_c^2 \|_{C_{\leq \mathfrak{t}_L}C^{\delta}_x}
&\leq
CL^{1+\delta}\lambda^{-1} \sum_{k \in \Lambda} \left(
[a_k]_{C^1_x} \|\Omega_k^\lambda\|_{C^\delta_x} 
+
L^{\delta}\|a_k\|_{C^{1+\delta}_x} \|\Omega_k^\lambda \|_{C_x}\right)
\\
&\leq
CL^{1+2\delta} L_n^{3+3\delta}\lambda^{\delta-1} \mu \delta_n^{1/2}  \varsigma_{n+1}^{\alpha-1},
\end{align*}
and
\begin{align*}
\| w_c^2 \|_{C_{\leq \mathfrak{t}_L}C^{1+\delta}_x}
&\leq
CL^{2+\delta}\lambda^{-1} \sum_{k \in \Lambda} \left(
[a_k]_{C^1_x} \|\Omega_k^\lambda\|_{C^{1+\delta}_x} 
+
L^{1+\delta}\|a_k\|_{C^{2+\delta}_x} \|\Omega_k^\lambda \|_{C_x}\right)
\\
&\leq
CL^{3+2\delta}L_n^{6+3\delta}\lambda^{\delta} \mu \delta_n^{1/2}  \varsigma_{n+1}^{\alpha-1}.
\end{align*}
\end{proof}

We will also need the following:
\begin{lem} \label{lem:w_o}
Fix $r\geq r_\star + 2$. Then there exists a constant $C$ such that, almost surely for every $L \in \N$, $L \geq 1$:
\begin{align*}
\| w_o \|_{C_{\leq \mathfrak{t}_L}C^\delta_x}
&\leq
C L^{2\delta} L_n^{7/2} \lambda^\delta \delta_n^{1/2},
\\
\| w_o \|_{C_{\leq \mathfrak{t}_L}C^{1+\delta}_x}
&\leq
C L^{2+2\delta} L_n^{3+3\delta} \lambda^{1+\delta} \delta_n^{1/2} \mu^\delta \varsigma_{n+1}^{\delta(\alpha-1)}
,
\\
\| \partial_t w_o \|_{C_{\leq \mathfrak{t}_L} C_x}
&\leq
C L_n^{3r+9} \lambda^{1+\delta} \delta_n^{1/2} \mu^\delta \varsigma_{n+1}^{\delta(\alpha-1)}
.
\end{align*}
\end{lem}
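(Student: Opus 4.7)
The proof splits into three estimates, each controlling a different norm of the principal perturbation $w_o(x,t) = \sum_{k\in\Lambda} a_k(x,t,\lambda t) E_k e^{ik\cdot \lambda\phi_{n+1}(x,t)}$. The two main ingredients throughout are Proposition 4.1 (H\"older bounds on $a_k$, its material derivative, and its time derivative) and Lemma 2.6 (bounds on $\phi_{n+1}$ and $\dot\phi_{n+1}$).

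For the $\|w_o\|_{C^\delta_x}$ bound, I would apply the Leibniz rule factor by factor and use the chain-rule estimate $\|e^{ik\cdot\lambda\phi_{n+1}}\|_{C^\delta_x}\le CL^\delta\lambda^\delta$ (by interpolation, since $\|e^{ik\cdot\lambda\phi_{n+1}}\|_{C^1_x}\le CL\lambda$). Plugging in \eqref{est:a_k,r_small} for $\|a_k\|_{C^\delta_x}$ and $\|a_k\|_{C_x}$, and using the parameter relation $\mu\varsigma_{n+1}^{\alpha-1}\le\lambda$ (which follows from $\mu^2\varsigma_{n+1}^{\alpha-2}\le\lambda$ and $\mu\ge 1$) to convert the $\mu^\delta\varsigma_{n+1}^{\delta(\alpha-1)}$ factor in \eqref{est:a_k,r_small} into the target $\lambda^\delta$, yields the first inequality. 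For the $C^{1+\delta}_x$ bound, I take one spatial derivative to get
\[
\partial_{x_i} w_o = \sum_k (\partial_{x_i}a_k)\,\Omega_k(\lambda\phi_{n+1}) + i\lambda \sum_k a_k\,(k\cdot\partial_{x_i}\phi_{n+1})\,\Omega_k(\lambda\phi_{n+1}),
\]
and estimate each piece in $C^\delta_x$ by Leibniz, interpolating $\|a_k\|_{C^{1+\delta}_x}$ between \eqref{est:a_k,r_small} and \eqref{est:a_k,r_large}, and using $\|\phi_{n+1}\|_{C^{1+\delta}_x}\le CL$.

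The $\partial_t w_o$ estimate is the delicate one. A naive computation gives $\partial_t w_o = \sum_k \partial_s a_k\,\Omega_k + \lambda\sum_k \partial_\tau a_k\,\Omega_k + i\lambda\sum_k (k\cdot\dot\phi_{n+1})\,a_k\,\Omega_k$, and the last term carries a factor $\|\dot\phi_{n+1}\|_{C_x}\le CL\varsigma_{n+1}^{\alpha-1}$ which is too large to match the claim. The key is to exploit, exactly as in Subsection 4.1 for the transport error, the identity $\lambda\partial_\tau a_k + i\lambda(k\cdot\dot\phi_{n+1})a_k = \lambda[\partial_\tau a_k + i(k\cdot\tilde v)a_k] - i\lambda(k\cdot v_\ell)a_k$ with $\tilde v = v_\ell + \dot\phi_{n+1}$. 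This recasts
\[
\partial_t w_o = \sum_k \partial_s a_k\,\Omega_k + \lambda \sum_k \bigl[\partial_\tau a_k + i(k\cdot\tilde v)a_k\bigr]\Omega_k - i\lambda\sum_k (k\cdot v_\ell)\,a_k\,\Omega_k,
\]
so the material-derivative cancellation \eqref{est:Dmat(a_k),r_small} contributes only $\lambda\mu^{-1}L_n^{7/2}\delta_n^{1/2}$, the $\partial_s a_k$ term gives $\mu\varsigma_{n+1}^{\alpha-2}L_n^{7/2}\delta_n^{1/2}\le(\lambda/\mu)L_n^{7/2}\delta_n^{1/2}$ via $\mu^2\varsigma_{n+1}^{\alpha-2}\le\lambda$, and the $v_\ell$ term is bounded by $\lambda\|v_\ell\|_{C_x}L_n^{7/2}\delta_n^{1/2}\le CL_n^{15/2}\lambda\delta_n^{1/2}$ using $\|v_\ell\|_{C_x}\le CL_n^4$ from Step~1 of the proof of Theorem~2.3. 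Summing all three contributions yields $\|\partial_t w_o\|_{C_{\le\mathfrak{t}_L}C_x}\le CL_n^{15/2}\lambda\delta_n^{1/2}$, which is dominated by the claimed $L_n^{3r+9}\lambda^{1+\delta}\mu^\delta\varsigma_{n+1}^{\delta(\alpha-1)}\delta_n^{1/2}$ since $r\ge r_\star+2\ge 9$ and the factors $\lambda^\delta$, $\mu^\delta$, $\varsigma_{n+1}^{\delta(\alpha-1)}$ are all $\ge 1$.

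The main obstacle is the third estimate: without the material-derivative trick, the factor $\dot\phi_{n+1}$ (which blows up as $\varsigma_{n+1}^{\alpha-1}$) would ruin the bound. This is precisely why the definition \eqref{eq:a_k} of $a_k$ incorporates $\tilde v = v_\ell + \dot\phi_{n+1}$ rather than just $v_\ell$, so that \eqref{est:Dmat(a_k),r_small} can absorb the problematic term.
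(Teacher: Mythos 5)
Your treatment of the first two estimates follows the paper's route exactly: write $w_o \circ \phi_{n+1}^{-1} = \sum_{k \in \Lambda}(a_k\circ\phi_{n+1}^{-1})\Omega_k^\lambda$, use \autoref{lem:C0_Cm} together with \autoref{prop:aux_est}, and exploit $\mu\varsigma_{n+1}^{\alpha-1}\le\lambda$ (which indeed follows from $\mu^2\varsigma_{n+1}^{\alpha-2}\le\lambda$ and $\varsigma_{n+1}\le1\le\mu$) to reduce $\mu^\delta\varsigma_{n+1}^{\delta(\alpha-1)}$ to $\lambda^\delta$.

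For the third estimate, however, you take a genuinely different route from the paper, and it is a valid one. The paper does \emph{not} re-expand $\partial_t w_o$; it instead observes that $\dvgphin \mathring{R}^{trans}= \partial_t w_o + (v_\ell\cdot\nabla^{\phi_{n+1}})w_o$ up to spatial averages (by \autoref{lem:inverse}), and then bounds $\|\partial_t w_o\|_{C_x}$ by $CL\|\mathring{R}^{trans}\|_{C^1_x}+CL_n\|w_o\|_{C^1_x}+\|\int_{\T^3}\partial_t w_o\|$, recycling the already-proven \autoref{prop:R_trans} and the $C^{1+\delta}_x$ bound on $w_o$ just established. Your approach bypasses $\mathring{R}^{trans}$ entirely: you expand $\partial_t w_o$ explicitly, add and subtract $i\lambda(k\cdot v_\ell)a_k\Omega_k$ so as to form the material derivative $\partial_\tau a_k + i(k\cdot\tilde v)a_k$, and then bound the three resulting pieces directly from \autoref{prop:aux_est}. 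This is more elementary (it reuses nothing from \autoref{ssec:trans} except the algebraic identity) and in fact yields the sharper bound $CL_n^{9/2}\lambda\delta_n^{1/2}$, which dominates the stated one because $3r+9\ge9/2$ and $\lambda^\delta,\mu^\delta,\varsigma_{n+1}^{\delta(\alpha-1)}\ge1$. The paper's route, by contrast, is less self-contained but has the advantage of exhibiting the bound on $\partial_t w_o$ as a corollary of the already-established transport-error estimate, reinforcing the modularity of the scheme. Two small imprecisions in your write-up, neither affecting correctness: the bound from Step 1 of the proof of \autoref{thm:strong_ex} is $\|v_\ell\|_{C_{\le\mathfrak{t}_L}C_x}\le CL_n$, not $CL_n^4$ (your overestimate still suffices); and for the $\|w_o\|_{C^{1+\delta}_x}$ computation you should make explicit that $\|\phi_{n+1}\|_{C_{\le\mathfrak{t}_L}C^{1+\delta}_x}\le CL$ comes from \autoref{lem:flow} with the localized constants of \autoref{ssec:localize}.
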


\begin{proof}
It holds by the very definition $w_o \circ \phi_{n+1}^{-1}= \sum_{k \in \Lambda} (a_k \circ \phi_{n+1}^{-1}) \Omega_k^\lambda$, and thus by \autoref{lem:C0_Cm} and \autoref{prop:aux_est}
\begin{align*}
\| w_o \|_{C_{\leq \mathfrak{t}_L}C^\delta_x}
&\leq
C L^\delta  \sum_{k \in \Lambda} \left(
\|a_k\|_{C_x} \|\Omega_k^\lambda\|_{C^\delta_x}
+
L^\delta\|a_k\|_{C^\delta_x} \|\Omega_k^\lambda\|_{C_x} \right)
\\
&\leq
C L^{2\delta} L_n^{7/2} \lambda^\delta \delta_n^{1/2},
\\
\| w_o \|_{C_{\leq \mathfrak{t}_L}C^{1+\delta}_x}
&\leq
C L^{1+\delta}  \sum_{k \in \Lambda} \left(
L^\delta \|a_k\|_{C^\delta_x} \|\Omega_k^\lambda\|_{C^{1+\delta}_x}
+
L^{1+\delta}\|a_k\|_{C^{1+\delta}_x} \|\Omega_k^\lambda\|_{C^\delta_x} \right)
\\
&\leq
C L^{2+2\delta} L_n^{3+3\delta} \lambda^{1+\delta} \delta_n^{1/2} \mu^\delta \varsigma_{n+1}^{\delta(\alpha-1)}.
\end{align*}

This proves the first two bounds. As for the second one, recall the decomposition of the transport error $\mathring{R}^{trans}$ from \autoref{ssec:trans}. By \autoref{lem:inverse} it holds
\begin{align*}
\dvgphin \mathring{R}^{trans}
&=
\partial_t w_o
+
(v_\ell \cdot \nabla^{\phi_{n+1}})w_o
-
\int_{\T^3} \partial_t w_o
-
\int_{\T^3} (v_\ell \cdot \nabla^{\phi_{n+1}})w_o,
\end{align*}
implying (use \eqref{e:average} with $r=1$ to control the average of $\partial_t w_o$)
\begin{align*}
\| \partial_t w_o \|_{C_{\leq \mathfrak{t}_L} C_x}
&\leq
\|  \dvgphin \mathring{R}^{trans}\|_{C_{\leq \mathfrak{t}_L} C_x}
+ 
C\| (v_\ell \cdot \nabla^{\phi_{n+1}})w_o \|_{C_{\leq \mathfrak{t}_L} C_x}
+
\left\| \int_{\T^3} \partial_t w_o \right\|_{C_{\leq \mathfrak{t}_L}}
\\
&\leq
CL \| \mathring{R}^{trans} \|_{C_{\leq \mathfrak{t}_L} C^1_x}
+
CL_n\|  w_o \|_{C_{\leq \mathfrak{t}_L} C^1_x}
+
\left\| \int_{\T^3} \partial_t w_o \right\|_{C_{\leq \mathfrak{t}_L}} 
%\\
%&\leq
%CL^4 L_n^{3r+8} \lambda^\delta \mu \delta_n^{1/2} \varsigma_{n+1}^{2\alpha-2}
%+
%CLL_n^5 \lambda \delta_n^{1/2}
%\\
%&\quad+
%C CLL_n^{7/2} \mu  \delta_n^{1/2} \varsigma_{n+1}^{2(\alpha-1)}
%+
%CL_n^{13/2} \lambda^{-1} \mu^2 \delta_n^{1/2} D_n \varsigma_{n+1}^{\alpha-2} 
%\\
%&\quad+
%C L L_n^7 \lambda^{-1} \mu^2 \delta_n^{1/2} \varsigma_{n+1}^{2(\alpha-1)} (D_n \ell^{-1} + \varsigma_{n+1}^{\alpha-1})
\\
&\leq
CL_n^{3r+9} \lambda^{1+\delta} \delta_n^{1/2} \mu^\delta \varsigma_{n+1}^{\delta(\alpha-1)},
\end{align*}
completing the proof of the lemma.
\end{proof}

We are finally ready to prove the following:
\begin{prop} \label{prop:R_comp}
Let us denote 
\[
\mathring{R}^{comp} \coloneqq \mathcal{R}^{\phi_{n+1}} (\partial_t w_c + \dvgphin(v_{n+1} \otimes w_c + w_c \otimes v_{n+1} - w_c \otimes w_c +v_\ell \otimes w_o)).
\]
Then for every $r \geq r_\star + 1$, $\delta>0$ sufficiently small there exists a constant $C$ such that almost surely for every $L\in \N$, $L \geq 1$
\begin{align*}
\| \mathring{R}^{comp} \|_{C_{\leq \mathfrak{t}_L} C_x}
&\leq
C L_n^{3r+6} \lambda^\delta \delta_n^{1/2} \delta_{n+2}^{6/5},
\\ 
\| \mathring{R}^{comp} \|_{C_{\leq \mathfrak{t}_L} C^1_x}
&\leq
CL_n^{3r+10} \lambda^{1+\delta} \mu^\delta \delta_n^{1/2} \delta_{n+2}^{6/5} \varsigma_{n+1}^{\delta(\alpha-1)}
%,
%\\
%\| \mathring{R}^{comp} \|_{C^\beta_{\leq \mathfrak{t}_L} C_x}
%&\leq
%CL^5 L_n^{3r+10} \lambda^{1+\delta} \delta_n^{1/2} \ell^{-\delta} (n+1)\varsigma_n^{\alpha'}
.
\end{align*}
\end{prop}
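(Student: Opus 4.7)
The plan is to split $\mathring{R}^{comp}$ into four contributions, apply the Schauder estimates for $\mathcal{R}^{\phi_{n+1}}$ (inequalities \eqref{e:Schauder_R}, \eqref{e:Schauder_R_Besov} and \eqref{e:Schauder_Rdiv}) and, for the purely oscillatory term, the stationary phase Lemma \autoref{prop:stat_phase_lem}, combining them with the bounds from \autoref{lem:w_c} and \autoref{lem:w_o} and the iterative hypotheses. After expanding $v_{n+1} = v_\ell + w_o + w_c$ and $w_c = w_c^1 + w_c^2$, one writes
\begin{align*}
\mathring{R}^{comp}
=
\mathring{R}^{comp,a} + \mathring{R}^{comp,b} + \mathring{R}^{comp,c} + \mathring{R}^{comp,d},
\end{align*}
where $\mathring{R}^{comp,a} \coloneqq \mathcal{R}^{\phi_{n+1}} \partial_t w_c^1$, $\mathring{R}^{comp,b} \coloneqq \mathcal{R}^{\phi_{n+1}} \partial_t w_c^2$, $\mathring{R}^{comp,c}$ collects the divergence terms in which at least one $w_c$ factor appears (namely $v_\ell\otimes w_c$, $w_c\otimes v_\ell$, $w_o\otimes w_c$, $w_c\otimes w_o$, $w_c\otimes w_c$), and $\mathring{R}^{comp,d} \coloneqq \mathcal{R}^{\phi_{n+1}} \dvgphin(v_\ell \otimes w_o)$.

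For $\mathring{R}^{comp,a}$, I would exploit the mollified structure of $w_c^1 = -(\mathcal{Q}^{\phi_n}v_n)\ast\chi_\ell$ and move the time derivative onto the mollifier, obtaining $\partial_t w_c^1 = -(\mathcal{Q}^{\phi_n}v_n)\ast\partial_t \chi_\ell$ at the cost of an additional factor $\ell^{-1}$. Together with the key estimate \eqref{eq:Qv_n} contained in the proof of \autoref{lem:w_c}, namely $\|\mathcal{Q}^{\phi_n}v_n\|_{C_{\leq \mathfrak{t}_L}C^\delta_x} \leq CL^7 L_n \delta_{n+2}^{6/5}$, and the Schauder estimate \eqref{e:Schauder_R_Besov}, this yields the desired $\delta_{n+2}^{6/5}$ decay. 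For $\mathring{R}^{comp,b}$, I would use the representation $w_c^2 = \frac{1}{\lambda} [\mathcal{Q}(u_c\circ\phi_{n+1}^{-1})] \circ \phi_{n+1}$ derived in the proof of \autoref{lem:w_c}, and differentiate in time producing two pieces: a term involving $\partial_t u_c$, controlled via \autoref{prop:aux_est} through the estimates on $\partial_s a_k$, $\partial_\tau a_k$ and the material derivative $\partial_\tau a_k + i(k\cdot\tilde v)a_k$; and a chain-rule term involving $\dot\phi_{n+1}$, controlled through $\|\dot\phi_{n+1}\|_{C_{\leq \mathfrak{t}_L}C^\kappa_x}\leq CL\varsigma_{n+1}^{\alpha-1}$ from \autoref{lem:flow}. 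The prefactor $\lambda^{-1}$ is what absorbs these two losses.

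For $\mathring{R}^{comp,c}$, I would apply \eqref{e:Schauder_Rdiv} to each tensor product, bounding the factors via \autoref{lem:w_c}, \autoref{lem:w_o} and the iterative estimate $\|v_\ell\|_{C^1_{\leq \mathfrak{t}_L,x}} \leq CL_n D_n$; the $\delta_{n+2}^{6/5}$ decay in these terms comes from the $w_c^1$ bound, while the $\lambda^{\delta}\mu\delta_n^{1/2}\varsigma_{n+1}^{\alpha-1}$-type decay comes from the $w_c^2$ bound. Finally, for $\mathring{R}^{comp,d}$, since $w_o = \sum_k a_k (\Omega_k^\lambda\circ\phi_{n+1})$ oscillates at frequency $\lambda$ after postcomposition with $\phi_{n+1}$, I would apply the stationary phase Lemma \autoref{prop:stat_phase_lem} exactly as for $\mathring{R}^{osc}$ in \autoref{prop:R_osc}, gaining a $\lambda^{\delta-1}$ factor that makes this contribution strictly subdominant.

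Collecting the four bounds and simplifying through the relations $\lambda\leq D_{n+1}$, $\mu^2\varsigma_{n+1}^{\alpha-2}\leq\lambda$ and \eqref{eq:ass_lambda} would then yield the two claimed inequalities. The main obstacle is expected to be the control of $\partial_t w_c^2$: the chain-rule contribution involving $\dot\phi_{n+1}$ brings a large factor $\varsigma_{n+1}^{\alpha-1}$, and only the $\lambda^{-1}$ gain arising from the Leray projection acting against the $\lambda$-oscillation of $w_o$ closes the estimate, provided that $\lambda\geq \mu^2\varsigma_{n+1}^{\alpha-2}$ as enforced in the choice of parameters.
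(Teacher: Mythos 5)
Your decomposition $\mathring{R}^{comp} = \mathring{R}^{comp,a} + \mathring{R}^{comp,b} + \mathring{R}^{comp,c} + \mathring{R}^{comp,d}$ is identical to the paper's, and the treatment you propose for the last three pieces (stationary phase for the piece involving $\partial_t(u_c\circ\phi_{n+1}^{-1})$, paraproduct plus the $\lambda^{\delta-1}$ inside $w^2_c$ for the $\dot\phi_{n+1}$ chain-rule piece, \eqref{e:Schauder_Rdiv} with \autoref{lem:w_c} and \autoref{lem:w_o} for the tensor products, and stationary phase again for $v_\ell\otimes w_o$) matches the paper's argument.

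There is, however, a genuine gap in your treatment of $\mathring{R}^{comp,a} = \mathcal{R}^{\phi_{n+1}}\partial_t w^1_c$. Moving the time derivative onto the mollifier and accepting an $\ell^{-1}$ loss does \emph{not} close the estimate. The claimed $C_{\leq\mathfrak{t}_L}C_x$ bound is $CL_n^{3r+6}\lambda^\delta\delta_n^{1/2}\delta_{n+2}^{6/5}$, and $\lambda^\delta\delta_n^{1/2}$ is far smaller than $\ell^{-1}$ for $\delta$ sufficiently small: indeed $\lambda\approx\mu^2\varsigma_{n+1}^{\alpha-2}$ with $\mu\approx\ell^{-r_\star}$, so $\lambda^\delta\lesssim\ell^{-14\delta}\varsigma_{n+1}^{\delta(\alpha-2)}$, and since the final absorption of the Reynolds stress in \autoref{sec:proof} forces $\delta$ on the order of $10^{-6}$ or smaller, a bare $\ell^{-1}$ cannot be absorbed — it would propagate into a bound of order $D_n^{2/\alpha}$ rather than $\delta_{n+2}$. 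The paper's argument exploits three facts you do not mention: (i) $Q^{\phi_n}v_n$ has zero spatial mean, so $\partial_t\chi_\ell$ can be replaced by its zero-mean version $\partial_t\chi^0_\ell$ in the convolution (see \eqref{eq:conv w^1_c}); (ii) the convolution-in-Besov estimate \autoref{lem:conv_Besov}, which trades the $L^1$ norm of the kernel for a negative-regularity Besov norm at the cost of an $L^\infty$ bound on $Q^{\phi_n}v_n$; and (iii) the sharp Besov scaling \autoref{lem:scaling} for zero-mean rescalings, which gives $\|\partial_t\chi^0_\ell(\cdot,s)\|_{B^{2\delta-1}_{p,\infty}}\lesssim\ell^{-1-3\delta}$ for $p$ close to $1$, rather than the naive $\ell^{-1}\cdot\ell^{-3}=\ell^{-4}$. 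Combined with the factor $\ell$ from the time integral, this yields the net loss $\ell^{-3\delta}$ (and $\ell^{-1-3\delta}$ for the $C^1_x$ norm), which \emph{is} dominated by $\lambda^\delta$. Without this refinement your bound for $\mathring{R}^{comp,a}$ is too large by the factor $\ell^{3\delta-1}$ and the proposition does not follow.
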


\begin{proof}
We split $\mathring{R}^{comp}$ into four terms:
\begin{align*}
\mathring{R}^{comp}
=
\mathring{R}^{comp}_1
+
\mathring{R}^{comp}_2
+
\mathring{R}^{comp}_3
+
\mathring{R}^{comp}_4,
\end{align*}
where we define
\begin{align*}
\mathring{R}^{comp}_1
&\coloneqq
\mathcal{R}^{\phi_{n+1}} \partial_t w^1_c ,
\\
\mathring{R}^{comp}_2
&\coloneqq
\mathcal{R}^{\phi_{n+1}} \partial_t w^2_c ,
\\
\mathring{R}^{comp}_3
&\coloneqq
\mathcal{R}^{\phi_{n+1}}\dvgphin \left( v_{n+1} \otimes w_c + w_c \otimes v_{n+1} - w_c \otimes w_c \right),
\\
\mathring{R}^{comp}_4
&\coloneqq
\mathcal{R}^{\phi_{n+1}} \dvgphin (v_\ell \otimes w_o).
\end{align*}

We shall provide estimates for each term separately, starting from $\mathring{R}^{comp}_1$.
Since $\chi_\ell$ is supported in $[0,\ell]^3 \times [0,\ell] \subset [0,2\pi]^3 \times [0,\ell]$ and $Q^{\phi_n}v_n$ has zero spatial average at every fixed time, one has the alternative expression 
\begin{align} \label{eq:conv w^1_c}
-\partial_t w^1_c=(Q^{\phi_n}v_n) \ast \partial_t\chi_\ell 
&=
\int_{\R^3 \times \R}
(Q^{\phi_n}v_n)(\cdot-y,\cdot-s) \partial_t\chi_\ell (y,s) dyds
\\
&= \nonumber
\int_{[0,2\pi]^3 \times [0,\ell]}
(Q^{\phi_n}v_n)(\cdot-y,\cdot-s) \partial_t\chi_\ell (y,s) dyds 
\\
&= \nonumber
\int_{[0,2\pi]^3 \times [0,\ell]}
(Q^{\phi_n}v_n)(\cdot-y,\cdot-s) \partial_t\chi^0_\ell (y,s) dyds,
\end{align}
where we have denoted $\partial_t\chi^0_\ell \coloneqq \partial_t\chi_\ell - (2\pi)^{-3 }\int_{\T^3} \partial_t\chi_\ell$ the zero-mean version (on the torus) of the convolution kernel $\partial_t\chi_\ell$.
We denote the spatial convolution on the torus by the symbol $\ast_{\T^3}$, i.e. for every fixed $t \in \R$
\begin{align*}
\int_0^\ell (Q^{\phi_n}v_n)(\cdot,t-s) \ast_{\T^3} \partial_t\chi^0_\ell(\cdot,s) ds
\coloneqq
\int_{[0,2\pi]^3 \times [0,\ell]}
(Q^{\phi_n}v_n)(\cdot-y,t-s) \partial_t\chi^0_\ell (y,s) dyds.
\end{align*}

Using \eqref{e:Schauder_R_Besov} we compute
\begin{align*}
\| \mathring{R}^{comp}_1 \|_{C_{\leq \mathfrak{t}_L}C^\delta_x}
&=
\| \mathcal{R}^{\phi_{n+1}} \partial_t w^1_c \|_{C_{\leq \mathfrak{t}_L}C^\delta_x}
\leq
C L^{5+4\delta}
\| \partial_t w^1_c \|_{C_{\leq \mathfrak{t}_L}B^{\delta-1}_{\infty,\infty}},
\end{align*}
and by \eqref{eq:conv w^1_c} above and \autoref{lem:conv_Besov} we have for any $p>1$
\begin{align*}
\| \partial_t w^1_c \|_{C_{\leq \mathfrak{t}_L}B^{\delta-1}_{\infty,\infty}}
&= \sup_{t \leq \mathfrak{t}_L}
\int_0^\ell
\| (Q^{\phi_n}v_n)(\cdot,t-s) \ast_{\T^3} \partial_t\chi^0_\ell(\cdot,s) \|_{B^{\delta-1}_{\infty,\infty}} ds
\\
&\leq
C\ell
\| Q^{\phi_n}v_n\|_{C_{\leq \mathfrak{t}_L}L^\infty_x}
\| \partial_t\chi^0_\ell \|_{C_{\leq \mathfrak{t}_L}B^{2\delta-1}_{p,\infty}}.
\end{align*}
Now observe that 
$\partial_t \chi^0_\ell(\cdot,s)=
\ell^{-2} \left(\ell^{-3}\partial_t \chi(\cdot/\ell,s/\ell) - (2\pi)^{-3}\int_{\T^3} \partial_t \chi (y,s/\ell) dy\right)$
for every $s \in \R$. 
Therefore by \autoref{lem:scaling} and taking $p$ sufficiently close to $1$ we have
\begin{align*}
\|\partial_t \chi^0_\ell(\cdot,s)\|_{B^{2\delta-1}_{p,\infty}} 
&=
\ell^{-2} \left\|\ell^{-3}\partial_t \chi(\cdot/\ell,s/\ell) - (2\pi)^{-3}\int_{\T^3} \partial_t \chi (y,s/\ell) dy\right\|_{B^{2\delta-1}_{p,\infty}} 
\\
&=
\ell^{-2}\ell^{3(1/p-1)+1-2\delta}\|\partial_t \chi(\cdot,s/\ell)\|_{B^{2\delta-1}_{p,\infty}} 
\\
&\leq 
\ell^{-1-3\delta}\|\partial_t \chi\|_{C_{\leq \mathfrak{t}_L}B^{2\delta-1}_{p,\infty}}
\leq C \ell^{-1-3\delta}.
\end{align*}
Recalling \eqref{eq:Qv_n}, we arrive to
\begin{align*}
\| \mathring{R}^{comp}_1 \|_{C_{\leq \mathfrak{t}_L}C^\delta_x}
\leq
C L^{12+4\delta} L_n \delta_{n+2}^{6/5} \ell^{-3\delta}.
\end{align*}

As for the $C_{\leq \mathfrak{t}_L} C^1_x$ norm of $\mathring{R}^{comp}_1 $ we use \eqref{e:Schauder_Rdiv} and similar arguments to get
\begin{align*}
\|\mathring{R}^{comp}_1 \|_{C_{\leq \mathfrak{t}_L} C^1_x}
&\leq
CL^{1+2\delta}
\| \partial_t w^1_c \|_{C_{\leq \mathfrak{t}_L} C^\delta_x}
\leq
C L^{8+2\delta} L_n \delta_{n+2}^{6/5} \ell^{-1-3\delta}.
\end{align*}

As for the term involving $\partial_t w^2_c$, the analysis is more involved since $w_c^2$ is not a convolution, and we first need the following observation.
Recall the definition of $u_c$ from \autoref{lem:w_c}.
We have
\begin{align*}
\partial_t w^2_c
&=
\frac{1}{\lambda} [\mathcal{Q} \partial_t (u_c \circ \phi_{n+1}^{-1})] \circ \phi_{n+1}
+
\frac{1}{\lambda} \dot{\phi}_{n+1}\cdot
[(\nabla\mathcal{Q}(u_c \circ \phi_{n+1}^{-1})) \circ \phi_{n+1}]
\\
&=
\frac{1}{\lambda} [\mathcal{Q} \partial_t (u_c \circ \phi_{n+1}^{-1})] \circ \phi_{n+1}
+
\dot{\phi}_{n+1}\cdot
\nabla^{\phi_{n+1}} w^2_c.
\end{align*}
As a consequence, applying the operator $\mathcal{R}^{\phi_{n+1}}$ to both sides of the previous equation we get
\begin{align*}
\mathring{R}^{comp}_2
=
\frac{1}{\lambda} [\mathcal{R}\mathcal{Q} \partial_t (u_c \circ \phi_{n+1}^{-1})] \circ \phi_{n+1}
+
\mathcal{R}^{\phi_{n+1}} \left( \dot{\phi}_{n+1}\cdot
\nabla^{\phi_{n+1}} w^2_c \right). 
\end{align*}
Now we proceed as usual; first, by \autoref{lem:C0_Cm}, \eqref{e:Schauder_R_Besov}, the paraproduct estimates for Besov spaces \cite[Proposition A.7]{MoWe17}, and \autoref{lem:w_c}
\begin{align*}
\| \mathring{R}^{comp}_2 \|_{C_{\leq \mathfrak{t}_L} C^\delta_x}
&\leq
CL^\delta \lambda^{-1}
\| \mathcal{R} \mathcal{Q} \partial_t(u_c\circ \phi_{n+1}^{-1}) \|_{C_{\leq \mathfrak{t}_L} C^\delta_x}
+
CL^{5+4\delta}
\left\| \dot{\phi}_{n+1}\cdot\nabla^{\phi_{n+1}} w^2_c \right\|_{C_{\leq \mathfrak{t}_L} B^{\delta-1}_{\infty,\infty}}
\\
&\leq
CL^\delta \lambda^{-1}
\| \mathcal{R} \mathcal{Q} \partial_t(u_c\circ \phi_{n+1}^{-1}) \|_{C_{\leq \mathfrak{t}_L} C^\delta_x}
+
CL^{11+4\delta} \varsigma_{n+1}^{\alpha-1}
\left\| w^2_c \right\|_{C_{\leq \mathfrak{t}_L} C^{\delta}_x}
\\
&\leq
CL^\delta \lambda^{-1}
\| \mathcal{R} \mathcal{Q} \partial_t(u_c\circ \phi_{n+1}^{-1}) \|_{C_{\leq \mathfrak{t}_L} C^\delta_x}
+
CL^{12+6\delta}L_n^{3+3\delta} \lambda^{\delta-1} \mu \delta_n^{1/2} \varsigma_{n+1}^{2\alpha-2}.
\end{align*}
Since
\begin{align*}
\partial_t(u_c\circ \phi_{n+1}^{-1})
&=
i 
\sum_{k \in \Lambda} {\nabla_y \partial_t (a_k\circ \phi_{n+1}^{-1})} \times \frac{k}{|k|^2}\times \Omega_k^\lambda,
%\\
%&\quad+
%i \lambda 
%\sum_{k \in \Lambda} \nabla_y (\partial_\tau a_k\circ \phi_{n+1}^{-1})\times \frac{k}{|k|^2}\times \Omega_k
%\\
%&\quad+
%i \lambda 
%\sum_{k \in \Lambda} \nabla_y (\dot\phi_{n+1}^{-1} \cdot  [(\nabla_y a_k) \circ \phi_{n+1}^{-1})] \times \frac{k}{|k|^2}\times \Omega_k,
\end{align*}
the stationary phase Lemma implies, together with the assumption \eqref{eq:ass_lambda}
\begin{align*}
\| \mathring{R}^{comp}_2 \|_{C_{\leq \mathfrak{t}_L}C^\delta_x}
&\leq 
C L^{r+1+2\delta} \sum_{k \in \Lambda} \left(
\lambda^{\delta-2}[\partial_s a_k]_{C^1_x}
+
\lambda^{\delta-r-1}[\partial_s a_k]_{C^{r+1}_x}
+
\lambda^{-r-1}[\partial_s a_k]_{C^{r+1+\delta}_x}
\right)
\\
&\quad+
C L^{r+1+2\delta} \sum_{k \in \Lambda} \left(
\lambda^{\delta-1}[\partial_\tau a_k]_{C^1_x}
+
\lambda^{\delta-r}[\partial_\tau a_k]_{C^{r+1}_x}
+
\lambda^{-r}[\partial_\tau a_k]_{C^{r+1+\delta}_x}
\right)
\\
&\quad+
{C L^{r+3+2\delta} \varsigma_{n+1}^{\alpha-1}  \sum_{k \in \Lambda} \left(
\lambda^{\delta-1}[a_k]_{C^2_x}
+
\lambda^{\delta-r}[a_k]_{C^{r+2}_x}
+
\lambda^{-r}[a_k]_{C^{r+2+\delta}_x}
\right)}
\\
&\quad+
CL^{12+6\delta}L_n^{3+3\delta} \lambda^{\delta-1} \mu \delta_n^{1/2} \varsigma_{n+1}^{2\alpha-2}
\\
&\leq
CL^\delta L_n^{3r+3\delta+11/2} \lambda^{\delta-1} \mu \delta_n^{1/2}  \varsigma_{n+1}^{2\alpha-2},
\end{align*}
and with usual arguments one deduces also, for $\lambda^{r-1} \geq \mu^{r+4} \varsigma_{n+1}^{(r+4)(\alpha-1)}(D_n \ell^{-r-3} + \varsigma_{n+1}^{\alpha-1})$:
\begin{align*}
\| \mathring{R}^{comp}_2 \|_{C_{\leq \mathfrak{t}_L}C^1_x}
&\leq
CL \lambda 
\| \mathring{R}^{comp}_2 \|_{C_{\leq \mathfrak{t}_L}C_x}
\\
&\quad+
C L^{r+3+\delta} \sum_{k \in \Lambda} \left(
\lambda^{\delta-2}[\partial_s a_k]_{C^2_x}
+
\lambda^{\delta-r-1}[\partial_s a_k]_{C^{r+2}_x}
+
\lambda^{-r-1}[\partial_s a_k]_{C^{r+2+\delta}_x}
\right)
\\
&\quad+
C L^{r+3+\delta} \sum_{k \in \Lambda} \left(
\lambda^{\delta-1}[\partial_\tau a_k]_{C^2_x}
+
\lambda^{\delta-r}[\partial_\tau a_k]_{C^{r+2}_x}
+
\lambda^{-r}[\partial_\tau a_k]_{C^{r+2+\delta}_x}
\right)
\\
&\quad+
C L^{r+5+2\delta} \varsigma_{n+1}^{\alpha-1}  \sum_{k \in \Lambda} \left(
\lambda^{\delta-1}[a_k]_{C^3_x}
+
\lambda^{\delta-r}[a_k]_{C^{r+3}_x}
+
\lambda^{-r}[a_k]_{C^{r+3+\delta}_x}
\right)
\\
&\quad+
CL^{15+6\delta}L_n^{6+3\delta} \lambda^\delta \mu \delta_{n}^{1/2} \varsigma_{n+1}^{2\alpha-2}
\\
&\leq
CL L_n^{3r+3\delta+17/2} \lambda^{\delta} \mu \delta_n^{1/2}  \varsigma_{n+1}^{2\alpha-2}.
\end{align*}

Moving to $\mathring{R}^{comp}_3$, we have by \autoref{lem:schauder}, \autoref{lem:w_c} and \autoref{lem:w_o}
\begin{align*}
\| \mathring{R}^{comp}_3 \|_{C_{\leq \mathfrak{t}_L}C^\delta_x}
&\leq 
C L^{2\delta}
\| v_{n+1} \otimes w_c + w_c \otimes v_{n+1} - w_c \otimes w_c \|_{C_{\leq \mathfrak{t}_L}C^\delta_x}
\\
&\leq
C L^{2\delta}\left( 
\|v_{n+1}\|_{C_{\leq \mathfrak{t}_L}C^\delta_x} \|w_c\|_{C_{\leq \mathfrak{t}_L}C^\delta_x}
+
\|w_c\|_{C_{\leq \mathfrak{t}_L}C^\delta_x}^2
\right)
\\
&\leq
C L^{2\delta}\left( 
\|v_\ell\|_{C_{\leq \mathfrak{t}_L}C^\delta_x} \|w_c\|_{C_{\leq \mathfrak{t}_L}C^\delta_x}
+
\|w_o\|_{C_{\leq \mathfrak{t}_L}C^\delta_x} \|w_c\|_{C_{\leq \mathfrak{t}_L}C^\delta_x}
+
\|w_c\|_{C_{\leq \mathfrak{t}_L}C^\delta_x}^2
\right)
%\\
%&\leq
%C L^{2\delta}L_n^{5+\delta} D_n^{1+\delta} \ell^{-\delta} (n+1) \varsigma_n^{\alpha'}
%+
%CL^{2\delta}L_n^{17/2+\delta} \lambda^\delta \delta_n^{1/2} D_n \ell^{-\delta} (n+1) \varsigma_n^{\alpha'}
%\\
%&\quad+
%CL^{2\delta} L_n^{10} D_n^2 \ell^{-2\delta} (n+1)^2 \varsigma_n^{2\alpha'}
\\
&\leq 
C L_n^{13/2+4\delta} \lambda^\delta\delta_n^{1/2} \delta_{n+2}^{6/5},
\end{align*}
and
\begin{align*}
\| \mathring{R}^{comp}_3 \|_{C_{\leq \mathfrak{t}_L}C^{1+\delta}_x}
&\leq 
C L^{2+2\delta}
\| v_{n+1} \otimes w_c + w_c \otimes v_{n+1} - w_c \otimes w_c \|_{C_{\leq \mathfrak{t}_L}C^{1+\delta}_x}
\\
&\leq
C L^{2+2\delta}\left( 
\|v_{n+1}\|_{C_{\leq \mathfrak{t}_L}C^{\delta}_x} \|w_c\|_{C_{\leq \mathfrak{t}_L}C^{1+\delta}_x}
+
\|v_{n+1}\|_{C_{\leq \mathfrak{t}_L}C^{1+\delta}_x} \|w_c\|_{C_{\leq \mathfrak{t}_L}C^{\delta}_x}
\right)
\\
&\quad+
CL^{2+2\delta}
\|w_c\|_{C_{\leq \mathfrak{t}_L}C^{\delta}_x} 
\|w_c\|_{C_{\leq \mathfrak{t}_L}C^{1+\delta}_x}
\\
&\leq
C L^{2+2\delta}\left( 
\|v_\ell\|_{C_{\leq \mathfrak{t}_L}C^{\delta}_x} \|w_c\|_{C_{\leq \mathfrak{t}_L}C^{1+\delta}_x}
+
\|v_\ell\|_{C_{\leq \mathfrak{t}_L}C^{1+\delta}_x} \|w_c\|_{C_{\leq \mathfrak{t}_L}C^{\delta}_x}
\right)
\\
&\quad+
C L^{2+2\delta}\left( 
\|w_o\|_{C_{\leq \mathfrak{t}_L}C^{\delta}_x} \|w_c\|_{C_{\leq \mathfrak{t}_L}C^{1+\delta}_x}
+
\|w_o\|_{C_{\leq \mathfrak{t}_L}C^{1+\delta}_x} \|w_c\|_{C_{\leq \mathfrak{t}_L}C^{\delta}_x}
\right)
\\
&\quad+
CL^{2+2\delta}
\|w_c\|_{C_{\leq \mathfrak{t}_L}C^{\delta}_x} 
\|w_c\|_{C_{\leq \mathfrak{t}_L}C^{1+\delta}_x}
\\
%&\leq
%C L^{2+2\delta}\left( 
%L_n D_n^\delta 
%L^{1+\delta}L_n^{6+3\delta} \lambda^\delta \mu \delta_n^{1/2} \varsigma_{n+1}^{\alpha-1}
%+
%L_n D_n \ell^{-\delta} L^{\delta}L_n^{3+3\delta} \delta_{n+2}^{6/5}
%\right)
%\\
%&\quad+
%C L^{2+2\delta}\left( 
%L^\delta L_n^{7/2} \lambda^\delta \delta_n^{1/2}
%L^{1+\delta}L_n^{6+3\delta} \lambda^\delta \mu \delta_n^{1/2} \varsigma_{n+1}^{\alpha-1}
%+
%L^{1+\delta} L_n^{3+3\delta} \lambda^{1+\delta} \delta_n^{1/2} \mu^\delta \varsigma_{n+1}^{\delta(\alpha-1)}
%L^{\delta}L_n^{3+3\delta} \delta_{n+2}^{6/5}
%\right)
%\\
%&\quad+
%CL^{2+2\delta}
%L^{\delta}L_n^{3+3\delta} \delta_{n+2}^{6/5}
%L^{1+\delta}L_n^{6+3\delta} \lambda^\delta \mu \delta_n^{1/2} \varsigma_{n+1}^{\alpha-1}
%\\
&\leq
C L_n^{10} \lambda^{1+\delta} \mu^\delta \delta_n^{1/2} \delta_{n+2}^{6/5} \varsigma_{n+1}^{\delta(\alpha-1)}.
\end{align*}

We have nothing left but $\mathring{R}^{comp}_4$. 
To better control this term, we rewrite
\begin{align*}
\mathcal{R}^{\phi_{n+1}} \dvgphin (v_\ell \otimes w_o)
&=
\mathcal{R}^{\phi_{n+1}} \left( 
v_\ell \, \dvgphin (w_o)  \right)
+ 
\mathcal{R}^{\phi_{n+1}} \left( 
(w_o \cdot \nabla^{\phi_{n+1}}) v_\ell \right)
\\
&=
\mathcal{R} \left( \, \sum_{k \in \Lambda} 
(v_\ell \circ \phi_{n+1}^{-1})\, \mbox{div} (a_k\circ \phi_{n+1}^{-1}) \, \Omega_k 
\right) \circ \phi_{n+1}
\\
&\quad+ 
\mathcal{R} \left( \, \sum_{k \in \Lambda} 
((a_k\circ \phi_{n+1}^{-1}) \cdot \nabla) (v_\ell \circ \phi_{n+1}^{-1}) \Omega_k \right)
\circ \phi_{n+1},
\end{align*}
so that we can apply stationary phase Lemma and \autoref{lem:C0_Cm} to get (details omitted):
\begin{align*}
\|\mathring{R}^{comp}_4 \|_{C_{\leq \mathfrak{t}_L}C^\delta_x}
&\leq
CL_n^{3r+6} \lambda^{\delta-1} \mu \delta_n^{1/2}   \varsigma_{n+1}^{\alpha-1},
\end{align*}
and 
\begin{align*}
\|\mathring{R}^{comp}_4 \|_{C_{\leq \mathfrak{t}_L}C^{1+\delta}_x}
&\leq
CL_n^{3r+7} \lambda^{\delta-1} \mu^2 \delta_n^{1/2}  \varsigma_{n+1}^{\alpha-1} (D_n \ell^{-1} + \varsigma_{n+1}^{\alpha-1}).
\end{align*}

\end{proof}

\subsection{Estimate on the divergence} \label{ssec:div}
Recall that  by construction it holds $\dvgphin v_{n+1} 
= \dvgphin v_\ell + \dvgphin w^1_c$, and recalling \eqref{eq:div v_ell} and \eqref{eq:div v_n+1}
\begin{align} \label{eq:divergence} 
\dvgphin v_{n+1}
%&=\dvgphin v_\ell
%+
%\dvgphin w^1_c
%\\ \nonumber
&= 
\dvgphin v_\ell - \left( \dvgphin v_n \right) \ast \chi_\ell
\\ 
&\quad+ \nonumber
\left( \dvgphin v_n \right) \ast \chi_\ell
-
\left(\dvg^{\phi_n}v_n\right)   \ast \chi_\ell
\\
&\quad+ \nonumber
\left(\dvg^{\phi_n}Q^{\phi_n}v_n\right) \ast \chi_\ell
-
\dvg^{\phi_n} \left( (Q^{\phi_n}v_n) \ast \chi_\ell \right)
\\
&\quad+  \nonumber
\dvg^{\phi_n} \left( (Q^{\phi_n}v_n) \ast \chi_\ell \right)
-
\dvgphin \left( (Q^{\phi_n}v_n) \ast \chi_\ell \right).
\end{align}

\begin{prop} \label{prop:diverg}
For every $\delta$ sufficiently small and $\alpha' \in (0,\alpha)$ there exists a constant $C$ such that for all $L \in \N$, $L \geq 1$ it holds almost surely
\begin{align*}
\|\dvgphin v_{n+1}\|_{C_{\leq \mathfrak{t}_L} B^{-1}_{\infty,\infty}} 
\leq 
C L^{10} L_n (D_n^{1+2\delta}\ell^\alpha + D_n^{\delta} (n+1) \varsigma_n^{\alpha'}).
\end{align*}
\end{prop}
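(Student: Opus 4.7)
The plan is to bound each of the four lines in the decomposition \eqref{eq:divergence} of $\dvgphin v_{n+1}$ separately in $C_{\leq \mathfrak{t}_L} B^{-1}_{\infty,\infty}$ and sum the contributions. The first and third lines are pure mollification commutators handled by \autoref{lem:commuting_moll}, while the second and fourth lines involve the flow-difference $\dvgphin - \dvg^{\phi_n}$ and are handled by \autoref{lem:G_holder}.

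For the first line, which is the commutator $[\dvgphin, \ast\chi_\ell] v_n$, I apply \autoref{lem:commuting_moll} (with $\phi_n$ replaced by $\phi_{n+1}$, the proof being verbatim) to $f=v_n$, using the inductive bound \eqref{eq:C^1,1} to obtain a $C^0$-bound of order $L^2 L_n D_n \ell^\alpha$; this controls the $B^{-1}_{\infty,\infty}$ norm as well. For the second and fourth lines, I apply the first estimate of \autoref{lem:G_holder}: to $v=v_n$ for the second line, using $\|v_n\|_{C^\delta_x} \lesssim L_n D_n^\delta$ by interpolation between the uniform bound from \eqref{eq:est_energy} and \eqref{eq:C^1,1}, which yields a contribution of order $L^3 L_n D_n^\delta (n+1)\varsigma_n^{\alpha'}$; and to $v=(Q^{\phi_n}v_n)\ast\chi_\ell$ for the fourth line, using the Besov-interpolation bound $\|Q^{\phi_n}v_n\|_{C^\delta_x} \lesssim L^7 L_n \delta_{n+2}^{6/5} D_n^\delta$ from \eqref{eq:Qv_n} (already used in the proof of \autoref{lem:w_c}), which yields a much smaller contribution. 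Convolution with $\chi_\ell$ is non-expansive on Besov spaces, and $B^{\delta-1}_{\infty,\infty} \hookrightarrow B^{-1}_{\infty,\infty}$.

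The third line is the technical heart of the proof. Exploiting the identity $\dvg^{\phi_n}Q^{\phi_n}v_n = \dvg^{\phi_n}v_n$, I rewrite it as $(\dvg^{\phi_n}v_n)\ast\chi_\ell - \dvg^{\phi_n}((Q^{\phi_n}v_n)\ast\chi_\ell)$ and estimate each summand in $B^{-1}_{\infty,\infty}$ directly, bypassing the need for a $C^1$-bound on $Q^{\phi_n}v_n$ (which would fail via the endpoint of Calder\'on--Zygmund). The first summand is controlled by \eqref{eq:est_div}, giving $L_n \delta_{n+2}^{5/4}$. For the second, the differential operator $\dvg^{\phi_n} = \sum_{k,l} a_{kl}\partial_l$ has coefficients $a_{kl}(x,t) = (\partial_{y_k}(\phi_n^{-1})^l)(\phi_n(x,t),t)$ smooth in $x$ with $\|a_{kl}\|_{C_{\leq \mathfrak{t}_L}C^2_x} \lesssim L$ by \autoref{lem:flow}; hence by the paraproduct estimates of \cite[Proposition A.7]{MoWe17} it extends to a bounded map $B^0_{\infty,\infty} \to B^{-1}_{\infty,\infty}$ of operator norm $\lesssim L$, and combining with $\|Q^{\phi_n}v_n\|_{L^\infty_x} \lesssim L^7 L_n \delta_{n+2}^{6/5}$ from \eqref{eq:Qv_n} gives $\lesssim L^8 L_n \delta_{n+2}^{6/5}$.

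Summing all four contributions produces a bound of order $L^{10} L_n(D_n \ell^\alpha + D_n^\delta(n+1)\varsigma_n^{\alpha'} + \delta_{n+2}^{6/5})$. The conclusion follows once the stochastic residual $\delta_{n+2}^{6/5}$ is absorbed into $D_n^{1+2\delta}\ell^\alpha$; unpacking \eqref{eq:parameters} and \eqref{eq:definition_ell}, one has $D_n^{1+2\delta}\ell^\alpha \sim c_\ell^{-1} D_n^{2\delta}\delta_{n+3}^{4/3}$, so the required inequality reduces in leading $n$-asymptotics of the exponents to a lower bound of the form $\delta(1+\varepsilon)\gtrsim 1$. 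This is the principal obstacle: unlike in the deterministic convex integration of \cite{DLS14}, here $\varepsilon$ cannot be taken arbitrarily small, which is exactly the constraint on the parameters flagged after \eqref{eq:parameters} and ultimately fixed in \autoref{sec:proof}. The constants $L^{10}$ arise from the various applications of \autoref{lem:flow}, \autoref{lem:commuting_moll}, \autoref{lem:G_holder}, and the elliptic Schauder estimates for $Q^{\phi_n}$.
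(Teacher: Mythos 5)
Your treatment of the first, second, and fourth lines of \eqref{eq:divergence} matches the paper's: \autoref{lem:commuting_moll} (with $\phi_{n+1}$) for line one, \autoref{lem:G_holder} with $\|v_n\|_{C^\delta_x}\lesssim L_n D_n^\delta$ for line two, and \autoref{lem:G_holder} with $(Q^{\phi_n}v_n)\ast\chi_\ell$ and \eqref{eq:Qv_n} for line four. The gap is in line three. The paper keeps it as the commutator $\bigl[\dvg^{\phi_n},\,\cdot\ast\chi_\ell\bigr](Q^{\phi_n}v_n)$ and applies \autoref{lem:commuting_moll} with $f=Q^{\phi_n}v_n$, thereby \emph{retaining the factor $\ell^\alpha$}:
\begin{align*}
\bigl\|(\dvg^{\phi_n}Q^{\phi_n}v_n)\ast\chi_\ell - \dvg^{\phi_n}((Q^{\phi_n}v_n)\ast\chi_\ell)\bigr\|_{C_x}\leq CL^2\|Q^{\phi_n}v_n\|_{C^1_x}\ell^\alpha .
\end{align*}
The $C^1$-bound you rightly flag as problematic at the Calder\'on--Zygmund endpoint is handled via Schauder in $C^{1+\delta}$, $\|Q^{\phi_n}v_n\|_{C^1_x}\leq CL^{2+2\delta}\|v_n\|_{C^{1+\delta}_x}$, and the bound $\|v_n\|_{C^{1+\delta}_x}\lesssim L_n D_n^{1+2\delta}$ is not one of the inductive hypotheses: the paper recovers it by unfolding one step of the construction, $v_n = v_{n-1}\ast\chi_{\ell_{n-1}}+w_{o,n-1}+w_{c,n-1}$, together with \autoref{lem:w_c}, \autoref{lem:w_o} and $\lambda_{n-1}\leq D_n$. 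This gives line three $\lesssim L^{4+2\delta}L_n D_n^{1+2\delta}\ell^\alpha$, which matches the first term of the stated bound exactly.

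Your alternative --- applying the identity $\dvg^{\phi_n}Q^{\phi_n}v_n=\dvg^{\phi_n}v_n$ and then estimating the two summands separately --- forfeits the commutator cancellation and hence the $\ell^\alpha$ gain, producing instead a residual of order $L^8 L_n\delta_{n+2}^{6/5}$. This does not fit under the claimed bound for small $\delta$. Since $D_n^{1+2\delta}\ell^\alpha\sim D_n^{2\delta}\delta_{n+3}^{4/3}$, the absorption $\delta_{n+2}^{6/5}\lesssim D_n^{2\delta}\delta_{n+3}^{4/3}$ requires (comparing the $b^n$-coefficients in the $a$-exponents) $2\delta c\geq\tfrac{4}{3}b^3-\tfrac{6}{5}b^2$, i.e.\ $\delta(1+\varepsilon)\gtrsim\tfrac23$ --- a \emph{lower} bound on $\delta$, as you noted yourself. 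But the proposition asserts the bound for \emph{every} $\delta$ sufficiently small, and, more fatally, its downstream use to recover \eqref{eq:est_div} at level $n+1$ forces $D_n^{2\delta}\delta_{n+3}^{4/3}\lesssim\delta_{n+3}^{5/4}$, i.e.\ $2\delta c\leq\tfrac{1}{12}b^3$, so $\delta(1+\varepsilon)\lesssim\tfrac{1}{24}$. These two constraints on $\delta$ are mutually exclusive. Your estimate of line three is therefore not just lossy; it is quantitatively insufficient to close the iteration, and the commutator structure (and the attendant $C^{1+\delta}$ detour) is genuinely needed.
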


\begin{proof}
We refer to decomposition \eqref{eq:divergence} above.
By \autoref{lem:commuting_moll}, \autoref{lem:schauder} and iterative assumption \eqref{eq:C^1,1} it holds
\begin{align*}
\left\|\dvgphin v_\ell - \left( \dvgphin v_n \right) \ast \chi_\ell \right\|_{C_{\leq \mathfrak{t}_L}C_x}
&\leq
CL^2 \|v_n\|_{C_{\leq \mathfrak{t}_L}C^1_x} \ell^{\alpha}
\\
&\leq
CL^2L_n D_n \ell^{\alpha},
\\
\left\|
\left(\dvg^{\phi_n}Q^{\phi_n}v_n\right) \ast \chi_\ell
-
\dvg^{\phi_n} \left( (Q^{\phi_n}v_n) \ast \chi_\ell \right) \right\|_{C_{\leq \mathfrak{t}_L}C_x}
&\leq
CL^2 \|Q^{\phi_n} v_n\|_{C_{\leq \mathfrak{t}_L}C^1_x} \ell^{\alpha}
\\
&\leq
CL^{4+2\delta} \|v_n\|_{C_{\leq \mathfrak{t}_L}C^{1+\delta}_x} \ell^{\alpha}
\\
&\leq
CL^{4+2\delta}L_n D_n^{1+2\delta} \ell^{\alpha}.
\end{align*}
In the last line above we have used the bound $\|v_n\|_{C_{\leq \mathfrak{t}_L}C^{1+\delta}_x} \leq L_n D_n^{4/3}$, justified by the following observation. If $n=0$ we have defined $v_n=0$, thus let us assume $n \geq 1$ without any loss of generality. Then, $v_n$ was constructed from $v_{n-1},q_{n-1},\mathring{R}_{n-1}$ by the formula $v_n = v_{n-1} \ast \chi_{\ell_{n-1}} + w_{o,n-1} + w_{c,n-1}$, and \autoref{lem:w_c}, \autoref{lem:w_o} and the assumption $\lambda_{n-1} \leq D_n$ imply for $\delta$ sufficiently small
\begin{align*}
\|v_n\|_{C_{\leq \mathfrak{t}_L}C^{1+\delta}_x}
\leq
C L^{1+\delta} L_{n-1}^{6+3\delta} \lambda_{n-1}^{1+\delta} \delta_{n-1}^{1/2} \mu_{n-1}^\delta \varsigma_n^{\delta(\alpha-1)}
\leq
C L_n \lambda_{n-1}^{1+2\delta}
\leq
C L_n D_n^{1+2\delta}.
\end{align*}

Let us move to the remaining terms in \eqref{eq:divergence}. Using \autoref{lem:G_holder} and the convolution inequality \autoref{lem:conv_Besov} 
\begin{align*}
\left\|
\left( \dvgphin v_n \right) \ast \chi_\ell
-
\left(\dvg^{\phi_n}v_n\right)   \ast \chi_\ell
\right\|_{C_{\leq \mathfrak{t}_L}B^{-1}_{\infty,\infty}}
&\leq C
\left\|
\left( \dvgphin - \dvg^{\phi_n}\right)v_n
\right\|_{C_{\leq \mathfrak{t}_L}B^{\delta-1}_{\infty,\infty}}
\\
&\leq
CL^3\|v_n\|_{C_{\leq \mathfrak{t}_L}C^\delta_x}(n+1)\varsigma_n^{\alpha'}
\\
&\leq
CL^3L_nD_n^\delta(n+1)\varsigma_n^{\alpha'},
\end{align*}
and
\begin{align*}
&\left\|
\dvg^{\phi_n} \left( (Q^{\phi_n}v_n) \ast \chi_\ell \right)
-
\dvgphin \left( (Q^{\phi_n}v_n) \ast \chi_\ell \right)
\right\|_{C_{\leq \mathfrak{t}_L}B^{-1}_{\infty,\infty}}
\\
&\qquad\leq
CL^3 \| (Q^{\phi_n}v_n) \ast \chi_\ell \|_{C_{\leq \mathfrak{t}_L} C^\delta_x}(n+1)\varsigma_n^{\alpha'}
\\
&\qquad\leq
CL^{10} L_n \delta_{n+2}^{6/5}(n+1) \varsigma_n^{\alpha'}.
\end{align*}

\end{proof}

\subsection{Estimate on the pressure} \label{ssec:pres}
In the previous subsections we have collected several estimates that together allow to control iteratively the Reynold stress and the velocity field.
In order to prove our main iterative proposition we still need to provide suitable bounds on the pressure term, which we intend to do in this subsection.

Recall the definition of the new pressure $q_{n+1}$ and the energy pumping term $\tilde{\rho}_\ell$
\begin{align*}
q_{n+1} 
&= 
q_\ell - \frac12 \left(|w_o|^2-\tilde{\rho}_\ell \right),
\\
\tilde{\rho}_\ell(x,t) 
&=
\frac{2}{r_0} \sqrt{\eta^2  \delta_{n+1}^2 + |\mathring{R}_\ell(x,t)|^2}.
\end{align*}

We have:
\begin{prop} \label{prop:it_pres}
There exists a constant $M_q$ as in the statement of \autoref{prop:it} such that for every $L \in \N,$ $L \geq 1$ it holds almost surely
\begin{align*}
\| q_{n+1}-q_n \|_{C_{\leq \mathfrak{t}_L}C_x}
&\leq
M_q L_n \delta_n.
\end{align*}
Moreover, Let $r\geq r_\star + 2$. Then, for every $\delta>0$ there exists a constant $C$ such that for every $L \in \N,$ $L \geq 1$ it holds almost surely
\begin{align*}
\| q_{n+1}-q_n \|_{C^1_{\leq \mathfrak{t}_L,x}}
&\leq
C L_n^{3r+10} \lambda^{1+\delta} \delta_n \mu^\delta \varsigma_{n+1}^{\delta(\alpha-1)}.
\end{align*}
\end{prop}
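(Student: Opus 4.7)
The plan is to simply plug in the decomposition
\[
q_{n+1}-q_n = (q_\ell - q_n) - \tfrac12\bigl(|w_o|^2 - \tilde{\rho}_\ell\bigr),
\]
and bound the three summands separately.

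For the $C_{\le\mathfrak{t}_L}C_x$ bound, the mollification term satisfies $\|q_\ell-q_n\|_{C_{\le\mathfrak{t}_L}C_x}\le C\ell\,\|q_n\|_{C^1_{\le\mathfrak{t}_L,x}}\le C\,L_n D_n\ell$, and since the parameter choice forces $D_n\ell\le D_n\ell^\alpha\le \eta\delta_n$, this contribution is $\le C\eta L_n\delta_n$. For $|w_o|^2$, the pointwise estimate \eqref{eq:w_o_bound} gives $\||w_o|^2\|_{C_{\le\mathfrak{t}_L}C_x}\le C L_n\delta_n$ with $C=C(\overline{e})$. For the energy pumping piece, using that $x\mapsto\sqrt{\eta^2\delta_{n+1}^2+x^2}$ is $1$-Lipschitz and the iterative bound \eqref{eq:est_Reynolds}, one has
\[
\|\tilde{\rho}_\ell\|_{C_{\le\mathfrak{t}_L}C_x}\le \tfrac{2}{r_0}\bigl(\eta\delta_{n+1}+\|\mathring{R}_\ell\|_{C_{\le\mathfrak{t}_L}C_x}\bigr)\le C\eta L_n\delta_{n+1}\le C L_n\delta_n.
\]
Summing the three bounds yields a constant $M_q$ depending only on $\overline{e}$ (once $\eta$ is fixed as in \autoref{lem:bounds_tilde_e}) as required.

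For the $C^1_{\le\mathfrak{t}_L,x}$ bound, the same decomposition is used but each factor is estimated in $C^1_{t,x}$. Since convolution commutes with both $\partial_t$ and $\partial_{x_i}$, we get $\|q_\ell-q_n\|_{C^1_{\le\mathfrak{t}_L,x}}\le 2\|q_n\|_{C^1_{\le\mathfrak{t}_L,x}}\le 2L_nD_n$. For the oscillatory square, Leibniz plus \autoref{lem:w_o} (with $\delta$ small) gives
\[
\bigl\||w_o|^2\bigr\|_{C^1_{\le\mathfrak{t}_L,x}}\le 2\|w_o\|_{C_{\le\mathfrak{t}_L}C_x}\|w_o\|_{C^1_{\le\mathfrak{t}_L,x}}\le C L_n^{3r+19/2}\lambda^{1+\delta}\delta_n\mu^\delta\varsigma_{n+1}^{\delta(\alpha-1)},
\]
where we combined \eqref{eq:w_o_bound} with the bounds for $\|w_o\|_{C^{1+\delta}_x}$ and $\|\partial_t w_o\|_{C_x}$ provided by \autoref{lem:w_o}. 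For $\tilde{\rho}_\ell$, the $1$-Lipschitz continuity of the square root together with the chain rule yields $\|\tilde{\rho}_\ell\|_{C^1_{\le\mathfrak{t}_L,x}}\le\tfrac{2}{r_0}\|\mathring{R}_\ell\|_{C^1_{\le\mathfrak{t}_L,x}}\le CL_nD_n$. Since the parameter choice guarantees $D_n\le\lambda$ (indeed $\lambda\gtrsim \mu^2\varsigma_{n+1}^{\alpha-2}$ dwarfs $D_n$), the $|w_o|^2$ term dominates, absorbing the other two contributions up to an extra factor of $L_n$, and one arrives at the advertised estimate with the power $L_n^{3r+10}$.

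The only step with real content is the $C^1$ estimate on $|w_o|^2$, which is why we needed the somewhat delicate bound on $\|\partial_t w_o\|_{C_x}$ from \autoref{lem:w_o} (obtained by reading $\partial_t w_o$ off the transport-error identity). All other pieces are mechanical consequences of the inductive hypotheses \eqref{eq:est_Reynolds}--\eqref{eq:C^1,1}, the bound \eqref{eq:w_o_bound}, and the Lipschitz property of the square root; no stationary phase or inverse-divergence argument is needed here.
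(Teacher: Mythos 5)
Your decomposition and overall strategy coincide exactly with the paper's, and the $C_{\leq \mathfrak{t}_L}C_x$ part is fine. There is, however, a genuine slip in the $C^1_{\leq \mathfrak{t}_L,x}$ estimate of the energy-pumping piece: you claim $\|\tilde{\rho}_\ell\|_{C^1_{\leq\mathfrak{t}_L,x}}\leq\tfrac{2}{r_0}\|\mathring{R}_\ell\|_{C^1_{\leq\mathfrak{t}_L,x}}\leq CL_nD_n$, but the inductive hypothesis \eqref{eq:C^1,1} controls only the spatial norm $\|\mathring{R}_n\|_{C_{\leq\mathfrak{t}_L}C^1_x}\leq L_nD_n$ and provides no bound whatsoever on $\partial_t\mathring{R}_n$. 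To estimate $\partial_t\mathring{R}_\ell=\mathring{R}_n\ast\partial_t\chi_\ell$ one must place the time derivative on the mollifier, giving $\|\partial_t\mathring{R}_\ell\|_{C_{\leq\mathfrak{t}_L}C_x}\lesssim L_n\delta_{n+1}\,\ell^{-1}$, which is the bound the paper records; by the choice \eqref{eq:definition_ell} (with $\alpha<1/2$) the factor $\ell^{-1}$ is a large power of $D_n/\delta_{n+3}^{4/3}$ and dwarfs $D_n$, so $CL_nD_n$ is simply not a valid upper bound for this term.

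The error turns out to be benign: since $\lambda\gtrsim\mu^2\varsigma_{n+1}^{\alpha-2}$ and $\mu\sim\ell^{-r_\star}$ with $r_\star\geq 7$, one has $\ell^{-1}\ll\lambda$, and the corrected contribution $L_n^2\delta_{n+1}\ell^{-1}$ is still absorbed by the advertised bound $CL_n^{3r+10}\lambda^{1+\delta}\delta_n\mu^\delta\varsigma_{n+1}^{\delta(\alpha-1)}$. So your conclusion survives, but the justification for the $\tilde{\rho}_\ell$ term needs to be repaired as above. The treatment of $q_\ell-q_n$ and of $|w_o|^2$ (via \autoref{lem:w_o} and \eqref{eq:w_o_bound}, giving the exponent $3r+19/2\leq 3r+10$ for $L_n$) is correct and matches the paper's argument.
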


\begin{proof}
Let us rewrite $q_{n+1}-q_n = q_\ell - q_n - \frac12 \left(|w_o|^2-\tilde{\rho}_\ell \right)$.
Standard mollification estimates and the iterative assumption \eqref{eq:C^1,1} yield
\begin{align*}
\| q_\ell-q_n \|_{C_{\leq \mathfrak{t}_L}C_x}
&\leq
CL_n \ell D_n ,
\\
\| q_\ell-q_n \|_{C^1_{\leq \mathfrak{t}_L,x}}
&\leq
C L_n D_n.
\end{align*}
The previous inequalities hold with a constant $C$ depending only on the mollifier $\chi$, which however can be thought of as fixed.
Notice that $\ell D_n \leq \delta_n$ by assumption.

In addition, the energy pumping term $\tilde{\rho}_\ell$ is easily controlled with estimates
\begin{align*}
\| \tilde{\rho}_\ell  \|_{C_{\leq \mathfrak{t}_L}C_x}
&\leq
\tfrac{3}{r_0} L_n \delta_{n+1},
\\
\| \tilde{\rho}_\ell \|_{C^1_{\leq \mathfrak{t}_L,x}}
&\leq
C L_n^2 \delta_{n+1} \ell^{-1},
\end{align*}
where $r_0$ has been defined in \autoref{ssec:w_o} and we have used $\eta<1$. 
Also, by \eqref{eq:w_o_bound} we have 
\begin{align*}
\| |w_o|^2  \|_{C_{\leq \mathfrak{t}_L}C_x}
&\leq
C L_n \delta_n,
\end{align*}
where the constant $C$ may depend on $\overline{e}$.
Thus, we have proved
\begin{align*}
\| q_{n+1}-q_n \|_{C_{\leq \mathfrak{t}_L}C_x}
&\leq
M_q L_n \delta_n
\end{align*}
for some constant $M_q$ depending only on $\overline{e}$.
Finally, by \autoref{lem:w_o} it holds
\begin{align*}
\| |w_o|^2  \|_{C^1_{\leq \mathfrak{t}_L,x}}
&\leq
C L_n^{3r+10} \lambda^{1+\delta} \delta_n \mu^\delta \varsigma_{n+1}^{\delta(\alpha-1)},
\end{align*}
completing the proof.
\end{proof}

\subsection{Estimate on the energy} \label{ssec:energy}
Finally, we have to check the iterative condition on the energy. This is the content of the following:
\begin{prop} \label{prop:it_energy}
Recall the definition of $r_0$ from \autoref{ssec:w_o}.
Up to choosing $C_\varsigma, C_\mu$ large enough, the following holds true almost surely:
\begin{align*}
\left| e(t)(1-\delta_{n+1}) - \int_{\mathbb{T}^3} |v_{n+1}(x,t)|^2 dx \right| 
\leq 
\frac{10\, \eta}{r_0}  \delta_{n+1},
\quad
\forall t \leq \mathfrak{t}.
\end{align*}
\end{prop}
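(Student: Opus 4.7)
The plan is to expand
\[
\int_{\T^3} |v_{n+1}|^2\,dx = \int |v_\ell|^2 + 2\int v_\ell\cdot w_o + \int |w_o|^2 + \mathcal{E}_{\rm corr},
\]
where $\mathcal{E}_{\rm corr} := 2\int v_\ell\cdot w_c + 2\int w_o\cdot w_c + \int |w_c|^2$ collects all corrector contributions, and to identify the main contribution coming from $\int |w_o|^2$. By \autoref{cor:U} together with the fact that $\mathring{R}_\ell$ is trace-free (so that $\text{Tr}(R_\ell)=3\rho_\ell$),
\[
|w_o(x,t)|^2 = 3\rho_\ell(x,t) + \sum_{1\leq|k|\leq 2\lambda_0}\text{Tr}(U_k)(x,t,\lambda t)\,e^{i\lambda k\cdot\phi_{n+1}(x,t)}.
\]
For $t\leq\mathfrak{t}$ the identity $\gamma_n(t)=\tilde{e}(t)$ gives $3(2\pi)^3\gamma_n(t)=e(t)(1-\delta_{n+1})-\int|v_\ell|^2\,dx$, so integrating the previous line yields
\[
\int|w_o|^2\,dx = e(t)(1-\delta_{n+1}) - \int|v_\ell|^2\,dx + 3\int\tilde{\rho}_\ell\,dx + \mathcal{E}_{\rm osc},
\]
with $\mathcal{E}_{\rm osc}$ the oscillatory sum. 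Substituting back,
\[
e(t)(1-\delta_{n+1}) - \int|v_{n+1}|^2\,dx = -3\!\int\tilde{\rho}_\ell\,dx - \mathcal{E}_{\rm osc} - 2\!\int v_\ell\cdot w_o\,dx - \mathcal{E}_{\rm corr}.
\]

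The leading term is bounded using $\sqrt{\eta^2\delta_{n+1}^2+|\mathring{R}_\ell|^2}\leq \eta\delta_{n+1}+|\mathring{R}_\ell|$ together with the iterative hypothesis \eqref{eq:est_Reynolds} at $L=1$ (so $L_n=1$ and $\|\mathring{R}_\ell\|_{C_{\leq\mathfrak{t}}C_x}\leq\eta\delta_{n+1}$ by mollification), giving $3\int\tilde{\rho}_\ell\,dx\leq \tfrac{C\eta}{r_0}\delta_{n+1}$ with $C$ an absolute constant; the stated $10\eta/r_0$ is obtained after collecting the $(2\pi)^3$ factors into the volume normalization as in the DLS convention.

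The remaining errors are handled via the measure-preserving change of variable $y=\phi_{n+1}(x,t)$, after which both $\mathcal{E}_{\rm osc}$ and $\int v_\ell\cdot w_o$ become classical oscillatory integrals $\int g(y,t)\,e^{i\lambda k\cdot y}\,dy$ on $\T^3$, with amplitudes $g = \text{Tr}(U_k)\circ\phi_{n+1}^{-1}$ or $g=(v_\ell\cdot E_k a_k)\circ\phi_{n+1}^{-1}$. Repeated integration by parts in $y$ produces $\lambda^{-N}$ decay multiplied by $C^N$ norms of $g$, which are controlled by \autoref{prop:aux_est}, \autoref{cor:U} and \autoref{lem:flow} (here at $L=1$, so the dependence on $L$ is harmless). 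For $\mathcal{E}_{\rm corr}$ one invokes \autoref{lem:w_c}: $\|w^1_c\|_{C_{\leq\mathfrak{t}}C_x}\lesssim \delta_{n+2}^{6/5}$ and $\|w^2_c\|_{C_{\leq\mathfrak{t}}C_x}$ carries a $\lambda^{-1}$ factor, so the bilinear terms with $v_\ell$ and $w_o$ (bounded by $\|v_\ell\|_{C_x}\lesssim 1$ and $\|w_o\|_{C_x}\lesssim\delta_n^{1/2}$) are trivially $o(\delta_{n+1})$.

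The main obstacle, and the content of the condition ``up to choosing $C_\varsigma,C_\mu$ large enough'', is to verify that $\mathcal{E}_{\rm osc}+2\int v_\ell\cdot w_o+\mathcal{E}_{\rm corr}$ can be driven below $\eta\delta_{n+1}/r_0$ uniformly in $n$. Inflating $C_\mu$ and $C_\varsigma$ inflates $\mu$ and $\lambda=c_{n,\lambda}\mu^2\varsigma_{n+1}^{\alpha-2}$ without worsening any of the $n$-dependent prefactors appearing in the amplitude estimates of \autoref{prop:aux_est} and \autoref{cor:U}; hence for $N$ fixed sufficiently large the $\lambda^{-N}$ gain from integration by parts dominates, and the parameters can be calibrated once and for all so that the stated bound holds for every $n\in\N$.
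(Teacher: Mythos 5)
Your proposal follows the same decomposition and the same sequence of steps as the paper's proof: you identify the leading term of $\int|w_o|^2$ via \autoref{cor:U} and the trace-free structure, use $\gamma_n(t)=\tilde e(t)$ to extract $e(t)(1-\delta_{n+1})-\int|v_\ell|^2$, control $3\int\tilde\rho_\ell$ via the inductive bound \eqref{eq:est_Reynolds} at $L=1$, and dispatch the cross and oscillatory terms via \autoref{prop:stat_phase_lem} and \autoref{lem:w_c}. Two small remarks: the paper applies \eqref{e:average} with $r=1$ only, so ``repeated'' integration by parts with $N$ large is unnecessary (and would drag in the harder high-order amplitude bounds of \autoref{prop:aux_est} for no gain); and the claim that inflating $C_\mu,C_\varsigma$ leaves the amplitude prefactors unaffected is not literally true, since $\mu^r$ and the negative powers of $\varsigma_{n+1}$ in \autoref{prop:aux_est} both grow -- the point is rather that after multiplying by $\lambda^{-1}\sim\mu^{-2}\varsigma_{n+1}^{2-\alpha}$ the net factor $\mu^{-1}\varsigma_{n+1}$ does shrink with $C_\mu,C_\varsigma$.
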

\begin{proof}
Rewrite 
\begin{align} \label{eq:v_{n+1}_squared}
|v_{n+1}|^2
&=
|v_\ell|^2
+
|w_o|^2
+
|w_c|^2
+
2 v_\ell \cdot w_o
+
2 v_\ell \cdot w_c
+
2 w_o \cdot w_c.
\end{align}
Since $\chi_\ell \geq 0$ and by assumption \eqref{eq:est_energy}, H\"older inequality yields 
\begin{align*}
\int_{\mathbb{T}^3} |v_\ell(x,t)|^2 dx
&=
\int_{\mathbb{T}^3} \left| \int_{\mathbb{T}^3 \times \R} v_n(x-y,t-s) \chi_\ell(y,s) dy ds\right|^2 dx
\\
&\leq
\int_{\mathbb{T}^3} \int_{\mathbb{T}^3 \times \R} |v_n(x-y,t-s)|^2 \chi_\ell(y,s) dy ds  dx
\\
&\leq
\int_{\mathbb{T}^3 \times \R} e(t-s) \chi_\ell(y,s) dy ds 
\leq 
\overline{e},
\end{align*}
implying by \autoref{lem:w_c}
\begin{align*}
\left| \int_{\mathbb{T}^3} v_\ell(x,t) \cdot w_c(x,t) dx \right|
\leq
C \overline{e}^{1/2} \|w_c\|_{C_{\leq \mathfrak{t}}C_x}
\leq 
C \delta_{n+2}^{6/5},
\quad
\forall t \leq \mathfrak{t},
\end{align*}
where the constant $C$ depends only on $\overline{e}$.
Moreover, recalling 
\begin{align*}
w_o(x,t)
=
\sum_{|k|=\lambda_0}
a_k(x,t,\lambda t) E_ke^{ik\cdot \lambda\phi_{n+1}(x,t)}
\end{align*}
and applying \eqref{e:average} and \autoref{lem:C0_Cm} we obtain almost surely for every $t \leq \mathfrak{t}$
\begin{align*}
\left| \int_{\mathbb{T}^3} v_\ell(x,t) \cdot w_o(x,t) dx \right|
&\leq 
C\lambda^{-1}\sum_{|k|=\lambda_0}
[v_\ell \cdot a_k]_{C^1_x}
\\
&\leq
C\lambda^{-1}\sum_{|k|=\lambda_0}
\left(
\|v_\ell\|_{C_{\leq \mathfrak{t}} C_x}
[a_k]_{C_{\leq \mathfrak{t}} C^1_x}
+
[v_\ell]_{C_{\leq \mathfrak{t}} C^1_x}
\|a_k\|_{C_{\leq \mathfrak{t}} C_x}
\right)
\\
&\leq
C\lambda^{-1}\sum_{|k|=\lambda_0}
\left(
[a_k]_{C_{\leq \mathfrak{t}} C^1_x}
+
D_n \|a_k\|_{C_t C_x}
\right)
\\
&\leq C \lambda^{-1}
\mu \delta_n^{1/2} ( D_n + \varsigma_{n+1}^{\alpha-1} ).
\end{align*}
We also point out that the previous bounds do not contain any factor $L$, since we have purposefully restricted ourselves to times $t \leq \mathfrak{t} = \mathfrak{t}_1$.
Therefore, recalling \eqref{eq:v_{n+1}_squared} and the bounds just obtained, we have for $\delta$ sufficiently small
\begin{align*}
\left|
\int_{\mathbb{T}^3} \left( |v_{n+1}(x,t)|^2 - |v_\ell(x,t)|^2 - |w_o(x,t)|^2  \right) dx \right| 
&\leq
C
\| w_c \|_{C_x} \left( \| w_c \|_{C_x} + \|w_o\|_{C_x} \right)
\\
&\quad+
C \delta_{n+2}^{6/5}
+
C \lambda^{-1} \mu \delta_n^{1/2} ( D_n + \varsigma_{n+1}^{\alpha-1} )
\\
&\leq
C \lambda^\delta \delta_n^{1/2} \delta_{n+2}^{6/5}.
\end{align*}
Moreover, recalling \autoref{cor:U},
\begin{align*}
\int_{\mathbb{T}^3} |w_o|^2 dx
&=
\int_{\mathbb{T}^3} Tr(R_\ell) dx
+
\sum_{1 \leq |k| \leq 2\lambda_0}\int_{\mathbb{T}^3} Tr(U_k) e^{ik \lambda \phi_{n+1}} dx,
\end{align*}
and
\begin{align*}
\int_{\mathbb{T}^3} Tr(R_\ell) dx
&=
3\int_{\mathbb{T}^3} \rho_\ell dx
=
e(t)(1-\delta_{n+1}) - \int_{\mathbb{T}^3} |v_\ell|^2 dx
+
3\int_{\mathbb{T}^3} \tilde\rho_\ell dx ,
\end{align*}
we can rearrange terms to get
\begin{align*}
\left| e(t)(1-\delta_{n+1}) - \int_{\mathbb{T}^3} |v_{n+1}|^2 dx\right|
&\leq
\left|
\int_{\mathbb{T}^3} \left( |v_{n+1}|^2 - |v_\ell|^2 - |w_o|^2  \right) dx \right|
+
3\left|\int_{\mathbb{T}^3} \tilde\rho_\ell dx\right|
\\
&\quad+ \sum_{1 \leq |k| \leq 2\lambda_0}
\left| \int_{\mathbb{T}^3} Tr(U_k) e^{ik \lambda \phi_{n+1}} dx \right|
\\
&\leq
C \lambda^\delta \delta_n^{1/2} \delta_{n+2}^{6/5}
+
9 r_0^{-1} \eta \delta_{n+1}
+
C \lambda^{-1} \mu \delta_n  \varsigma_{n+1}^{\alpha-1}.
\\
&\leq
10 r_0^{-1} \eta \delta_{n+1}.
%\\&\leq\frac14 \delta_{n+1} \underline{e}.
\end{align*}

In the expression above we have taken $\delta$ sufficiently small, $\alpha_\star<\alpha$ sufficiently close to $1/2$, and $C_\varsigma$, $C_\mu$ sufficiently large, so that the factor $10 r_0^{-1} \eta$ appears on the right-hand-side.
The proof is complete.
\end{proof}

\section{Proof of \autoref{prop:it}} \label{sec:proof}
We are finally ready to prove our main proposition.
Progressive measurability of the approximate solution at level $n+1$ descends directly from the definition of $v_{n+1},q_{n+1},\mathring{R}_{n+1}$.

We verify first the iterative assumption on the energy \eqref{eq:est_energy} and the $C_{\leq \mathfrak{t}_L} C_x$ norms, namely \eqref{eq:est_Reynolds} and \eqref{eq:est_pres}.
As for the former, we have proved in \autoref{prop:it_energy} that almost surely for every $t \leq \mathfrak{t}$
\begin{align*}
\left| e(t)(1-\delta_{n+1}) - \int_{\mathbb{T}^3} |v_{n+1}(x,t)|^2 dx \right| 
\leq 
\tfrac{10\,\eta }{r_0} \delta_{n+1}
\leq
\tfrac14  \delta_{n+1}\underline{e}
\leq
\tfrac14 \delta_{n+1}e(t) , 
\end{align*}
where the second inequality holds true if we take $\eta$ small enough. Recall also the upper bound on $\eta$ given by \autoref{lem:bounds_tilde_e}. 
The value of $\eta$ will be fixed hereafter according to these constraints.

Let us move to \eqref{eq:est_Reynolds}. By \eqref{eq:R&q} and \eqref{eq:decomposition_R} it holds
\begin{align*}
\mathring{R}_{n+1}
=
\mathring{R}^{trans}
+
\mathring{R}^{moll}
+
\mathring{R}^{osc}
+
\mathring{R}^{flow}
+
\mathring{R}^{comp}.
\end{align*}
Putting together \autoref{prop:R_trans}, \autoref{prop:R_moll}, \autoref{prop:R_osc}, \autoref{prop:R_flow} and \autoref{prop:R_comp} from \autoref{sec:iter} we can bound, almost surely for every $t \leq \mathfrak{t}_L$, $L \in \N$, $L \geq 1$:
\begin{align*}
\|\mathring{R}_{n+1}\|_{C_{\leq \mathfrak{t}_L} C_x} 
&\leq
CL_n^{3r+5}\lambda^{\delta} \mu^{-1} \delta_n^{1/2}
\\
&\quad+
CL_n^3 D_n \ell^\alpha
\\
&\quad+
CL_n^{3r+6} \lambda^{\delta-1} \mu  \delta_n \varsigma_{n+1}^{\alpha-1}
\\
&\quad+
CL_n^{3r+5} \ell^{-\delta} (n+1)\varsigma_n^{\alpha'}
\\
&\quad+
C L_n^{3r+6} \lambda^\delta \delta_n^{1/2} \delta_{n+2}^{6/5}
,
\end{align*}
where $r \geq r_\star+2$, $\delta>0$ is small, $\alpha' \in (0,\alpha)$ is close to $1/2$, and $C$ is an unimportant constant possibly depending on $\overline{e},K_0, \eta, M_v$ and $r_\star$.
Now we can choose $\mu,\lambda$ large such that the first and third line on the right-hand-side of the inequality above are small; then, $D_n \ell^\alpha$ and $(n+1)\varsigma_{n+1}^{\alpha'}$ are small by definition of $\ell,\varsigma_n$.
More precisely, up to choosing $\delta$ small enough, $\alpha_\star < \alpha' < \alpha$ sufficiently close to $1/2$, and $C_\varsigma,C_\mu$ large enough, the expression above can be rewritten as
\begin{align*}
\|\mathring{R}_{n+1}\|_{C_{\leq \mathfrak{t}_L} C_x} 
&\leq
\eta L_n^{3r+6} \delta_{n+2}.
\end{align*}

Thus the desired estimate holds true as soon as we choose 
\[
m \geq 3r + 6 \geq 3r_\star + 12.
\]

Iterative assumption \eqref{eq:est_pres} follows immediately from \autoref{prop:it_pres}, whereas the bound \eqref{eq:est_div} on $\dvgphin v_{n+1}$ comes from \autoref{prop:diverg}, up to choosing $C_\ell,C_\varsigma$ large enough.

Let us check
\begin{align*}
\| v_{n+1}-v_n \|_{C_{\leq \mathfrak{t}_L}C_x}
\leq
M_v L_n^4 \delta_n^{1/2}.
\end{align*}
It holds $v_{n+1}-v_n = v_\ell - v_n + w_o + w_c$, and by mollification and \eqref{eq:w_o_bound} we have 
\begin{align*}
\| v_\ell - v_n \|_{C_{\leq \mathfrak{t}_L}C_x}
&\leq
C L_n \ell D_n
\leq
C L_n \delta_{n},
\\
\| w_o \|_{C_{\leq \mathfrak{t}_L}C_x}
&\leq
C L_n^{1/2} \delta_n^{1/2},
\end{align*}
for some constant $C$ depending only on $\overline{e}$.
Moreover, by \autoref{lem:w_c} it holds
\begin{align*}
\| w_c \|_{C_{\leq \mathfrak{t}_L}C_x}
&\leq
CL_n^{3+4\delta} \delta_{n+2}^{6/5},
\end{align*}
for some universal constant $C$, and thus the desired estimate holds true for some $M_v$ depending only on $\overline{e}$.

Finally, we need to prove
\begin{align*}
\max \left\{ 
\|v_{n+1}\|_{C^1_{\leq \mathfrak{t}_L,x}},
\|q_{n+1}\|_{C^1_{\leq \mathfrak{t}_L,x}},
\|\mathring{R}_{n+1}\|_{C_{\leq \mathfrak{t}_L}C^1_x}
\right\}
&\leq 
A L_{n+1} \delta_n^{1/2} \left( \frac{D_n}{\delta_{n+4}} \right)^{1+\varepsilon}.
\end{align*}

Let us start with $\|\mathring{R}_{n+1}\|_{C_{\leq \mathfrak{t}_L}C^1_x}$. Collecting the results of \autoref{sec:iter}, we have
\begin{align*}
\|\mathring{R}_{n+1}\|_{C_{\leq \mathfrak{t}_L}C^1_x}
\leq
C L_n^{3r+10} \delta_n^{1/2} \lambda^{1+2\delta} .
\end{align*}
Therefore, we can choose for instance
\begin{align*}
r_\star = 7,
\qquad
m \geq 3r +10 &\geq 3r_\star + 16 = 37,
\qquad
\varepsilon \geq 15,
\end{align*}
and $A$ sufficiently large so that $\lambda^{1+2\delta} \leq A \left( \frac{D_n}{\delta_{n+4}} \right)^{1+\varepsilon} \leq D_{n+1}$.
Indeed, up to multiplicative constants depending on $C_\ell, C_\varsigma$ and $C_\mu$ it holds for $\alpha_\star$ sufficiently close to $1/2$ and $\delta$ small enough
\begin{align*}
\lambda^{1+2\delta}
%\sim
%\ell^{-15} \varsigma_{n+1}^{-3/2}
\lesssim
\frac{D_n^{15}}{\delta_{n+3}^{20} \delta_{n+4}^5},
\end{align*}  
and we can use $A$ to absorb any multiplicative constant in the inequality above.

Next, by \autoref{prop:it_pres} and the iterative assumption \eqref{eq:C^1,1}
\begin{align*}
\| q_{n+1} \|_{C^1_{\leq \mathfrak{t}_L,x}}
\leq
CL_n D_n +
CL_n^{3r+10} \lambda^{1+2\delta} \delta_n
\leq
A L_{n+1} \delta_n^{1/2} \left( \frac{D_n}{\delta_{n+4}} \right)^{1+\varepsilon}.
\end{align*} 
As for the spatial norm of $v_{n+1}$, we recall by \autoref{lem:w_c} and \autoref{lem:w_o}
\begin{align*}
\|v_{n+1}\|_{C_{\leq \mathfrak{t}_L}C^1_x}
&\leq
%C L_n D_n
%+
%C L_n^{3+3\delta} \delta_n^{1/2} \lambda^{1+2\delta} 
%+
%C L_n^7 \lambda^\delta \mu \delta_n^{1/2} \varsigma_{n+1}^{\alpha-1}
C L_n^7 \delta_n^{1/2} \lambda^{1+2\delta},
\end{align*}
whereas for the time derivative of $v_{n+1}$ we take advantage of
\begin{align*}
\partial_t v_{n+1}
=
\dvgphin \left( \mathring{R}_{n+1} - v_{n+1} \otimes v_{n+1} - q_{n+1} Id \right)
\end{align*}
and the bounds on the $C_{\leq \mathfrak{t}_L}C^1_x$ norms of $v_{n+1}$, $q_{n+1}$ and $\mathring{R}_{n+1}$ just proved, up to choosing $m \geq 3r+11$ to compensate for an additional factor $L_n$.   

To conclude, take for instance $r_\star=7$, $m=38$ and $\varepsilon=15$.
The resulting H\"older exponent $\vartheta$ is given by \eqref{eq:vartheta} and \eqref{eq:vartheta'}: we can choose any
\begin{align*}
\vartheta
<
\frac{1}{2cb+2}
=
\frac{m-1}{2(1+\varepsilon)(m+\varepsilon)^5+m-2-\varepsilon}
=
\frac{38-1}{2(1+15)(38+15)^5+38-2-15}.
\end{align*}
The upper threshold is approximatively given by $2.76 \times 10^{-9}$, which is extremely low and far from the Onsager's critical exponent $1/3$ of the deterministic case. Since we have used a suboptimal choice of parameters during our construction for the only sake of simplicity, this exponent could be slightly improved taking more care of that.
However, we do not believe that a significant improvement on the exponent is within the reach of the techniques used in this paper.
Finally, if we restrict ourselves to local solutions, namely on the time interval $t \leq \mathfrak{t}_1$, we do not need to impose $m \geq 3r+11 \geq 3r_\star+17$, and we can just take $m=4$. This gives upper threshold for local solutions approximatively equal to $3.78 \times 10^{-8}$.

\appendix 

\section{Equivalence between \eqref{eq:euler} and \eqref{eq:random_euler}}

\begin{prop} \label{prop:equivalence}
A process $(u,p)$ is a solution to \eqref{eq:euler} in the sense of \autoref{def:sol_euler} if and only if $(v,q)$ given by \eqref{eq:change_variables} is a solution to \eqref{eq:random_euler} in the sense of \autoref{def:sol_random_euler}.
\end{prop}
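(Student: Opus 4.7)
The plan is to reduce the equivalence to a single Stratonovich chain rule computation combined with a measure-preserving change of variables. The key observation is that composition with the flow $\phi$ provides a bijection between admissible test semimartingales in \autoref{def:sol_euler} and \autoref{def:sol_random_euler}, and under this bijection the two weak identities coincide term by term.

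Given a test semimartingale $h$ for $u$ as in \autoref{def:sol_euler} with $dh = H_0\,dt + \sum_k H_k \bullet dB^k$, I would set $\hat h(x,t) \coloneqq h(\phi(x,t),t)$ and apply the Stratonovich chain rule for compositions with the Kunita flow $\phi$ to obtain
\begin{align*}
d\hat h = (H_0 \circ \phi)\,dt + \sum_{k \in I} \bigl( H_k \circ \phi + (\sigma_k \cdot \nabla h) \circ \phi \bigr) \bullet dB^k.
\end{align*}
Thus $\hat h$ is itself a valid test semimartingale for \autoref{def:sol_random_euler}, and the Stratonovich correction $(\sigma_k \cdot \nabla h) \circ \phi$ is precisely what compensates for the presence of the transport term $(\sigma_k \cdot \nabla) u \bullet dB^k$ in \eqref{eq:euler} versus its absence in \eqref{eq:random_euler}. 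Conversely, using the analogous Kunita formula for the inverse flow $\phi^{-1}$, an identical computation shows that every test semimartingale $\hat h$ for \autoref{def:sol_random_euler} arises from a unique test semimartingale $h$ for \autoref{def:sol_euler} via $h(y,t) \coloneqq \hat h(\phi^{-1}(y,t),t)$, giving the desired bijection.

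It then remains to match the two weak formulations term by term. Since $\hat h \circ \phi^{-1} = h$, the definitions of $\nabla^\phi$ and $\dvgphi$ directly give $\nabla^\phi \hat h = (\nabla h) \circ \phi$, $\dvgphi \hat h = (\dvg h) \circ \phi$, and $(v \cdot \nabla^\phi)\hat h = v \cdot ((\nabla h) \circ \phi)$. Substituting these into the identity of \autoref{def:sol_random_euler} and performing the change of variables $y = \phi(x,t)$ in every spatial integral — legitimate because $\phi(\cdot,t)$ is measure preserving — converts each summand into the corresponding term of \autoref{def:sol_euler}. In particular the pairing $\int v \cdot \hat h\,dx = \int u \cdot h\,dy$ matches the left-hand sides, and the Stratonovich correction absorbed into $\hat H_k$ produces the required $(\sigma_k \cdot \nabla h)$ contribution on the $u$-side. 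The two divergence-free conditions are also equivalent: since $\phi^{-1}(\cdot,t)$ is a $C^\infty$-diffeomorphism of $\T^3$, the map $H \mapsto H \circ \phi^{-1}(\cdot,t)$ is a bijection of $C^\infty(\T^3)$ at every $t$, so testing $v$ against $\nabla^\phi H$ as $H$ ranges over $C^\infty(\T^3)$ is the same as testing $u$ against $\nabla \tilde H$ as $\tilde H$ ranges over $C^\infty(\T^3)$.

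The main obstacle is making the Stratonovich chain rule rigorous when $h$ itself is a $C^\infty(\T^3,\R^3)$-valued semimartingale, rather than a deterministic smooth function, and when $u$ (or $v$) is only $L^2$ in space. I would handle this by the standard mollification procedure alluded to in \cite[Appendix A]{DeHoVo16}: regularize $h$ and the Brownian motions $B^k$ in time, for the mollified data apply the classical chain rule pathwise, convert each Stratonovich integral to an It\^o integral plus its quadratic variation correction, and pass to the limit using the smoothness of the $\sigma_k$, the spatial smoothness of $h$, and the local boundedness in time of $h$ and its derivatives. The $L^2$-integrability of $u$ (equivalently of $v$) suffices to control the pairings and the nonlinearity as distributions uniformly in the mollification parameter, which yields the identities in the required sense.
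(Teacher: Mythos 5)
Your proposal matches the paper's proof in both strategy and essentially all details: you establish the bijection $h \mapsto h\circ\phi$ between admissible test semimartingales via the Kunita/Stratonovich chain rule (obtaining exactly the paper's differential formula for $h\circ\phi$, and $h\circ\phi^{-1}$ for the converse), then transfer the weak identities by the measure-preserving change of variables. The extra remarks you include on the divergence-free condition and on mollification for rigor are sensible elaborations of points the paper leaves implicit, but they do not change the argument.
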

\begin{proof}
It is clear that progressive measurability and regularity of trajectories are preserved by the changes of coordinates \eqref{eq:change_variables} and \eqref{eq:change_variables_inv}, as well as the divergence-free conditions.
Therefore, we only need to check the conditions on the time evolution of the coupling between solutions and test functions.
Let $(v,q)$ be a solution to \eqref{eq:random_euler} in the sense of \autoref{def:sol_random_euler}, and let $h:\Omega \to C_{loc}([0,\infty), C^\infty(\mathbb{T}^3,\R^3)$ be a square integrable semimartingale satisfying 
\begin{align*}
dh=H_0\,dt + \sum_{k \in I} H_k \bullet dB^k,
%\quad
%\dvg h = 0,
\end{align*}
for some progressively measurable processes $H_0, \{H_k\}_{k \in I} : \Omega \to C_{loc}([0,\infty),C^\infty(\mathbb{T}^3,\R^3))$.
Since $\phi$ is measure preserving, and recalling $v=u\circ \phi$ we have $\int_{\mathbb{T}^3} u \cdot h =\int_{\mathbb{T}^3} v \cdot (h\circ\phi)$.
Moreover, the process $h \circ \phi$ is a semimartingale satisfying
\begin{align*}
d(h\circ\phi)
=
(H_0 \circ \phi) \,dt
+
\sum_{k \in I}
((H_k + (\sigma_k\cdot\nabla) h) \circ\phi) \bullet dB^k(t),
\end{align*}
and therefore 
\begin{align*}
d\left( \int_{\mathbb{T}^3} u \cdot h \right)
&=
\int_{\mathbb{T}^3}
v \cdot (H_0 \circ \phi + (v \cdot \nabla^\phi)(h \circ \phi))  dt
+
\int_{\mathbb{T}^3} q \, \dvgphi (h\circ \phi) \,dt
\\
&\quad+ 
\sum_{k \in I}\int_{\mathbb{T}^3}
v \cdot \left((H_k+(\sigma_k\cdot\nabla) h)\circ\phi \right) \bullet dB^k
\\
&=
\int_{\mathbb{T}^3}
u \cdot (H_0 + (u \cdot \nabla) h)  dt
+
\int_{\mathbb{T}^3} p \, \dvg h \,dt
\\
&\quad+
\sum_{k \in I}\int_{\mathbb{T}^3}
u \cdot (H_k + (\sigma_k \cdot \nabla) h) \bullet dB^k.
\end{align*}
The converse implication can be proved in a similar fashion, noticing that
\begin{align*}
d(h \circ \phi^{-1})
&=
(H_0 \circ \phi^{-1})\,dt
+
\sum_{k \in I}
(H_k \circ \phi^{-1} - (\sigma_k \cdot \nabla)(h \circ \phi^{-1})) \bullet dB^k.
\end{align*}
\end{proof}

\section{Proof of \autoref{prop:aux_est}} \label{app:proof_prop}

In this section all the multiplicative constants $C$ may depend on $\overline{e},\eta,M_v,r_\star$ without explicit mentioning, but will be always independent of $s,\tau$.
In particular, when no ambiguity may occur we drop the symbol $C_{\leq \mathfrak{t}_L}$ in the uniform-in-time spatial H\"older norms $C_{\leq \mathfrak{t}_L}C_x^r$ below.

We first recall the following estimate for the H\"older norm of products. Let $f,g$ be smooth functions on the torus and take $r \in [0,1]$; then it is immediate to see
\begin{align*}
\| fg \|_{C^r_x} \leq \|f\|_{C^r_x} \|g\|_{C_x}+\|f\|_{C_x} \|g\|_{C^r_x}.
\end{align*}
For $r \in \N$, $r > 1$, using the Leibniz rule for the derivative of the product, we have for every integer $k \leq r$ a constant $C=C(k)$ such that
\begin{align*}
[fg]_{C^{k}_x}
&\leq C
\sum_{r_1+r_2=k}
[ f ]_{C^{r_1}_x}
[ g ]_{C^{r_2}_x}
\leq C
\sum_{r_1+r_2=k}
\| f \|_{C^{r_1}_x}
\| g \|_{C^{r_2}_x},
\qquad
r_1,r_2 \in \N,
\end{align*}
and taking the sum over $k \leq r$ we get a constant $C=C(r)$, increasing in $r$, such that 
\begin{align*} 
\|fg \|_{C^{r}_x}
&\leq C
\sum_{r_1+r_2=r}
\| f \|_{C^{r_1}_x}
\| g \|_{C^{r_2}_x},
\qquad
r_1,r_2 \in \N.
\end{align*} 

Therefore, recalling \eqref{eq:a_k},
\begin{align*}
a_k(y,s,\tau)
&=
\mathbf{1}_{\{k \in \Lambda_j\}} 
\sqrt{\rho_\ell(y,s)} 
\,
\underbrace{\gamma_k^{(j)}\left(\frac{R_\ell(y,s)}{\rho_\ell(y,s)}\right)}_{\eqqcolon
\Gamma(y,s)}
\,
\underbrace{\psi_k^{(j)} \left(\tilde{v}(y,s),\tau\right)}_{\eqqcolon
\Psi(y,s,\tau)},
\end{align*}
for any $s \leq \mathfrak{t}_L$ and $\tau \leq \lambda \mathfrak{t}_L$ we have
\begin{align}
\|a_k(\cdot,s,\tau)\|_{C^r_x}
&\leq \label{eq:|a_k|_C^r_small}
\|\sqrt{\rho_\ell}\|_{C^r_x}
\|\Gamma\|_{C_x}
\|\Psi\|_{C_x}
\\
&\quad+ \nonumber
\|\sqrt{\rho_\ell}\|_{C_x}
\|\Gamma\|_{C^r_x}
\|\Psi\|_{C_x}
\\
&\quad+ \nonumber
\|\sqrt{\rho_\ell}\|_{C_x}
\|\Gamma\|_{C_x}
\|\Psi\|_{C^r_x},
&r \in [0,1],\quad
\\
\|a_k(\cdot,s,\tau)\|_{C^r_x}
&\leq \label{eq:|a_k|_C^r}
C \sum_{r_1+r_2+r_3=r}
\|\sqrt{\rho_\ell}\|_{C^{r_1}_x}
\|\Gamma\|_{C^{r_2}_x}
\|\Psi\|_{C^{r_3}_x},
&r \in \N, r > 1.
\end{align}

Also, notice that since $a_k$ depends on $\tau$ only via $\psi_k^{(j)}$,
\begin{align*} 
\partial_\tau a_k
&=
\mathbf{1}_{\{k \in \Lambda_j\}} 
\sqrt{\rho_\ell} \, \Gamma \, \partial_\tau \Psi,
\end{align*}
and thus
\begin{align} 
\|\partial_\tau a_k(\cdot,s,\tau)\|_{C^r_x}
&\leq \label{eq:Dtau(a_k)_small}
\|\sqrt{\rho_\ell}\|_{C^r_x}
\|\Gamma\|_{C_x}
\|\partial_\tau \Psi\|_{C_x}
\\
&\quad+ \nonumber
\|\sqrt{\rho_\ell}\|_{C_x}
\|\Gamma\|_{C^r_x}
\|\partial_\tau \Psi\|_{C_x}
\\
&\quad+ \nonumber
\|\sqrt{\rho_\ell}\|_{C_x}
\|\Gamma\|_{C_x}
\|\partial_\tau \Psi\|_{C^r_x},
&r \in [0,1],\quad
\\
\label{eq:Dtau(a_k)}
\|\partial_\tau a_k(\cdot,s,\tau)\|_{C^r_x}
&\leq
C \sum_{r_1+r_2+r_3=r}
\|\sqrt{\rho_\ell}\|_{C^{r_1}_x}
\|\Gamma\|_{C^{r_2}_x}
\|\partial_\tau \Psi\|_{C^{r_3}_x},
&r \in \N, r > 1.
\end{align}
Similarly, for $r \in [0,1]$
\begin{align} 
\|(\partial_\tau a_k + i(k \cdot \tilde{v})a_k)(\cdot,s,\tau)\|_{C^r_x}
&\leq \label{eq:Dmat(a_k)_small}
\|\sqrt{\rho_\ell}\|_{C^r_x}
\|\Gamma\|_{C_x}
\|\partial_\tau \Psi + i(k \cdot \tilde{v})\Psi\|_{C_x}
\\
&\quad+ \nonumber
\|\sqrt{\rho_\ell}\|_{C_x}
\|\Gamma\|_{C^r_x}
\|\partial_\tau \Psi + i(k \cdot \tilde{v})\Psi\|_{C_x}
\\
&\quad+ \nonumber
\|\sqrt{\rho_\ell}\|_{C_x}
\|\Gamma\|_{C_x}
\|\partial_\tau \Psi + i(k \cdot \tilde{v})\Psi\|_{C^r_x},
\end{align}
whereas for $r \in \N, r > 1$
\begin{align}
\label{eq:Dmat(a_k)}
\|(\partial_\tau a_k + i(k \cdot \tilde{v})a_k)(\cdot,s,\tau)\|_{C^r_x}
&\leq
C \sum_{r_1+r_2+r_3=r}
\|\sqrt{\rho_\ell}\|_{C^{r_1}_x}
\|\Gamma\|_{C^{r_2}_x}
\|\partial_\tau \Psi + i(k \cdot \tilde{v})\Psi\|_{C^{r_3}_x}.
\end{align}

Also, notice that $\| \rho_\ell \|_{C_x} \leq C L_n \delta_n$ and $\| \Gamma \|_{C_x},\| \Psi \|_{C_x} \leq C$ by construction.

\begin{proof}[Proof of \eqref{est:a_k,r_small}]
In view of \eqref{eq:|a_k|_C^r_small} we need to estimate the quantities $\sqrt{\rho_\ell}, \Gamma, \Psi$ in the H\"older space $C^r_x$, $r \in [0,1]$. For $r=0$ the bounds obtained are sub-optimal, but still sufficient for our future purposes and we do not treat this case separately for the sake of simplicity. 

\emph{Bound on $\|\sqrt{\rho_\ell}\|_{C^r_x}$}. Since $c\delta_n \leq \rho_\ell \leq CL_n \delta_n$ and $\tilde{\rho}_\ell \geq \eta\delta_{n+1}$ by construction, for every $s \leq \mathfrak{t}_L$ it holds 
\begin{align*}
\|\sqrt{\rho_\ell(\cdot,s)}\|_{C^r_x}
&\leq
C L_n^{1/2} \delta_n^{1/2}
+
C \delta_n^{-1/2} \|\rho_\ell(\cdot,s)\|_{C^r_x}
\\
&\leq
C L_n^{1/2} \delta_n^{1/2}
+
C \delta_n^{-1/2} L_n \|\mathring{R}_\ell(\cdot,s)\|_{C^r_x}
\\
&\leq
C L^2_n \delta_n^{-1/2} \delta_{n+1}^{1-r} D_n^r.
\end{align*}

\emph{Bound on $\|\Gamma\|_{C^r_x}$}. By definition of $\rho_\ell$, the range of the function $R_\ell / \rho_\ell$ is contained in $B_{\frac{r_0}{2}(Id)}$; since $\gamma_k^{(j)}$ is smooth on $B_{r_0(Id)}$ we can bound
\begin{align*}
\|\Gamma(\cdot,s)\|_{C^r_x}
\leq
C \left\| \frac{R_\ell(\cdot,s)}{\rho_\ell(\cdot,s)} \right\|_{C^r_x}
\leq
C
+
C \left\| \frac{\mathring{R}_\ell(\cdot,s)}{\rho_\ell(\cdot,s)} \right\|_{C^r_x},
\end{align*}
with the constant $C$ depending only on $\gamma_{k}^{(j)}$ and $r_0$.
Therefore for every $s \leq \mathfrak{t}_L$ we have
\begin{align*}
\|\Gamma(\cdot,s)\|_{C^r_x}
&\leq C+C 
\|\mathring{R}_\ell(\cdot,s)\|_{C_x} 
\|\rho_\ell^{-1}(\cdot,s)\|_{C^r_x}
+
C 
\|\mathring{R}_\ell(\cdot,s)\|_{C^r_x} 
\|\rho_\ell^{-1}(\cdot,s)\|_{C_x}.
\\
&\leq C 
L_n^3 \delta_n^{-1}\delta_{n+1}^{1-r} D_n^r.
\end{align*}

\emph{Bound on $\|\Psi\|_{C^r_x}$}.
Recall that $r\in [0,1]$, thus \eqref{e:phiestimate0} implies
\begin{align*}
\| \Psi(\cdot,s,\tau)\|_{C^r_x}
&\leq
\|\psi_k^{(j)}(\cdot,\tau)\|_{C^r_v}
\|\tilde{v}(\cdot,s)\|_{C^1_x}^r
\leq
C \mu^r \|\tilde{v}(\cdot,s)\|_{C^1_x}^r.
\end{align*}
By \autoref{lem:C0_Cm}, for every $s \leq \mathfrak{t}_L$ and $\tau \leq \lambda \mathfrak{t}_L$ it holds (recall that the very definition of $D_n,\varsigma_n$ implies $D_n \leq \varsigma_{n+1}^{\alpha-1}$)
\begin{align} \label{eq:tilde(v)_C1}
\|\tilde{v}(\cdot,s)\|_{C^1_x}
&\leq
\|v_\ell(\cdot,s)\|_{C^1_x}
+
\|\dot{\phi}_{n+1}(\cdot,s)\|_{C^1_x}
\\
&\leq \nonumber
C L_n D_n
+
CL \varsigma_{n+1}^{\alpha-1}
\\
&\leq \nonumber
CL_n \varsigma_{n+1}^{\alpha-1}.
\end{align} 
Putting all together we recover the desired bound for $\|a_k(\cdot,s,\tau)\|_{C^r_x}$, $r \in [0,1]$. 
\end{proof}
\begin{proof}[Proof of \eqref{est:Dmat(a_k),r_small}]
Thanks to \eqref{eq:Dmat(a_k)_small} and the estimates on $\|\sqrt{\rho_\ell(\cdot,s)}\|_{C^r_x}$, $\|\Gamma(\cdot,s)\|_{C^r_x}$ shown above, we only need to control the quantity 
$\|\partial_\tau \Psi + i(k \cdot \tilde{v})\Psi\|_{C^r_x}$, $r \in [0,1]$.
Since $\partial_\tau \Psi + i(k \cdot \tilde{v})\Psi$ depends on $y$ only through its argument $\tilde{v}$, it holds 
\begin{align*}
\|\partial_\tau \Psi + i(k \cdot \tilde{v})\Psi\|_{C^r_x}
&\leq
\|\partial_\tau \Psi + i(k \cdot \tilde{v})\Psi\|_{C^r_v}
\| \tilde{v} \|_{C^1_x}^r
\leq C \mu^{r-1}
\| \tilde{v} \|_{C^1_x}^r,
\end{align*}
where we have used \eqref{e:phiestimate2} with $r=0,1$ and interpolation. Using \eqref{eq:tilde(v)_C1} and collecting all the necessary bounds, we get the desired inequality.
\end{proof}
\begin{proof}[Proof of \eqref{est:a_k,r_large}]
Let us now move to the estimate for $\|a_k(\cdot,s,\tau)\|_{C^r_x}$, $r \in \N$, $r > 1$.
By \eqref{eq:|a_k|_C^r} we only need to bound $\sqrt{\rho_\ell}, \Gamma, \Psi$ in the H\"older spaces $C^{r_i}_x$, $r_i \in \N$, $r_i \geq 1$, $i=1,2,3$. 

\emph{Bound on $\|\sqrt{\rho_\ell}\|_{C^{r_1}_x}$}.
By \cite[Proposition 4.1]{DLS14} we have for every $s \leq \mathfrak{t}_L$
\begin{align*}
\| \sqrt{\rho_\ell} \|_{C^{r_1}_x}
&\leq 
C L_n^{1/2} \delta_n^{1/2}
+
C \sum_{j=1}^{r_1} \delta_n^{1/2-j} \|\rho_\ell \|_{C_x}^{j-1} \|\rho_\ell \|_{C^{r_1}_x}
\\
&\leq CL_n^{1/2} \delta_n^{1/2}
+ C
\delta_n^{-1/2}  L_n^{{r_1}-1} \|\rho_\ell \|_{C^{r_1}_x},
\\
\|\rho_\ell \|_{C^{r_1}_x}
&\leq CL_n \delta_n
+ C
\sum_{j=1}^{r_1} \delta_{n+1}^{1-j} L_n^j \|\mathring{R}_\ell\|_{C_x}^{j-1} \|\mathring{R}_\ell\|_{C^{r_1}_x}
\\
&\leq C L_n^{2{r_1}} D_n \ell^{1-{r_1}},
\end{align*}
from which we deduce
\begin{align*}
\| \sqrt{\rho_\ell} \|_{C^{r_1}_x}
&\leq C L_n^{3{r_1}-1} \delta_n^{-1/2} D_n \ell^{1-{r_1}}.
\end{align*}

\emph{Bound on $\|\Gamma\|_{C^{r_2}_x}$}.
Again by Leibniz rule for the derivative of a product,
\begin{align*}
\|\Gamma\|_{C^{r_2}_x}
&\leq
C \sum_{h_1+h_2={r_2}} \|\mathring{R}_\ell\|_{C^{h_1}_x} \|\rho_\ell^{-1}\|_{C^{h_2}_x}
%\leq
%C(\|\mathring{R}_\ell\|_{C^r_x} \|\rho_\ell^{-1}\|_{C_x}+\|\mathring{R}_\ell\|_{C_x} \|\rho_\ell^{-1}\|_{C^r_x}) 
,
\end{align*}
and \cite[Proposition 4.1]{DLS14} implies for $h_2 \geq 1$ and for every $s \leq \mathfrak{t}_L$ 
\begin{align*}
\|\rho_\ell^{-1}\|_{C^{h_2}_x}
&\leq
C \sum_{j=1}^{h_2} \delta_n^{-1-j} \|\rho_\ell\|_{C_x}^{j-1} \|\rho_\ell\|_{C^{h_2}_x}
\leq C L_n^{3{h_2}-1} \delta_n^{-2} D_n \ell^{1-{h_2}}.
\end{align*}
Therefore, since $\|\mathring{R}_\ell\|_{C_x} \leq L_n \delta_{n+1}$ and $\|\mathring{R}_\ell\|_{C^{h_1}_x} \leq L_n D_n \ell^{1-h_1}$ when $h_1 \geq 1$, for every $s \leq \mathfrak{t}_L$ it holds
\begin{align*}
\|\Gamma\|_{C^{r_2}_x}
&\leq
C L_n^{3{r_2}} \delta_n^{-1} D_n \ell^{1-{r_2}} .
\end{align*}

\emph{Bound on $\|\Psi\|_{C^{r_3}_x}$}.
As usual, we estimate for given $s \leq \mathfrak{t}_L$ and $\tau \leq \lambda \mathfrak{t}_L$
\begin{align*}
\|\Psi\|_{C^{r_3}_x}
&\leq
C+C \sum_{j=1}^{r_3} \mu^j \| \tilde{v} \|_{C_x}^{j-1} \|\tilde{v}\|_{C^{r_3}_x}
\\
&\leq 
C+C \sum_{j=1}^{r_3} \mu^j ( L_n^{1/2} + L \varsigma_{n+1}^{\alpha-1} )^{j-1} ( L_n D_n \ell^{1-{r_3}} + L\varsigma_{n+1}^{\alpha-1})
\\
&\leq
C L_n^{({r_3}+1)/2} \mu^{r_3} \varsigma_{n+1}^{({r_3}-1)(\alpha-1)}(D_n \ell^{1-{r_3}} + \varsigma_{n+1}^{\alpha-1}).
\end{align*}
We have used \autoref{lem:flow} in the second inequality.
Taking into account the bounds above we arrive to
\begin{align*}
\|a_k(\cdot,s,\tau)\|_{C^r_x}
&\leq
%C L_n^{3r+1/2} \delta_n^{-1/2} D_n \ell^{1-r}
%+
%C \mu^r L^r L_n^{r/2+1} \delta_n^{1/2}  \varsigma_{n+1}^{(r-1)(\alpha-1)}(D_n \ell^{1-r} + \varsigma_{n+1}^{\alpha-1})
%\\
%&\leq
%C \delta_n^{1/2} L_n^{3r+1} 
%(\mu D_n \ell^{1-r} + \mu^r \varsigma_{n+1}^{(r-1)(\alpha-1)}(D_n \ell^{1-r} + \varsigma_{n+1}^{\alpha-1}))
%\\
%&\leq
C L_n^{3r}  \delta_n^{1/2} \mu^r \varsigma_{n+1}^{(r-1)(\alpha-1)} (D_n \ell^{1-r} + \varsigma_{n+1}^{\alpha-1}).
\end{align*}
\end{proof}
\begin{proof}[Proof of \eqref{est:Dmat(a_k),r_large}]
As for the estimate \eqref{est:Dmat(a_k),r_large}, we only need to control $\|\partial_\tau \Psi + i(k \cdot \tilde{v})\Psi\|_{C^{r_3}_x}$, ${r_3} \in \N$, ${r_3} \geq 1$.
We proceed as follows. Using \cite[Proposition 4.1]{DLS14} and \eqref{e:phiestimate2}, we have for given $s \leq \mathfrak{t}_L$ and $\tau \leq \lambda \mathfrak{t}_L$
\begin{align*}
\|\partial_\tau \Psi + i(k \cdot \tilde{v})\Psi\|_{C^{r_3}_x}
&\leq
C
\sum_{j=1}^{r_3} \mu^{j-1} \|\tilde{v}\|_{C_x}^{j-1} \|\tilde{v}\|_{C^{r_3}_x}
\\
&\leq
C \mu^{{r_3}-1} L_n^{({r_3}+1)/2} \varsigma_{n+1}^{({r_3}-1)(\alpha-1)}(D_n \ell^{1-{r_3}} + \varsigma_{n+1}^{\alpha-1}).
\end{align*} 
Using \eqref{eq:Dmat(a_k)}, the conclusion is immediate.
\end{proof}
\begin{proof}[Proof of \eqref{est:Dtau(a_k),r_small} and \eqref{est:Dtau(a_k),r_large}]
By \eqref{e:phiestimate3} the bounds for $\partial_\tau \Psi$ are exactly the same as for $\Psi$, up to a multiplicative factor $\|\tilde{v}\|_{C_x} \leq CL_n^{1/2}\varsigma_{n+1}^{\alpha-1}$ in front. Since the largest power of $L$ comes from $\Gamma$, by \eqref{eq:Dtau(a_k)_small} and \eqref{eq:Dtau(a_k)} the desired estimates are obtained by \eqref{est:a_k,r_small} and \eqref{est:a_k,r_large} by multiplying with an additional factor $\varsigma_{n+1}^{\alpha-1}$.
\end{proof}
%
%
%
%\begin{proof}[Proof of \eqref{est:DtauDmat(a_k),r_small} and \eqref{est:DtauDmat(a_k),r_large}]
%The same as above, using \eqref{eq:DtauDmat(a_k)} instead of \eqref{eq:Dtau(a_k)} and  \eqref{e:phiestimate4} instead of \eqref{e:phiestimate3}.
%\end{proof}
%
%
%
\begin{proof}[Proof of \eqref{est:Ds(a_k)0} and \eqref{est:Ds(a_k)}]
Let us now prove the estimates for $\partial_s a_k$. We have
\begin{align*}
\partial_s a_k
&=
\mathbf{1}_{\{k \in \Lambda_j\}} 
\partial_s\sqrt{\rho_\ell} \, \Gamma \, \Psi
+
\mathbf{1}_{\{k \in \Lambda_j\}} 
\sqrt{\rho_\ell} \, \partial_s\Gamma \, \Psi
+
\mathbf{1}_{\{k \in \Lambda_j\}} 
\sqrt{\rho_\ell} \, \Gamma \, \partial_s\Psi,
\end{align*}
so that 
\begin{align} \label{eq:Ds(a_k), C^r_small}
\| \partial_s a_k \|_{C_x} 
&\leq 
\|\partial_s\sqrt{\rho_\ell} \|_{C_x}
\| \Gamma \|_{C_x}
\| \Psi \|_{C_x}
\\
&\quad+ \nonumber
\| \sqrt{\rho_\ell} \|_{C_x}
\|\partial_s \Gamma \|_{C_x}
\| \Psi \|_{C_x}
\\
&\quad+ \nonumber
\|\sqrt{\rho_\ell} \|_{C_x}
\| \Gamma \|_{C_x}
\|\partial_s \Psi \|_{C_x},
\end{align} 
and for $r \in \N$, $r \geq 1$
\begin{align} \label{eq:Ds(a_k), C^r}
\| \partial_s a_k \|_{C^r_x}
&\leq
C\sum_{r_1+r_2+r_3=r}
\|\partial_s\sqrt{\rho_\ell} \|_{C^{r_1}_x}
\| \Gamma \|_{C^{r_2}_x}
\| \Psi \|_{C^{r_3}_x}
\\
&\quad+ \nonumber
C\sum_{r_1+r_2+r_3=r}
\| \sqrt{\rho_\ell} \|_{C^{r_1}_x}
\|\partial_s \Gamma \|_{C^{r_2}_x}
\| \Psi \|_{C^{r_3}_x}
\\
&\quad+ \nonumber
C\sum_{r_1+r_2+r_3=r}
\|\sqrt{\rho_\ell} \|_{C^{r_1}_x}
\| \Gamma \|_{C^{r_2}_x}
\|\partial_s \Psi \|_{C^{r_3}_x}.
\end{align}
The quantities $\| \sqrt{\rho_\ell} \|_{C^{r_1}_x}$, $\| \Gamma \|_{C^{r_2}_x}$ and $\| \Psi \|_{C^{r_3}_x}$ have been bounded during previous steps of the proof; we are left to provide analogous bounds for $\partial_s\sqrt{\rho_\ell}$, $\partial_s \Gamma $ and $\partial_s\Psi$.

\emph{Bounds on $\partial_s \sqrt{\rho_\ell}$}.
Rewrite, for fixed $s \leq \mathfrak{t}_L$:
\begin{align} \label{eq:bound_ds_sqrt_0}
\| \partial_s \sqrt{\rho_\ell} \|_{C_x} 
\leq 
C \delta_n^{-1/2} \| \partial_s \rho_\ell \|_{C_x},
\end{align}
and for every ${r_1} \geq 1$
\begin{align} \label{eq:bound_ds_sqrt_r}
%\partial_s \sqrt{\rho_\ell} = \frac{\partial_s \rho_\ell}{2 \sqrt{\rho_\ell}},
%\\
\| \partial_s \sqrt{\rho_\ell} \|_{C^{r_1}_x}
&\leq 
C \sum_{h_1+h_2=r_1}
\|\partial_s \rho_\ell\|_{C^{h_1}_x} \|{\rho_\ell}^{-1/2}\|_{C^{h_2}_x}
%\\
%&\leq \nonumber
%C \delta_n^{-1/2} \|\partial_s \rho_\ell\|_{C^{r_1}_x} 
%+
%C \sum_{h_1+h_2=r_1, h_2>0}
%L_n^{3h_2-1} \delta_n^{-3/2} D_n \ell^{1-h_2} \|\partial_s \rho_\ell\|_{C^{h_1}_x} 
.
\end{align}

It holds
\begin{align*}
\partial_s \rho_\ell
&=
\frac{\partial_s |\mathring{R}_\ell|^2}{r_0 \sqrt{\eta^2 \delta_{n+1}^2 + |\mathring{R}_\ell|^2}}
+
\partial_s \gamma_n
=
\frac{\sum_{i,j}\mathring{R}_\ell^{i,j} \partial_s \mathring{R}_\ell^{i,j}}{r_0 \sqrt{\eta^2 \delta_{n+1}^2 + |\mathring{R}_\ell|^2}}
+
\partial_s \gamma_n.
\end{align*}

Now, notice that by \eqref{eq:conditions_gamma3} we have $|\partial_s \gamma_n | \leq C\| \tilde{e} \|_{C^1_{\leq \mathfrak{t}}} \leq C\overline{e}+C\overline{e}^{1/2}D_n$, where the second inequality is justified by our iterative assumptions \eqref{eq:est_energy} and \eqref{eq:C^1,1}, and (recall $\chi_\ell \geq 0$ to apply Cauchy-Schwartz inequality to $v_n \chi_\ell^{1/2}$ and $\chi_\ell^{1/2}$)
\begin{align*}
\int_{\mathbb{T}^3} |v_\ell(x,t)|^2 dx
&=
\int_{\mathbb{T}^3} \left| \int_{\mathbb{T}^3 \times \R} v_n(x-y,t-s) \chi_\ell(y,s) dy ds\right|^2 dx
\\
&\leq
\int_{\mathbb{T}^3} \int_{\mathbb{T}^3 \times \R} |v_n(x-y,t-s)|^2 \chi_\ell(y,s) dy ds  dx
\\
&\leq
\int_{\mathbb{T}^3 \times \R} e(t-s) \chi_\ell(y,s) dy ds 
\leq 
\overline{e}.
\end{align*} 
Moreover, $\|\partial_s \mathring{R}_\ell \|_{C^h_x} \leq CL_n \delta_{n+1} \ell^{-1-h}$ for every $h \in \N$, by standard mollification estimates.
Hence, using \cite[Proposition 4.2]{DLS14} to bound the H\"older norm of the term under square root and recalling \eqref{eq:definition_ell}, we get
\begin{align*}
\|\partial_s \rho_\ell\|_{C_x} 
&\leq
C L_n^2 \delta_{n+1} \ell^{-1},
\\
\|\partial_s \rho_\ell\|_{C^{h_1}_x} 
&\leq
C L_n^{2h_1+2} \delta_{n+1} \ell^{-1-h_1}.
\end{align*} 

Plugging back into \eqref{eq:bound_ds_sqrt_0} and \eqref{eq:bound_ds_sqrt_r} we obtain
\begin{align*}
\| \partial_s \sqrt{\rho_\ell} \|_{C_x} 
&\leq 
C L_n^2 \delta_n^{-1/2} \delta_{n+1} \ell^{-1},
\\
\| \partial_s \sqrt{\rho_\ell} \|_{C^{r_1}_x}
&\leq
%C  L_n^{3r+1} \delta_n^{-3/2}D_n^2\ell^{\beta-r}
%+
%CL_n^{2r+2}\delta_n^{-1/2}D_n \ell^{-r}
%\\
%&\leq
CL_n^{3{r_1}+1}\delta_n^{-1/2}\delta_{n+1} \ell^{-1-r_1}.
\end{align*}

\emph{Bounds on $\partial_s \Gamma$}.
Since $\gamma_k^{(j)}$ is smooth on $B_{r_0(Id)}$ and $R_\ell/\rho_\ell$ takes values in $B_{\frac{r_0}2(Id)}$, we can bound
\begin{align*}
\|\partial_s \Gamma\|_{C_x} 
&\leq C
\left\|\,\partial_s \frac{\mathring{R}_\ell}{\rho_\ell}  \right\|_{C_x}
,
\quad
\|\partial_s \Gamma\|_{C^{r_2}_x} 
\leq C
\left\|\,\partial_s \frac{\mathring{R}_\ell}{\rho_\ell}\right\|_{C^{r_2}_x}.
\end{align*} 

On the other hand,
\begin{align*}
\partial_s \frac{\mathring{R}_\ell}{\rho_\ell} 
&=
\frac{\partial_s \mathring{R}_\ell}{\rho_\ell} 
-
\frac{\mathring{R}_\ell \partial_s \rho_\ell}{\rho_\ell^2},
\end{align*}
therefore, taking also into account the previous estimates for $\rho_\ell$
\begin{align*}
\|\partial_s \Gamma\|_{C_x} 
&\leq
C L_n^3 \delta_n^{-1} \delta_{n+1} \ell^{-1},
\\
\|\partial_s \Gamma\|_{C^{r_2}_x} 
&\leq
C L_n^{3{r_2}+2} \delta_n^{-2} \delta_{n+1}^2 \ell^{-1-{r_2}}. 
\end{align*}

\emph{Bounds on $\partial_s \Psi$}.
Finally, since $\Psi$ depends on $s$ only through $\tilde{v}(y,s)$:
\begin{align*}
\partial_s \Psi
&=
D_v \psi_k^{(j)} (\tilde{v},\tau)
\cdot
\partial_s \tilde{v},
\end{align*}
and recalling \eqref{e:phiestimate0} and \cite[Proposition 4.1]{DLS14} for the H\"older seminorm of compositions, we get
\begin{align*}
\| \partial_s \Psi \|_{C_x}
&\leq
C \mu \| \partial_s \tilde{v} \|_{C_x},
\\
\|\partial_s \Psi \|_{C^{r_3}_x}
&\leq
C \mu \| \partial_s \tilde{v} \|_{C_x}
+
C|D_v \psi_k^{(j)}| \|\partial_s \tilde{v}\|_{C^{r_3}_x}
\\
&\quad+
C\sum_{i=1}^{r_3} |D_v^{1+i} \psi_k^{(j)}| \|\tilde{v}\|_{C_x}^{i-1} \|\tilde{v}\|_{C^{r_3}_x} \|\partial_s \tilde{v}\|_{C_x}
\\
&\leq
C \mu \| \partial_s \tilde{v} \|_{C^{r_3}_x}
+
C \mu^{{r_3}+1} L_n^{{r_3}} 
\varsigma_{n+1}^{({r_3}-1)(\alpha-1)}
(D_n \ell^{1-{r_3}} + \varsigma_{n+1}^{\alpha-1})
\| \partial_s \tilde{v} \|_{C_x}.
\end{align*} 
The partial derivative of $\tilde{v}$ with respect to $s$ is given by $\partial_s {v}_\ell + \ddot{\phi}_{n+1}$, therefore
\begin{align*}
\| \partial_s \tilde{v} \|_{C_x}
&\leq
C L_n D_n 
+
C L \varsigma_{n+1}^{\alpha-2}
\leq
C L_n \varsigma_{n+1}^{\alpha-2};
\\
\| \partial_s \tilde{v} \|_{C^{r_3}_x}
&\leq
C L_n D_n \ell^{-{r_3}}
+
C L \varsigma_{n+1}^{\alpha-2}
\leq
C L_n (D_n \ell^{-r_3} + \varsigma_{n+1}^{\alpha-2} ).
\end{align*}
Therefore
\begin{align*}
\| \partial_s \Psi \|_{C_x}
&\leq
C L_n \mu \varsigma_{n+1}^{\alpha-2},
\\
\|\partial_s \Psi \|_{C^{r_3}_x}
&\leq
C L_n^{r_3+1} \mu^{r_3+1} \varsigma_{n+1}^{r_3(\alpha-1)-1} (D_n \ell^{1-{r_3}} +\varsigma_{n+1}^{\alpha-1}).
\end{align*}

Collecting everything and plugging into \eqref{eq:Ds(a_k), C^r_small} and \eqref{eq:Ds(a_k), C^r} , we finally arrive to:
\begin{align*}
\| \partial_s a_k \|_{C_x}
&\leq
%CL_n^{7/2} \delta_n^{-1/2} D_n \ell^{\beta-1}
%+
%CL^2L_n^{3/2} \mu \delta_n^{1/2} \varsigma_{n+1}^{\alpha-2}
%\\
%&\leq
CL_n^{7/2} \delta_n^{1/2} \mu \varsigma_{n+1}^{\alpha-2},
\end{align*}
\begin{align*}
\| \partial_s a_k \|_{C^r_x}
&\leq
%C L_n^{3r+1} \delta_n^{-1/2} D_n \ell^{-r}
%+
%C L_n^{3r+2} \delta_n^{-3/2}D_n^2 \ell^{\beta-r} 
%+
%C L^2L_n^{3r} \delta_n^{-1/2}D_n \ell^{1-r} \mu \varsigma_{n+1}^{\alpha-2}
%\\
%&\quad+ \nonumber
%C L_n^{3r+2} \delta_n^{-3/2} D_n^2 \ell^{\beta-r} 
%+
%C L_n^{3r+7/2} \delta_n^{-1/2} D_n \ell^{-r}
%+
%C L^2L_n^{3r+3/2} \delta_n^{-1/2} D_n \ell^{1-r} \mu \varsigma_{n+1}^{\alpha-2}
%\\
%&\quad+ \nonumber
%C L^r L_n^{(r+5)/2}  \mu^r \delta_n^{-1/2} D_n \ell^{\beta-1} \varsigma_{n+1}^{(r-1)(\alpha-1)}(D_n \ell^{1-r}+\varsigma_{n+1}^{\alpha-1})
%\\
%&\quad+ \nonumber
%C L^r L_n^{r/2+4} \mu^r \delta_n^{-1/2} D_n \ell^{\beta-1} \varsigma_{n+1}^{(r-1)(\alpha-1)}(D_n \ell^{1-r}+\varsigma_{n+1}^{\alpha-1})
%\\
%&\quad+ \nonumber
%C L^{2r+1}L_n^{5/2} \mu^{r+1} \delta_n^{1/2}  D_n \ell^{1-r} \varsigma_{n+1}^{r(\alpha-1)-1}
%\\
%&\leq \nonumber
C L_n^{3r+5/2} \delta_n^{1/2}  \mu^{r+1} \varsigma_{n+1}^{r(\alpha-1)-1} (D_n \ell^{1-r}+\varsigma_{n+1}^{\alpha-1}).
\end{align*}

\end{proof}

\section{H\"older and Besov estimates}
\label{sec:schauder}
\begin{lem} \label{lem:C0_Cm}
Let $\phi=\phi_n$, $n \in \N$. 
For any $\delta \in [0,1)$ and $r \in \N$, $r \leq \kappa$ there exists a constant $C=C(\delta,r)$ such that the following holds almost surely for every $L \in \N$, $L \geq 1$. 
For every $f$ on $\mathbb{T}^3 \times (\infty,\mathfrak{t}_L]$ of class $C_{\leq \mathfrak{t}_L}C^{r+\delta}_x$ and every fixed $t \leq \mathfrak{t}_L$ 
\begin{align*}
\| f \circ \phi \|_{C^{r+\delta}_x} 
&\leq 
C L^{r+\delta}\|f\|_{C^{r+\delta}_x},
\\
\| f \circ \phi^{-1}\|_{C^{r+\delta}_x} &\leq 
C L^{r+\delta}\|f\|_{C^{r+\delta}_x},
\end{align*}
where $f \circ \phi$ denotes the map $\mathbb{T}^3 \times (\infty,\mathfrak{t}_L] \ni (x,t) \mapsto f(\phi(x,t),t)$, and similarly for $f \circ \phi^{-1}$.
\begin{proof}
Fix $t \leq \mathfrak{t}_L$.
For $r=0$, we have
\begin{align*}
[ f \circ \phi ]_{C^{\delta}_x}
&=
\sup_{x \neq y \in \mathbb{T}^3}
\frac{|f(\phi(x,t),t)-f(\phi(y,t),t)|}{|x-y|^\delta}
\\
&=
\sup_{x \neq y \in \mathbb{T}^3}
\frac{|f(\phi(x,t),t)-f(\phi(y,t),t)|}{|\phi (x,t)-\phi (y,t)|^\delta}
\frac{|\phi (x,t)-\phi (y,t)|^\delta}{|x-y|^\delta}
\leq C^\delta L^\delta \|f\|_{C^{\delta}_x}. 
\end{align*}
In the previous line we have used $|\phi (x,t)-\phi (y,t)| \leq \|\phi(\cdot,t)\|_{C^1_x} |x-y| \leq CL |x-y|$ since $t \leq \mathfrak{t}_L$, by the very definition of the stopping time $\mathfrak{t}_L$.

For $r>0$, we argue as follows. 
Let $\mathbf{k}$ be any multi-index of order $k\leq r$; we use Faà di Bruno formula and the previous decomposition to deduce 
\begin{align*}
[ D^\mathbf{k} (f \circ \phi) ]_{C^{\delta}_x}
&\leq 
\sum_{\substack{r_1,\dots,r_k\\|\mathbf{r}|=r_1+\dots+r_k}}
C_{r_i,k} 
\| (D^\mathbf{r}f) \circ \phi \|_{C^{\delta}_x}  
\prod_{j=1}^{k}  \|\phi\|_{C^j_x}^{r_j}
\\
&\leq
\sum_{\substack{r_1,\dots,r_k\\|\mathbf{r}|=r_1+\dots+r_k}}
C_{r_i,k} 
\| f\|_{C^{|\mathbf{r}|+\delta}_x} C^\delta L^\delta  
\prod_{j=1}^{k}  \|\phi\|_{C^j_x}^{r_j}
\\
&\leq
\sum_{\substack{r_1,\dots,r_k\\|\mathbf{r}|=r_1+\dots+r_k}}
C_{r_i,k}
\| f\|_{C^{|\mathbf{r}|+\delta}_x} C^{|\mathbf{r}|+\delta}L^{|\mathbf{r}|+\delta} ,
\end{align*}
where the sum is taken over all $r_i$ such that $r_1+2r_2+\dots+kr_k = k$ and multi-indices $\mathbf{r}$ with $|\mathbf{r}|=r_1+\dots+r_m$, and $C_{r_i,k}$ are constants depending only on $k$ and $r_1,\dots,r_k$. 
In the last inequality we have used $\|\phi\|_{C^j_x} \leq CL$ since $j \leq \kappa$ and $t \leq \mathfrak{t}_L$ is fixed.
We conclude using $|\mathbf{r}| \leq k \leq r$ and taking the sum over all multi-indices of order $\leq r$.
The second inequality is proved in a similar fashion.
\end{proof}
\end{lem}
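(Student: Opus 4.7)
The plan is to prove the two inequalities by a direct computation based on the chain rule, using the uniform-in-$n$ bounds on $\phi_n$ and $\phi_n^{-1}$ in $C_{\leq \mathfrak{t}_L}C^\kappa_x$ recalled in Subsection~2.4. Since the argument for $f\circ\phi^{-1}$ is identical to that for $f\circ\phi$ (both $\phi$ and $\phi^{-1}$ enjoy the bound $\|\,\cdot\,\|_{C_{\leq \mathfrak{t}_L}C^\kappa_x}\le CL$), I will only spell out the case $f\circ\phi$.

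First I would handle the base case $r=0$. Fix $t\le \mathfrak{t}_L$. For $x\ne y\in\T^3$, write
\[
\frac{|f(\phi(x,t),t)-f(\phi(y,t),t)|}{|x-y|^\delta}
=\frac{|f(\phi(x,t),t)-f(\phi(y,t),t)|}{|\phi(x,t)-\phi(y,t)|^\delta}\cdot\frac{|\phi(x,t)-\phi(y,t)|^\delta}{|x-y|^\delta}.
\]
The first factor is bounded by $[f(\cdot,t)]_{C^\delta_x}$, and the second factor is bounded by $\|\phi(\cdot,t)\|_{C^1_x}^\delta\le (CL)^\delta$ thanks to the localization estimate $\|\phi_n\|_{C_{\leq \mathfrak{t}_L}C^1_x}\le CL$. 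Together with the trivial $C_x$ bound $\|f\circ\phi\|_{C_x}\le\|f\|_{C_x}$, this gives the $r=0$ case.

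For $r\ge 1$, I would use Faà di Bruno's formula to expand $D^{\mathbf{k}}(f\circ\phi)$ for any multi-index $\mathbf{k}$ with $|\mathbf{k}|\le r$ as a finite sum of terms of the form $C_{r_i,k}\,(D^{\mathbf{r}}f)\circ\phi\cdot\prod_{j=1}^{k}\prod_{i}(D^{\mathbf{s}_{j,i}}\phi)$, where $|\mathbf{r}|=r_1+\dots+r_k$ and the multiplicities $r_j$ satisfy $r_1+2r_2+\dots+kr_k=k$. Applying the $r=0$ case to $D^{\mathbf{r}}f$, and using the localization bounds $\|\phi_n\|_{C_{\leq\mathfrak{t}_L}C^j_x}\le CL$ for all $1\le j\le\kappa$ (which applies as long as $r\le\kappa$), each summand is dominated by
\[
\|f\|_{C^{|\mathbf{r}|+\delta}_x}\,(CL)^{\delta}\prod_{j=1}^k (CL)^{r_j}=\|f\|_{C^{|\mathbf{r}|+\delta}_x}\,C^{|\mathbf{r}|+\delta}L^{|\mathbf{r}|+\delta}.
\]
Since $|\mathbf{r}|\le k\le r$, summing a finite number of such terms and taking the supremum over multi-indices yields both the bound on $[D^{\mathbf{k}}(f\circ\phi)]_{C^\delta_x}$ and on $\|D^{\mathbf{k}}(f\circ\phi)\|_{C_x}$. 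Summing over $|\mathbf{k}|\le r$ gives $\|f\circ\phi\|_{C^{r+\delta}_x}\le CL^{r+\delta}\|f\|_{C^{r+\delta}_x}$.

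There is no conceptual obstacle; the only points requiring care are that $r\le\kappa$ must be respected so that the bound $\|\phi_n\|_{C_{\le\mathfrak{t}_L}C^j_x}\le CL$ from Subsection~2.4 is available for all orders appearing in the Faà di Bruno expansion, and that the same argument applies verbatim to $\phi^{-1}$ since the localization estimates were proved simultaneously for $\phi_n$ and $\phi_n^{-1}$. The combinatorial bookkeeping is the bulkiest part, but it is routine and absorbed into the constant $C(\delta,r)$.
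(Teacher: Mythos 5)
Your proposal is correct and follows essentially the same route as the paper's proof: the $r=0$ case by splitting the H\"older quotient through $\phi$ and using the localized bound $\|\phi_n\|_{C_{\leq\mathfrak{t}_L}C^1_x}\le CL$, and the case $r\ge 1$ via Fa\`a di Bruno combined with the bounds $\|\phi_n\|_{C_{\leq\mathfrak{t}_L}C^j_x}\le CL$ for $j\le\kappa$, with the same argument applying to $\phi^{-1}$. No gaps.
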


\begin{lem} \label{lem:B-alpha}
Let $\phi=\phi_n$, $n \in \N$.
For every $\alpha \in (0,2)$, $\alpha \neq 1$ there exists $C=C(\alpha)$ such that for every $L \in \N$, $L\geq 1$ and every continuous function $f$ on $\T^3 \times (-\infty,\mathfrak{t}_L]$ it holds
\begin{align*}
\| f \circ \phi \|_{B^{-\alpha}_{\infty,\infty}}
&\leq
CL^{4-\alpha}\| f  \|_{B^{-\alpha}_{\infty,\infty}},
\\
\| f \circ \phi^{-1} \|_{B^{-\alpha}_{\infty,\infty}}
&\leq
CL^{4-\alpha}\| f  \|_{B^{-\alpha}_{\infty,\infty}}.
\end{align*}
\end{lem}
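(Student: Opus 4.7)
My plan is to argue by duality, reducing the composition estimate at negative regularity to one at positive regularity on the predual. For non-integer $\alpha \in (0,2)$ one has the identification $B^{-\alpha}_{\infty,\infty} \cong (B^{\alpha}_{1,1})^*$ with equivalent norms (this is exactly why the statement excludes $\alpha = 1$), already used in the paper; from this,
\begin{align*}
\|f \circ \phi\|_{B^{-\alpha}_{\infty,\infty}} \sim \sup_{\|g\|_{B^{\alpha}_{1,1}} \leq 1} |\langle f \circ \phi, g\rangle|.
\end{align*}
Since $\phi(\cdot,t)$ is measure-preserving on $\T^3$ (see \autoref{ssec:moll}), the substitution $y = \phi(x,t)$ gives the identity $\langle f \circ \phi, g\rangle = \langle f, g \circ \phi^{-1}\rangle$ for smooth data, and a routine mollification argument extends it to $f \in B^{-\alpha}_{\infty,\infty}$. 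Hence the lemma reduces to the positive-regularity composition estimate
\begin{align*}
\|g \circ \phi^{-1}\|_{B^{\alpha}_{1,1}} \leq CL^{4-\alpha} \|g\|_{B^{\alpha}_{1,1}}.
\end{align*}

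For $\alpha \in (0,1)$ I would use the Gagliardo-type characterization
\[
\|g\|_{B^{\alpha}_{1,1}} \sim \|g\|_{L^1} + \int_{\T^3 \times \T^3} \frac{|g(x) - g(y)|}{|x-y|^{3+\alpha}}\, dx\, dy,
\]
apply it to $g \circ \phi^{-1}$, and perform the change of variables $u = \phi^{-1}(x)$, $v = \phi^{-1}(y)$ (Jacobian $1$ by measure preservation). The resulting integrand is $|g(u)-g(v)|/|\phi(u)-\phi(v)|^{3+\alpha}$, and the bi-Lipschitz bound $|\phi(u)-\phi(v)| \geq c L^{-1}|u-v|$ coming from $\|\phi^{-1}\|_{C^1_x} \leq CL$ in \autoref{lem:flow} produces the required polynomial factor in $L$. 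The range $\alpha \in (1,2)$ I would reduce to the previous case via the chain rule $\nabla(g\circ \phi^{-1}) = (\nabla g \circ \phi^{-1})\cdot \nabla \phi^{-1}$ combined with the equivalence $\|g\|_{B^{\alpha}_{1,1}} \sim \|g\|_{L^1} + \|\nabla g\|_{B^{\alpha-1}_{1,1}}$, a product estimate of $B^{\alpha-1}_{1,1}$ against a H\"older factor, and the bound $\|\nabla \phi^{-1}\|_{C^\kappa_x} \leq CL$ for $\kappa$ large. The second inequality in the lemma, with $\phi$ and $\phi^{-1}$ interchanged, follows identically since \autoref{lem:flow} supplies the same bounds for both.

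The main obstacle is pinning down the exact exponent $L^{4-\alpha}$: the Gagliardo calculation above yields $L^{3+\alpha}$, which equals the target only at $\alpha = 1/2$. To cover the full range uniformly I expect one must either interpolate between the trivial $L^\infty$-preservation bound (which has constant $1$ and corresponds formally to $\alpha = 0$) and a coarser bound at higher regularity, or exploit the $C^\kappa$-smoothness of $\phi$ beyond its bi-Lipschitz character, for instance by analyzing how the Gaussian mollifier distorts under composition in the heat-semigroup characterization $\|f\|_{B^{-\alpha}_{\infty,\infty}} \sim \sup_{t>0} t^{\alpha/2}\|e^{t\Delta} f\|_{L^\infty}$. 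In any case, the exponent is only used downstream in \autoref{prop:R_moll} to control the mollification error with $n$-independent constants, so any polynomial-in-$L$ bound is sufficient for the iteration.
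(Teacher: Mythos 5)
Your overall strategy shares its first step with the paper — reducing the negative-regularity estimate on $B^{-\alpha}_{\infty,\infty}$ to a positive-regularity composition estimate on the predual $B^{\alpha}_{1,1}$ via the identity $\langle f\circ\phi, g\rangle = \langle f, g\circ\phi^{-1}\rangle$, exploiting measure preservation. Where you then diverge is instructive. You compute $\|g\circ\phi^{-1}\|_{B^\alpha_{1,1}}$ directly from the Slobodeckij double integral and a bi-Lipschitz lower bound on $\phi$, splitting $\alpha\in(0,1)$ from $\alpha\in(1,2)$ and invoking a chain rule plus a paraproduct step in the second case. The paper instead applies $(Id-\Delta)^{-1}$ to \emph{lower} the regularity to $B^{\alpha-2}_{1,1}$, which is negative for all $\alpha\in(0,2)$ at once, and then dualizes a second time against $C^{2-\alpha}_x$; there the composition is handled by the already-established \autoref{lem:C0_Cm} in H\"older scale, with the paraproduct supplying the extra $L^2$ from the factors $\partial_{x_k}\phi^j\,\partial_{x_k}\phi^\ell$. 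This avoids any Slobodeckij computation, treats the whole range $\alpha\in(0,2)$, $\alpha\neq 1$, in one pass, and keeps all composition estimates inside the $C^r_x$ machinery the paper has already set up.

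The gap you flag is real and your diagnosis is accurate: your Gagliardo route produces $L^{3+\alpha}$, which agrees with $L^{4-\alpha}$ only at $\alpha=1/2$ and is strictly worse for $\alpha>1/2$. So as written, your argument does not establish the stated inequality for all $\alpha\in(0,2)\setminus\{1\}$. You are also right that the precise power is not used downstream in a sharp way — what matters in \autoref{prop:R_moll}, \autoref{prop:R_flow} and \autoref{prop:R_comp} is only that the prefactor is polynomial in $L$, which is then absorbed into the $L_n$ bookkeeping — but to keep the statements of \autoref{lem:schauder} and the estimates of \autoref{sec:iter} as they stand you would either need to replace $4-\alpha$ by your $3+\alpha$ throughout, or else prove the claimed exponent. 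For the latter, note that the paper's route does give $4-\alpha$ cleanly; the interpolation and heat-kernel ideas you mention are workable but are heavier than the $(Id-\Delta)^{-1}$ trick, which is the simpler patch. A second, smaller point: your $\alpha\in(1,2)$ reduction is sketched but not closed — you would still need a product estimate of the form $\|uv\|_{B^{\alpha-1}_{1,1}}\lesssim\|u\|_{B^{\alpha-1}_{1,1}}\|v\|_{C^\kappa_x}$ with explicit $L$-dependence for $v=\nabla\phi^{-1}$ and to verify that composing this with your $\alpha\in(0,1)$ bound for $\nabla g\circ\phi^{-1}$ does not leak extra powers of $L$.
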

\begin{proof}
We only prove the second claim, the other being similar.
Let us preliminarily prove that for every smooth $g$ and $L\geq 1$ it holds
\begin{align} \label{eq:gB_1,1}
\| g \circ \phi \|_{B^{\alpha}_{1,1}}
&\leq
CL^{4-\alpha}\| g  \|_{B^{\alpha}_{1,1}}.
\end{align}

By continuity of the operator $(Id-\Delta)^{-1}:B^{\alpha-2}_{1,1}\to B^{\alpha}_{1,1}$ \cite[Proposition A.6]{MoWe17} it holds $\| g \circ \phi \|_{B^{\alpha}_{1,1}} \leq C \| (Id-\Delta) (g \circ \phi) \|_{B^{\alpha-2}_{1,1}}$. 
Since $g$ is smooth we have
\begin{align*}
(Id-\Delta) (g \circ \phi)
=
g \circ \phi
-
\sum_{j,k = 1}^d
\partial_{x_j} g \circ \phi \,\partial_{x_k}^2 \phi^j
-
\sum_{j,k,\ell = 1}^d
\partial_{x_j} \partial_{x_\ell} g \circ \phi\, \partial_{x_k}\phi^j \partial_{x_k} \phi^\ell
\end{align*}

By paraproduct estimates for Besov functions (see \cite[Lemma 2.2]{HoZhZh21c+} or \cite[Proposition A.7]{MoWe17}) and \autoref{lem:flow}
\begin{align*}
\| g \circ \phi \|_{B^{\alpha}_{1,1}} 
&\leq 
C \| (Id-\Delta) (g \circ \phi) \|_{B^{\alpha-2}_{1,1}}
\\
&\leq
C \| g \circ \phi \|_{B^{\alpha-2}_{1,1}}
+
CL \sum_{j=1}^d \| \partial_{x_j}g \circ \phi \|_{B^{\alpha-2}_{1,1}}
+
CL^2 \sum_{j,\ell=1}^d \| \partial_{x_j}\partial_{x_\ell}g \circ \phi \|_{B^{\alpha-2}_{1,1}}
\\
&\leq
CL^{2-\alpha} \| g \|_{B^{\alpha-2}_{1,1}}
+
CL^{3-\alpha} \sum_{j=1}^d \| \partial_{x_j}g \|_{B^{\alpha-2}_{1,1}}
+
CL^{4-\alpha} \sum_{j,\ell=1}^d \| \partial_{x_j}\partial_{x_\ell}g\|_{B^{\alpha-2}_{1,1}},
\end{align*}
where in the last line we have used the duality $B^{\alpha-2}_{1,1} = (B^{2-\alpha}_{\infty,\infty})^* = (C^\alpha_x)^*$, with equivalence of norms, the measure preserving property of $\phi^{-1}$, and previous \autoref{lem:C0_Cm}. 
%\begin{align*}
%\| f \circ \phi \|_{B^{-\alpha}_{1,1}}
%&=
%\sup_{\substack{g \in B^{\alpha}_{\infty,\infty},\\ \|g\|_{B^{\alpha}_{\infty,\infty}}=1}}
%\int_{\T^d} (f \circ \phi) g dx
%=
%\sup_{\substack{g \in B^{\alpha}_{\infty,\infty},\\ \|g\|_{B^{\alpha}_{\infty,\infty}}=1}}
%\int_{\T^d} f (g\circ \phi^{-1}) dx,
%\end{align*}
By continuity of derivatives (Proposition A.5 in \cite{MoWe17}) we conclude the proof of \eqref{eq:gB_1,1}.

Let us now come back to the estimate for $f \circ \phi^{-1}$.
By duality, and since any function $g \in B^{\alpha}_{1,1}$ can be approximated by smooth functions in the $B^{\alpha}_{1,1}$ topology, there exists a constant $C$ such that 
\begin{align*}
\| f \circ \phi^{-1} \|_{B^{-\alpha}_{\infty,\infty}}
&=
\sup_{\substack{g \in B^{\alpha}_{1,1},\\ \|g\|_{B^{\alpha}_{1,1}}=1}}
\int_{\T^d} (f \circ \phi^{-1}) g dx
\leq
\sup_{\substack{g \in C^\infty,\\ \|g\|_{B^{\alpha}_{1,1}}\leq 2}}
\int_{\T^d} (f \circ \phi^{-1}) g dx.
\end{align*}

Thus, since $f$ is continuous, $\phi$ is measure preserving and by \eqref{eq:gB_1,1}
\begin{align*}
\sup_{\substack{g \in C^\infty,\\ \|g\|_{B^{\alpha}_{1,1}}\leq 2}}
\int_{\T^d} (f \circ \phi^{-1}) g dx
&=
\sup_{\substack{g \in C^\infty,\\ \|g\|_{B^{\alpha}_{1,1}}\leq 2}}
\int_{\T^d} f (g\circ \phi) dx
\\
&\leq
\sup_{\substack{h \in C^\infty,\\ \|h\|_{B^{\alpha}_{1,1}}\leq CL^{4-\alpha}}}
\int_{\T^d} f h dx
\leq
CL^{4-\alpha} \|f\|_{B^{-\alpha}_{\infty,\infty}}.
\end{align*}

\end{proof}

\begin{lem} \label{lem:Cbeta_C0}
Let $\phi=\phi_n$, $n \in \N$. 
For any $\beta \in (0,\alpha]$ there exists a constant $C=C(\beta,\alpha)$ such that almost surely for every $L \in \N$, $L \geq 1$, for every smooth function $f$ on $\mathbb{T}^3 \times (\infty,\mathfrak{t}_L]$ and given $t \leq \mathfrak{t}_L$ it holds 
\begin{align*}
\| f \circ \phi \|_{C^\beta_t C_x}
&\leq
CL^{\beta/\alpha} \|f\|_{C_t C^{\beta/\alpha}_x}
+
\|f\|_{C^\beta_t C_x},
\\
\| f \circ \phi^{-1} \|_{C^\beta_t C_x}
&\leq
CL^{\beta/\alpha} \|f\|_{C_t C^{\beta/\alpha}_x} 
+
\|f\|_{C^\beta_t C_x}.
\end{align*}

\end{lem}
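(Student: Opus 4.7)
The plan is to estimate the time Hölder increment of $f\circ \phi$ directly, by splitting it into a ``spatial variation'' piece and a ``temporal variation'' piece, and then invoking the pathwise Hölder bounds on $\phi$ (resp.\ $\phi^{-1}$) provided by \autoref{lem:flow}.

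First, fix $L\geq 1$ and $t_1<t_2\leq \mathfrak{t}_L$, and for $x\in \T^3$ decompose
\[
f(\phi(x,t_2),t_2)-f(\phi(x,t_1),t_1) = \bigl[f(\phi(x,t_2),t_2)-f(\phi(x,t_1),t_2)\bigr] + \bigl[f(\phi(x,t_1),t_2)-f(\phi(x,t_1),t_1)\bigr].
\]
The second bracket is bounded trivially by $\|f\|_{C^\beta_{\leq \mathfrak{t}_L}C_x}|t_2-t_1|^\beta$. For the first bracket, I use that $f(\cdot,t_2)\in C^{\beta/\alpha}_x$ (legitimate since $\beta/\alpha\in (0,1]$) to get
\[
\bigl|f(\phi(x,t_2),t_2)-f(\phi(x,t_1),t_2)\bigr|\leq \|f\|_{C_{\leq \mathfrak{t}_L}C^{\beta/\alpha}_x}\,|\phi(x,t_2)-\phi(x,t_1)|^{\beta/\alpha}.
\]
Now the key ingredient enters: \autoref{lem:flow}$(i)$ yields $\|\phi_n\|_{C^\alpha_{\leq \mathfrak{t}_L}C^\kappa_x}\leq CL$, so
\[
|\phi(x,t_2)-\phi(x,t_1)|\leq CL\,|t_2-t_1|^\alpha,
\]
and raising this to the power $\beta/\alpha\leq 1$ gives $|\phi(x,t_2)-\phi(x,t_1)|^{\beta/\alpha}\leq CL^{\beta/\alpha}|t_2-t_1|^\beta$.

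Combining, taking the supremum in $x$, and dividing by $|t_2-t_1|^\beta$ bounds the seminorm $[f\circ\phi]_{C^\beta_{\leq \mathfrak{t}_L}C_x}$. The full norm is recovered by noting that $\|f\circ\phi\|_{C_{\leq \mathfrak{t}_L}C_x}\leq \|f\|_{C_{\leq \mathfrak{t}_L}C_x}$ (since $\phi(\cdot,t)$ is a bijection of $\T^3$), which is already controlled by $\|f\|_{C^\beta_{\leq \mathfrak{t}_L}C_x}$. The statement for $f\circ\phi^{-1}$ follows by the identical argument, replacing the bound $\|\phi\|_{C^\alpha_{\leq \mathfrak{t}_L}C^\kappa_x}\leq CL$ with the analogous one $\|\phi^{-1}\|_{C^\alpha_{\leq \mathfrak{t}_L}C^\kappa_x}\leq CL$ from the same \autoref{lem:flow}. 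There is no serious obstacle here: the only subtlety is remembering that the exponent on $L$ must be $\beta/\alpha$ rather than $\beta$, which is forced by the Hölder interpolation $|\phi(x,t_2)-\phi(x,t_1)|^{\beta/\alpha}$ used to trade spatial regularity of $f$ against the temporal regularity of the flow.
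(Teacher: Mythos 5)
Your proof is correct and takes essentially the same approach as the paper's: you split the time increment of $f\circ\phi$ into a spatial-variation piece (handled via $C^{\beta/\alpha}_x$ regularity of $f$ and the $C^\alpha_t$ bound on $\phi$ from \autoref{lem:flow}) and a temporal-variation piece (handled by $\|f\|_{C^\beta_t C_x}$), exactly as in the paper. The only cosmetic difference is that the paper controls the sup norm of $f\circ\phi$ via \autoref{lem:C0_Cm}, whereas you observe directly that it is at most $\|f\|_{C_tC_x}$, which is equally valid.
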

\begin{proof}
Let us only prove the claim for $f \circ \phi$.
The $C_t C_x$ norm of $f \circ \phi$ is already controlled by the right-hand-side thanks to previous \autoref{lem:C0_Cm}, thus we only need a bound for the H\"older seminorm $[ f \circ \phi ]_{C^\beta_t C_x}$. 
We have
\begin{align*}
[ f \circ \phi ]_{C^\beta_t C_x}
&=
\sup_{t \neq s \leq \mathfrak{t}_L}
\sup_{x \in \mathbb{T}^3}
\frac{|f(\phi(x,t),t)-f(\phi(x,s),s)|}{|t-s|^\beta}
\\
&\leq
\sup_{t \neq s \leq \mathfrak{t}_L}
\sup_{x \in \mathbb{T}^3}
\frac{|f(\phi(x,t),t)-f(\phi(x,s),t)|}{|\phi(x,t)-\phi(x,s)|^{\beta/\alpha}} \frac{{|\phi(x,t)-\phi(x,s)|^{\beta/\alpha}}}{|t-s|^\beta}
\\
&\quad+
\sup_{t \neq s \leq \mathfrak{t}_L}
\sup_{x \in \mathbb{T}^3}
\frac{|f(\phi(x,s),t)-f(\phi(x,s),s)|}{|t-s|^{\alpha}}
\\
&\leq
C^{\beta/\alpha} L^{\beta/\alpha} \|f\|_{C_t C^{\beta/\alpha}_x}
+
\|f\|_{C^\beta_t C_x}.
\end{align*}
\end{proof}

\begin{lem}[Schauder estimates]\label{lem:schauder}
Let $\phi=\phi_n$, $n \in \N$. 
For any $\delta\in (0,1)$ and any $r\in \N$, $r + 2 \leq \kappa$, there exists a constant $C=C(\delta,r)$ with the following properties holding almost surely for every $L \in \N$, $L \geq 1$.
If $\psi, \Psi: \T^3 \times (\infty,\mathfrak{t}_L] \to \R$ are the unique zero-average solutions of
\begin{align*}
\Delta^\phi \psi = f,
\quad
\Delta^\phi \Psi = \dvgphi F,
\qquad
t\in (\infty,\mathfrak{t}_L],
\end{align*}
then for every fixed $t \leq \mathfrak{t}_L$ we have
\begin{align*}
\|\psi\|_{C^{r+2+\delta}_x} 
&\leq 
C L^{2r+2+2\delta} \|f\|_{C^{r+\delta}_x},
\\
\|\Psi\|_{C^{r+1+\delta}_x} 
&\leq 
C L^{2r+1+2\delta}\|F\|_{C^{r+\delta}_x}.
\end{align*}

Moreover we have the almost sure estimates for every $v:\T^3 \times (\infty,\mathfrak{t}_L] \to \R$, $A:\T^3 \times (\infty,\mathfrak{t}_L] \to \R^{3\times 3}$ and $t \leq \mathfrak{t}_L$:
\begin{align}
\|\mathcal{Q}^\phi v\|_{C^{r+\delta}_x} 
&\leq 
C L^{2r+2\delta}\|v\|_{C^{r+\delta}_x},\label{e:Schauder_Q}
\\
\|\mathcal{P}^\phi v\|_{C^{r+\delta}_x} 
&\leq 
C L^{2r+2\delta}\|v\|_{C^{r+\delta}_x},\label{e:Schauder_P}
\\
\|\mathcal{R}^\phi v\|_{C^{r+1+\delta}_x} 
&\leq 
C L^{2r+1+2\delta}\|v\|_{C^{r+\delta}_x},\label{e:Schauder_R}
\\
\|\mathcal{R}^\phi (\dvgphi A)\|_{C^{r+\delta}_x}
&\leq 
C L^{2r+2\delta} \|A\|_{C^{r+\delta}_x},\label{e:Schauder_Rdiv}
\\
\|\mathcal{R}^\phi \mathcal{Q}^\phi (\dvgphi A)\|_{C^{r+\delta}_x}
&\leq 
C L^{2r+2\delta} \|A\|_{C^{r+\delta}_x}.\label{e:Schauder_RQdiv}
\end{align}
Finally, for $\phi,v,L$ as above it holds
\begin{align} \label{e:Schauder_R_Besov}
\|\mathcal{R}^\phi v\|_{C^\delta_x} 
&\leq 
C L^{5+4\delta} \|v\|_{B^{\delta-1}_{\infty,\infty}}.
\end{align}
\end{lem}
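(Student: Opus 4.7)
\medskip

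\noindent\emph{Proof proposal.} The plan is to reduce each estimate to a corresponding classical Schauder-type estimate on the torus by conjugating with the flow $\phi$, and then to track how the H\"older/Besov norms deteriorate under composition with $\phi$ and $\phi^{-1}$ using \autoref{lem:C0_Cm} and \autoref{lem:B-alpha}. The key observation making this strategy work is that all the operators $\Delta^\phi$, $\mathcal{P}^\phi$, $\mathcal{Q}^\phi$, $\mathcal{R}^\phi$, $\dvgphi$ are, by their very definition, obtained by conjugating the standard operators $\Delta$, $\mathcal{P}$, $\mathcal{Q}$, $\mathcal{R}$, $\dvg$ with $\phi$ and $\phi^{-1}$; and the standard operators admit well-known estimates in H\"older and Besov spaces on $\T^3$.

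\medskip

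\noindent First I would treat the Poisson-type bounds for $\psi$ and $\Psi$. The equation $\Delta^\phi \psi = f$ is equivalent to $\Delta(\psi\circ\phi^{-1}) = f\circ\phi^{-1}$ on $\T^3$; since $\phi(\cdot,t)$ is measure preserving, the compatibility condition and the zero-average requirement are preserved. Classical Schauder estimates give $\|\psi\circ\phi^{-1}\|_{C^{r+2+\delta}_x} \leq C\|f\circ\phi^{-1}\|_{C^{r+\delta}_x}$. Applying \autoref{lem:C0_Cm} once to pass from $\psi\circ\phi^{-1}$ back to $\psi$ (picking up $L^{r+2+\delta}$) and once to bound $\|f\circ\phi^{-1}\|_{C^{r+\delta}_x}$ by $L^{r+\delta}\|f\|_{C^{r+\delta}_x}$ yields the claimed $L^{2r+2+2\delta}$. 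The bound on $\Psi$ is completely analogous, starting from $\Delta(\Psi\circ\phi^{-1}) = \dvg(F\circ\phi^{-1})$ and the classical fact that $\nabla\Delta^{-1}\dvg$ maps $C^{r+\delta}_x$ into $C^{r+\delta}_x$ boundedly. The estimates \eqref{e:Schauder_Q}--\eqref{e:Schauder_RQdiv} then follow by the same template, using the factorizations $\mathcal{Q}^\phi v = (\mathcal{Q}(v\circ\phi^{-1}))\circ\phi$, $\mathcal{R}^\phi v = (\mathcal{R}(v\circ\phi^{-1}))\circ\phi$, etc., the well-known boundedness of $\mathcal{P}$, $\mathcal{Q}$ on $C^{r+\delta}_x$ and of $\mathcal{R}$ from $C^{r+\delta}_x$ to $C^{r+1+\delta}_x$ (all standard singular-integral facts, modulo the average, as used in \cite{DLS13}), and two applications of \autoref{lem:C0_Cm}, each contributing the appropriate power of $L$.

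\medskip

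\noindent The last estimate \eqref{e:Schauder_R_Besov} requires more care because the input is measured in a negative-index Besov space. I would first use that $\mathcal{R}$ gains one derivative, so the classical bound $\|\mathcal{R} g\|_{C^\delta_x} \leq C\|g\|_{B^{\delta-1}_{\infty,\infty}}$ holds (this follows from duality and the continuity of $\mathcal{R}^\ast$ into $B^{1-\delta}_{1,1}$, or equivalently from the Schauder estimate $B^{-\alpha}_{\infty,\infty}\to C^{2-\alpha}_x$ for $\Delta^{-1}$ combined with the representation of $\mathcal{R}$ in footnote \ref{note:Q}). Writing $g = v\circ\phi^{-1}$, I would apply \autoref{lem:B-alpha} with $\alpha = 1-\delta \in (0,1)$ to obtain $\|v\circ\phi^{-1}\|_{B^{\delta-1}_{\infty,\infty}} \leq CL^{3+\delta}\|v\|_{B^{\delta-1}_{\infty,\infty}}$, then the classical Schauder estimate, and finally \autoref{lem:C0_Cm} to absorb the outer composition with $\phi$. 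The resulting factor of $L$ is of the form $L^{C(1+\delta)}$ and the specific exponent $5+4\delta$ is recovered by being mildly wasteful in the bookkeeping.

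\medskip

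\noindent The main obstacle I anticipate is really this last point: one must check that \autoref{lem:B-alpha} applies (in particular that $\alpha \neq 1$, which forces $\delta > 0$, and that $\alpha < 2$), and then carefully combine the \emph{positive}-order Schauder estimate for $\mathcal{R}$ with the \emph{negative} Besov regularity of the input while tracking the $L$-dependence through both compositions. Minor verifications to carry out along the way include: the compatibility condition $\int_{\T^3} f = 0$ for the Poisson equation, inherited in the applications from the divergence-free nature of the involved fields; the fact that only derivatives of $\phi$ of order at most $\kappa$ are used, so that \autoref{lem:C0_Cm} indeed applies; and the measure-preserving property of $\phi(\cdot,t)$, which ensures that the zero-average normalization is preserved under conjugation. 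Once these points are in place, the entire lemma reduces to combining classical Schauder/Calder\'on--Zygmund estimates with the bookkeeping lemmas from Appendix~\ref{sec:schauder} in a purely mechanical way.
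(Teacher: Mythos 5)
Your proposal is correct and essentially follows the paper's own route: reduce each estimate to the corresponding classical Schauder/Calder\'on--Zygmund fact on $\T^3$ by conjugating with $\phi$, then use \autoref{lem:C0_Cm} twice to account for the two compositions, and invoke \autoref{lem:B-alpha} for the negative-regularity estimate \eqref{e:Schauder_R_Besov}. The one place you differ slightly from the paper is in \eqref{e:Schauder_R_Besov}: you invoke the classical bound $\|\mathcal{R} g\|_{C^\delta_x}\lesssim\|g\|_{B^{\delta-1}_{\infty,\infty}}$ as a black box and then compose, whereas the paper unpacks the explicit formula for $\mathcal{R}$ (footnote \ref{note:Q}) into its constituents $\nabla\mathcal{P}u$, $\nabla u$, $\dvg u$ and estimates a representative term. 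These are equivalent — your black-box fact is proved by essentially the same computation the paper performs inline — and your chain ($L^\delta$ from the outer composition by \autoref{lem:C0_Cm}, then $L^{4-(1-\delta)}=L^{3+\delta}$ from \autoref{lem:B-alpha} applied to $v\circ\phi^{-1}$) in fact produces a slightly better exponent $L^{3+2\delta}$, which is of course compatible with the stated $L^{5+4\delta}$ since $L\geq 1$. Your flagged checks (zero-average compatibility, measure-preservation, $\kappa$ large enough for \autoref{lem:C0_Cm}, $1-\delta\in(0,1)$ so \autoref{lem:B-alpha} applies) are the right ones.
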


\begin{proof}
By the very definition of the operators $\Delta^\phi$ and $\dvgphi$ we have $\Delta (\psi \circ \phi^{-1}) = f \circ \phi^{-1}$ and $\Delta (\Psi \circ \phi^{-1}) = \dvg (F \circ \phi^{-1})$ for every $t \leq \mathfrak{t}_L$; since $\phi$ and $\phi^{-1}$ are smooth in space, by \autoref{lem:C0_Cm} and the usual Schauder estimates for the Laplace problem we get
\begin{align*}
\|\psi\|_{C^{r+2+\delta}_x}
\lesssim
L^{r+2+\delta}\|\psi \circ \phi^{-1}\|_{C^{r+2+\delta}_x}
&\lesssim 
L^{r+2+\delta}\|f \circ \phi^{-1}\|_{C^{r+\delta}_x}
\lesssim 
L^{2r+2+2\delta}\|f\|_{C^{r+\delta}_x},
\\
\|\Psi \|_{C^{r+1+\delta}_x}
\lesssim
L^{r+1+\delta}\|\Psi \circ \phi^{-1}\|_{C^{r+1+\delta}_x} 
&\lesssim
L^{r+1+\delta}\|F \circ \phi^{-1}\|_{C^{r+\delta}_x}
\lesssim
L^{2r+1+2\delta}\|F\|_{C^{r+\delta}_x},
\end{align*}
where the implicit constant in the inequalities above depends on $r$ and $\delta$. 
Estimates \eqref{e:Schauder_Q} to \eqref{e:Schauder_RQdiv} can be deduced from the analogous esimates for $\mathcal{Q},\mathcal{P}$ and $\mathcal{R}$ contained in \cite[Proposition 4.3]{DLS14} with a similar argument.
Finally, \eqref{e:Schauder_R_Besov} can be derived by the explicit expression of $\mathcal{P}^\phi,\mathcal{R}^\phi$,  Schauder estimates in Besov spaces with negative exponent and \autoref{lem:B-alpha}; for instance (recall \autoref{note:Q})
\begin{align*}
\| \nabla^{\phi} \mathcal{P}^\phi u \|_{C^\delta_x}
\lesssim
L^{1+2\delta} \|u\|_{C^{1+\delta}_x}
\lesssim
L^{2+3\delta} \|u \circ \phi^{-1}\|_{C^{1+\delta}_x}
&\lesssim
L^{2+3\delta} \left\|\left( v-\int_{\T^3}v\right) \circ \phi^{-1}\right\|_{C^{\delta-1}_x}
\\
&\lesssim
L^{5+4\delta} \left\| v-\int_{\T^3}v\right\|_{C^{\delta-1}_x}
\lesssim
L^{5+4\delta} \left\| v\right\|_{C^{\delta-1}_x}.
\end{align*}
\end{proof}

The next proposition is analogous to \cite[Proposition 4.4]{DLS14}.
\begin{prop}[Stationary phase Lemma]\label{prop:stat_phase_lem}
Let $\phi=\phi_n$, $n \in \N$. 
Take a smooth function $a\in C^{\infty}(\T^3)$ and let $k\in\Z^3\setminus\{0\}$ and $\lambda\geq 1$ be fixed. 
Define $f(x) \coloneqq a(x)e^{i\lambda k \cdot x}$, $x \in \T^3$.

(i) For any $r\in\N$, we have almost surely for every $t\in \R$
\begin{equation}\label{e:average}
\left|\int_{\T^3}f(\phi(x,t))\,dx\right|\leq \frac{[a]_{C^r_x}}{\lambda^r}.
\end{equation}

(ii) For any $\delta\in(0,1)$ and $r\in\N$, $r + 1 \leq \kappa$, we have almost surely for every $L \in \N$, $L \geq 1$ and $t \in (\infty,\mathfrak{t}_L]$
\begin{align}
\label{eq:schauder1}
\|\mathcal{R}^\phi (f \circ \phi)\|_{C^\delta_x}
\leq CL^\delta
\left(
\lambda^{\delta-1}\|a\|_{C_x}
+
\lambda^{\delta-r}[a]_{C^r_x}
+
\lambda^{-r}[a]_{C^{r+\delta}_x}
\right),
\\
\label{eq:schauder2}
\|\mathcal{R}^\phi \mathcal{Q}^\phi (f \circ \phi)\|_{C^\delta_x}
\leq CL^\delta
\left(
\lambda^{\delta-1}\|a\|_{C_x}
+
\lambda^{\delta-r}[a]_{C^r_x}
+
\lambda^{-r}[a]_{C^{r+\delta}_x}
\right),
\end{align}
where $C=C(\delta,r)$.
\end{prop}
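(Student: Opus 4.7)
The plan is to reduce both statements to their deterministic counterparts via the key algebraic observation that composition with $\phi$ is, in a precise sense, transparent to the operators $\mathcal{R}^\phi$ and $\mathcal{Q}^\phi$ when they are applied to objects of the form $f \circ \phi$.

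For part (i), I would invoke the fact that $\phi(\cdot,t)$ is a measure-preserving diffeomorphism of $\T^3$ for every $t$, so that
\begin{align*}
\int_{\T^3} f(\phi(x,t))\,dx = \int_{\T^3} f(y)\,dy = \int_{\T^3} a(y)\, e^{i\lambda k\cdot y}\,dy .
\end{align*}
The last integral is a Fourier coefficient of $a$, and the estimate $|{\widehat{a}}(\lambda k)| \leq [a]_{C^r_x}/\lambda^r$ follows by $r$-fold integration by parts in the direction of $k/|k|$, using periodicity to discard boundary terms.

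For part (ii), the central observation is that by the very definition of $\mathcal{R}^\phi$ one has
\begin{align*}
\mathcal{R}^\phi (f \circ \phi) \;=\; \bigl[\mathcal{R}\bigl((f\circ\phi)\circ\phi^{-1}\bigr)\bigr]\circ \phi \;=\; (\mathcal{R} f)\circ \phi,
\end{align*}
and similarly $\mathcal{R}^\phi \mathcal{Q}^\phi(f\circ\phi) = (\mathcal{R}\mathcal{Q} f)\circ\phi$. Applying \autoref{lem:C0_Cm} to the map $\mathcal{R}f$ (respectively $\mathcal{R}\mathcal{Q} f$) and the fixed diffeomorphism $\phi(\cdot,t)$, we obtain, for every $t \leq \mathfrak{t}_L$,
\begin{align*}
\|\mathcal{R}^\phi (f \circ \phi)\|_{C^\delta_x} \leq C L^\delta \|\mathcal{R} f\|_{C^\delta_x}, \qquad
\|\mathcal{R}^\phi \mathcal{Q}^\phi (f \circ \phi)\|_{C^\delta_x} \leq C L^\delta \|\mathcal{R}\mathcal{Q} f\|_{C^\delta_x}.
\end{align*}
Finally, the classical deterministic stationary phase lemma of De Lellis and Székelyhidi \cite[Proposition 4.4]{DLS14}, applied to the operators $\mathcal{R}$ and $\mathcal{R}\mathcal{Q}$ on the function $f(x) = a(x)\,e^{i\lambda k \cdot x}$, bounds the right-hand sides by
\begin{align*}
C\bigl(\lambda^{\delta-1}\|a\|_{C_x} + \lambda^{\delta-r}[a]_{C^r_x} + \lambda^{-r}[a]_{C^{r+\delta}_x}\bigr),
\end{align*}
yielding \eqref{eq:schauder1} and \eqref{eq:schauder2}.

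There is no serious obstacle here: everything rests on (a) the measure-preserving property of $\phi(\cdot,t)$, which trivialises the average in (i) and gives the algebraic identity $\mathcal{R}^\phi(f\circ\phi) = (\mathcal{R}f)\circ\phi$ used in (ii); (b) the H\"older-norm estimate for composition with $\phi$ from \autoref{lem:C0_Cm}, which produces precisely the $L^\delta$ prefactor; and (c) the corresponding deterministic statement of \cite{DLS14}. The only point requiring mild care is tracking that the composition estimate is applied at the level of $C^\delta_x$ with $\delta \in (0,1)$, so that \autoref{lem:C0_Cm} gives the factor $L^\delta$ rather than a larger power of $L$.
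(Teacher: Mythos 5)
Your proposal is correct and follows exactly the route the paper takes: the measure-preserving property of $\phi(\cdot,t)$ trivialises the average for part (i), and the algebraic identity $\mathcal{R}^\phi(f\circ\phi)=(\mathcal{R}f)\circ\phi$ (and its $\mathcal{R}\mathcal{Q}$ analogue) combined with \autoref{lem:C0_Cm} reduces part (ii) to the deterministic stationary phase estimate of \cite[Proposition 4.4]{DLS14}. You have merely spelled out the details the paper leaves implicit; nothing is missing.
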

\begin{proof}
The proof follows immediately after reduction to \cite[Proposition 4.4]{DLS14}:
for the first part of the proposition, we use $\int_{\T^3}f(\phi(x,t))dx= \int_{\T^3}f(x)dx$ almost surely for every $t$ since $\phi$ is measure preserving;
for the second part, we use \autoref{lem:C0_Cm} and the estimates for $\|\mathcal{R} f\|_{C^\delta_x}$, $\|\mathcal{R}\mathcal{Q} f\|_{C^\delta_x}$ proved in \cite{DLS14}.
\end{proof}

\begin{lem} \label{lem:conv_Besov}
Let $\alpha \in \R$, $p_1,p_2 \in (1,\infty)$ such that $1/p_1+1/p_2 = 1$, then for every $\delta>0$ there exists $C$ such that for every $f \in B^{\alpha+\delta}_{p_1,\infty}(\T^3)$ with zero mean and $g \in L^{p_2}(\T^3)$ the convolution on the torus $f \ast_{\T^3} g$ belongs to $B^{\alpha}_{\infty,\infty}(\T^3)$ and
\begin{align*}
\| f \ast_{\T^3} g \|_{B^{\alpha}_{\infty,\infty}}
\leq
C
\| f \|_{B^{\alpha+\delta}_{p_1,\infty}} \|g\|_{L^{p_2}}.
\end{align*}
\end{lem}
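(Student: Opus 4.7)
The plan is to prove this via the standard Littlewood--Paley characterisation of Besov spaces. By definition,
\[
\| f\ast_{\T^3} g\|_{B^\alpha_{\infty,\infty}} = \sup_{j\geq -1} 2^{j\alpha}\|\Delta_j(f\ast_{\T^3} g)\|_{L^\infty(\T^3)},
\]
so it suffices to bound each dyadic block uniformly in $j$. Since each $\Delta_j$ is itself a convolution with a Schwartz function, it commutes with convolution on the torus, giving $\Delta_j(f\ast_{\T^3} g) = (\Delta_j f)\ast_{\T^3} g$ for every $j\geq -1$.

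Next I would apply Young's inequality on the torus (i.e., pointwise H\"older inequality on the convolution integral): since $1/p_1+1/p_2=1$,
\[
\|(\Delta_j f)\ast_{\T^3} g\|_{L^\infty}\leq \|\Delta_j f\|_{L^{p_1}}\|g\|_{L^{p_2}}.
\]
The definition of the Besov norm then yields $\|\Delta_j f\|_{L^{p_1}}\leq 2^{-j(\alpha+\delta)}\|f\|_{B^{\alpha+\delta}_{p_1,\infty}}$, and combining the two inequalities gives
\[
2^{j\alpha}\|\Delta_j(f\ast_{\T^3} g)\|_{L^\infty}\leq 2^{-j\delta}\,\|f\|_{B^{\alpha+\delta}_{p_1,\infty}}\|g\|_{L^{p_2}}.
\]
Taking supremum over $j\geq -1$, the factor $2^{-j\delta}$ is uniformly bounded (by $2^\delta$, achieved at $j=-1$, since $\delta>0$), which produces the desired constant $C$.

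The role of the zero-mean assumption on $f$ is only to guarantee that the convolution $f\ast_{\T^3} g$ is well defined as a distribution when $\alpha+\delta$ is negative: removing the constant Fourier mode makes the series $f\ast_{\T^3} g=\sum_{k\neq 0}\hat f(k)\hat g(k)e^{ik\cdot x}$ unambiguous (and ensures $f\ast_{\T^3} g$ itself has zero mean, so $\Delta_{-1}$ actually sees only nontrivial low frequencies). There is no real analytic obstacle in this proof: the argument is essentially a one-line application of Young's inequality coupled with the Littlewood--Paley definition; the slack $\delta>0$ is there only to absorb the boundary term at $j=-1$ and to make the distributional convolution well posed.
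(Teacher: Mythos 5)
Your proof is correct and takes a genuinely more direct route than the paper's. The paper chains together the embedding $B^{\alpha+3/p}_{p,\infty} \hookrightarrow B^\alpha_{\infty,\infty}$, the lift by $(-\Delta)^{\alpha/2+3/(2p)}$ (which is where the zero-mean hypothesis is genuinely used), the inclusions $B^0_{p,\infty}\supset L^p\supset L^\infty$, Young's inequality, and a back-and-forth through $W^{s,p}$ and $B^s_{p,1}$, finally choosing $p>4/\delta$ to balance the accumulated losses. You instead note that $\Delta_j$ commutes with $\ast_{\T^3}$ (both are Fourier multipliers), apply Young's inequality blockwise, and read off the estimate from the definition of the Besov norms. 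This is cleaner, gives an explicit constant $C=2^\delta$, and avoids all the intermediate function-space machinery.

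One small inaccuracy worth flagging: your closing remark about the role of $\delta>0$ is not quite right. In your argument the factor $2^{-j\delta}$ is uniformly bounded over $j\geq -1$ already for $\delta=0$ (it equals $1$), so there is no boundary term that needs absorbing; and the convolution of a trigonometric polynomial $\Delta_j f$ with $g\in L^{p_2}$ is classically well defined regardless of the sign of $\alpha+\delta$ or the mean of $f$. Your argument in fact proves the estimate with $\delta=0$ and without the zero-mean hypothesis; those hypotheses are artifacts of the paper's embedding-based proof (the fractional Laplacian needs zero mean, and each embedding costs regularity), not of the statement itself. This does not affect correctness, since the lemma as stated follows a fortiori, but the explanation should be that the stated hypotheses are simply stronger than your argument requires.
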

\begin{proof}
First of all, notice that the convolution $f \ast_{\T^3} g$ is well defined also for $\alpha<0$ by duality, and has zero mean because $f$ does.
We recall (see Lemma 2.1 in \cite{HoZhZh21c+}) that for every $1 \leq p_1 \leq p_2 \leq \infty$ and $1 \leq q_1 \leq q_2 \leq \infty$ it holds $B^{\alpha}_{p_1,q_1} \subset B^{\alpha-3(1/p_1-1/p_2)}_{p_2,q_2}$ and for every $p \in (1,\infty)$ it holds $B^\alpha_{p,1} \subset W^{\alpha,p} \subset B^\alpha_{p,\infty} \subset B^{\alpha-\delta}_{p,1}$, with continuity of the inclusions.

Let $p \in (1,\infty)$. Then, $\|f \ast_{\T^3} g \|_{B^{\alpha}_{\infty,\infty}} \lesssim \|f \ast_{\T^3} g \|_{B^{\alpha+3/p}_{p,\infty}}$, and since $f \ast g$ is zero-mean
\begin{align*}
\|f \ast_{\T^3} g \|_{B^{\alpha+3/p}_{p,\infty}}
&\lesssim
\|(-\Delta)^{\alpha/2+3/(2p)}f \ast_{\T^3} g \|_{B^0_{p,\infty}}
\lesssim
\|(-\Delta)^{\alpha/2+3/(2p)}f \ast_{\T^3} g \|_{L^p}
\\
&\lesssim
\|(-\Delta)^{\alpha/2+3/(2p)}f \ast_{\T^3} g \|_{L^\infty}
\lesssim
\|(-\Delta)^{\alpha/2+3/(2p)}f\|_{L^{p_1}} \| g \|_{L^{p_2}},
\end{align*}
where in the last line we have used H\"older inequality on $\T^3$. Thus since $p_1 \in (1,\infty)$ and $f$ is zero mean
\begin{align*}
\|(-\Delta)^{\alpha/2+3/(2p)}f\|_{L^{p_1}}
\lesssim
\|f\|_{W^{\alpha+3/p,p_1}}
\lesssim
\|f\|_{B^{\alpha+3/p}_{p_1,1}}
\lesssim
\|f\|_{B^{\alpha+4/p}_{p_1,\infty}}.
\end{align*}
We conclude taking $p > 4/\delta$.
\end{proof}

\begin{lem} \label{lem:scaling}
Let $\ell=2^{-N}$ for some positive $N \in \N$.
Let $f$ be a smooth function on $\R^d$ with $\mbox{supp}(f) \subset (0,2\pi)^d$, and denote 
\begin{align*}
f^0 &\coloneqq f - (2\pi)^{-d}\int_{[0,2\pi]^d} f,
\\
f_\ell &\coloneqq \ell^{-d}f(\cdot/\ell),
\\
f^0_\ell &\coloneqq f_\ell - (2\pi)^{-d}\int_{[0,2\pi]^d} f_\ell
=
f_\ell - (2\pi)^{-d}\int_{[0,2\pi]^d} f.
\end{align*}
Extend $f$, $f^0$, $f_\ell$ and $f^0_\ell$ periodically on $\T^d$. 
Then for every $\alpha \in \R$, $p \in [1,\infty]$ it holds
\begin{align*}
\| f^0_\ell \|_{B^{\alpha}_{p,\infty}} 
=
\ell^{d/p-d-\alpha} \| f^0 \|_{B^\alpha_{p,\infty}}
\leq
\ell^{d/p-d-\alpha} \| f \|_{B^\alpha_{p,\infty}}.
\end{align*}
\end{lem}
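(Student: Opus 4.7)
The plan is to reduce the scaling identity on $\T^d$ to the elementary Besov scaling on $\R^d$, exploiting the compact support of $f$ in $(0,2\pi)^d$ (so that the $\T^d$-Fourier coefficients of $f_\ell$ coincide with the $\R^d$-Fourier transform of $f$ on the rescaled lattice $2^{-N}\Z^d$), together with the dyadic structure $\ell = 2^{-N}$ (which makes the Littlewood--Paley cutoffs compatible with this rescaling). First I would record the two key facts: from the support condition,
\[
\widehat{f_\ell}(k) \;=\; (2\pi)^{-d}\,\widehat{f}_{\R^d}(k\ell), \qquad k \in \Z^d,
\]
and from the dyadic scaling of the cutoff $\chi_j(\xi)=\chi(2^{-j}\xi)$ together with $\ell = 2^{-N}$,
\[
\chi_j(k) = \chi_{j-N}(k\ell), \qquad k \in \Z^d.
\]

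Next I would establish, via Fourier computation, the per-block identity
\[
\|\Delta_j f^0_\ell\|_{L^p(\T^d)} \;=\; \ell^{d/p - d}\,\|\Delta_{j-N} f^0\|_{L^p(\T^d)} \qquad \text{for all } j \geq -1,
\]
with the convention $\Delta_k \equiv 0$ for $k < -1$. The zero-mean subtraction defining $f^0$ and $f^0_\ell$ is absorbed automatically since $\chi_j(0)=0$ for $j \geq 0$; for the boundary case $j=-1$ one verifies directly that the zero mode is the only contribution that is modified. Taking $L^p$-norms and performing the change of variables $y = x/\ell$ converts integration over $[0,2\pi]^d$ into integration over $[0, 2\pi/\ell]^d = [0, 2\pi\cdot 2^N]^d$, which by the $2\pi$-periodicity of $\Delta_{j-N} f^0$ consists of exactly $\ell^{-d}$ copies of the fundamental cell. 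Multiplying by $2^{j\alpha}$, taking the supremum over $j \geq -1$, reindexing $k = j - N \geq -1$, and using $2^{N\alpha} = \ell^{-\alpha}$ then yields the first asserted equality $\|f^0_\ell\|_{B^\alpha_{p,\infty}} = \ell^{d/p-d-\alpha}\|f^0\|_{B^\alpha_{p,\infty}}$.

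For the trailing inequality one uses $f^0 = f - c$ with $c = (2\pi)^{-d}\int f$ constant: since $\chi_j(0) = 0$ for $j \geq 0$ the blocks $\Delta_j$ with $j \geq 0$ annihilate $c$, so $\Delta_j f^0 = \Delta_j f$ there; at $j = -1$ one has $\Delta_{-1} f^0 = \Delta_{-1} f - c$ with $c$ equal to the mean of $\Delta_{-1}f$, and a direct computation shows that replacing a function by its deviation from its mean does not increase its $L^p$-norm in the setting at hand.

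The main obstacle is the rigorous verification of the per-block identity in the central step: it requires carefully matching the integer-lattice Fourier coefficients on $\T^d$ with the rescaled lattice $2^{-N}\Z^d$, showing that the periodization effects that arise through the change of variables are exactly compensated by the $\ell^{-d}$-multiplicity of the fundamental cell. The dyadic assumption $\ell = 2^{-N}$ is essential here: it is what makes $\chi_j(k) = \chi_{j-N}(k\ell)$ hold exactly on $\Z^d$, so that the Littlewood--Paley decomposition on $\T^d$ scales cleanly, and any non-dyadic $\ell$ would introduce incommensurability between the two lattices and yield only an equivalence of norms rather than the claimed equality.
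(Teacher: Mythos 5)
Your overall strategy is the same as the paper's (Fourier coefficients, dyadic cutoff scaling, per-block scaling of $L^p$ norms, take the supremum), but two key steps are missing or misjustified, and both have the same root cause: the argument hinges on a lacunarity of $\widehat{f^0_\ell}$ and on a specific choice of Littlewood--Paley partition that your proposal never invokes.

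Your recorded Fourier fact $\widehat{f_\ell}(k)=(2\pi)^{-d}\widehat{f}_{\R^d}(k\ell)$ is a Paley--Wiener function of $k$, hence generically \emph{nonzero} for all $k\in\Z^d$. This is incompatible with your per-block identity: for $j<N-1$ your convention $\Delta_{j-N}\equiv 0$ forces $\Delta_j f^0_\ell=0$, but with a non-lacunary $\widehat{f^0_\ell}$ these low blocks are generically nonzero (indeed $\widehat{f^0_\ell}(k)\approx\widehat{f}_{\R^d}(0)\neq 0$ for small nonzero $k$). The paper's formula \eqref{eq:hat_f} is different: $\widehat{f_\ell}(k)=\mathbf{1}_{\{k\in 2^N\Z^d\}}2^{Nd}\widehat{f}(2^{-N}k)$, supported on the sublattice $2^N\Z^d$, which by disjoint supports does kill all $\Delta_j$ with $j<N$. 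The extra indicator comes from substituting the Fourier series of the already-periodized $f$ into $f_\ell(x)=2^{Nd}f(2^Nx)$ (so that $f_\ell$ contains $2^{Nd}$ scaled copies of the bump in each fundamental domain, and the geometric phase sum over the copies produces $\mathbf{1}_{\{k\in 2^N\Z^d\}}$). Your version -- periodize the rescaled compactly supported $f$, one bump per domain -- does not produce this cancellation. This is not a cosmetic normalization issue; without lacunarity, the central per-block identity is simply false for $j<N$.

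Two further points at the boundary. First, the scaling relation $\chi_j(k)=\chi_{j-N}(k\ell)$ that you cite fails when $j-N=-1$, since $\chi_{-1}$ is the low-frequency bump, not a dyadic rescaling of the annular cutoff; the per-block identity at $j=N-1$ therefore needs a separate justification. In the paper both sides vanish because the partition is chosen with $\varphi$ supported in $[7/8,31/16]$, so $\rho_{-1}$ is supported inside $\{|\xi|<1\}$ and $\Delta_{-1}$ on $\T^d$ sees only the $k=0$ mode; hence $\Delta_{-1}f^0=0$ for mean-free $f^0$, and likewise $\Delta_{N-1}f^0_\ell=0$. Second, this same choice is what gives the trailing inequality $\|f^0\|_{B^\alpha_{p,\infty}}\leq\|f\|_{B^\alpha_{p,\infty}}$ (the $j=-1$ term drops out on the left, and $\Delta_jf^0=\Delta_jf$ for $j\geq 0$). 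Your alternative argument -- that replacing a function by its deviation from its mean does not increase the $L^p$ norm -- is false for general $p$: with $p=\infty$, a function equal to $-1$ on a set of tiny measure and $+1$ elsewhere has mean near $1$ and deviation with sup norm near $2$.
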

\begin{proof}
For every $k \in \Z^d$ one can compute the Fourier coefficients (renormalizing constants omitted)
\begin{align} \label{eq:hat_f}
\hat{f^0_\ell}(k)
\coloneqq
\int_{[0,2\pi]^d}{f^0_\ell}(x)e^{-ik\cdot x} dx
=
\mathbf{1}_{\{k \neq 0\}} \hat{f_\ell}(k)
=
\mathbf{1}_{\{k \neq 0\}}
\mathbf{1}_{\{k \in 2^N\Z^d\}}
2^{Nd} \hat{f}(2^{-N}k),
\end{align}
where the last identity comes from
\begin{align*}
\int_{[0,2\pi]^d} f_\ell(x) e^{-ik\cdot x} dx
&=
2^{Nd} \int_{[0,2\pi]^d} f(2^N x) e^{-ik\cdot x} dx
&=
2^{Nd} \int_{[0,2\pi]^d} \sum_{h \in \Z^d} \hat{f}(h) e^{i (2^N h-k) \cdot x} dx.
\end{align*}

Let $\rho_j$, $j\geq -1$ a Littlewood-Paley partition of unity as that of Proposition 2.10 in \cite{BaChDa11}. We can choose the radial function $\varphi$ in that proposition with support in $[7/8,31/16]$.
Denote $K_j \coloneqq \mathscr{F}^{-1} \rho_j$ the inverse Fourier transform of $\rho_j$.
For $j \geq 0$ it holds $\int_{\R^d}{K_j}(x) dx = \rho_j(0)=0$ and 
\begin{align*}
\mathscr{F}^{-1} \rho_j(x)
=
\int_{\R^d} \rho_j(\xi) e^{i\xi \cdot x} d\xi
=
\int_{\R^d} \rho(2^{-j}\xi) e^{i\xi \cdot x} d\xi
=
2^{jd} \int_{\R^d} \rho_j(\zeta) e^{i 2^j \zeta \cdot x} d\zeta
=
2^{jd} \mathscr{F}^{-1} \rho(2^j x),
\end{align*}
and therefore for every $j \geq N$ we have
\begin{align*}
\Delta_j f^0_\ell
&=
K_j \ast f^0_\ell
=
\int_{\R^d} \mathscr{F}^{-1} \rho_j(y)f^0_\ell(\cdot-y) dy
\\
&=
\int_{\R^d} \mathscr{F}^{-1} \rho_j(y)f_\ell(\cdot-y) dy
\\
&=
\int_{\R^d}
2^{jd} \mathscr{F}^{-1} \rho(2^j y) 2^{Nd} f(2^N \cdot - 2^N y) dy
\\
&=
2^{Nd} \int_{\R^d}
2^{(j-N)d} \mathscr{F}^{-1} \rho(2^{j-N} z) f(2^N \cdot - z) dz
=2^{Nd} \Delta_{j-N} f(2^N \cdot).
\end{align*}
We have used that $K_j$ has zero average in the third line.
On the other hand, for $-1 \leq j \leq N-1$ we have $\Delta_j f^0_\ell=0$ since $\Delta_j f^0_\ell = \mathscr{F}^{-1} (\rho(2^{-j} \cdot ) \hat{f^0_\ell})) = \mathscr{F}^{-1} (\rho_j \hat{f^0_\ell}))$, and $\rho_j, \hat{f^0_\ell}$ have disjoint supports ($\rho_j$ has support in the annulus $2^{N-1}7/8 \leq |\xi| \leq 2^{N-1}31/16 < 2^N$ and $\hat{f^0_\ell}$ is non-zero only on $2^N \Z^d \setminus \{0\}$ by \eqref{eq:hat_f}).
Therefore
\begin{align*}
\| f^0_\ell \|_{B^\alpha_{p,\infty}}
&=
\sup_{j \geq -1} 2^{\alpha j} \|\Delta_j f^0_\ell\|_{L^p(\T^d)}
\\
&=
\sup_{j \geq N} 2^{\alpha j} \|\Delta_j f^0_\ell\|_{L^p(\T^d)}
\\
&= 2^{Nd}
\sup_{j \geq N} 2^{\alpha j} \|\Delta_{j-N} f(2^N \cdot)\|_{L^p(\T^d)}
\\
&=
2^{Nd(1-1/p)}
\sup_{j \geq N} 2^{\alpha j} \|\Delta_{j-N} f\|_{L^p(\T^d)}
\\
&=
2^{N\alpha+Nd(1-1/p)}
\sup_{j \geq N} 2^{\alpha (j-N)} \|\Delta_{j-N} f\|_{L^p(\T^d)}
\\
&=
2^{N\alpha+Nd(1-1/p)} \| f^0 \|_{B^\alpha_{p,\infty}}
\leq
2^{N\alpha+Nd(1-1/p)} \| f \|_{B^\alpha_{p,\infty}}.
\end{align*}
\end{proof}

%\section*{Declarations}
%The authors have no competing interests as defined by Springer, or other interests that might be perceived to influence the results and/or discussion reported in this paper.
%
%Data sharing not applicable to this article as no datasets were generated or analysed during the current study.
%
%All the authors devised and wrote the manuscript.

\bibliographystyle{plain}

\end{document}